\definecolor{MyDarkBlue}{rgb}{0,0.08,0.50}
\definecolor{BrickRed}{rgb}{0.65,0.08,0}
\theoremstyle{plain}
\newtheorem{LEM}{Lemma}
\newtheorem{PRP}[LEM]{Proposition}
\newtheorem{THM}[LEM]{Theorem}
\newtheorem{COR}[LEM]{Corollary}
\newcommand{\innerthmname}{}
\theoremstyle{definition}
\theoremstyle{definition}
\newtheorem{EXA}[LEM]{Example}
\newtheorem{REM}[LEM]{Remark}
\newtheorem{DEF}[LEM]{Definition}
\newtheorem{COND}[LEM]{Condition}
\newcommand{\xqed}[1]{%
   \leavevmode\unskip\penalty9999 \hbox{}\nobreak\hfill
   \quad\hbox{\ensuremath{#1}}}
\newcommand{\Enddef}{\xqed{\blacktriangleleft}}
\newcommand{\nc}[1]{}
\newcommand{\1}{\mathbbm{1}}
\newcommand{\indic}[1]{\1_{\{#1\}}}
\newcommand{\nn}{\nonumber}
\newcommand{\degK}{\emptyset_K}
\numberwithin{equation}{section}
\newcommand{\ra}{\rightarrow}
\renewcommand{\P}{\mathbb{P}}
\newcommand{\E}{\mathbb{E}}
\newcommand{\N}{\mathbb{N}}
\newcommand{\R}{\mathbb{R}}
\newcommand{\Z}{\mathbb{Z}}
\newcommand{\C}{\mathbb{C}}
\newcommand{\TT}{\mathbb{T}}
\newcommand{\floor}[1]{\lfloor #1 \rfloor}
\newcommand{\mc}[1]{\mathcal{#1}}
\newcommand{\blank}[1]{}
\newcommand{\sss}{\scriptscriptstyle}
\newcommand{\vep}{\varepsilon}
\newcommand{\cweak}{\overset{w}{\longrightarrow}}
\newcommand{\cprobs}{\overset{\P^{s,K}}{\longrightarrow}}
\newcommand{\cfdd}{\overset{f.d.d.}{\longrightarrow}}
\newcommand{\D}{\mc{D}}
\newcommand{\oD}{\overline{\mc{D}}}
\newcommand{\oC}{\overline{\mc{C}}}
\newcommand{\M}{\mc{M}}
\newcommand{\K}{\mc{K}}
\newcommand{\T}{\mc{T}}
\newcommand{\F}{\mc{F}}
\renewcommand{\d}{\mathrm d}
\newcommand{\ara}{\overset{\bs{a}}{\ra}}
\newcommand{\aran}{\overset{\bs{a,n}}{\ra}}
\newcommand{\sn}{\sss(n)}
\newcommand{\tempend}{\end{document}}
\newcommand{\bs}[1]{{\boldsymbol #1}}
\newcommand{\aij}{\langle i,j \rangle}
\newcommand{\apij}{\langle i',j' \rangle}
\newcommand{\lb}{\langle}
\newcommand{\rb}{\rangle}
\newcommand{\supp}{\textrm{supp}}
\newcommand{\mf}[1]{\mathfrak{#1}}
\newcommand{\MF}{\mc{M}_F}
\newcommand{\FL}{\mathfrak{L}}
\newcommand{\what}{\vartheta}
\begin{document}

\author{
Manuel Cabezas\footnote{Pontificia Universidad Cat\'olica di Chile. 
E-mail {\tt cabezas.mn@gmail.com}}
,
Alexander Fribergh\footnote{Universit\'e de Montreal.
E-mail {\tt alexander.fribergh@umontreal.ca}}
,
Mark Holmes\footnote{School of Mathematics and Statistics,
The University of Melbourne.
 E-mail {\tt holmes.m@unimelb.edu.au}. Research supported in part by Future Fellowship FT160100166 from the Australian Research Council}
,
and Edwin Perkins\footnote{Department of Mathematics,
The University of British Columbia.
 E-mail {\tt perkins@math.ubc.ca}. Research supported in part by an NSERC Discovery grant.}.
}

\title{Random skeletons in high-dimensional lattice trees}

\maketitle

\begin{abstract}
We study the behaviour of the rescaled minimal subtree containing the origin and $K$ random vertices selected from a random critical (sufficiently spread-out, and in dimensions $d>8$) lattice tree conditioned to survive until time $ns$, in the limit as $n$ goes to infinity.  We prove joint weak convergence of various quantities associated with these  subtrees under this sequence of conditional measures to their counterparts for historical Brownian motion.  We also show that when $K$ is sufficiently large the entire rescaled tree is close to this rescaled skeleton with high probability, uniformly in $n$.  These two results are the key conditions used in~\cite{B-ACF18} to prove that the simple random walk on sufficiently spread-out lattice trees (conditioned to survive for a long time) converges to Brownian motion on a super-Brownian motion (conditioned to survive). 

The main convergence result is established more generally for a sequence of historical processes converging to historical Brownian motion in the sense of finite dimensional distributions and satisfying a pair of technical conditions. The conditions are readily verified for the lattice trees mentioned above and also for critical branching random walk.  We expect that it will also apply with suitable changes to other lattice models in sufficiently high dimensions such as oriented percolation and the voter model. In addition some forms of the second skeleton density result are already established in this generality.  
\end{abstract}

\noindent {\bf Keywords: } Lattice trees, weak convergence, random subtree, historical Brownian motion,  super-Brownian motion.

\noindent {\bf MSC2020:} 82B41, 60F17, 60G57, 60K35.

\section{Introduction and main results}
\label{sec:intro}
Various critical lattice models have been shown to converge to super-Brownian motion, when the dimension is larger than a certain critical dimension for the model.  Examples include the voter model, oriented percolation, the contact process and lattice trees (discussed below)  \cite{CDP00,BCLG01,HS03b,HofSak10,H08,HHP17}.  Each of these models comes with a natural time parameter, and the convergence is phrased in terms of the random process  conditioned on survival until time $ns$ (where $s>0$) in the limit as $n \to \infty$.  The state of each of these processes at time $t$ is a measure on $\R^d$, and after rescaling space and time, the limit is a measure-on-$\R^d$-valued diffusion (conditioned to survive until time $s$) called super-Brownian motion (SBM).  In the case of oriented percolation and the contact process, the statement is weak convergence of the finite-dimensional distributions while for lattice trees and the voter model convergence on path space has been established.  Each of the above lattice models also comes with a notion of genealogy (e.g.~who infected who in the contact process), that is not encoded in the above measure-valued processes.

In recent years much progress has been made in the context of a model for branched polymers called \emph{lattice trees} \cite{DerSla98,H08,HHP17,H16}, in dimensions $d>8$.  Most relevant for the context of this paper is the  
result that historical lattice trees conditional on survival until time $ns$ converge weakly to historical Brownian motion (HBM) \cite{CFHP20} conditional on survival until time $s$.  The state of the historical process at time $t$ is a measure on genealogical paths of duration $t$, and thus the historical objects encode the genealogy of the model.  It is also known in all of the models mentioned above that, above a critical dimension, the conditional ranges (the random set of spatial points visited by the process) converge  weakly to the conditional range of SBM  \cite{HP19}.  As well as living in spatial dimensions $d>8$, the lattice trees considered here and in the above references are sufficiently spread-out in the sense that edges between vertices in the tree may be long (up to some distance $L$ that is fixed and large).  The random tree is selected according to a critical weighting scheme.  This description will be made more precise in the following sections.  

Random walk (RW) in random media has been studied intensely in recent decades, see for example \cite{Bar17,Kuma10}.  Among the kinds of results that have been frequently studied is the setting where one has a sequence of random graphs with a weak limit, often described as a metric measure space, and one is interested in the scaling limit of a random walk on such graphs.   In Ben-Arous et al \cite{B-ACF18} and~\cite{B-ACF19} (see also Croydon \cite{Croydon09}) sufficient conditions (on the underlying sequence of graphs) are given for weak convergence of RW on the graph to 
Brownian motion on some limiting fractal.  
 
We study properties of the minimal subtree in a random lattice tree $\mc{T}$ 
connecting the origin $o$ to $K$ uniformly chosen points 
in $\mc{T}$,  conditional on the tree $\mc{T}$ surviving until time $ns$. Our results show that for $d$ and $L$ as noted above, if time 
is scaled by $1/n$ and the spatial vertices are scaled by $1/\sqrt n$, then, as $n \to \infty$, the rescaled ancestral tree with the rescaled graph metric converges weakly (as a metric space) to the ancestral tree of $K$ uniformly chosen points from a SBM
 cluster (conditional on survival until time $s$). Here the ``ancestral tree" refers only to the $K$ leaves, root and internal branch points in the subtree.  We also show that under the same scaling and conditioning, uniformly in $n$, the original subtree is, with high probability, a good approximation of the whole tree if we take $K$ sufficiently large.  
These results establish Conditions $(G)^{s,+}_{1,\sigma_0,C_0}$ and $(S)$, respectively, in \cite{B-ACF18}, which are needed there to verify  that RW on lattice trees conditional on survival to $ns$ converges to Brownian motion on a SBM cluster conditional on survival, as defined in \cite{Croydon09}.
 
 The convergence of the historical processes to HBM
 in \cite{CFHP20}, described above,  plays a central role in the weak convergence result.   In fact we prove an abstract convergence result where the historical convergence is the main assumption.  To study the limit of the rescaled ancestral tree
 we will also augment the historical convergence to obtain convergence of the finer branching structure.  To illustrate the general methodology we  also use it to obtain a similar result for critical branching random walk (BRW) (in any dimension and for any $L\ge 1$), where the underlying 
Galton-Watson (GW) tree is conditioned to survive until time $ns$ and the $K$ individuals are chosen uniformly at random from the entire GW tree.

 To verify that the subtree 
 is a good approximation of $\mc{T}$ when $K$ is large, we will also use the range convergence results \cite{HP19}, including the so-called uniform modulus of continuity therein.  Some of these results apply to the general setting of \cite{HP19} and we think that our general approach may be applicable to this setting and so include other models such as the voter model, oriented percolation, and the contact process.

For a Polish space $E$, let $\mc{D}_t(E)$ be the  space of cadlag $E$-valued paths on $[0,t]$ with the Skorokhod topology, and $\mc{D}_\infty(E)$ be  the same but with $\R_+$ in place of $[0,t]$. 
When $E=\R^d$ we will write $\mc{D}$ for $\mc{D}_\infty(\R^d)$ and for $n\in\N$ define rescaling maps $\rho_n:\mc{D}\to\mc{D}$ by $\rho_n(w)_s=\frac{w_{ns}}{\sqrt n}$.   We also write $\rho_n(w)=(\rho_n(w_1),\dots,\rho_n(w_K))$ for $w=(w_1,\dots,w_K)\in\mc{D}^K$. 
Let $\mc{M}_F(E)$ denote the set of finite measures on the Borel sets of $E$ equipped with the topology of weak convergence.

\subsection{Branching random walk}
\label{sec:BRW}
Let $Y$ be a $\Z_+$-valued random variable with mean 1 and variance $\gamma\in (0,\infty)$, such that 
\begin{equation}\label{Ymom}\E[Y^p]<\infty \text{ for each }p>0.
\end{equation}
 If $n\in\N$, we set $[n]=\{1,\dots,n\}$. Write $I=\{0\}\cup \{0\alpha_1\cdots \alpha_n:n \in \N, \alpha_j \in \N \ \forall j \in [n]\}$ for the countable index set of potential individuals of a 
 GW tree.  Let $I_m=I\cap(\{0\}\times\N^m)\ (I_0=\{0\})$ denote the set of such potential individuals in generation $m\in\Z_+$, set $I_t=I_{\lfloor t\rfloor}$ for $t\ge0$, and let $|\alpha|=m$ iff $\alpha\in I_m$.  Let $(Y_\alpha)_{\alpha \in I}$ be independent random variables with the same distribution as $Y$.  We define a random tree $\mc{T}$ recursively as follows:  $0$ is a vertex in $\mc{T}$, and  
\[0\alpha_1\cdots\alpha_m i\in \mc{T}, \quad \text{ if and only if }
\alpha=0\alpha_1\cdots\alpha_m\in \mc{T} \quad \text{ and }i\in [Y_\alpha].\]
We then construct the GW tree by inserting an edge between each such vertex $0\alpha_1\cdots\alpha_m i\in \mc{T}$ and its \emph{parent} $0\alpha_1\cdots\alpha_m$.  The individuals in $\mc{T}$ of generation $m$, i.e.  those that are tree distance $m$ from the root 0 in this random tree, are denoted $\mc{T}_m$.  For $t\ge 0$ define $\mc{T}_t=\mc{T}_{\floor{t}}$. The ancestors of $\alpha=0\alpha_1\cdots\alpha_m$ are the individuals/vertices $0\alpha_1\cdots\alpha_j$, $j<m$ along the unique path in this tree from $\alpha$ to the root $0$. 

Fix $d\ge 1$, $L\in\N$ and let $D(\cdot)$ be the uniform distribution on a finite box $\Z^d\cap[-L,L]^d\setminus o$, where $o=(0,\dots,0)\in \Z^d$.  Let $(\Delta_{\alpha})_{\alpha \in I}$ be independent random variables (also independent of $(Y_j)_{j \in I}$) with distribution given by $D$ under a probability measure $\P$. We set $\varphi(0)=o\in \Z^d$ and associate to each $\alpha=0\alpha_1\cdots\alpha_m\in I\setminus \{0\}$ the spatial location  
\begin{equation}\label{Deltasum}\varphi(\alpha)=\sum_{j=1}^m \Delta_{0\alpha_1\cdots \alpha_j}.
\end{equation}
In other words, we sum up i.i.d.~increments with law $D$ associated to $\alpha$ and each of its ancestors (excluding the root).   Associated to each $\alpha=0\alpha_1\cdots\alpha_m\in I_m$ is the path $w(m,\alpha)$ defined for $\ell\in \{0,1,\dots, m\}$ by 
\begin{equation}
\label{wpath_BRW}
w_\ell(m,\alpha)=\varphi(0\alpha_1\cdots\alpha_\ell).
\end{equation}
For  $u\ge 0$, 
set $w_u(m,\alpha)=w_{\floor{u}\wedge m}(m,\alpha)$.  Note that the second argument of $w$ is the label of the individual and so multiple occupancy is allowed unlike the lattice trees discussed below.

For $t\ge 0$, $n\in\N$ and $\alpha\in I_{nt}$ define the rescaled history of $\alpha$, $w^{\sss(n)}_\cdot(t,\alpha)=w_{\cdot\wedge t}^{\sn}(t,\alpha)\in\mc{D}$, by 
\begin{align}
w^{\sss(n)}_s(t,\alpha)=\frac{w_{ns}(\floor{nt},\alpha)}{\sqrt{n}}=\rho_n(w(\floor{nt},\alpha))_s,
 \quad \text{ for }s\ge 0.\label{rescaledpathbrw}
\end{align}
In particular, if $\alpha\in \mc{T}_{nt}$, then $w^{\sn}_t(t,\alpha)=\varphi(\alpha)/\sqrt{n}$ is the rescaled spatial location of this particle in the $(nt)^{th}$  generation of the 
BRW. 
The rescaled measure-valued and historical processes associated with the BRW 
are defined for $n \in \N$ by 
\begin{align}
X^{\sss(n)}_t&=\frac{1}{\gamma n}\sum_{\alpha\in \mc{T}_{nt}}
\delta_{{w^{\sss(n)}_t(t, \alpha)}}\in \mc{M}_F(\R^d)\\
H^{\sss(n)}_t&=\frac{1}{\gamma n}\sum_{\alpha \in \mc{T}_{nt}}
\delta_{{w^{\sss(n)}(t,\alpha)}}\in \mc{M}_F(\mc{D}).
\end{align}
(Adding $\gamma^{-1}$ to the renormalization allows us to take branching rate one in the limiting HBM 
 described below.)
Note that $X^{\sss(n)}_{t}$ assigns mass to ``locations of particles" in $\R^d$ (but does not encode the genealogy) whereas $H^{\sss(n)}_{t}$ assigns mass to genealogical paths leading to those particles.   
If $\mu\circ f^{-1}$ denotes the pushforward of a measure $\mu$ by a measurable map $f$,     
it follows from \eqref{rescaledpathbrw} that
\begin{equation}\label{rescaleHBRW}
H^{\sss(n)}_t=\frac{1}{n}H^{\sss(1)}_{\floor{nt}}\circ\rho_n^{-1}\text{ for all }t\ge 0\text{ and }n\in\N.
\end{equation}

\subsection{Lattice trees}
\label{sec:LT}
A lattice tree is a finite connected set of lattice bonds containing no cycles.  We will be considering lattice trees on $\Z^d$ (with $d>8$) consisting of bonds of $\ell_\infty$ length at most $L$, for some $L\gg 1$.  To be more precise, let $d>8$ and let $D(\cdot)$ be as above.  For a lattice tree $T\ni o\in \Z^d$, and for $m \in \Z_+$,  let $T_{m}$ denote the set of vertices in $T$ of tree distance $m$ from $o$.  In particular, $T_0=\{o\}$ (the root of $T$), and for any $x\in T_{m}$ there is a unique path $w(m,x,T):=(w_j(m,x;T): 0\le j\le m)$, from $o$ to $x$ in the tree, of length (number of bonds) $m$.  For $t\ge 0$ 
define $T_t=T_{\floor{t}}$.

We now describe how to choose a \emph{random}  tree $\mc{T}(\omega)$, which is defined on some underlying space $(\Omega,\mc{F})$.  For a lattice tree $T\ni o$ and $z>0$ define $W_{z,D}(T)=z^{|T|}\prod_{e\in T}D(e)$, where the product is over the edges in $T$ and $|T|$ is the number of edges in $T$.    
There exists a critical value $z_{\sss D}$ such that $\rho=\sum_{T\ni o}W_{z_{\sss D},D}(T)<\infty$ and $\E[|\mc{T}|]=\infty$, where $\P(\mc{T}=T)=\rho^{-1}W_{z_{\sss D},D}(T)$ for $T\ni o$ (see e.g.~\cite{H08,HH13}).  Hereafter we write $W(\cdot)$ for the critical weighting $W_{z_{\sss D},D}(\cdot)$ and suppose that we are selecting a random tree $\mc{T}=\mc{T}(\omega)\ni o$ according to this critical weighting.   

Therefore, for $m \in \Z_+$ and any $x\in \mc{T}_{m}$, the unique path in $\mc{T}$ from $o$ to $x$ is $w(m,x)=w(m,x;\mc{T})$, and in particular $w_0(m,x)=o$ and $w_m(m,x)=x$.  For $u\ge 0$,  
set $w_u(m,x)=w_{\floor{u}\wedge m}(m,x)$, and for $t\ge 0$ define $w^{\sss(n)}_\cdot(t,x)=w_{\cdot\wedge t}^{\sn}(t,x)\in\mc{D}$ by 
\begin{align}
w^{\sss(n)}_s(t,x)=\frac{w_{ns}(\floor{nt},x)}{\sqrt{n}}=\rho_n(w(\floor{nt},x))_s,
 \quad \text{ for }s\ge 0.\label{rescaledpath}
\end{align}

There are constants $C_A,C_V>0$ (depending on $d,D$) such that \cite{H08,HH13} 
\begin{equation}
\E[|\mc{T}_n|]\to C_A \label{pop_size_converge},
\end{equation}
 and 
 \begin{equation}\label{survasym}
 n \P(\mc{T}_n\ne \varnothing)\to 2/(C_AC_V) \text{ as }n \to \infty. 
 \end{equation}  
 Let $C_0=C_A^2C_V$ and 
\begin{align}
X^{\sss(n)}_t=&\frac{1}{C_0n}\sum_{x\in \mc{T}_{nt}}\delta_{x/\sqrt n}=\frac{1}{C_0n}\sum_{x\in \mc{T}_{nt}}\delta_{{w_t^{\sss(n)}(t,x)}}\in \mc{M}_F(\R^d)\label{def_x}\\ \label{def_h}
H^{\sss(n)}_t=&\frac{1}{C_0n}\sum_{x\in \mc{T}_{nt}}\delta_{{w^{\sss(n)}(t,x)}}\in \mc{M}_F(\mc{D}),
\end{align}
respectively, denote the (rescaled) standard and historical ``processes''  associated with the random lattice tree $\mc{T}$.   We may again use \eqref{rescaledpath}  in \eqref{def_h} and conclude that
\begin{equation}\label{rescaleHLT}
H^{\sss(n)}_t=\frac{1}{n}H^{\sss(1)}_{\floor{nt}}\circ\rho_n^{-1} \text{ for all }t\ge 0\text{ and }n\in\N.
\end{equation}

\subsection{Scaling limits}\label{sec:scalelim}

For $\phi:E \ra \C$ and $Y_t\in \mc{M}_F(E)$ write $Y_t(\phi)=\int \phi \d Y_t$.  In either of the above settings we have 
$H^{\sss(n)}_{t}(1)\equiv X^{\sss(n)}_{t}(1)$, and we 
define the survival/extinction time as 
\begin{equation}\label{def_S}
S^{\sss(n)}:=\inf\{t>0:X^{\sss(n)}_t(1)=0\}=\inf\{t>0:H^{\sss(n)}_t(1)=0\}.
\end{equation}
Then for both GW and lattice trees (the latter for $d>8$ and $L$ large enough) we have that there exists $C_1>0$ such that 
\begin{equation}
n\P(\mc{T}_{\floor{nt}}\ne \varnothing)=n\P(H^{\sss(n)}_{t}(1)>0)=n\P(S^{\sss(n)}>t)\ra \frac{2}{C_1t}, \qquad \text{ as }n\ra \infty.\label{surv1}
\end{equation}
For GW $C_1=\gamma$ (this is Kolmogorov's classical result).  For lattice trees $C_1=C_AC_V$ by  \eqref{survasym}.

Let $B$ denote a $d$-dimensional Brownian motion, with covariance matrix $\sigma_0^2$   times the identity.  Then the path-valued process $(B\vert_{[0,t]})_{t\ge 0}$ is a (time-inhomogeneous) Markov process in $t$. 
According to \cite[pages 34, 64]{LeGall}, for any $\sigma_0^2>0$, 
there exists a sigma-finite measure $\N_{\sss H}=\N_{\sss H}^{\sigma_0^2}$ on $\mc{D}_{\infty}(\mc{M}_F(\mc{D}))$ with $\N_{\sss H}(H_t(1)>0)=\frac{2}{t}$  such that $\N_{\sss H}$ is the canonical measure associated to the $(B\vert_{[0,t]})_{t\ge 0}$-superprocess, $(H_t)_{t\ge 0}$, with branching rate $1$. In fact the random measures $H_t,t\ge 0$ are all supported on the space, $\mc{C}$, of continuous $\R^d$-valued paths $\N_{\sss H}$-a.e. and $H_\cdot$ 	is a continuous $\M_F(\mc{C})$-valued process $\N_{\sss H}$-a.e. (see Sections II.7, II.8 and V.2 of \cite{Per02}.) ($\mc{C}$ is given the topology of uniform convergence on compacts.) We call $H$ historical Brownian motion or the historical process associated with $B$.  Let $S$ denote the extinction time of $H$, that is, 
\begin{equation}
\label{def_S_2}
S=\inf\{t>0:H_t(1)=0\}.
\end{equation}
  Then by the above and the fact that the historical process sticks at $0$ once it hits 0, 
  \begin{equation}\label{SBMsurv}\N_{\sss H}(S>t)=2/t.
  \end{equation}
  
Let $\N_{\sss H}^s(\cdot)=\N_{\sss H}^{s,\sigma_0^2}(\cdot)=\N_{\sss H}^{\sigma_0^2}(\cdot\,|S>s)$ and  $\P^s_n(\cdot)=\P(\cdot|S^{\sn}>s)$. It is proved in \cite{CFHP20} that for lattice trees in dimensions $d>8$ (with $L$ sufficiently large) and  for all $s>0$ 
\begin{align}
\P_n^s(H^{\sn}\in\cdot)\cweak \N_{\sss H}^{s,\sigma_0^2}(H\in\cdot)
\label{weak_historical}
\end{align}
as probability measures on $\mc{D}_\infty(\mc{M}_F(\mc{D}))$, where  $\sigma_0^2=\sigma_0^2(d,D)$ can be expressed in terms of an explicit infinite series (see e.g.~\cite[(3.49)]{H08}, with our $\sigma_0^2$ being equal to $v\sigma^2/d$ in the notation of that paper). The corresponding historical convergence for BRW (with no restrictions on $d$ or $L$) is much easier and can be proved as in the proof of Theorem~II.7.3 and Remark~II.7.4  in \cite{Per02} which includes the historical setting by Section II.8 of \cite{Per02}. In this case the limiting variance parameter $\sigma_0^2=\sigma_0^2(d,D)$ is just the common variance of each coordinate under $D$. Remark~II.7.4 of \cite{Per02} actually includes the case where $D$ is replaced by a normal law with covariance $\sigma_0^2 I$ by taking independent Brownian motions as the spatial motions considered there.  The martingale problem arguments underlying the proof readily adapt to the setting where the iid displacements have finite $4$th moments and hence include our $D$.

\subsection{Choosing individuals at random}
\label{sec:enlargement}
We continue to work in the lattice tree or branching random walk setting on our $(\Omega,\F,\P)$.  
Our main results require us to enlarge our underlying space $(\Omega,\mc{F},\P)$ to include an i.i.d.~sequence of random quantities with some law depending on $\omega\in \Omega$.  For both lattice trees and BRW we choose $K$  individuals uniformly at random from the random tree $\mc{T}$.

Let $\nu$ be a probability on some measurable space $(\Omega_0,\mc{F}_0)$, and let $M:\Omega_0 \to \mc{M}_F(G)$ denote a $\mc{F}_0$-measurable random measure on a measurable space $(G,\mc{G})$ such that $M\neq 0$ a.s.  We can enlarge $\Omega_0$ so that  we also have a sequence $(Z_i)_{i \in \N}$ of random elements of $G$ with the property that $\nu$-a.s., given $\omega\in\Omega_0$, $(Z_i)_{i \in \N}$ are i.i.d.~with law $\mc{L}(\omega):=M(\omega)/[M(\omega)(G)]$ (define $\mc{L}(\omega)$ to be a convenient law on $G$ on the null set where $M(\omega)$ is zero).  This can be achieved as follows.  For each $\omega \in \Omega_0$, let $P_\omega$ be a probability measure on the product space $(G^{\N},\mc{G}^{\N})$ under which the coordinate variables $(Z_i)_{i \in \N}$ are i.i.d.~with law $\mc{L}(\omega)$.  Measurability of $P_\omega(B)$ in $\omega$ for $B\in \mc{G}^{\N}$ is elementary and so we may define a probability   $\hat{\nu}(M)$ on $(\hat{\Omega}_0,\hat{\mc{F}}_0)=(\Omega_0 \times G^{\N},\mc{F}_0\times \mc{G}^{\N})$ by 
\begin{equation}
\label{nuhat}
\hat{\nu}(M)(A\times B)=\int_A P_\omega(B)\d\nu(\omega).
\end{equation}
Then $P_\omega$ is a regular conditional distribution for $(Z_i)_{i\in\N}$ given $\Pi(\hat{\omega})=\omega$, where $\Pi:\hat\Omega_0\to\Omega_0$ is the projection map. Often we write $\hat\nu$ for $\hat\nu(M)$ if the choice of $M$ is clear from the context.

For $n\in\N$ and $u\ge 0$ we introduce
\begin{equation}\label{brackdef}
[u]_n=\lfloor nu\rfloor/n,
\end{equation}
and if $w\in\D$, we let
\begin{equation}\label{Ldef}
\FL(w)=\inf\{u\ge 0: w\text{ is constant on }[u,\infty)\}\in[0,\infty]\quad(\inf\emptyset=\infty)
\end{equation}
be the ``lifetime" of $w$.  

Note that for lattice trees if $x\in\T_m$, then it follows from $D(o)=0$ that
\begin{equation}\label{LTdist}
\FL(w(m,x))=m\ \ \text{ is the tree distance from $x$ to the root at $o$}.
\end{equation}
Similarly (with $\FL(w(m,\alpha))=m$ for $\alpha\in \mc{T}_m$) in the case of BRW.
In each case it follows by scaling 
that if $x\in\T_{nt}$ 
then
\begin{equation}\label{LTndist}\FL(w^{\sn}(t,x))=\FL(w(\lfloor nt\rfloor, x))/n=[t]_n\ \ \text{is the rescaled graph ``distance'' from $x$ to the root}.
\end{equation}

Let $\overline{\mc{D}}=\R_+\times\mc{D}$ and define random a.s. finite measures on $\overline{\mc{D}}$ by 
\[I^{\sn}(A\times B)=\int_0^\infty\int \1_A(u)\1_B(w)H_u^{\sn}(\d w)\d u,\]
and
\[I(A\times B)=\int_0^\infty\int\1_A(u)\1_B(w)H_u(\d w)\d u,\]
Let $0_M$ denote the zero measure and define the measure-normalising function $F_1:\mc{M}_F(\oD) \to \mc{M}_F(\oD)$ by 
\begin{align}
F_1(\mu)=\begin{cases}
0_M, & \text{ if }\mu=0_M,\\
\dfrac{\mu}{\mu(\oD)}, & \text{ otherwise}.
\end{cases}\label{FK}
\end{align}
We set $J^{\sn}=F_1(I^{\sn})$, which is a probability on $\oD$ since $o\in\mc{T}$, and  $J=F_1(I)$,  which is a.s. a probability on $\oD$ because the excursion measure $\N_{\sss H}$ assigns no weight to the zero path. The fact that $H_u$ is supported on paths in $\mc{C}$ starting at $0$ and constant on $[u,\infty)$ $\N_{\sss H}$-a.e. (for the latter properties use (II.8.6)(a) of \cite{Per02} to see the mean measure of $H_u$ under $\N_{\sss H}$ is a Brownian motion starting at $0$ and stopped at $u$) implies
\begin{equation}\label{Jsupp}
\text{$J$ is supported on $\oC_0=\{(u,w)\in\R_+\times \mc{C}:w_0=0\text{ and $w$ is constant on }[u,\infty)$\}.}  
\end{equation}
Note that for either BRW or lattice trees, given $\mc{T}$,
\begin{equation}\label{Jnunif}J^{\sn}(A\times B)=\frac{1}{|\mc{T}|}\sum_{t \in \Z_+/n}\sum_{x \in \mc{T}_{nt}}\1_A(t)\1_B(w^{\sn}(t,x)).\end{equation}
Therefore $J^{\sn}(\R_+\times \cdot)$ is the law of the rescaled  path associated to a uniformly chosen point in $\mc{T}$. 
Enlarge the probability space $(\Omega, \mc{F},\P)$ as above so that given $\mc{T}$, $(\mathfrak{T}^{\sn}_i,W^{\sn}_i)_{i \in \N}$ are i.i.d.~with law $J^{\sn}$ under $\hat{\P}(J^{\sn})$, \blank{\todo{do we want to already condition on survival here?}  }where $W^{\sn}_{i}=(W^{\sn}_{i,t})_{t\ge 0}$. Let $V^{\sn}_i=W^{\sn}_{i,\FL(W^{\sn}_i)}\in\R^d$. 

The same construction and results hold with $\P_n^s$ in place of $\P$ leading to probabilities
 $\{\hat{\P}^s_n(J^{\sn}):s>0,n\in\N\}$.

\begin{REM}
\label{rem:alphahat}
We note here that for BRW, $(\mathfrak{T}^{\sn}_i,W^{\sn}_i)_{i \in \N}$ are the paths associated to uniformly chosen points $(\hat{\alpha}_i)_{i \in \N}$ in $\mc{T}$.  That is, given $\mc{T}$ we first choose the $(\hat{\alpha}_i)_{i \in \N}$ as independent uniform points from $\mc{T}$,  then define 
\begin{equation}\label{WBRW}W^{\sn}_{i,s}=w_{ns}(|\hat\alpha_i|,\hat\alpha_i)/\sqrt{n},\ i \in \N
\end{equation} 
(as in \eqref{rescaledpathbrw} with $\alpha=\hat{\alpha}_i$  and $nt=|\hat{\alpha}_i|$), and set $\mathfrak{T}^{\sn}_i=|\hat{\alpha}_i|/n$ (recall \eqref{LTndist}). \Enddef
\end{REM}
We enlarge the probability space $(\Omega_{\sss H},\mc{F}_{\sss H}, \N^s_{\sss H})$ for our HBM as well so that under a probability $\hat{\N}^s_{\sss H}(J)$, 
\begin{equation}\label{WinCK}
\text{given $(I,J,S)$, $(\mathfrak{T}_i,W_i)_{i\in\N}$ are i.i.d. in $\oC_0$ (recall \eqref{Jsupp}) with law $J$.} 
\end{equation}
(We usually drop the $J$ and write $\hat{\N}^s_{\sss H}$.)  Then elementary properties of HBM (see Lemma~\ref{lem:distlife} below) imply $\mathfrak{T}_i=\FL(W_i)$, $\hat{\N}^s_{\sss H}$-almost surely.     The collection of the continuous paths $(W_i)_{i \le K}$, truncated at the times $(\mathfrak{T}_i)_{i \le K}$ respectively, can be viewed as a  skeleton to $K$ points chosen i.i.d.~``uniformly'' from the SBM cluster. 

\subsection{Shapes and Graph Spatial Trees}
\label{sec:GST}
We largely follow Section~7 of Croydon \cite{Croydon09} to introduce the space of graph spatial trees, although due to our more specialized application, we will be able to work in the simpler setting of graph spatial trees with {\it non-degenerate}  shapes. After introducing the abstract definitions we specialize to the setting we will use: the graph spatial tree of $K$ continuous paths.  We begin with a definition from the lace expansion literature (e.g. see Definition 2.5 in \cite{CFHP20}).
\begin{DEF} \label{ndshape}Let $K\in\N$ and $T$ be a finite rooted tree graph with $K+1$ labelled vertices, $0,1,\dots,K$, where $0$ is 
the root, and all of them are leaves (hence have degree one). We assume the the remaining vertices are all binary branching vertices (hence have degree $3$), that is,  they each have one parent (closer to the root) and two children.  We call $T$ a {\it non-degenerate shape}. We let $\Sigma_K$ denote the set of such non-degenerate shapes where isomorphic shapes (isomorphisms must also preserve the labelings) are identified. \Enddef
\end{DEF}

\begin{REM}  
A simple induction on $K$ shows that a shape $T$ in $\Sigma_K$  has $K+1$ leaves, $K-1$ vertices of degree $3$, and $2K-1$ edges in $E(T)$, the edge set of $T$. 
There is one shape with $K=2$, three shapes with $K=3$, and in general (for $K\ge 2$) $\prod_{j=2}^K(2j-3)$ shapes in $\Sigma_K$. If $v_1,v_2\in T$ (formally we mean they are vertices of $T$), let $[[v_1,v_2]]$ denote the unique path of non-overlapping edges from $v_1$ to $v_2$. \Enddef
\end{REM}
\begin{DEF}\label{metricdef} If $T\in\Sigma_K$ and $\ell$ is a strictly positive function on $E(T)$, we call $\ell(e)$ the length of edge $e$ and define a metric $d=d_\ell$ on $T$ by $d(v_1,v_2)=\sum_{e\in[[v_1,v_2]]}\ell(e)$.  Embed $(T,d)$ in a metric space $(\overline T,d)$ by adding line segments along the edges of $T$, and linearly extend $d$ to a metric on $\overline T$, so that the line segment along  edge $e$ is isometric to $[0,\ell(e)]$. If $\phi:\overline T\to \R^d$ is continuous and maps the root to the origin, we call $(T,d,\phi)$ a graph spatial tree (gst).    
The space of all gst's together with a distinguished additional point  $\degK$ is denoted by $\TT_{gst}$. \Enddef
\end{DEF}

\begin{figure}
\begin{tikzpicture}[nn/.style={circle,scale=0.2,fill=black}]
\node[circle,scale=0.2,fill=black,label=left:{$0$}] (O) at (0,0) {};
\node[nn,] (O1) at (1,0) {};
\node[nn,] (O11) at (2,1) {};
\node[nn,] (O12) at (2,-1) {};
\node[nn,] (O111) at (3,2) {};
\node[nn,label=right:{$2$}] (O112) at (3,1) {};
\node[nn,] (O121) at (3,0) {};
\node[nn,] (O122) at (3,-2) {};
\node[nn,label=right:{$1$}] (O1111) at (4,2.5) {};
\node[nn,label=right:{$4$}] (O1112) at (4,1.5) {};

\node[nn,label=right:{$3$}] (O1211) at (4,0.5) {};
\node[nn,label=right:{$5$}] (O1212) at (4,-0.5) {};

\node[nn,label=right:{$6$}] (O1221) at (4,-1.5) {};
\node[nn,label=right:{$7$}] (O1222) at (4,-2.5) {};
\draw (O)--(O1)--(O11)--(O111)--(O1111);
\draw (O111)--(O1112);
\draw (O11)--(O112);
\draw (O1)--(O12)--(O121)--(O1211);
\draw (O121)--(O1212);
\draw (O12)--(O122)--(O1221);
\draw (O122)--(O1222);
\node at (1.4, 0.7) {\tiny $e_{9}$};
\node at (0.4, 0.2) {\tiny $e_{8}$};
\node at (2.4, -0.3) {\tiny $e_{12}$};
\node at (2.4, -1.7) {\tiny $e_{13}$};
\node at (2.4, 1.7) {\tiny $e_{10}$};
\node at (3.4, 2.4) {\tiny $e_{1}$};
\node at (2.4, 0.8) {\tiny $e_{2}$};
\node at (3.4, -2.4) {\tiny $e_{7}$};
\node at (1.4, -0.7) {\tiny $e_{11}$};
\end{tikzpicture}
\hspace{2cm}
\begin{tikzpicture}
[nn/.style={circle,scale=0.2,fill=black}]
\node[circle,scale=0.2,fill=black,label=left:{$0$}] (O) at (0,0) {};
\node[nn,] (O1) at (1,0) {};
\node[nn,] (O11) at (2,1) {};
\node[nn,] (O12) at (2,-1) {};
\node[nn,] (O111) at (3,2) {};
\node[nn,label=right:{$2$}] (O112) at (3,1) {};
\node[nn,] (O121) at (3,0) {};
\node[nn,] (O122) at (3,-2) {};
\node[nn,label=right:{$1$}] (O1111) at (4,2.5) {};
\node[nn,label=right:{$4$}] (O1112) at (4,1.5) {};

\node[nn,label=right:{$3$}] (O1211) at (4,0.5) {};
\node[nn,label=right:{$5$}] (O1212) at (4,-0.5) {};

\node[nn,label=right:{$6$}] (O1221) at (4,-1.5) {};
\node[nn,label=right:{$7$}] (O1222) at (4,-2.5) {};
\draw (O)--(O1)--(O11)--(O111)--(O1111);
\draw (O111)--(O1112);
\draw (O11)--(O112);
\draw (O1)--(O12)--(O121)--(O1211);
\draw (O121)--(O1212);
\draw (O12)--(O122)--(O1221);
\draw (O122)--(O1222);
\draw[ultra thick] (1.5,0.5)--(1,0)--(2,-1)--(2.5,-0.5);
\node at (1.8,0.2) {\tiny $\ell(e_{9})/2$};
\node at (1.1,-0.6) {\tiny $\ell(e_{11})$};
\node at (2.9,-0.7) {\tiny $\ell(e_{12})/2$};
\end{tikzpicture}
\caption{A non-degenerate shape $T\in\Sigma_7$ also showing some of the corresponding edge labels.  If $\ell:E(T)\to (0,\infty)$ is given then the corresponding metric space $(\bar{T},d)$ has e.g.~the distance between the midpoints of $e_{9}$ and $e_{12}$ equal to $\ell(e_{9})/2+\ell(e_{11})+\ell(e_{12})/2$}
\label{fig:7shape}
\end{figure}
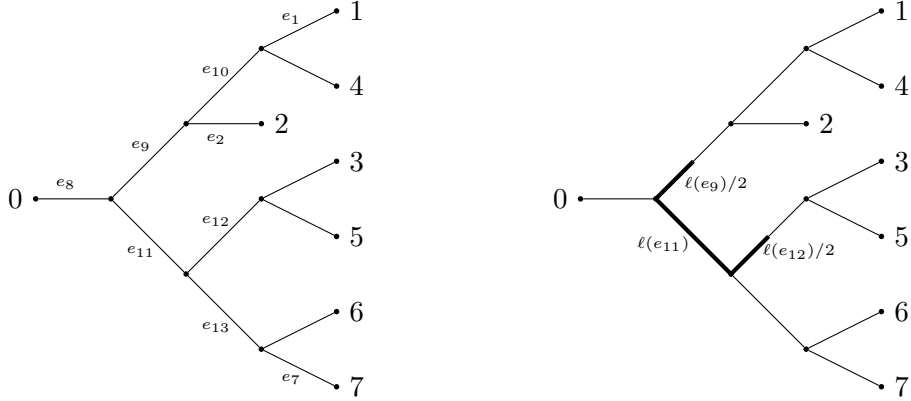

\begin{REM} The definition of $(\overline T,d)$ applies to any rooted tree $T$ equipped with a metric $d$.\Enddef
\end{REM}

\begin{REM}\label{rem:labels} (a) In practice the point $\degK$  
will represent gst's, defined as above, but  arising from a shape $T$ which is degenerate, that is, a rooted tree which does not satisfy the degree assumptions in Definition~\ref{ndshape}.

(b) In \cite{Croydon09} it is assumed that the edges of $T\in\Sigma_K$ are labelled as $e_1,\dots,e_{2K-1}$.  In our non-degenerate setting this can be easily done using the labeling of the leaves. We label the $K-1$ remaining branching vertices (those of degree $3$) as $K+1,\dots,2K-1$ in the order you encounter them when moving from the root to vertex $1$, then continue the labeling as you encounter {\it new} branching  vertices when you move from the root to vertex $2$, and so on up to vertex $K$. For {\it any} labeling of the vertices, if $v$ is a non-root vertex we let $e_v$ be the unique edge between $v$ and its parent.  See e.g. Figure \ref{fig:7shape}. In particular in the above labelling we have labelled the edges as $e_1,\dots,e_{2K-1}$. Note that any graph isomorphism preserving the labeling of the $K+1$ leaves, will preserve this labeling of all vertices and edges.    
Hence we {\it usually} will just identify $T\in\Sigma_K$  
 with the tree with vertices $\{0,\dots, 2K-1\}$ and edges $e_1,\dots, e_{2K-1}$, thus (as in \cite{Croydon09}) avoiding explicit identification of isomorphic trees. However the above edge-labelling convention can be used for any labelling of the leaves. 

If $T$ is a shape and $i\wedge j$ is the greatest common ancestor of $i$ and $j$ in $T$, then every branching vertex $v$ in $T$ can be written (non-uniquely) as $i\wedge j$ for $i\neq j\in [K]\cup\{0\}$ (choose $i,j$ which are descendants of the two children of $v$). In practice we will often use this labelling of branching vertices.\Enddef
\end{REM}

Let $(T,d,\phi),(T',d',\phi')$ be (non-$\degK$) gst's, and let $\ell$ and $\ell'$ be the edge-length functions corresponding to $d$ and $d'$, respectively. If $T=T'$ define 
\[d_1\big((T,d),(T',d')\big)=\sup_{1\le i\le 2K-1}|\ell(e_i)-\ell'(e_i)|,\]and if $T\neq T'$  
the above distance is defined to be $\infty$.  
If $T=T'$  we have a homeomorphism $\Upsilon_{d,d'}:(\overline T,d)\to(\overline{T'},d')$ which maps a point $x$, which is $d$-distance $\alpha\in[0,\ell(e)]$ along the edge $e$ (measured from the endpoint closest to the root), to the point $x'$ which is $d'$-distance $\alpha\ell'(e)/\ell(e)$ along the edge $e$. Let $\Vert\cdot\Vert$ be the Euclidean norm on $\R^d$ and define
\[d_2\big((T,d,\phi),(T',d',\phi')\big)=\sup_{x\in\overline T}\Vert\phi(x)-\phi'(\Upsilon_{d,d'}(x))\Vert.\]
Set the above distance to be $\infty$ if $T\neq T'$.  
 We define a metric on $\TT_{gst}$ by 
 \[D\big((T,d,\phi),(T',d',\phi')\big)=\big(d_1\big((T,d),(T',d')\big)+d_2\big((T,d,\phi),(T',d',\phi')\big)\big)\wedge 1,\]
 $D((T,d,\phi),\degK)=1$, and $D(\degK,\degK)=0$.  
If $(T,d)=(T',d')$ then $\Upsilon_{d,d'}$ is the identity, and so $D((T,d,\phi),(T',d',\phi'))=0$ implies 
$(T,d,\phi)=(T',d',\phi')$. In fact, 
 $(\TT_{gst},D)$ is a separable metric space (see Section~7 of \cite{Croydon09} and the references cited there).

\medskip

We next introduce the non-degenerate shape and gst associated to a collection of $K$ paths, each starting at the origin. If $w,w'\in\D$, let 
\begin{equation}\label{bartaudefn}\bar\tau(w,w')=\inf\{t:w_{t}\neq w'_{t}\}\in[0,\infty] \ (\inf\emptyset=\infty),
\end{equation}
 and define the branch time of $w$ and $w'$ by
\begin{equation}
\tau(w,w')=\bar\tau(w,w')\wedge \FL(w)\wedge \FL(w'). \label{tau_def}
\end{equation}
For $K\in\N$ and $w=(w_1,\dots,w_K)\in \mc{D}^K$, let $\bs{\tau}(w)=(\tau_{i,j}(w))$ be the $K\times K$ branching matrix given by
\[\tau_{i,j}(w)=\tau(w_i,w_j).\]
Let $C_K$ be the set of $w=(w_1,\dots,w_K)\in \mc{C}^K$ such that $w_i(0)=o$ and $\FL(w_i)<\infty$ for all $i\le K$. The following is clear for any $\bs{\tau}\in R_K=\{\bs{\tau}(w):w\in C_K\}$:
\begin{equation}\label{tauprops}
0\le \tau_{i,j}=\tau_{j,i}\le \tau_{i,i}\wedge \tau_{j,j}<\infty\text{ and }\tau_{i,j}\wedge\tau_{j,k}\le\tau_{i,k}\text{ for any }i,j,k\in [K].
\end{equation}
For $w\in\mc{D}$ and $u\ge 0$, let $w\vert_u $ be the restriction of $w$ to $[0,u]$. 
\begin{DEF}We say $\bs{\tau}\in R_K$ is non-degenerate iff 
\begin{equation}\label{noanc}\text{for all distinct }i,j\in[K],\ \tau_{i,j}\in(0,\tau_{i,i}\wedge \tau_{j,j}),
\end{equation}
and
\begin{equation}\label{binaryc}\text{there are no distinct $i,j,k\in[K]$ such that }\tau_{i,j}=\tau_{i,k}=\tau_{j,k}.
\end{equation}
Otherwise we say $\bs{\tau}$ is degenerate. If $\bs{\tau}=\bs{\tau}(w)$, where $w\in C_K$, we also say $w$ is non-degenerate or degenerate, according to the status of $\bs{\tau}$.\Enddef
\end{DEF}

Intuitively speaking, the first condition ensures that for $\bs{\tau}=\bs{\tau}(w)$, in ``the natural tree associated with $w$", the ``ends'' of paths are associated to leaves in the tree 
 and the second condition ensures the remaining vertices will have degree three. 

Let $w=(w_1,\dots,w_K)\in C_K$. We now construct a number of objects which depend on this fixed $w$.  Our definition of the associated shape will in fact depend only on $\bs{\tau}=\bs{\tau}(w)$ but we will often interpret definitions in terms of $w$ as well for clarity. Assume $\bs{\tau}$ is non-degenerate.  Let $\hat T=\hat T(\bs{\tau}):=\cup_{i=1}^K\{i\}\times[0,\tau_{i,i}]$ and define an equivalence relation (the transitivity uses the last property in \eqref{tauprops}) on $\hat T$ by  
\begin{equation}\label{eqrel}(i,u)\sim (j,v)\text{ iff }u=v\le \tau_{i,j},\text{ that is, iff }u=v\text{ and } w_i|_u=w_j|_u.
\end{equation}
We let $[i,u]$ be the equivalence class containing $(i,u)$ and set $\widetilde T=\widetilde T(\bs{\tau})=\hat T(\bs{\tau})/{\sim}$, that is,
\begin{equation}\label{eqclass}
[i,u]=\{(k,u):\tau_{i,k}\ge u\}=\{(k,u):w_k\vert_u=w_i\vert_u\}.
\end{equation}
\begin{figure}
\begin{center}
\begin{tikzpicture}[scale=0.8]
\node[circle,fill=black, scale=0.1] (A1) at (0,0) {};
\node at (-0.2,0) {$0$};
\node[circle,fill=black,scale=0.1] (B1) at (3,0) {};

\node[circle,fill=black, scale=0.1] (A2) at (0,-0.1) {};
\node[circle,fill=black,scale=0.1] (B2) at (3,-0.1) {};
\node[circle,fill=black,scale=0.1] (E1) at (5.5,0.5) {};
\node[circle,fill=black,scale=0.1] (F1) at (7.9,0) {};
\node[circle,fill=black,scale=0.1] (D) at (3.87,-0.6) {};
\node[circle,fill=black, scale=0.1] (A3) at (0,0.1) {};
\node[circle,fill=black,scale=0.1] (B3) at (3,0.1) {};
\node[circle,fill=black,scale=0.1] (C2) at (6.9,0.8) {};
\node[circle,fill=black,scale=0.1] (D) at (3.87,-0.6) {};
\node[circle,fill=black,scale=0.1] (E3) at (5.5,0.6) {};
\draw[thick] (A1)--(B1)--(E1)--(F1);
\draw[thick] (A2)--(B2)--(D);
\draw[thick] (A3)--(B3)--(E3)--(C2);
\draw[dashed] (3,-1)--(3,1);
\draw[dashed] (5.5,-1)--(5.5,1);
\draw[dashed] (3.87,-1)--(3.87,1);
\draw[dashed] (6.9,-1)--(6.9,1);
\draw[dashed] (7.9,-1)--(7.9,1);
\node at (3,-1.5) {$\tau_{1,2}$};
\node at (5.5,-1.5) {$\tau_{2,3}$};
\node at (3.87,1.5) {$\tau_{1,1}$};
\node at (6.9,1.5) {$\tau_{2,2}$};
\node at (7.9,1.5) {$\tau_{3,3}$};

\node[circle,fill=black,scale=0.2,label=above:{$0=\langle 0,0\rangle$}]  at (10,0) {};
\node[circle,fill=black,scale=0.2,label=below:{$\langle 1,2\rangle$}]  at (12,0) {};
\node[circle,fill=black,scale=0.2]  at (14,-1) {};
\node[circle,fill=black,scale=0.2,label=above:{$\langle 2,3\rangle$}]  at (14,1) {};
\node[circle,fill=black,scale=0.2]  at (16,1.5) {};
\node[circle,fill=black,scale=0.2]  at (16,0.5) {};
\draw[thick] (10,0)--(12,0)--(14,1)--(16,1.5);
\draw[thick] (14,1)--(16,0.5);
\draw[thick] (12,0)--(14,-1);
\node at (17.25,0.5) {$3=\langle 3,3\rangle$};
\node at (17.25,1.5) {$2=\langle 2,2\rangle$};
\node at (15.25,-1) {$1=\langle 1,1\rangle$};
\end{tikzpicture}
\end{center}
\caption{Left: a depiction of 3 paths ($w_2,w_3,w_1$ from top to bottom) for which the associated $\bs{\tau}$ is non-degenerate.  Close parallel lines  depict parts of paths that exactly coincide.  
Right: The corresponding shape $T(\bs{\tau})$ (labels 2 and 3 can be swapped without changing the shape).  Edge lengths are differences of $\tau_{\cdot,\cdot}$  (e.g.~the edge adjacent to 2 has length $\ell(e_{\langle 2,2 \rangle})=\tau_{2,2}-\tau_{2,3}$) and these edge lengths define $d$ on $T(\bs{\tau})$.
}
\label{fig:3paths_shape}
\end{figure}
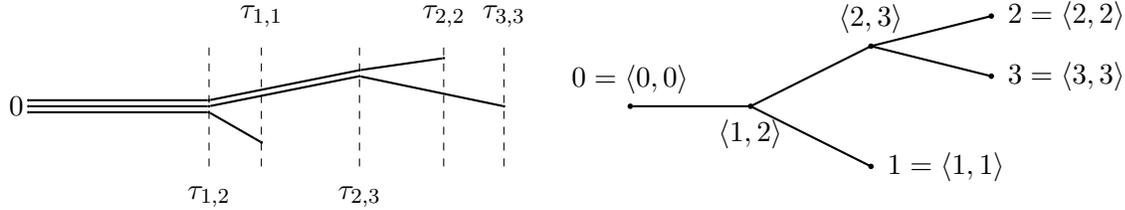
See, for example, Figure \ref{fig:3paths_shape}, where $[1,u]=[2,u]=[3,u]$ for $u\le \tau_{1,2}$, while for $u\in (\tau_{1,2},\tau_{1,1}]$
$[2,u]=[3,u]\ne [1,u]$.

A non-negative function $d=d_{\bs{\tau}}$ on $\widetilde T$ is defined by
\begin{equation}\label{treemet}d_{\bs{\tau}}([i,u],[j,v])=\begin{cases}
u+v-2\tau_{i,j}, & \text{ if }\min(u,v)>\tau_{i,j}\\
|v-u|, & \text{ otherwise}.
\end{cases}
\end{equation}
Later (below Lemma~\ref{lem:samemet}) we will see that $(\widetilde T,d)$ is a real tree (and in particular $d_{\bs{\tau}}$ is a metric) as defined, e.g., in Section~2.1 of \cite{Croydon09}.  We will not use the notion of a real tree, but instead will be primarily focused on the finite tree graph 
consisting of leaves and branch points of $\widetilde T$, which we will denote by $T(\bs{\tau})$ and define next.
We introduce (use \eqref{eqclass} in the second equality below)
\begin{align}\label{eqclass2}\langle i, j\rangle:=&[i,\tau_{i,j}]=\{(k,\tau_{i,j}):\tau_{i,k}\ge \tau_{i,j}, k\in[K]\}=\{(k,\tau_{i,j}):w_k\vert_{\tau_{i,j}}=w_i\vert_{\tau_{i,j}}, k\in[K]\},\\
&\nonumber\text{for }i,j\in[K], \text{ and } \langle 0,0\rangle=\lb i,0\rb=\lb 0,i\rb:=[i,0] \text{
(for any $i\in[K]$)}.
\end{align}
From the symmetry of $\tau_{i,j}$, the fact that $w_i\vert_{\tau_{i,j}}=w_j\vert_{\tau_{i,j}}$ and the last equality in \eqref{eqclass2} we see that $\langle i,j\rangle$ is symmetric in $i$ and $j$.  
We set 
\begin{equation}\label{leafnot}i:=\lb i,i \rb=\{(i,\tau_{i,i})\}\text{ for }i\in\{0,1,\dots,K\},
\end{equation}
where the last equality holds by definition and non-degeneracy of $\bs{\tau}$.
By \eqref{eqrel}, 
\begin{equation}\label{eqclass3}
\aij=\lb k,\ell\rb \text{ iff }\tau_{i,j}=\tau_{k,\ell}\le \tau_{i,k}.
\end{equation}
The vertex set of $T(\bs{\tau})$ is $\{\langle i,j\rangle:i,j\in [K]\text{ or }i=j=0\}$ which we also denote by $T(\bs{\tau})$. 
We call $\lb 0,0\rb$ the root of $T(\bs{\tau})$.

To define the edges, set $\tau_{0,i}=\tau_{i,0}=0$ for all $i\in\{0,\dots K\}$, and then for $i,j\in[K]$, choose $m(i,j)\in\{0,\dots,K\}$ so that
\begin{equation}\label{mdef}\tau_{i,m(i,j)}=\max\{\tau_{i,k}:\tau_{i,k}<\tau_{i,j}, k=0,\dots,K\}(<\tau_{i,j}).
\end{equation}
The above set is non-empty because $\tau_{i,0}=0<\tau_{i,j}$ by non-degeneracy.   If there are multiple choices of $m(i,j)$, choose the smallest, although this choice will not affect $\tau_{i,m(i,j)}$ or $\langle i,m(i,j)\rangle$. (The first is obvious and the second is then easy to see from \eqref{eqclass3}.) 
We define (for $i,j\in[K]$) the parent of $\langle i,j\rangle$ to be 
\begin{equation}\label{pidefn}\pi(\langle i, j\rangle)=\langle i, m(i,j)\rangle.\end{equation}
One easily checks that $\pi:T(\bs{\tau})\setminus \{0\}\ra T(\bs{\tau})\setminus\{1,\dots,K\}$ is well-defined (see Lemma~\ref{parentwd} and recall that $i=\langle i,i\rangle$).
Let $e_{\langle i,j\rb}$ denote an edge between $\pi(\lb i,j\rb)$ and $\lb i,j\rb$ for all $i,j\in[K]$. The edge set $E(\bs{\tau})$ for the tree $T(\bs{\tau})$ is the set of these edges for $i,j\in[K]$. 

\begin{PRP}\label{prop:wshape}
If $\bs{\tau}\in R_K$ is non-degenerate, then $T(\bs{\tau})$ is a non-degenerate shape in $\Sigma_K$.
\end{PRP}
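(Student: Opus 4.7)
My plan is to verify the three requirements of Definition~\ref{ndshape}: that $T(\bs\tau)$ is a finite rooted tree; that the $K+1$ labelled vertices $0,1,\dots,K$ (identified with $\lb 0,0\rb,\lb 1,1\rb,\dots,\lb K,K\rb$) are leaves; and that the remaining vertices are binary branching.  A key tool throughout is the ultrametric inequality of \eqref{tauprops}, $\tau_{i,j}\wedge\tau_{j,k}\le\tau_{i,k}$, applied cyclically to give the ``two-smallest-equal'' property: in any triangle the two smallest of $\tau_{i,j},\tau_{i,k},\tau_{j,k}$ coincide.

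For the tree structure, I first note that \eqref{noanc} forces $\lb i,i\rb=\{(i,\tau_{i,i})\}$ for each $i\in[K]$ (since $(k,\tau_{i,i})\in\lb i,i\rb$ requires $\tau_{i,k}\ge\tau_{i,i}$, hence $k=i$), so the $K+1$ labelled vertices are distinct.  The parent map $\pi$ is well-defined by Lemma~\ref{parentwd}, and \eqref{mdef} guarantees it strictly decreases the height $\tau_{i,j}$ of a non-root vertex $\lb i,j\rb$.  Since heights lie in a finite set, iterating $\pi$ from any vertex reaches $\lb 0,0\rb$ in finitely many steps; together with uniqueness of parents, this makes $T(\bs\tau)$ a finite rooted tree.

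To check that the labelled vertices are leaves: if $\pi(\lb k,l\rb)=\lb i,i\rb$ for some $i\in[K]$, the singleton structure of $\lb i,i\rb$ forces $k=i$, and then $\tau_{i,m(i,l)}=\tau_{i,i}$ contradicts $\tau_{i,m(i,l)}<\tau_{i,l}\le\tau_{i,i}$ from \eqref{mdef} and \eqref{noanc}.  For the root, set $\tau^*=\min_{i\ne j}\tau_{i,j}$; the two-smallest-equal property implies $\min_{m\in[K]}\tau_{k,m}=\tau^*$ for every $k\in[K]$ (in the triangle $\{k,i,j\}$ with $\tau_{i,j}=\tau^*$ and $\tau_{k,i},\tau_{k,j}\ge\tau^*$, one of $\tau_{k,i},\tau_{k,j}$ must equal $\tau^*$).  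Hence $\pi(\lb k,l\rb)=\lb 0,0\rb$ iff $\tau_{k,l}=\tau^*$, and every such vertex equals $[k,\tau^*]=\{(m,\tau^*):m\in[K]\}$, so the root has a unique child.

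Finally, for an internal vertex $v=\lb i,j\rb$ with $i\ne j$, put $\tau_v=\tau_{i,j}$ and $A_v=\{k\in[K]:\tau_{k,i}\ge\tau_v\}$ equipped with the equivalence $k\sim_v l\iff\tau_{k,l}>\tau_v$ (transitive by the ultrametric).  I will set up a bijection between the equivalence classes $B\subseteq A_v$ and the children of $v$: for $k\in B$, both $\sigma_k:=\min\{\tau_{k,m}:\tau_{k,m}>\tau_v\}$ and the class $[k,\sigma_k]$ are independent of $k\in B$ (using $\tau_{k,k'}\ge\sigma_k$ for $k'\in B$ together with repeated two-smallest-equal arguments), while classes $B\ne B'$ yield distinct children.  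There are at least two classes since $i\not\sim_v j$, and at most two by \eqref{binaryc}: representatives $k_1,k_2,k_3$ from three distinct classes would satisfy $\tau_{k_\alpha,k_\beta}\le\tau_v$ by definition, while the ultrametric in $\{k_\alpha,k_\beta,i\}$ gives $\tau_{k_\alpha,k_\beta}\ge\tau_v$, forcing pairwise equality in violation of \eqref{binaryc}.  Adding the edge to $\pi(v)$ yields degree $3$.  The main obstacle is this last step --- specifically, the well-definedness of the child attached to each equivalence class --- which requires carefully chaining the ultrametric through several triangles.
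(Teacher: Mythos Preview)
Your approach is correct and parallels the paper's proof in outline (tree $\to$ leaves $\to$ root $\to$ internal vertices) but is organized differently.  The paper relies on the auxiliary Lemmas~\ref{treegraph}, \ref{orderprops}(b), and \ref{ancestry}: it exhibits two explicit children of $\aij$ via a ``next-branch'' index $c(i,j)$ (essentially your $\sigma_i$), and then rules out a third child by identifying the common parent of three siblings as $\lb i_1,i_2\rb=\lb i_1,i_3\rb=\lb i_2,i_3\rb$ through Lemma~\ref{ancestry}.  Your route is more self-contained: you invoke the ultrametric two-smallest-equal property directly and package the internal-vertex analysis as a bijection between $\sim_v$-classes on $A_v$ and children of $v$, which handles the lower and upper bounds on the number of children simultaneously.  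This buys you a cleaner standalone argument, at the cost of the triangle-chaining you flag; the paper's modular approach has the advantage that the lemmas are reused elsewhere (e.g.\ in Lemma~\ref{lem:samemet} and Remark~\ref{gcaij}).

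One point your sketch leaves implicit: the bijection claim needs, beyond well-definedness and injectivity, that each $[k,\sigma_k]$ is actually a child of $v$ and that \emph{every} child of $v$ arises from some class.  For the first, observe that no $\tau_{k,m'}$ lies in $(\tau_v,\sigma_k)$ (such an $m'$ would belong to $B$, contradicting minimality of $\sigma_k$) while $\tau_{k,m'}=\tau_v$ is attained by any $m'$ in the other class; hence $\tau_{k,m(k,\cdot)}=\tau_v$ and $\pi([k,\sigma_k])=[k,\tau_v]=v$ since $k\in A_v$.  For the second, if $\pi(\lb a,b\rb)=v$ then $\tau_{a,m(a,b)}=\tau_v$, so $a\in A_v$, and the gap condition defining $m(a,b)$ forces $\tau_{a,b}=\sigma_a$.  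Both checks are routine, but without surjectivity your class count only yields \emph{at least} two children, not exactly two.
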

\noindent This should be clear now as the points of $T(\bs{\tau})$ are the $K$ leaves, $1,\dots,K$, corresponding to the ``endpoints" of the paths $w_1,\dots,w_K$, the branch points of these $K$ paths (which are always binary by non-degeneracy), and the root, $0$, which will have degree one by non-degeneracy.  The straightforward proof is in Section~\ref{ssec:gsts}.  
It is not hard to see that $i\wedge j=\aij$ (see Remark~\ref{gcaij} below and recall Remark~\ref{rem:labels}(b) above).  

By definition we have $\tau_{i,j}=\tau_{i',j'}$ whenever $(i,\tau_{i,j})\sim (i',\tau_{i',j'})$ and so can define $\tau_{\aij}=\tau_{i,j}$.  We define an edge weight function $\ell=\ell_{\bs{\tau}}$ on $E(\bs{\tau})$ by
\begin{equation}\label{elldef}
\ell(e_{\aij})=\tau_{\aij}-\tau_{\pi(\aij)}>0\text{ for }i,j\in[K].
\end{equation}
We can then use $\ell$ to define a metric $d_\ell$ on the vertices of $T$ as in Definition~\ref{metricdef}. Recall the function $d_{\bs{\tau}}$ on $\widetilde T\times \widetilde T$ from \eqref{treemet}. The next elementary result is proved again in Section~\ref{ssec:gsts}.

\begin{LEM}\label{lem:samemet}
The metric $d_\ell$ is the restriction of $d_{\bs{\tau}}$ to $T\times T$.
\end{LEM}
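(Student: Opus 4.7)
The plan is to verify, for every pair of vertices $v_1,v_2\in T(\bs{\tau})$, the identity $d_\ell(v_1,v_2)=d_{\bs{\tau}}(v_1,v_2)$ by expressing both sides explicitly in terms of $\tau_{v_1},\tau_{v_2}$, and $\tau_{i,j}$ when $v_1=[i,u_1]$ and $v_2=[j,u_2]$ with $u_1=\tau_{v_1}$, $u_2=\tau_{v_2}$ (recall that $\tau_{\aij}=\tau_{i,j}$ is well defined on vertices, as noted in the excerpt just before \eqref{elldef}).

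First I would verify the telescoping identity $d_\ell(\lb 0,0\rb,v)=\tau_v$ for every $v\in T(\bs{\tau})$. Indeed, iterating $\pi$ yields a chain of ancestors of $v$ ending at the root, and by \eqref{elldef} the edge lengths along this chain sum to $\tau_v-\tau_{\lb 0,0\rb}=\tau_v$ (since $\tau_{\lb 0,0\rb}=0$). Combining this with the standard additivity of a tree metric along the least common ancestor gives
\[d_\ell(v_1,v_2)=\tau_{v_1}+\tau_{v_2}-2\tau_{v_1\wedge v_2}\quad\text{for all vertices }v_1,v_2.\]

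Next I would identify $\tau_{v_1\wedge v_2}$. By the definition \eqref{mdef} of $m(i,j)$ as selecting the largest $\tau_{i,k}$ strictly below the current value, iterating $\pi$ from $v_1=\lb i,j_1\rb$ sweeps through all distinct values in $\{\tau_{i,m}:\tau_{i,m}\le u_1\}$ in strictly decreasing order, so the ancestors of $v_1$ are exactly the equivalence classes $[i,\tau_{i,m}]$ with $\tau_{i,m}\le u_1$; an analogous description holds for $v_2$. A common ancestor thus has the form $[i,\alpha]=[j,\alpha]$, which by \eqref{eqrel} forces $\alpha\le\tau_{i,j}$, together with $\alpha\le u_1\wedge u_2$. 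The value $\alpha=u_1\wedge u_2\wedge\tau_{i,j}$ is attained, because $\tau_{i,j}$ lies in both of the lists $\{\tau_{i,m}\}_m$ and $\{\tau_{j,m}\}_m$ and $u_1,u_2$ are themselves of that form, so
\[\tau_{v_1\wedge v_2}=u_1\wedge u_2\wedge \tau_{i,j}.\]

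Substituting this into the previous display yields $d_\ell(v_1,v_2)=u_1+u_2-2(u_1\wedge u_2\wedge\tau_{i,j})$. If $u_1\wedge u_2>\tau_{i,j}$ this equals $u_1+u_2-2\tau_{i,j}$, matching the first case of \eqref{treemet}; otherwise $u_1\wedge u_2\wedge\tau_{i,j}=u_1\wedge u_2$, and the expression collapses to $|u_1-u_2|$, matching the second case. The main subtlety is the identification of the set of $\pi$-ancestors of $v_1$; once that is in place, the identification of the least common ancestor and the final matching of cases is a direct computation.
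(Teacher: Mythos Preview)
Your approach is essentially the same as the paper's: both compute $d_\ell(v_1,v_2)$ by locating the greatest common ancestor and telescoping the edge lengths, then match the result against \eqref{treemet}. The paper invokes Lemma~\ref{ancestry} (proved just above) to identify $v_1\wedge v_2$ via a three-case analysis, while you re-derive the equivalent formula $\tau_{v_1\wedge v_2}=u_1\wedge u_2\wedge\tau_{i,j}$ directly; these are the same computation in slightly different packaging.

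One step is too brief. When you assert that $\alpha^*=u_1\wedge u_2\wedge\tau_{i,j}$ is attained, the justification ``$\tau_{i,j}$ lies in both lists and $u_1,u_2$ are themselves of that form'' does not cover the case $\alpha^*=u_1<\tau_{i,j}$: here $u_1=\tau_{i,j_1}$ lies in $\{\tau_{i,m}\}_m$, but you have not shown $u_1\in\{\tau_{j,m}\}_m$, which is what your description of the ancestors of $v_2$ requires. The cleanest fix bypasses the lists: if $\alpha^*=u_1$ then $u_1\le u_2$ and $u_1\le\tau_{i,j}$, so by definition of the partial order $v_1=[i,u_1]\le[j,u_2]=v_2$; since both lie in $T$, Lemma~\ref{orderprops}(b) gives that $v_1$ is an ancestor of $v_2$, whence $v_1\wedge v_2=v_1$ and $\tau_{v_1\wedge v_2}=u_1$. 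The case $\alpha^*=u_2$ is symmetric, and $\alpha^*=\tau_{i,j}$ is as you say. (Alternatively, from the ultrametric-type inequality in \eqref{tauprops} one gets $\tau_{i,j_1}\wedge\tau_{i,j}\le\tau_{j,j_1}$ and $\tau_{i,j}\wedge\tau_{j,j_1}\le\tau_{i,j_1}$, which together with $u_1<\tau_{i,j}$ force $\tau_{j,j_1}=u_1$, so $u_1$ does lie in the $j$-list after all.) With this patch your argument is complete.
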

\noindent Henceforth we will use $d_{\bs{\tau}}$ in place of $d_\ell$.  Note that
\begin{equation}\label{dtauij0}d_{\bs{\tau}}(\aij,0)=d_{\bs{\tau}}([i,\tau_{i,j}],[i,0])=\tau_{i,j}\text{ for all }i,j\in[K].
\end{equation}

If $u,v\in[\tau_{\pi(\aij)},\tau_{\aij}]$, then since $u,v\le \tau_{i,j}\le \tau_{i,i}$ we have from \eqref{treemet} that 
\[d_\tau([i,u],[i,v])=|v-u|.\] 
In particular we see that
\begin{equation}\label{iuchar}
\text{$[i,u]$ is the point a distance $u-\tau_{\pi(\aij)}$ from $\pi(\aij)$ along the edge $e_{\aij}$.}
\end{equation}
The line segment $L_{\aij}=\{[i,u]:\, u\in [\tau_{\pi(\aij)},\tau_{\aij}]\}$ runs from $\pi(\aij)=[i,\tau_{\pi(\aij)}]$ to $\aij=[i,\tau_{\aij}]$.  By the above, $L_{\aij}$ with the metric $d_\tau$ is isometric to the interval $[0,\ell(e_{\aij})]$, and $d_\tau$ on this line segment is the linear extension of $d_\tau$ on $T\times T$ to $\widetilde T\times \widetilde T$.
 Hence we may use $(\widetilde T,d_\tau)$ as a realization of $(\overline T,d)$ in Definition~\ref{metricdef}. (As noted in Section~7 of \cite{Croydon09}, $(\widetilde T,d_\tau)$ is a real tree.) Henceforth we therefore use the notation $\overline T=\overline{T}(\bs{\tau})$ in place of $\widetilde T(\bs{\tau})$.   

Recall that $w\in C_K$. Define $\phi_w:\overline T\to\R^d$ by
 \begin{equation}\label{phiwdef}
\phi_w([i,u])=w_i(u).
\end{equation}
 One easily checks $\phi_w$ is well-defined.  We have $\phi_w(0)=\phi_w([i,0])=w_i(0)=o$ and the continuity of $\phi_w$ is shown in Lemma~\ref{dtauconv}(b) in Section~\ref{ssec:gsts}.  We conclude from this, Proposition~\ref{prop:wshape}, and Definition~\ref{metricdef} that 
\begin{equation}\label{gstw}
\text{for non-degenerate $w\in C_K$, }\mc{B}_K(w):=(T(\bs{\tau}(w)),d_{\bs{\tau}(w)},\phi_w)\text{ is a graph spatial tree}.
\end{equation}
\begin{DEF}\label{gstwdef} Let $w\in C_K$.  If $w$ is non-degenerate define $\mc{B}_K(w)$ by \eqref{gstw}, and call $\mc{B}_K(w)$ the graph spatial tree of $w$. If $w$ is degenerate, set $\mc{B}_K(w)= \degK\in \TT_{gst}$. \Enddef
\end{DEF}
\begin{REM}\label{idgsts}  Let $(T,d)$ and $(T',d')$ be shapes equipped with edge-length metrics. In the definition of a shape we identified these metric spaces if they are linked by an isometric isomorphism (recall from Definition~\ref{ndshape} this means the leaf-labellings are preserved). This meant we could assume the labelled leaves were  $0,1,\dots,K$, where $0$ is the root and then could in fact label the  branch vertices as $K+1,\dots,2K-1$ in a canonical fashion (see Remark~\ref{rem:labels}(b)).  
 At times it will be convenient to work explicitly with distinct leaf-labellings underlying a gst, connected as above by an isomorphism mapping one leaf-labelling to another.  Let $\psi:(T,d)\to (T',d')$ be an isometric isomorphism.  
 Let $\bar\psi:(\overline{T},d)\to (\overline{T'},d')$ be the unique isometric extension of $\psi$, that is, $\bar\psi$ maps the point $x$ which is $d$ distance $\alpha\in[0,\ell(e)]$ along the edge $e$ (measured from the endpoint closest to the root) to the point $x'$ with is distance $\alpha$ along the edge $\psi(e)$.  If we make obvious changes in the definition of $D$ to accommodate different labellings of $T$ and $T'$ (using the edge-labelling from Remark~\ref{rem:labels}(b)), then 
\begin{align}\label{gstidentn}
\nonumber&\text{$(T,d,\phi)$ and $(T',d',\phi')$ are equal in $\TT_{gst}$ iff there is an isometric}\\
&\text{ isomorphism $\psi:(T,d)\to(T',d')$ such that $\phi'=\phi\circ\bar\psi$.}
\end{align}
\end{REM}\Enddef

Turning to the limiting HBM, 
 recall from \eqref{WinCK} that 
$W=(W_1,\dots,W_K)\in C_K$, $\hat \N_{\sss H}^s$-a.s. We in fact have (the proof is in Section~\ref{ssec:HBMnondeg}):
\begin{LEM} 
\label{lem:hBM_shape0}
\label{hBM_shape0} 
$W$  is non-degenerate $\hat\N_{\sss H}^s$-a.s.~and so $\hat\N_{\sss H}^s(\mc{B}_K(W)=\degK)=0$.  
\end{LEM}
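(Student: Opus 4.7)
The plan is to reduce non-degeneracy of $W$ to finitely many null-event assertions about pairs and triples of sampled paths, and then to apply the $k$-th moment/Palm representation of HBM to see that those events have $\hat\N_{\sss H}^s$-mass zero.

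First I would reduce to the pair and triple cases. Since $\bar\tau(W_i,W_i)=\infty$ and $\FL(W_i)=\mathfrak{T}_i$ (the latter by Lemma~\ref{lem:distlife}), we have $\tau_{i,i}=\mathfrak{T}_i$ $\hat\N_{\sss H}^s$-a.s.  Hence \eqref{noanc} fails iff $\tau_{i,j}\in\{0,\mathfrak{T}_i,\mathfrak{T}_j\}$ for some distinct $i,j\in[K]$, and \eqref{binaryc} fails iff $\tau_{i,j}=\tau_{i,k}=\tau_{j,k}$ for some three distinct indices in $[K]$. A union bound over the finitely many such (unordered) pairs and triples, together with the conditional i.i.d.\ nature (hence exchangeability) of $(\mathfrak{T}_i,W_i)_{i\in[K]}$ given $(I,J,S)$, reduces everything to the two null-set claims
\[
\hat\N_{\sss H}^s\big(\tau_{1,2}\in\{0,\mathfrak{T}_1,\mathfrak{T}_2\}\big)=0,\qquad
\hat\N_{\sss H}^s(\tau_{1,2}=\tau_{1,3}=\tau_{2,3})=0.
\]

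The second step unfolds $\hat\N_{\sss H}^s$ to a formula in terms of moments of HBM. From \eqref{WinCK} and $J=F_1(I)$, for any measurable $\phi\ge 0$ and $k\in\{2,3\}$,
\[
\hat\N_{\sss H}^s\big[\phi\big((\mathfrak{T}_i,W_i)_{i=1}^k\big)\big]
=\N_{\sss H}^s\Big[\frac{1}{I(\oD)^k}\int_{[0,\infty)^k}\!\!\int \phi\big((u_i,w_i)_{i=1}^k\big)\prod_{i=1}^k H_{u_i}(dw_i)\,du_1\cdots du_k\Big].
\]
The key analytic input is the $k$-th moment/Palm formula for HBM (e.g.\ Sections~II.7 and~V.2 of \cite{Per02}): the pair measure $H_u(dw_1)H_v(dw_2)$ under $\N_{\sss H}$ decomposes as $\int_0^{u\wedge v} 2\,dr\,\mu_r^{u,v}(dw_1,dw_2)$, where $\mu_r^{u,v}$ places $(w_1,w_2)$ on a pair of continuous paths coinciding on $[0,r]$ and evolving as independent Brownian motions on $(r,u]$, $(r,v]$ respectively; and an analogous binary-branching-tree representation controls the triple moment, with the two internal branching times admitting an absolutely continuous joint density on the relevant simplex.

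With the representation in hand the conclusions are immediate. In the pair case $\tau_{1,2}$ equals the internal branch time $r$, which has an absolutely continuous density on $(0,u\wedge v)$, so $\{\tau_{1,2}\in\{0,u,v\}\}$ is null; in the triple case the two internal branch times are jointly absolutely continuous, so their coincidence is null. The main obstacle is the technical bookkeeping: precisely identifying the branch time $\tau_{i,j}$ from \eqref{tau_def} with the ancestral branching time produced by the moment formula (this uses the fact that after $r$ the paths are independent Brownian motions and hence a.s.\ disagree on every interval strictly to the right of $r$), and accommodating the factor $I(\oD)^{-k}$ and the conditioning $\{S>s\}$ in the moment formula. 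I expect this bookkeeping is the content deferred to Section~\ref{ssec:HBMnondeg}.
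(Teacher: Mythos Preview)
Your proposal is correct and follows essentially the same route as the paper: reduce to null-event claims about pairs and triples via a union bound, unfold $\hat\N_{\sss H}^s$ as an integral against products of $H_{u_i}$, and then invoke the $k$th-moment formula for HBM to see that the branch times are absolutely continuous (Lebesgue) over the relevant simplices, whence the boundary and diagonal sets are null. The paper packages the moment formula as Lemma~\ref{lem:hBMmoments} and Corollary~\ref{cor:taulaw}, and handles the normalization $I(\oD)^{-k}$ and the conditioning on $\{S>s\}$ exactly as you anticipate, by reducing to $\N_{\sss H}[H_{t_1}\times\cdots\times H_{t_r}(A)]=0$ for fixed $t_1,\dots,t_r$; your identification of $\tau_{i,j}$ with the ancestral branch time via independence of the post-branching Brownian motions is precisely equation~\eqref{uistau} in the paper.
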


For the rescaled BRW 
and lattice trees in Section~\ref{sec:enlargement} the paths $W^{\sn}=(W^{\sn}_i)_{i\le K}$ chosen according to $J^{\sn}$ will have jumps at lattice points $k/n$, $k\in\N$.  To get paths in $C_K$, as required in Definition~\ref{gstwdef}, we will need to introduce continuous interpolation operators 
$\kappa_n:\mc{D}\to\mc{C}$ satisfying $\kappa_n(w)_{k/n}=w_{k/n}$ and interpolated linearly on each $[k/n,(k+1)/n]$, for $k\in\Z_+$. We abuse notation and also let $\kappa_n:\mc{D}^K\to\mc{C}^K$ be the above map applied componentwise for any $K\in\N$. Therefore we have $\kappa_n(W^{\sn})\in C_K$.

\subsection{Main results}
\label{sec:main_results}

We can now state our first main result, which gives joint convergence of the (embedding of the) minimal subtree in $\mc{T}$ containing the root and $K$ uniformly chosen points.

\begin{THM}[Joint weak convergence] \label{thm:jointconvergence} 
For 
\begin{itemize}
\item[\emph{(i)}] branching random walk in dimension $d\ge 1$ and any $L\ge 1$; and 
\item[\emph{(ii)}] lattice trees in dimensions $d>8$, for all $L$ sufficiently large (depending on $d$), 
\end{itemize}
if $s>0$, $K \in \N$, $W^{\sn}=(W_1^{\sn},\dots,W_K^{\sn})$ and $W=(W_1,\dots,W_K)$, then the following hold as $n\to\infty$: 
\begin{equation}\label{rnondeglargen}\hat\P^s_n\big(\kappa_n(W^{\sn})\text{ non-degenerate}\big)\to 1,
\end{equation} and 
\begin{align}\label{wconva}&\hat\P^s_n\Big(\big(I^{\sn},J^{\sn},S^{\sn},W^{\sn},\bs{\tau}(W^{\sn}),\mc{B}_K(\kappa_n(W^{\sn})\big)\in\cdot\Big)\\
\nonumber&\cweak \hat\N^{s,\sigma_0^2}_{\sss H} \Big(\big(I,J,S,W,\bs{\tau}(W),\mc{B}_K(W)\big)\in\cdot\Big)\text{ in } \M_F(\D)^2\times\R_+\times\D^K\times\R_+^{(K^2)}\times \TT_{gst}. 
\end{align}
\end{THM}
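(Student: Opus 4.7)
The plan is to prove \eqref{wconva} via the continuous mapping theorem, starting from the historical convergence \eqref{weak_historical} and adding one layer at a time. First I would upgrade \eqref{weak_historical} to joint convergence
\[\hat\P^s_n\big((H^{\sn},I^{\sn},J^{\sn},S^{\sn})\in\cdot\big)\cweak \hat\N^{s,\sigma_0^2}_{\sss H}\big((H,I,J,S)\in\cdot\big).\]
The map $H\mapsto \int_0^\cdot H_u\,du$ is continuous in an appropriate sense once one truncates at a time $T$ that dominates $S^{\sn}$ with high probability (which is possible by \eqref{surv1}), giving convergence of $I^{\sn}$; continuity of $S$ as a functional of $H$ on the event $\{S>s\}$ gives convergence of $S^{\sn}$; and $J^{\sn}=F_1(I^{\sn})$ converges because $F_1$ is continuous at nonzero measures, which is automatic under the conditioning on $\{S^{\sn}>s\}$.

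Next I would append the $K$ uniformly chosen paths. By Skorokhod representation, work on a space where $J^{\sn}\to J$ almost surely in $\mc{M}_F(\oD)$. Conditional on the environment, $W^{\sn}$ is i.i.d.\ with law $J^{\sn}$ and $W$ is i.i.d.\ with law $J$, so a standard Fubini/test-function argument using weak convergence of the product measures $(J^{\sn})^{\otimes K}\to J^{\otimes K}$ yields
\[\hat\P^s_n\big((I^{\sn},J^{\sn},S^{\sn},W^{\sn})\in\cdot\big)\cweak \hat\N^{s,\sigma_0^2}_{\sss H}\big((I,J,S,W)\in\cdot\big).\]
Since the limit $W$ lies in $C_K$ almost surely by \eqref{Jsupp}, Skorokhod convergence to $W$ coincides with uniform-on-compacts convergence.

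The core step is to show the map $w\mapsto(\bs{\tau}(w),\mc{B}_K(w))$ is continuous at non-degenerate $w\in C_K$ in the uniform-on-compacts topology. At a non-degenerate $w$ the inequalities \eqref{noanc}--\eqref{binaryc} are strict, so a uniformly small perturbation $w^m\to w$ forces $\bar\tau(w^m_i,w^m_j)\to \bar\tau(w_i,w_j)$ and $\FL(w^m_i)\to\FL(w_i)$ (the latter using that paths in the support of $J^{\sn}$ and $J$ are constant beyond their lifetimes), hence $\bs{\tau}(w^m)\to\bs{\tau}(w)$ entrywise. Consequently the combinatorial shape $T(\bs{\tau}(w^m))$ eventually equals $T(\bs{\tau}(w))$, the edge lengths \eqref{elldef} converge, and the spatial embeddings satisfy $\phi_{w^m}\to \phi_w$ uniformly on $\overline T$; hence $D(\mc{B}_K(w^m),\mc{B}_K(w))\to 0$ in $\TT_{gst}$. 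Since $\kappa_n(W^{\sn})$ differs from $W^{\sn}$ in sup-norm by at most one lattice step of order $n^{-1/2}$, it shares the limit of $W^{\sn}$.

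Finally I would conclude both \eqref{rnondeglargen} and \eqref{wconva}. The set of non-degenerate $K$-tuples is open in $C_K$ (the strict inequalities in \eqref{noanc}--\eqref{binaryc} are preserved by small uniform perturbations of non-degenerate paths), and by Lemma~\ref{lem:hBM_shape0} the limit $W$ lies in this set almost surely; portmanteau then gives \eqref{rnondeglargen}. On this full-measure continuity set, the continuous mapping theorem applied to $(I,J,S,W)\mapsto (I,J,S,W,\bs{\tau}(W),\mc{B}_K(W))$ delivers \eqref{wconva}. The step I expect to be most delicate is the continuity argument in the third paragraph, namely showing that uniform-on-compacts convergence together with non-degeneracy transfers the combinatorial structure (branch times, lifetimes, and shape) through the interpolant $\kappa_n$; this is where non-degeneracy is used essentially and where the discretisation built into $W^{\sn}$ has to be accommodated.
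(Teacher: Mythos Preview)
Your proposal has a genuine gap in the third paragraph: the claim that $w\mapsto\bs{\tau}(w)$ is continuous at non-degenerate $w\in C_K$ is false, and this is the heart of the matter. The branch time $\bar\tau$ is only \emph{upper} semicontinuous (Lemma~\ref{lem:uppersemi}); non-degeneracy of the limit does nothing to repair the lower direction. For a concrete counterexample with $K=2$, take any non-degenerate pair $(w_1,w_2)$ with $\tau(w_1,w_2)=t_0\in(0,\FL(w_1)\wedge\FL(w_2))$, and set $w_2^m(u)=w_2(u)+\epsilon_m(u\wedge 1)$ with $0\neq\epsilon_m\to0$. Then $w_2^m\to w_2$ uniformly, yet $\bar\tau(w_1,w_2^m)=0$ for all $m$. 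For the same reason the set of non-degenerate $w$ is \emph{not} open in $C_K$, so your portmanteau argument for \eqref{rnondeglargen} also fails.

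The paper confronts this explicitly: it states that ``convergence of $H^{\sn}$ to $H$ does not in general imply the convergence of $\tau_{i,j}(W^{\sn})$ to $\tau_{i,j}(W)$'' and introduces the model-specific Condition~\ref{cond:tau_stuff} precisely to close the gap. Part~(ii) of that condition says, roughly, that once two sampled paths separate, they become macroscopically far apart after any fixed positive time; for BRW and lattice trees this is a random-walk estimate carried out in Sections~\ref{sec:cond18lt} and~\ref{sec:cond18brw}. With this in hand the paper proves $\tau(W^{\sn}_i,W^{\sn}_j)\cprobs\tau(W_i,W_j)$ directly (see \eqref{tauconvp}--\eqref{actual}), not via continuity. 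Only after $\bs{\tau}(W^{\sn})\to\bs{\tau}(W)$ is established does Proposition~\ref{shapecont} (which assumes convergence of \emph{both} $w^{\sn}$ and $\bs{\tau}(w^{\sn})$) give $\mc{B}_K(\kappa_n(W^{\sn}))\to\mc{B}_K(W)$. Your outline for $(I^{\sn},J^{\sn},S^{\sn},W^{\sn})$ is essentially what the paper does (Corollary~\ref{cor:IJScvgce} and Lemmas~\ref{lem:absrandompts},~\ref{quenchlimit}), and the convergence of lifetimes is rescued not by continuity of $\FL$ but by the auxiliary time coordinate $\mathfrak{T}_i^{\sn}$ together with Condition~\ref{cond:fdd}(iv); but the branch-time step cannot be done by continuous mapping alone.
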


We next use the scaling properties of $H^{\sn}$ for branching random walk and lattice trees (\eqref{rescaleHBRW} and \eqref{rescaleHLT}, respectively)  to reinterpret the above convergence. For $w\in C_K$, let
\begin{equation}\label{BnKdef}\mc{B}_{n,K}(w)=(T(\bs{\tau}(w)),d_{\bs{\tau}(w)}/n,\phi_{w}/\sqrt n\,)\in\TT_{gst},
\end{equation}
if $w$ is non-degenerate, and 
let it be 
$\degK\in\TT_{gst}$ otherwise.
Define $\bar \rho_n:\overline{\mc{D}}\to\oD$ by $\bar\rho_n(u,w)=(u/n,\rho_n(w))$. Recall that 
$\mu\circ f^{-1}$ denotes the pushforward of 
$\mu$ by a measurable map $f$.

\begin{THM}[Rescaled Joint weak convergence] \label{thm:rjointconvergence} 
For 
\begin{itemize}
\item[\emph{(i)}] branching random walk in dimension $d\ge 1$ and any $L\ge 1$; and 
\item[\emph{(ii)}] lattice trees in dimensions $d>8$, for all $L$ sufficiently large (depending on $d$), 
\end{itemize}
if $s>0$, $K \in \N$, $W^{\sss(1)}=(W_1^{\sss(1)},\dots,W_K^{\sss(1)})$ and $W=(W_1,\dots,W_K)$, then the following hold as $n\to\infty$: 
\begin{equation}\label{nondeglargen}\hat\P^{sn}_1\Bigl(\kappa_1(W^{\sss(1)})\text{ non-degenerate}\Bigr)\to 1,
\end{equation} and
\begin{align}\label{wconvb}&\hat\P^{sn}_1\Big(\Big(\frac{I^{\sss(1)}\circ\bar\rho_n^{\ -1}}{n^2},J^{\sss(1)}\circ\bar\rho_n^{\ -1},\frac{S^{\sss(1)}}{n},\rho_n(W^{\sss(1)}),\,\frac{\bs{\tau}(\kappa_1(W^{\sss(1)}))}{n},\mc{B}_{n,K}(\kappa_1(W^{\sss(1)}))\Bigr)\in\cdot\Bigr)\\
\nonumber&\cweak \hat\N^{s,\sigma_0^2}_{\sss H} \Big(\big(I,J,S,W,\bs{\tau}(W),\mc{B}_K(W)\big)\in\cdot\Big)\text{ in } \M_F(\D)^2\times\R_+\times\D^K\times\R_+^{(K^2)}\times \TT_{gst}. 
\end{align}
\end{THM}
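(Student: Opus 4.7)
The plan is to deduce Theorem~\ref{thm:rjointconvergence} from Theorem~\ref{thm:jointconvergence} by exploiting the exact scaling identities \eqref{rescaleHBRW} and \eqref{rescaleHLT} between $H^{\sn}$ and $H^{\sss(1)}$. The key point is that the left-hand side of \eqref{wconvb} evaluated under $\hat\P_1^{sn}$ coincides, up to an $O(1/n)$ modification of the $\bs{\tau}$-coordinate, with the left-hand side of \eqref{wconva} evaluated under $\hat\P_n^s$.

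First I would record the basic scaling identities. Since $X^{\sss(1)}$ changes only at integer times, $S^{\sss(1)}\in\N\cup\{\infty\}$ and $\{S^{\sn}>s\}=\{S^{\sss(1)}>sn\}$; thus $\hat\P_n^s=\hat\P_1^{sn}$ and $S^{\sn}=S^{\sss(1)}/n$. A change of variables $v=nu$ together with \eqref{rescaleHBRW}/\eqref{rescaleHLT} yields $I^{\sn}=n^{-2}I^{\sss(1)}\circ\bar\rho_n^{-1}$, and hence $J^{\sn}=J^{\sss(1)}\circ\bar\rho_n^{-1}$ by scale-invariance of $F_1$. The i.i.d.~samples from $J^{\sn}$ may therefore be coupled to those from $J^{\sss(1)}$ via $(\mathfrak{T}^{\sn}_i,W^{\sn}_i)=\bar\rho_n(\mathfrak{T}^{\sss(1)}_i,W^{\sss(1)}_i)$, giving $W^{\sn}_i=\rho_n(W^{\sss(1)}_i)$ and then $\kappa_n(W^{\sn})=\rho_n(\kappa_1(W^{\sss(1)}))$ via the path-level identity $\kappa_n\circ\rho_n=\rho_n\circ\kappa_1$. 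The definitions \eqref{bartaudefn}--\eqref{tau_def} give $\tau(\rho_n(w),\rho_n(w'))=\tau(w,w')/n$, so $\bs{\tau}(\rho_n(w))=\bs{\tau}(w)/n$, and non-degeneracy is preserved---yielding the equivalence of \eqref{nondeglargen} and \eqref{rnondeglargen}. Since the shape $T(\bs{\tau})$ depends only on the order structure of $\bs{\tau}$, it is unchanged under this scaling; the metric scales as $d_{\bs{\tau}/n}=d_{\bs{\tau}}/n$, and a direct check from \eqref{phiwdef} shows $\phi_{\rho_n(w)}=\phi_w/\sqrt n$ under the identification of Remark~\ref{idgsts}, so comparing with \eqref{BnKdef} gives $\mc{B}_K(\rho_n(w))=\mc{B}_{n,K}(w)$ and hence $\mc{B}_K(\kappa_n(W^{\sn}))=\mc{B}_{n,K}(\kappa_1(W^{\sss(1)}))$.

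Combining these identities, the tuple on the left of \eqref{wconvb} under $\hat\P_1^{sn}$ has the same law as $(I^{\sn},J^{\sn},S^{\sn},W^{\sn},\bs{\tau}(\kappa_n(W^{\sn})),\mc{B}_K(\kappa_n(W^{\sn})))$ under $\hat\P_n^s$. This differs from the tuple in \eqref{wconva} only in that $\bs{\tau}(W^{\sn})$ has been replaced by $\bs{\tau}(\kappa_n(W^{\sn}))$, and the only delicate point---the main obstacle---is to transfer the $\bs{\tau}$-convergence across this change. Since the $W^{\sn}_i$ are piecewise constant with jumps on $\{k/n:k\in\Z_+\}$ and $\kappa_n$ modifies them only by linear interpolation on intervals of length $1/n$, a short check yields $|\tau_{i,j}(\kappa_n(W^{\sn}))-\tau_{i,j}(W^{\sn})|\le 1/n$ for all $i,j\in[K]$, so a standard Slutsky-type argument transfers the joint weak convergence from Theorem~\ref{thm:jointconvergence} to \eqref{wconvb}.
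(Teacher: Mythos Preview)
Your proposal is correct and follows essentially the same route as the paper: establish the scaling identities $I^{\sn}=n^{-2}I^{\sss(1)}\circ\bar\rho_n^{-1}$, $J^{\sn}=J^{\sss(1)}\circ\bar\rho_n^{-1}$, $S^{\sn}=S^{\sss(1)}/n$, $W^{\sn}=\rho_n(W^{\sss(1)})$, $\kappa_n\circ\rho_n=\rho_n\circ\kappa_1$, $\bs{\tau}(\rho_n(w))=\bs{\tau}(w)/n$, and $\mc{B}_K(\rho_n(w))=\mc{B}_{n,K}(w)$ (the paper records the last one as Lemma~\ref{lem:rescBK}), and then transport the conclusion of Theorem~\ref{thm:jointconvergence} across these identities. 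The only point of difference is how you pass from $\bs{\tau}(W^{\sn})$ to $\bs{\tau}(\kappa_n(W^{\sn}))$: the paper invokes the convergence statements \eqref{tauconvp} and \eqref{kapconv} obtained inside the proof of Theorem~\ref{thm:Gconvergence}, whereas you use the deterministic bound $|\tau_{i,j}(\kappa_n(W^{\sn}))-\tau_{i,j}(W^{\sn})|\le 1/n$, which is valid because the $W^{\sn}_i$ are piecewise constant on the grid $\{k/n\}$; your argument is slightly more elementary in the specific BRW/lattice-tree setting, while the paper's works in the abstract framework of Theorem~\ref{thm:Grjointconvergence}.
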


\begin{REM}\label{rem:limitlaw}(The Limit Law)
The convergence result in Theorems~\ref{thm:jointconvergence}(ii) and \ref{thm:rjointconvergence}(ii) is not expected to hold for lattice trees in dimensions $d<8$,  while for $d=8$ one at least expects the same limit with logarithmic corrections in the scaling.   The limiting historical Brownian motion is a tree in dimensions $d\ge 8$, by the lack of super-Brownian double points in these dimensions (Theorem~1.6(a) of \cite{DIP89}).   On the other hand, the skeleton $(W_1,\dots,W_K)$ connecting the root to $K$ uniformly chosen points in the historical BM is almost surely a tree in dimensions $d\ge 4$. Indeed, these paths will have disjoint ranges once they separate by the lack of double points in the Brownian path for $d\ge 4$ (see, e.g., Theorem~9.1(a) in \cite{MP}) and the absolute continuity result \eqref{Wabscont} below. The latter is potentially useful to deduce other qualitative properties of
the weak limit $(W,\bs{\tau},\mc{B}_K(W))$. 

In \cite{A91} (see (13))  one can find a simple description of the joint law of the tree shape  and edge lengths of the tree generated by the $K$ randomly chosen particles conditional on the total mass of $I$ being one.  Conditioning on $S>s$ does not seem to lead to such explicit formulae.  Nonetheless, a simple consequence of f.d.d.~historical convergence is that for lattice trees for $d>8$ and $L$ large, the unique path in the tree from the root to a uniformly chosen vertex in $\mc{T}_{\floor{nt}}$ (conditional on $\mc{T}_{\floor{nt}}$ being non-empty) converges to a Brownian motion on $[0,t]$ \cite[Theorem 1.3]{H16}.  By comparison, in Theorem~\ref{thm:jointconvergence}(ii), we are sampling $K$ points from the entire tree, conditional on survival beyond time $s$.  When $K=1$, it is still easy to show that for the historical limit, the law of the randomly
chosen path $W$ given $\FL(W)=t$  is a Brownian motion stopped at time $t$. Extensions of this to $K>1$ are also possible (see \cite{HHP24})  
(This is easy to understand from the fact that for the approximating BRW, the branching and spatial motion variables are independent).  Nonetheless, even the law of $\FL(W)$ for $K=1$ under $\N^s_K$ is a bit complicated.    It is not hard to show that for $t>1$, $\hat{\N}^{1}_{\sss H}(\FL(W)>t)=1/(2t)$, whereas for $t<1$ an explicit non-trivial series expansion for its distribution function is found in \cite{HHP24}. 
Although the explicit law of $\bs{\tau}$ seems a bit complicated, the absolute continuity \eqref{Wabscont} allows us to derive simple quantitative properties of $W$ such as the tree property above.  A special case of it will also be used to derive the non-degeneracy of $\bs{\tau}$ for general $K$ in Lemma~\ref{lem:hBM_shape0}.
\Enddef
\end{REM}

\medskip

Theorem~\ref{thm:rjointconvergence} is perhaps a bit more transparent than the version without scaling, but we will prove a more general version of Theorem~\ref{thm:jointconvergence} (Theorem~\ref{thm:Gconvergence} below) in which no scaling property is assumed, and so it was natural to first state the particular application in this form. Theorem~\ref{thm:Gconvergence} only assumes a pair of abstract conditions, Conditions~\ref{cond:fdd} and \ref{cond:tau_stuff}, on a general sequence of historical processes, $H^{\sn}$, under which the conclusions of Theorem~\ref{thm:jointconvergence} hold.  The main part of Condition~\ref{cond:fdd} asserts weak convergence of the finite-dimensional distributions of $H^{\sn}$ to those of historical Brownian motion (historical convergence). Convergence of $H^{\sn}$ to $H$ does not in general imply the convergence of $\tau_{i,j}(W^{\sn})$ to $\tau_{i,j}(W)$, and   Condition~\ref{cond:tau_stuff} gives additional conditions on $H^{\sn}$ to insure weak convergence of $\bs{\tau}(W^{\sn})$ to $\bs{\tau}(W)$. Condition~\ref{cond:fdd} is verified for branching random walk and lattice trees using known results, most notably the weak convergence of rescaled historical lattice trees to historical Brownian motion in \cite{CFHP20}.  Theorem~\ref{thm:jointconvergence} is derived from Theorem~\ref{thm:Gconvergence} by verifying Condition~\ref{cond:tau_stuff} for lattice trees (our main model of interest) and branching random walk, in Sections~\ref{sec:cond18lt} and \ref{sec:cond18brw}, respectively.   We also give a more general version of Theorem~\ref{thm:rjointconvergence} under the two conditions noted above and the rescaling relation \eqref{rescaleHLT} (see Theorem~\ref{thm:Grjointconvergence} below). 

 \begin{REM}\label{rem:Genmodels}We believe the general results, Theorems~\ref{thm:Gconvergence} and \ref{thm:Grjointconvergence}, could also apply (with suitable changes in the continuous time setting) to the general class of models from \cite{HP19}, including oriented percolation, the contact process and the voter model.  The f.d.d. historical convergence underlying Condition~\ref{cond:fdd}  was  proved for lattice trees in \cite{CFHP20}, even for convergence on path space (this stronger result is not needed here), and 
 f.d.d.~historical convergence was recently proved  for the voter model in \cite{Banova},  using similar methods.  We also believe that the second condition, Condition~\ref{cond:tau_stuff} (implying convergence of branch times), will hold for all of the above models. \Enddef
 \end{REM}

\begin{EXA}[Lattice trees]\label{latrconv}
Let us examine more closely the gst component  
arising in Theorem~\ref{thm:rjointconvergence} in the lattice tree setting of Section~\ref{sec:LT}. 
We set aside the simple scaling factors in \eqref{BnKdef} ($n$ and $\sqrt n$),  and recall that, given $J^{\sss(1)}$, $W^{\sss(1)}_1,\dots,W^{\sss(1)}_K\in\D$ are i.i.d.~paths in $\mc{T}$, chosen according to $J^{\sss(1)}$. Let  $W=\kappa_1(W^{\sss(1)})\in C_K$ and $T=T(\bs{\tau}(W))$.  
We will see that it is equal to the natural gst embedded in $\Z^d$ arising from the tree generated by $K$ randomly chosen points from the random lattice tree $\mc{T}$. First we recap the   construction of $W$.  Condition $\mc{T}$ to survive beyond generation $ns$ (that is, choose it according to $\P^{ns}_1$).  Choose points $V_i,i\le K$ independently and uniformly from the vertices in $\mc{T}$ and let $k_i\in\Z_+$ be the graph distance of $V_i$ from the root $o$ of $\mc{T}$. Let $W_i(j),\ j=0,1,\dots,k_i$, be the unique path of vertices in $\mc{T}$ from  $o$ to $V_i$ and set $W_i(j)=V_i$ for $j>k_i$. Extend $W_i$ to $[0,\infty)$ by linear interpolation. 
We see from \eqref{Jnunif} that this does give a description of the construction of $W$.  By Theorem~\ref{thm:rjointconvergence}, $\hat \P^{sn}_1(\bs{\tau}(W)\text{ non-degenerate})\to 1$, and so  we may assume $\bs{\tau}(W)$ is non-degenerate, and thus have a gst
\[\mc{B}_{K}(W)=(T,d_{\bs{\tau}(W)},\phi_W)\]
with labelled leaves $i=\lb i,i\rb$, $i=1,\dots,K$, root $0=\lb 0,0\rb$, vertices $\aij$, and edges $e_{\aij}$.

Now let $\mc{T}^{\vec V}$ denote the subtree of $\mc{T}$ with vertices $\{W_i(m):m=0,\dots,k_i, i\in[K]\}$ in $\Z^d$, edges between ``consecutive" vertices $W_i(m-1)$ and $W_i(m)$ ($1\le m\le k_i$), root $o$ (also a leaf), and labelled leaves $V_i=W_i(k_i)$, $i=1,\dots,K$. Equip $\mc{T}^{\vec V}$ with the graph metric $d_{\mc{T}}$. Then the leaves $0,V_1,\dots,V_K$ have degree $1$ in $\mc{T}^{\vec V}$, the branch points $\{W_i(\tau_{i,j} ):i,j\in[K]\}$ have degree $3$ by non-degeneracy, and the remaining vertices have degree $2$. Let $T_{\vec V}$ denote the subtree of $\mc{T}^{\vec V}$ obtained by erasing each degree $2$ vertex  (in any order), and each time collapsing the two edges into a single edge.   
Then $T_{\vec V}$ has vertex set $\{W_i(\tau_{i,j}):i,j\in K\}\cup\{o\}$, also denoted $T_{\vec V}$ (note that $\tau_{i,i}=k_i$ and so $W_i(\tau_{i,i})=V_i$). Thus in $T_{\vec V}$  there is an edge  between $W_i(\tau_{i,m})$ and $W_i(\tau_{i,j})$ iff there are no times $\tau_{i,\ell}\in(\tau_{i,m},\tau_{i,j})$, which means the only edges are  between $W_i(\tau_{i,m(i,j)})$ and $W_i(\tau_{i,j})$ for all $i,j$.  
By its definition, $\phi_W(\aij)=W_i(\tau_{i,j})$ 
 so $\phi_W:T\to T_{\vec{V}}$ is onto. If $W_i(\tau_{i,j})=W_{i'}(\tau_{i',j'})$, then by the tree property of $\mc{T}$ we must have $\tau_{i,j}=\tau_{i',j'}$ and $W_i\vert_{\tau_{i,j}}=W_{i'}\vert_{\tau_{i,j}}$. This implies $\aij=\lb i',j'\rb$ by \eqref{eqclass3}. This shows $\phi_W$ is a root-preserving bijection between the vertex sets of $T$ and $T_{\vec{V}}$ which maps the labelled leaves $1,\dots,K$ to the respective labelled leaves $V_1,\dots,V_K$ of $T_{\vec{V}}$. The above description of edges in $T_{\vec{V}}$ shows it also is a graph isomorphism (recall \eqref{pidefn}).  We give $T_{\vec V}$ the distance $d$ it inherits from $d_{\mc{T}}$. So the length of the edge between $W_i(\tau_{i,j})$ 
and its parent, $W_i(\tau_{i,m(i,j)})$, is $\tau_{i,j}-\tau_{i,m(i,j)}=d_{\bs{\tau}}(\aij,\lb i,m(i,j)\rb)$.  The preservation of edge lengths implies that 
\begin{equation}\label{phiW}
\text{$\phi_W$ is an isometric isomorphism from $(T,d_{\bs{\tau}(W)})$ to $(T_{\vec V},d_{\mc{T}})$.}
\end{equation} 
$\overline{T_{\vec{V}}}$ has a line segment of length $\tau_{i,j}-\tau_{i,m(i,j)}$ inserted along edge $e_{\aij'}$, while $\overline{\mc{T}^{\vec V}}$ has $\tau_{i,j}-\tau_{i,m(i,j)}$ edges of length one (from $W_i(\ell-1)$ to $W_i(\ell)$, $\ell=\tau_{i,m(i,j)},\dots, \tau_{i,j}-1$) inserted between the same endpoints. So we can define an isometry from $(\overline{T_{\vec{V}}},d)$ to $(\overline{\mc{T}^{\vec V}},d_{\mc{T}})$ which is the identity on $T_{\vec{V}}$.  This allows us to identify these two metric spaces and it will be convenient to work with $\overline{\mc{T}^{\vec V}}$ in what follows. Let $id_{\mc{T}^{\vec V}}:\overline{\mc{T}^{\vec V}}\to\R^d$ be the identity map on $\mc{T}^{\vec V}$, extended linearly along the added unit line segments in $\overline{\mc{T}^{\vec V}}$ to the image line segments in $\R^d$, giving us 
the gst:
\[(T_{\vec V},d,id_{\mc{T}^{\vec V}}).\]
Recall the definition of the isometric extension  
of $\phi_W$, $\overline{\phi_W}:(\overline{T},d_{\bs{\tau}})\to (\overline{\mc{T}^{\vec V}},d)$, from Remark~\ref{idgsts}. Note that for $\tau_{i,m(i,j)}\le m\le \tau_{i,j}$, $\overline{\phi_W}([i,m])$ is the point in $\mc{T}^{\vec V}$ which is graph distance $m-\tau_{i,m(i,j)}$ from $W_i(\tau_{i,m(i,j)})$, and hence is $W_i(m)$. So we conclude
 that $id_{\mc{T}^{\vec V}}\circ \overline{\phi_W}([i,m])=W_i(m)$ for all $m\le k_i$.  For $u\in(m-1,m)$ both sides are extended linearly in $\R^d$ and we have proved that
 \[ id_{\mc{T}^{\vec V}}\circ\overline{\phi_W}([i,u])=W_i(u)=\phi_W([i,u])\text{ on }\overline{T}.\]
From this, \eqref{phiW}, and Remark~\ref{idgsts} we conclude that (recall $d$ is just the restriction of $d_{\mc{T}}$ to $T_{\vec V}$)
\begin{equation}\label{ltgst}\mc{B}_K(W):=(T,d_{\bs{\tau}(W)},\phi_W)=(T_{\vec V},d_{\mc{T}},id_{\mc{T}^{\vec V}})\ \text{ in }\TT_{gst},\text{ where }\vec V=(W_i(k_i))_{i\le K}.
\end{equation}
Thus, if we prefer, in the lattice tree setting of Theorem~\ref{thm:rjointconvergence} we may replace $\mc{B}_{n,K}(\kappa_1(W^{\sss (1)}))$ in \eqref{wconvb} with $(T_{\vec V}, d_{\mc{T}}/n,id_{\mc{T}^{\vec V}}/\sqrt n),$ where $\vec{V}$ is as in \eqref{ltgst}. 

Note that for other models, such as branching random walk, this alternative construction of $\mc{B}_K(W)$ will not be available because $\phi_W$ will not be injective.  See Example~\ref{BRWrem} for a  reinterpretation of the  weak convergence result in the setting of BRW.\Enddef
\end{EXA}

\begin{REM}[Condition G] \label{condg}
We may use the continuity theorem to see that in the lattice tree setting,  Theorem~\ref{thm:rjointconvergence} implies 
\[\hat \P^{sn}_1\Bigl(\Bigl(\frac{I^{\sss(1)}(1)}{n^2},\mc{B}_{n,K}(\kappa_1(W^{\sss(1)})\Bigr)\in\cdot\Bigr)\cweak \hat\N^{s,\sigma_0^2}_{\sss H} \Big((I^{\sss(1)}(1), \mc{B}_K(W))\in\cdot\Bigr)\ \ \text{ in }\R_+\times\TT_{gst}.\]
A simple calculation gives $I^{\sss(1)}(1)=(|E(\mc{T})|+1)/C_0$ where $|E(\mc{T})|$ is the number of edges in the random tree $\mc{T}$. Therefore we have
\begin{align*} \hat \P^{sn}_1\Bigl(\Bigl(\frac{|E(\mc{T})|}{C_0n^2},(T(\kappa_1(W^{\sss(1)})),d_{\bs{\tau}(\kappa_1(W^{\sss (1)}))}&/n,\phi_{\kappa_1(W^{\sss(1)})}/\sqrt n\,)\Bigr)\in\cdot\Bigr)\\
&\cweak \hat\N^{s,\sigma_0^2}_{\sss H} \Big((I^{\sss(1)}(1), \mc{B}_K(W))\in\cdot\Bigr)\ \ \text{ in }\R_+\times\TT_{gst}.
\end{align*}
This is Condition $(G)^{s,+}_{1,\sigma_0,C_0}$ in \cite{B-ACF18} (see the previous Example) which was required therein to derive convergence of rescaled RW on $\mc{T}$ to Brownian motion on SBM  (see \cite{Croydon09} for the latter). \Enddef
\end{REM}

We  continue to focus on lattice trees.
For $K\in \N$ recall from Example~\ref{latrconv} that $\mc{T}^{\vec V}$ is the minimal subtree of the lattice tree $\mc{T}$ consisting of the vertices and edges along the paths 
 joining the origin to $K$ uniformly chosen vertices $V_1,\dots, V_K$ from $\mc{T}$.   Here $\vec V=(V_1,\dots,V_K)$. 
For $x\in \mc{T}$, let  $\pi_{K}(x)$ denote the point in $\mc{T}^{\vec V}$ which is closest to $x$ in the graph metric, $d_{\mc{T}}$, of $\mathcal{T}$.  One can readily check that 
\begin{equation}\label{piKanc}
\text{for any $x\in\mc{T}$, $\pi_K(x)$ is the most recent ancestor of $x$ in $\mc{T}^{\vec V}$.}
\end{equation} 
Here is our second main result.  $B(v,r)$ denotes the open Euclidean ball about $v$  of radius $r$.
\begin{THM}[Approximation by the subtree]
\label{thm:main2}
For  
lattice trees in dimensions $d>8$, for all $L$ sufficiently large (depending on $d$) 
the following holds: 

\noindent For any 
any $\vep,s>0$,
\begin{itemize}
\item[\emph{(a) }]    
$\displaystyle{\limsup_{K\to \infty}\sup_{n \in \N}\hat{\P}_1^{ns}\Big(\mc{T}
\not\subset \cup_{i=1}^KB(V_i,\vep\sqrt{n})\Big)=0, \quad \text{ and}}$
\item[\emph{(b) }] 
$\displaystyle{\limsup_{K\to\infty}\sup_{n\in\N}\hat{\P}_1^{ns}\big(\max_{x\in\mc{T}}\, d_{\mc{T}}(x,\pi_K(x))/n>\vep\big)=0,\quad \text{ and}}$
\item[\emph{(c) }] 
$\displaystyle{\limsup_{K\to\infty}\sup_{n\in\N}\hat{\P}_1^{ns}\big(\max_{x\in\mc{T}} |x-\pi_K(x)|/\sqrt n>\vep\big)=0.}$
\end{itemize}

\end{THM}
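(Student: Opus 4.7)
The plan is to prove (a) and (b) by analogous covering/density arguments---in Euclidean space and in path space respectively---using the range convergence and uniform modulus of continuity from \cite{HP19}, and then to deduce (c) from (b) using the uniform spatial H\"older continuity of rescaled historical paths (again from \cite{HP19}).

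For part (a), the event rewrites as $\{\sup_{x\in \mc{T}}\min_{i\le K}|x-V_i|/\sqrt n >\vep\}$. Given $\mc T$, the $V_i$ are i.i.d.~uniform on the vertices of $\mc T$ by \eqref{Jnunif}, since $V_i$ is the terminal point of $W^{\sn}_i$ which is sampled from $J^{\sn}$. By \cite{HP19} we can choose $R=R(\eta)$ so that $\mc T/\sqrt n\subset B(0,R)$ with probability at least $1-\eta$ uniformly in $n$. Cover $B(0,R)$ by a finite $\vep/2$-net $\{B(y_j,\vep/2)\}_{j\le N}$ with $N=N(\vep,R)$. On this good event, failure of the inclusion $\mc T\subset \cup_i B(V_i,\vep\sqrt n)$ forces some $j\le N$ with $B(y_j,\vep/2)\cap(\mc T/\sqrt n)\neq\emptyset$ yet no $V_i/\sqrt n\in B(y_j,\vep)$; the conditional probability of the latter given $\mc T$ equals $(1-q_j)^K$, where $q_j=|\mc T\cap B(\sqrt n\,y_j,\vep\sqrt n)|/|\mc T|$. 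The key ingredient is a uniform-in-$n$ lower bound $q_j\ge c(\vep)>0$ (with high probability) whenever $B(y_j,\vep/2)$ meets the range, coming from a local density lower bound on the normalized integrated measure $I^{\sn}/I^{\sn}(\oD)$ near its support---a refinement of the estimates in \cite{HP19}. A union bound over $j\le N$, then $K\to\infty$, then $\eta\to 0$, yields (a).

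For (b), by \eqref{piKanc} the event $\{\max_{x\in\mc T} d_{\mc T}(x,\pi_K(x))/n\le \vep\}$ is equivalent to: every path $w$ in the support of $I^{\sn}$ satisfies $\max_{i\le K}\tau(w,W_i^{\sn})\ge \FL(w)-\vep$, with $\tau$ from \eqref{tau_def}. The analogous statement under $\hat\N_H^s$ holds because (i) the $W_i$ are i.i.d.~$J$-distributed, (ii) the support of $J$ is a.s.~compact in $\mc C$, and (iii) for HBM, two paths close in the uniform topology must have branch time close to their common lifetime---a consequence of the absence of Brownian double points in $d\ge 4$ and the absolute continuity alluded to in Remark~\ref{rem:limitlaw}. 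The uniform-in-$n$ transfer mirrors the strategy of (a), but in path space: cover the support of $J^{\sn}$ by a finite $\vep/2$-net in $\mc C$ (possible uniformly in $n$ via the uniform path modulus of continuity from \cite{HP19}), apply a uniform-in-$n$ lower bound on the $J^{\sn}$-mass of small $\mc C$-balls, and take a union bound.

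For (c), if $d_{\mc T}(x,\pi_K(x))\le \vep n$ then $x$ and $\pi_K(x)$ lie on a common ancestral path in $\mc T$ of tree-length $\le \vep n$, and the uniform spatial H\"older continuity of rescaled historical paths from \cite{HP19} gives $|x-\pi_K(x)|\le C\sqrt{\vep n}$ uniformly in $n$ with high probability. Hence $|x-\pi_K(x)|/\sqrt n\le C\sqrt\vep$, which together with (b) implies (c) after choosing $\vep$ small. The principal technical obstacle is to secure the uniform-in-$n$ lower bounds on local density (both of $I^{\sn}$ in $\R^d$ for (a) and of $J^{\sn}$ in $\mc C$ for (b)) near their respective supports; once these are in place, each of (a) and (b) reduces to an elementary finite covering argument, and (c) follows at once.
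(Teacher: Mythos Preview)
Your deduction of (c) from (b) via the uniform modulus of continuity from \cite{HP19} is correct and matches the paper.  The substantive gaps are in (a) and especially (b).

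For (b), your path-space covering strategy rests on two ingredients that you flag as the ``principal technical obstacle'' but do not substantiate: a uniform-in-$n$ lower bound on the $J^{\sn}$-mass of small $\mc C$-balls centred on the support, and the implication ``$w,w'$ close in $\mc C \Rightarrow \tau(w,w')$ close to $\FL(w)\wedge\FL(w')$.''  The second is the real problem.  For the lattice tree paths in $I^{\sn}$ this implication is simply false pointwise: two random-walk paths can branch at an early time and remain within $\vep$ of each other for $\Theta(n)$ further steps.  You only argue the implication for the limiting HBM via absence of double points in $d\ge 4$, but that is an almost-sure statement about a \emph{fixed pair} of $J$-sampled paths; what you need is a \emph{uniform} statement over all $w$ in the support of $I^{\sn}$, and no transfer mechanism is given.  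The paper's proof of (b) avoids spatial/path-space closeness entirely and is purely genealogical: it discretises time into levels $i2^{-m}$, and for each $x\in\mc T_\ell$ identifies the ancestor $y$ of $x$ at the nearest level below.  The key input (Lemma~4.6 of \cite{HP19}) says that, with high probability, every such ancestor that survives to the next level has normalized descendant mass at least $2^{-10m}$ over the following interval, while the number of surviving ancestors per level is controlled by a simple first-moment bound.  A union bound over levels and ancestors then shows some $V_j$ is a descendant of $y$, whence $d_{\mc T}(x,V_j)\le O(2^{-m}n)$.  No inference from spatial proximity to genealogical proximity is needed.

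For (a), your covering-by-spatial-balls approach requires a uniform lower bound on the fraction of vertices of $\mc T$ within Euclidean distance $\vep\sqrt n$ of an arbitrary point of $\mc T$.  This spatial local density bound is not in \cite{HP19} and is not obviously true (the limiting integrated SBM measure is singular).  The paper sidesteps this entirely: it uses the joint weak convergence of $(R^{\sn},\bar X^{\sn}_\infty,V^{\sn})$ to $(R,\bar X_\infty,V)$ (from Theorem~2 of \cite{HP19}), compactness of the SBM range $R$, and the elementary fact that i.i.d.\ samples from a compactly supported probability eventually $\vep$-cover its support.  Skorokhod representation then transfers the covering to $R^{\sn}$ for large $n$, and finiteness of $\mc T$ handles small $n$.
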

Parts (b) and (c) of Theorem \ref{thm:main2} constitute the main part of  Condition $(S)$ of \cite{B-ACF18} in the context of lattice trees.  There $(S)$ was needed to establish the weak convergence of RW on lattice trees to Brownian motion on SBM.

\begin{REM}\label{rem:thm2}  A general class of lattice models with a notion of ancestry is introduced in \cite{HP19}.  This class includes lattice trees, critical oriented percolation, critical contact processes and voter models. The lattice tree $\mc{T}$ is replaced by an abstract space-time graph in $\Z_+\times\Z^d$. Under Conditions 1--7 of that reference, Theorem~2 of \cite{HP19} proves weak convergence of the rescaled ranges to the range of SBM in the Hausdorff topology. 
For the discrete time setting ($I=\Z_+$ in the notation of \cite{HP19}) these conditions include sufficiently spread out lattice trees for $d>8$ and sufficiently spread out oriented percolation for $d>4$.  Using this weak convergence of the ranges and the compactness of the range for SBM one can easily derive Theorem~\ref{thm:main2}(a) under Conditions 1--7 of \cite{HP19} in the discrete time setting,  and hence for the spread-out lattice trees ($d>8$) in the statement of (a) above.  Although (a) is similar in spirit to parts (b) and (c), we omit its simple proof because the result is not used in the proofs of (b) and (c), and it is the latter results that are used in \cite{B-ACF18}. 
The proof of (b) can also be adapted to  the discrete time setting of \cite{HP19} under Conditions~1--7 of \cite{HP19}, where $d_{\mc{T}}$ becomes the graph metric associated with the aforementioned space-time graph in $\Z_+\times\Z^d$. In particular (b) also holds for sufficiently spread out oriented percolation with $d>4$.   We   only prove  it in the setting of lattice trees, as stated above, where the argument simplifies. 
Whether or not (c) remains valid under these general Conditions remains unresolved. \Enddef
\end{REM}

Theorem \ref{thm:main2}~(b,c) is proved in 
Section~\ref{sec:S}.  The proof of Theorem~\ref{thm:main2} does not use historical processes per se, although a notion of ancestry plays a key role.   

\section{Some Elementary Results on the Graph Spatial Tree of $w$}\label{ssec:gsts}

We start by proving a number of  results stated in Section~\ref{sec:GST} without proof. Throughout this section $w\in C_K$ and $\bs{\tau}=\bs{\tau}(w)$ is non-degenerate. Recall the definition of $m(i,j)$ from \eqref{mdef}.
\medskip
\begin{LEM}\label{parentwd} \,
\begin{itemize}
\item[\emph{(a)}] Let $i,j,i',j'\in[K]$. If $\aij=\langle i',j'\rangle$, then $m(i,j)=m(i',j')$, $\tau_{i,m(i,j)}=\tau_{i',m(i',j')}$, and $\langle i,m(i,j)\rb=\langle i',m(i',j')\rb$.
\item[\emph{(b)}] $\pi:T(\bs{\tau})\setminus \{0\}\ra T(\bs{\tau})\setminus\{1,\dots,K\}$.
\end{itemize}
\end{LEM}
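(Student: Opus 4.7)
The plan is to derive (a) directly from the ultrametric property $\tau_{i,j}\wedge\tau_{j,k}\le \tau_{i,k}$ in \eqref{tauprops} together with the characterization \eqref{eqclass3} of equality of equivalence classes, and then obtain (b) almost immediately.

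For (a), the hypothesis $\aij=\lb i',j'\rb$ is, by \eqref{eqclass3}, equivalent to
\begin{equation*}
\tau^*:=\tau_{i,j}=\tau_{i',j'}\le \tau_{i,i'}.
\end{equation*}
The key step is to show that for every $k\in\{0,\dots,K\}$,
\begin{equation*}
\tau_{i,k}<\tau^*\iff \tau_{i',k}<\tau^*,\qquad\text{and when either (hence both) holds, }\tau_{i,k}=\tau_{i',k}.
\end{equation*}
Suppose $\tau_{i,k}<\tau^*\le \tau_{i,i'}$. Applying the ultrametric inequality from \eqref{tauprops} to the triple $(i',i,k)$ gives $\tau_{i',k}\ge \tau_{i,i'}\wedge\tau_{i,k}=\tau_{i,k}$. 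If we had $\tau_{i',k}>\tau_{i,i'}$, then the same inequality applied to $(i,i',k)$ would force $\tau_{i,k}\ge \tau_{i,i'}\wedge\tau_{i',k}=\tau_{i,i'}\ge \tau^*$, a contradiction. Hence $\tau_{i',k}\le\tau_{i,i'}$, and the $(i,i',k)$ inequality now yields $\tau_{i,k}\ge\tau_{i',k}$; combining, $\tau_{i,k}=\tau_{i',k}<\tau^*$. The reverse implication is symmetric. Taking the maximum over this common set gives $\tau_{i,m(i,j)}=\tau_{i',m(i',j')}$, and because the sets of maximizers agree the ``smallest maximizer'' convention forces $m(i,j)=m(i',j')=:m$. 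A second application of \eqref{eqclass3} to $(i,m)$ and $(i',m)$, using $\tau_{i,m}=\tau_{i',m}<\tau^*\le\tau_{i,i'}$, delivers $\lb i,m\rb=\lb i',m\rb$.

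For (b), part (a) says that $\pi(\aij)=\lb i,m(i,j)\rb$ depends only on the equivalence class $\aij$, so $\pi$ is well defined on $T(\bs{\tau})\setminus\{0\}$ (non-degeneracy provides $\tau_{i,j}>0$, which is exactly what makes the set in \eqref{mdef} non-empty and hence $m(i,j)$ defined). Since $m(i,j)\in\{0,\ldots,K\}$ and $\lb i,0\rb=0$ by \eqref{eqclass2}, $\pi(\aij)$ is a vertex of $T(\bs{\tau})$. To rule out $\pi(\aij)\in\{1,\ldots,K\}$, suppose $\lb i,m(i,j)\rb=k=\{(k,\tau_{k,k})\}$ for some $k\in[K]$ (using \eqref{leafnot}). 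Then $(i,\tau_{i,m(i,j)})\in\lb k,k\rb$ forces $i=k$ and $\tau_{i,m(i,j)}=\tau_{i,i}$, contradicting the strict inequality $\tau_{i,m(i,j)}<\tau_{i,j}\le\tau_{i,i}$ built into \eqref{mdef}. The main obstacle is the ultrametric matching of the full $\tau$-profiles below $\tau^*$ in (a); once that is established, everything else is bookkeeping using \eqref{eqclass2}, \eqref{eqclass3}, \eqref{leafnot}, and \eqref{mdef}.
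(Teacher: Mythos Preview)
Your proof is correct and follows essentially the same outline as the paper's: both establish that the sets $\{k:\tau_{i,k}<\tau^*\}$ and $\{k:\tau_{i',k}<\tau^*\}$ coincide and that $\tau_{i,k}=\tau_{i',k}$ on this set, then read off $m(i,j)=m(i',j')$ and use \eqref{eqclass3} once more; part (b) is identical in both.

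The one genuine, if minor, difference is in how the key claim is justified. The paper obtains the ``iff'' directly from the set description \eqref{eqclass2} of $\aij$ (so that $\tau_{i,k}\ge\tau_{i,j}$ iff $\tau_{i',k}\ge\tau_{i,j}$), and then gets $\tau_{i,k}=\tau_{i',k}$ from the path equality $w_i\vert_{\tau_{i,j}}=w_{i'}\vert_{\tau_{i,j}}$. You instead argue purely from the ultrametric inequality in \eqref{tauprops}, never touching the underlying paths $w$. This buys a small conceptual gain: your argument shows that the well-definedness of $\pi$ depends only on $\bs{\tau}\in R_K$ being non-degenerate, not on any particular $w\in C_K$ realizing it. One caveat worth making explicit is that the ultrametric inequality in \eqref{tauprops} is stated for indices in $[K]$, so the case $k=0$ (where $\tau_{i,0}=\tau_{i',0}=0<\tau^*$ trivially) should be noted separately.
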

\begin{proof} (a) Assume $\lb i,j\rb=\lb i',j'\rb$ for $i,j,i',j'\in[K]$.  By \eqref{eqclass2} we have
\[ \forall k\in[K],\quad \ \ \tau_{i,k}\ge \tau_{i,j}\text{ iff }\tau_{i',k}\ge \tau_{i',j'}=\tau_{i,j},\]
or equivalently,
\begin{equation}\label{complement}
\forall k\in[K]\cup\{0\}, \quad \ \tau_{i,k}< \tau_{i,j}\text{ iff }\tau_{i',k}< \tau_{i',j'}=\tau_{i,j}.
\end{equation}
Note also that by \eqref{eqclass3} we have $\tau_{i,i'}\ge \tau_{i,j}$.  This and \eqref{complement} show that $\tau_{i,k}<\tau_{i,j}$ implies $\tau_{i',k}=\tau_{i,k}$ since $w_i\vert_{\tau_{i,j}}=w_{i'}\vert_{\tau_{i,j}}$.  
The definition of $m(i,j)$ in \eqref{mdef} and the above imply that $m(i,j)=m(i',j')$ and $\tau_{i,m(i,j)}=\tau_{i',m(i',j')}$.  
By $\lb i,j\rb=\lb i',j'\rb$, an application of \eqref{eqclass3} gives $\tau_{i,i'}\ge \tau_{i,j}>\tau_{i,m(i,j)}=\tau_{i',m(i',j')}$, where the strict inequality is by definition of $m$. This last conclusion implies that $\lb i, m(i,j)\rb=\lb i',m(i',j')\rb$ by another application of \eqref{eqclass3}.\\
(b) 
If $\pi(\lb i, j\rb)=k\in\{1,\dots,K\}$, 
then since $\lb k,k\rb$ is a singleton (by \eqref{leafnot}),  $i=k=m(i,j)$ which contradicts $\tau_{i,m(i,j)}<\tau_{i,j}\le\tau_{i,i}$ (by definition).  This gives (b).
\end{proof}

\begin{LEM}\label{treegraph} $T(\bs{\tau})$ is a tree graph.
\end{LEM}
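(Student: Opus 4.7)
The plan is to show that $T(\bs{\tau})$ is a finite connected graph with $|V(T(\bs{\tau}))|-1$ edges, which characterizes a tree. The entire argument rests on a single monotonicity property: $\tau$ strictly decreases along the parent map. Indeed, for any non-root vertex $v=\aij$ with $i,j\in[K]$, the definition of $m(i,j)$ in \eqref{mdef} gives directly
\[
\tau_{\pi(v)} \;=\; \tau_{i,m(i,j)} \;<\; \tau_{i,j} \;=\; \tau_v.
\]
Lemma~\ref{parentwd}(a) ensures this value depends only on the equivalence class $\aij$, so the inequality is unambiguous.

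For connectedness, I would iterate $\pi$ starting from an arbitrary vertex $v$. By Lemma~\ref{parentwd}(b), $\pi$ is defined at every non-root vertex, and by the strict monotonicity above, the resulting sequence $\tau_v > \tau_{\pi(v)} > \tau_{\pi^2(v)} > \cdots$ is strictly decreasing in the finite set $\{\tau_{i,j}:i,j\in[K]\cup\{0\}\}$. The iteration must therefore terminate in finitely many steps, and can only terminate upon reaching a vertex outside the domain of $\pi$, which by Lemma~\ref{parentwd}(b) is exactly the root $\lb 0,0\rb$. Concatenating the edges $e_{\pi^k(v)}$ yields an explicit edge path from $v$ to $0$, proving that $T(\bs{\tau})$ is connected.

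For the edge count, observe that by Lemma~\ref{parentwd}(a) the edge $e_\aij=\{\pi(\aij),\aij\}$ depends only on the equivalence class $\aij$, and by strict monotonicity $\pi(v)\neq v$, so $v$ is unambiguously identified as the ``child'' endpoint of $e_v$. Consequently the map $v \mapsto e_v$ is a bijection between non-root vertices of $T(\bs{\tau})$ and $E(\bs{\tau})$, giving $|E(\bs{\tau})| = |V(T(\bs{\tau}))| - 1$.

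Combining connectedness with the edge count shows $T(\bs{\tau})$ is a finite tree graph. (As an independent check of acyclicity, one may note that any cycle would contain a vertex $v$ of maximum $\tau$-value along the cycle, whose two cycle-neighbors would both satisfy $\tau_{\cdot}<\tau_v$; but the only neighbor of $v$ with strictly smaller $\tau$ is $\pi(v)$, a contradiction.) I do not anticipate a genuine obstacle: the only care required is verifying well-definedness on equivalence classes, which is exactly what Lemma~\ref{parentwd} supplies.
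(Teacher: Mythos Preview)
Your proof is correct and follows essentially the same approach as the paper: both establish connectedness by iterating $\pi$ and using the strict decrease $\tau_{\pi(v)}<\tau_v$ to force termination at the root. For the tree property, the paper argues acyclicity directly via the maximum-$\tau$ vertex in a putative cycle (exactly your parenthetical check), whereas your main route counts edges via the bijection $v\mapsto e_v$ from non-root vertices to $E(\bs{\tau})$; this is a harmless variant resting on the same monotonicity.
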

\begin{proof} If $\aij\in T$ by taking iterates of $\pi$ we obtain a sequence of neighbouring edges starting with $e_{\aij}$ and following a sequence of vertices $\lb i, j_n\rb$ with $\tau_{i,j_n}$ decreasing at least by a minimal amount $\delta>0$ at each step. This must end at $0$ after a finite number of steps, proving connectivity of $T$. The definition of the edge set in $T(\bs{\tau})$ implies that 
\begin{align}\nonumber&\text{for any non-root vertex $\aij$, $\pi(\aij)$}\text{ is the only neighbour, $\lb i',j'\rb$, of $\aij$ s.t. $\tau_{i',j'}\le \tau_{i,j}$,}\\
\label{nocyc}&\text{ and it satisfies $\tau_{i',j'}< \tau_{i,j}$, and the root $\lb 0,0\rb$ has no neighbour s.t. $\tau_{i',j'}\le \tau_{0,0}$}.
\end{align}
Suppose that there is a cycle.  Choose a vertex in the cycle, $\aij$, so that  $\tau_{\aij}$ is maximal over all  vertices in the cycle.  Then it has two neighbours $\lb i',j'\rb$ with  $\tau_{\lb i',j'\rb}\le \tau_{\aij}$, contradicting the above. Hence $T$ contains no cycles and is a tree. 
\end{proof}
  
Define a partial order, $\le$, on $\overline T=\overline T(\bs{\tau})$ by 
\[[i,u]\le [j,v]\text{ iff }u\le v \text{ and }u\le \tau_{i,j},\text{ that is, iff }u\le v\text{ and }w_i\vert_u=w_j\vert_u.\]
It is easy to check that $\le$ is a well-defined partial order.  We also write $\le$ for the restriction of this partial order to $T=T(\bs{\tau})$, that is, for $i,j,i',j'\in \{0,1,\dots,K\}$, 
\begin{equation}\label{ledef}\lb i',j'\rb\le \aij\text{ iff }[i',\tau_{i',j'}]\le [i,\tau_{i,j}]\text{ iff } \tau_{i',j'}\le \min(\tau_{i,j},\tau_{i,i'})\text{ iff }\tau_{i',j'}\le \tau_{i,j}\text{ and }w_{i'}\vert_{\tau_{i',j'}}=w_{i}\vert_{\tau_{i',j'}}.
\end{equation}
The relation $<$ will mean $\le$ but not equal to. 
By \eqref{ledef} and \eqref{eqclass3} for $i,j,i',j'\in \{0,1,\dots,K\}$, 
\begin{equation}\label{strictle}\apij<\aij\text{ iff }\tau_{i',j'}<\tau_{i,j}\text{ and }\tau_{i',j'}\le\tau_{i,i'}.
\end{equation}
We also introduce the ancestral ordering $\prec$ on the tree $T$, that is $\lb i',j'\rb\prec\aij$ iff $\lb i',j'\rb=\pi^k(\aij)$ for some $k\in\N$, where $\pi^k$ is the $k$-fold composition of $\pi$.  

\medskip
\begin{LEM}\label{orderprops}\,
\begin{itemize}
\item[\emph{(a)}] If $\lb i',j'\rb\le\aij$, then $\apij=\lb i,i'\rb$ or $\lb i,j'\rb$.
\item[\emph{(b)}] The orders $<$ and $\prec$ agree on $T$.
\end{itemize}
\end{LEM}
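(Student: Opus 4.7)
For (a), the plan is to apply the ultrametric-style inequality from \eqref{tauprops} twice on the index triple $\{i,i',j'\}$. First I would unfold the hypothesis $\lb i',j'\rb\le\aij$ via \eqref{ledef} into the two concrete inequalities $\tau_{i',j'}\le\tau_{i,j}$ and $\tau_{i',j'}\le\tau_{i,i'}$. Applying \eqref{tauprops} with $(i,j,k)=(i,i',j')$ gives $\tau_{i,i'}\wedge\tau_{i',j'}\le\tau_{i,j'}$, which combined with $\tau_{i',j'}\le\tau_{i,i'}$ yields $\tau_{i',j'}\le\tau_{i,j'}$. Applying it instead with $(i,j,k)=(i',i,j')$ and using the symmetry $\tau_{i',i}=\tau_{i,i'}$ gives $\tau_{i,i'}\wedge\tau_{i,j'}\le\tau_{i',j'}$, and hence the dichotomy $\tau_{i,j'}\le\tau_{i',j'}$ or $\tau_{i,i'}\le\tau_{i',j'}$. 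Combining these with the earlier bounds yields either $\tau_{i,j'}=\tau_{i',j'}$ or $\tau_{i,i'}=\tau_{i',j'}$; in the two cases \eqref{eqclass3} identifies $\apij$ with $\lb i,j'\rb$ or $\lb i,i'\rb$ respectively, the side condition in \eqref{eqclass3} being automatic from the hypothesis $\tau_{i',j'}\le\tau_{i,i'}$.

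For (b), the direction $\prec\Rightarrow<$ reduces by iteration and the transitivity of $\le$ to showing $\pi(\aij)<\aij$, which follows at once from the strict inequality $\tau_{i,m(i,j)}<\tau_{i,j}$ in \eqref{mdef} together with \eqref{ledef}. For the converse, the plan is to use (a): if $\apij<\aij$ then there is $k^*\in\{i',j'\}$ with $\apij=\lb i,k^*\rb$, so in particular $\tau_{\apij}=\tau_{i,k^*}<\tau_{i,j}$. Iterating $\pi$ from $v_0=\aij$ produces a chain $v_\ell=\lb i,m_\ell\rb$ (with $m_0=j$ and $m_{\ell+1}=m(i,m_\ell)$) whose $\tau$-values enumerate in strictly decreasing order every distinct element of $\{\tau_{i,k}:k=0,\dots,K\}$ that lies $\le\tau_{i,j}$, terminating at $0$. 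Hence some $v_\ell$ satisfies $\tau_{i,m_\ell}=\tau_{i,k^*}$, and \eqref{eqclass3} then forces $v_\ell=\lb i,k^*\rb=\apij$, giving $\apij=\pi^\ell(\aij)\prec\aij$.

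The main (and essentially only) obstacle is the bookkeeping in (a): one must line up the two applications of \eqref{tauprops} in the correct directions and then verify the side condition of \eqref{eqclass3} in each alternative in order to legitimately identify the two equivalence classes. Once (a) is in hand, (b) reduces to the purely combinatorial observation that the iterates of $\pi$ starting from $\aij$ exhaust precisely those distinct $\tau$-values at $i$ lying strictly below $\tau_{i,j}$, which is transparent from the definition \eqref{mdef} of $m(i,j)$.
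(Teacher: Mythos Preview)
Your proposal is correct and follows essentially the same route as the paper. In (a) both arguments combine the hypothesis with the ultrametric inequality from \eqref{tauprops} on the triple $\{i,i',j'\}$ to force $\tau_{i',j'}=\min(\tau_{i,i'},\tau_{i,j'})$ and then invoke \eqref{eqclass3}; the only cosmetic difference is that the paper obtains $\tau_{i',j'}\le\tau_{i,j'}$ by the symmetry $\apij=\lb j',i'\rb$ in \eqref{ledef} rather than a second use of \eqref{tauprops}. In (b) both arguments reduce the forward direction to $\pi(\aij)<\aij$, and for the converse use (a) to write $\apij=\lb i,k\rb$ and then observe that the $\pi$-iterates of $\aij$ sweep through all distinct $\tau_{i,\cdot}$-values below $\tau_{i,j}$; the paper phrases this as an induction ($\apij\le\pi(\aij)$, repeat) while you describe the full chain at once, but the content is identical.
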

\begin{proof}(a) By assumption, \eqref{ledef} and symmetry (interchange $i'$ and $j'$) we have 
\begin{equation}\label{ineq1}\tau_{i',j'}\le\tau_{i,j}\text{ and }\tau_{i',j'}\le \min(\tau_{i,i'},\tau_{i,j'}).
\end{equation}
Assume $\tau_{i,i'}\le \tau_{i,j'}$.  Then by the last inequality in \eqref{tauprops}, $\tau_{i,i'}=\min(\tau_{i,i'},\tau_{i,j'})\le \tau_{i',j'}$. We also have $\tau_{i,i'}\ge \tau_{i',j'}$ by \eqref{ineq1} and hence $\tau_{i,i'}=\tau_{i',j'}$.  It follows from the definitions now that $\apij=\lb i',i\rb=\lb i,i'\rb$.  If $\tau_{i,i'}\ge \tau_{i,j'}$, analogous reasoning (interchange $i'$ and $j'$) leads to $\apij=\lb i,j'\rb$. 

\noindent (b) To show that $v_1\prec v_2$ implies $v_1<v_2$, note first by transitivity of $<$, it suffices to show $\pi(v_2)<v_2$, which follows from \eqref{eqclass3}--\eqref{pidefn}.  Assume now that $\apij<\aij$.  By (a), we have $\apij=\lb i,k\rb$ for some $k$. We have $\tau_{i,k}<\tau_{i,j}$ which implies by definition of $m(i,j)$ that $\tau_{i,k}\le \tau_{i,m(i,j)}$.  This in turn easily implies $\apij=\lb i,k\rb\le\lb i,m(i,j)\rb=\pi(\aij)$.  If equality holds here we are done since then $\apij\prec\aij$.  If not, continue this process until we arrive at $\apij=\pi^m(\aij)$ (it must stop after finitely many iterations) and conclude $\apij\prec\aij$.
\end{proof}
Henceforth we will write  $<$ for the strict ancestral ordering and drop the notation $\prec$.  
Recall that if $v_1,v_2$ are vertices in the tree $T$, $v_1\wedge v_2$ denotes their greatest common ancestor with respect to $\le$ on $T$, and $[[v_1,v_2]]$ is the unique path of non-overlapping edges from $v_1$ to $v_2$. 
\medskip

\begin{LEM}\label{obvfact} For any $i,j,k,\ell\in[K]$, if $\min(\tau_{i,k},\tau_{j,\ell})>\tau_{i,j}$, then $\tau_{k,\ell}=\tau_{i,j}$.
\end{LEM}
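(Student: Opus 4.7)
The claim is a purely combinatorial consequence of the ``ultrametric'' inequality in \eqref{tauprops}, namely $\tau_{a,b}\wedge\tau_{b,c}\le \tau_{a,c}$ for all $a,b,c$, applied a few times. Intuitively, $w_i$ and $w_k$ agree past time $\tau_{i,j}$, and $w_j$ and $w_\ell$ also agree past time $\tau_{i,j}$, so $w_k$ ``sees'' what $w_i$ does, and $w_\ell$ ``sees'' what $w_j$ does, on $[0,\tau_{i,j}]$; hence the first time $w_k$ and $w_\ell$ disagree should be exactly $\tau_{i,j}$.

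The plan is as follows. First I would establish the auxiliary equality $\tau_{i,\ell}=\tau_{i,j}$. For the lower bound, apply $\tau_{i,j}\wedge\tau_{j,\ell}\le\tau_{i,\ell}$; since by hypothesis $\tau_{j,\ell}>\tau_{i,j}$, the left side equals $\tau_{i,j}$, giving $\tau_{i,\ell}\ge\tau_{i,j}$. For the reverse inequality, apply $\tau_{i,\ell}\wedge\tau_{\ell,j}\le\tau_{i,j}$; since $\tau_{\ell,j}=\tau_{j,\ell}>\tau_{i,j}$, the minimum must be attained by $\tau_{i,\ell}$, so $\tau_{i,\ell}\le\tau_{i,j}$. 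Hence $\tau_{i,\ell}=\tau_{i,j}$.

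Next I would deduce $\tau_{k,\ell}=\tau_{i,j}$ by the same trick. For the lower bound, apply $\tau_{k,i}\wedge\tau_{i,\ell}\le\tau_{k,\ell}$; since $\tau_{k,i}>\tau_{i,j}$ and $\tau_{i,\ell}=\tau_{i,j}$ by the previous step, the left side equals $\tau_{i,j}$, yielding $\tau_{k,\ell}\ge\tau_{i,j}$. For the upper bound, apply $\tau_{k,\ell}\wedge\tau_{k,i}\le\tau_{\ell,i}=\tau_{i,j}$; since $\tau_{k,i}>\tau_{i,j}$, the minimum is forced to be $\tau_{k,\ell}$, so $\tau_{k,\ell}\le\tau_{i,j}$. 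Combining the two inequalities gives the desired equality.

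No real obstacle is anticipated here: the proof is a short, three-line application of the ultrametric inequality \eqref{tauprops}, once in each direction for each of the two quantities $\tau_{i,\ell}$ and $\tau_{k,\ell}$. The only thing to be careful about is the strictness of the hypotheses $\tau_{i,k}>\tau_{i,j}$ and $\tau_{j,\ell}>\tau_{i,j}$, which is precisely what forces the minimum in each application to be taken by the other term and thereby turns the inequality into an equality.
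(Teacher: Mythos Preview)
Your proof is correct and takes a genuinely different route from the paper's. The paper argues directly with the underlying paths: since $\tau_{i,k}>\tau_{i,j}$ and $\tau_{j,\ell}>\tau_{i,j}$, there is a $\delta>0$ with $w_i\vert_{\tau_{i,j}+\delta}=w_k\vert_{\tau_{i,j}+\delta}$ and $w_j\vert_{\tau_{i,j}+\delta}=w_\ell\vert_{\tau_{i,j}+\delta}$; hence $w_k$ and $w_\ell$ separate at exactly the time $w_i$ and $w_j$ do, namely $\tau_{i,j}$. Your argument instead never touches the paths and works purely from the ultrametric inequality $\tau_{a,b}\wedge\tau_{b,c}\le\tau_{a,c}$ in \eqref{tauprops}, applied four times to pin down first $\tau_{i,\ell}$ and then $\tau_{k,\ell}$. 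Your approach has the advantage of showing the lemma is a property of any $\bs{\tau}\in R_K$ satisfying \eqref{tauprops}, independent of its realisation as $\bs{\tau}(w)$; the paper's approach has the virtue of making the geometric picture (``$w_k$ tracks $w_i$, $w_\ell$ tracks $w_j$'') transparent. Both are short and complete.
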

\begin{proof} The hypotheses imply that for some $\delta>0$ we have $w_i\vert_{\tau_{i,j}+\delta}=w_k\vert_{\tau_{i,j}+\delta}$ and $w_j\vert_{\tau_{i,j}+\delta}=w_\ell\vert_{\tau_{i,j}+\delta}$. It follows that $w_k$ and $w_\ell$ separate at the first time at $\tau_{i,j}$, whence the result.
\end{proof}

\begin{LEM}\label{ancestry}\,
\begin{itemize}
\item[\emph{(a)}] If $\tau_{i,k}<\min(\tau_{i,j},\tau_{k,l})$, then $\aij\wedge \lb k,\ell\rb=\lb i, k\rb$.
\item[\emph{(b)}] Assume $\tau_{i,k}\ge \min(\tau_{i,j},\tau_{k,\ell})$.
\begin{itemize}
\item[\emph{(i)}] If $\tau_{i,j}\le \tau_{k,\ell}$, then $\aij\wedge \lb k,\ell\rb=\aij$.
\item[\emph{(ii)}] If $\tau_{i,j}\ge \tau_{k,\ell}$, then $\aij\wedge \lb k,\ell\rb=\lb k,\ell\rb$.
\end{itemize}
\end{itemize}
\end{LEM}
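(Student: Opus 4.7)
The plan is to work with the partial order $\le$ on $T$ characterized by \eqref{ledef}, namely $\lb i',j'\rb\le \aij$ iff $\tau_{i',j'}\le \min(\tau_{i,j},\tau_{i,i'})$, and to exploit Lemma~\ref{orderprops}(a), which lets me rewrite any ancestor of $\aij$ as $\lb i,a\rb$ for some $a$. In each of the three cases the candidate for $\aij\wedge\lb k,\ell\rb$ will first be shown to be a common ancestor of $\aij$ and $\lb k,\ell\rb$, and then maximality will be verified; the latter is automatic in case (b) because the candidate is one of the two vertices themselves.

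For part (a), under the hypothesis $\tau_{i,k}<\min(\tau_{i,j},\tau_{k,\ell})$, I would check $\lb i,k\rb\le \aij$ via \eqref{ledef} (both $\tau_{i,k}\le \tau_{i,i}$ and $\tau_{i,k}<\tau_{i,j}$ hold), and symmetrically $\lb i,k\rb\le \lb k,\ell\rb$ since $\tau_{i,k}<\tau_{k,\ell}$. To show maximality, take any common ancestor $v$; by Lemma~\ref{orderprops}(a) applied to $v\le \aij$ we may write $v=\lb i,a\rb$ with $\tau_{i,a}\le \tau_{i,j}$, and applied to $v\le\lb k,\ell\rb$ we may write $v=\lb k,b\rb$ with $\tau_{k,b}\le\tau_{k,\ell}$. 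The identification $\lb i,a\rb=\lb k,b\rb$ via \eqref{eqclass3} forces $\tau_{i,a}=\tau_{k,b}\le\tau_{i,k}$, and then \eqref{ledef} gives $v\le \lb i,k\rb$.

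For part (b)(i), the assumptions give $\tau_{i,k}\ge \tau_{i,j}$ and $\tau_{i,j}\le\tau_{k,\ell}$, so $\tau_{i,j}\le\min(\tau_{k,\ell},\tau_{k,i})$ and \eqref{ledef} yields $\aij\le\lb k,\ell\rb$. Since $\aij\le\aij$ trivially, $\aij$ is a common ancestor, and it is automatically maximal. Part (b)(ii) follows by interchanging the roles of $(i,j)$ and $(k,\ell)$.

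I do not foresee any genuine obstacle here: once the correspondence between the partial order on $T$ and the ``depth'' description $[i,u]\le[j,v]$ iff $u\le v$ and $w_i\vert_u=w_j\vert_u$ is in hand, the lemma is essentially the statement that the greatest common ancestor in $\overline T$ of $[i,\tau_{i,j}]$ and $[k,\tau_{k,\ell}]$ sits at depth $\min(\tau_{i,j},\tau_{k,\ell},\tau_{i,k})$ and, under each hypothesis, that depth realises one of the labelled vertices $\lb i,k\rb$, $\aij$, $\lb k,\ell\rb$. The only mild subtlety is verifying that the identification of representatives given by Lemma~\ref{orderprops}(a) is compatible across both ancestor relations, which is handled exactly as above.
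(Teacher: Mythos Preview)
Your proposal is correct and follows essentially the same route as the paper. Part (b) is identical to the paper's argument. For the maximality step in (a), the paper bypasses Lemma~\ref{orderprops}(a): given a common ancestor $\lb m,n\rb$, it reads off $\tau_{m,n}\le\min(\tau_{m,i},\tau_{m,k})$ directly from \eqref{ledef} and then invokes the ultrametric inequality in \eqref{tauprops} to conclude $\tau_{m,n}\le\tau_{i,k}$, whereas you first relabel $v$ as $\lb i,a\rb=\lb k,b\rb$ and use \eqref{eqclass3}. Both arguments are short and equivalent; just remember to note that the root case $v=\lb 0,0\rb$ is trivial, as the paper does.
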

\begin{proof} (a) Apply \eqref{strictle} with $i=i'$ to see that $\tau_{i,k}<\min(\tau_{i,j},\tau_{k,l})$ implies
$\lb i,k\rb<\aij$ and $\lb i,k\rb<\lb k,\ell\rb$. Assume that $m,n\in[K]$  satisfy $\lb m,n\rb\le \aij$ and $\lb m,n\rb\le \lb k,\ell\rb$.   Then by \eqref{ledef}, 
$\tau_{m,n}\le \min(\tau_{m,i},\tau_{m,k})\le \tau_{i,k}$ (the last by \eqref{tauprops}).  In addition by \eqref{ledef}, $w_i\vert_{\tau_{m,n}}=w_m\vert_{\tau_{m,n}}=w_k\vert_{\tau_{m,n}}$. The last two results imply (use \eqref{ledef}) $\lb m,n\rb\le \lb i,k\rb$, and this is trivial if $m=0$ or $n=0$.  From the above results we conclude that $\aij\wedge \lb k,\ell\rb=\lb i, k\rb$.

\noindent(b)(i) We have $\tau_{i,j}\le \min(\tau_{i,k},\tau_{k,\ell})$, which by \eqref{ledef} implies $\aij\le \lb k,\ell\rb$.\\
(ii) follows from (i) by interchanging $\aij$ and $\lb k,\ell\rb$.
\end{proof}
\begin{REM}\label{gcaij} It follows from (a) that $i\wedge k =\lb i,k\rb$. If $i=k$ this is trivial. For $i\neq k$, we have
$\tau_{i,k}<\min(\tau_{i,i},\tau_{k,k})$ by non-degeneracy.  We can therefore apply (a) with $j=i$ and $k=\ell$ to get the required equality.\Enddef
\end{REM}

\noindent{\bf Proof of Proposition~\ref{prop:wshape}.} By Lemma~\ref{treegraph}, $T$ is a finite tree graph and we designate $0$ as the root.  Suppose $i$  is the parent of $\lb k,\ell\rb$.  
By \eqref{strictle} we have $\tau_{i,i}<\tau_{k,\ell}$ and $\tau_{i,i}\le \tau_{i,k}$. The last inequality and the non-degeneracy of $\bs{\tau}$ imply that $i=k$. So the first inequality implies $\tau_{i,i}<\tau_{i,\ell}$, which is impossible.  Hence the labelled vertices $1,\dots,K$ have no children and are all leaves of degree $1$ (each has a neighbouring parent $\pi(i)$). 

Consider next the root $0$. By definition it has no parent.  Choose $(i_0,j_0)\in[K]^2$ such that $\tau_{i_0,j_0}=\min\{\tau_{i,j}:i,j\in[K]\}>0$. It follows by definition that $m(i_0,j_0)=0$ and so $0$ is the parent of $\lb i_0,j_0\rb$.  
Suppose now $0$ is the parent of $\lb i_1,j_1\rb$.  By the choice of $i_0,j_0$,  $\tau_{i_0,j_0}\le \min(\tau_{i_1,j_1},\tau_{i_0,i_1})$, and so by \eqref{ledef}, $\lb i_0,j_0\rb\le \lb i_1,j_1\rb$. Lemma~\ref{orderprops}(b) implies that for some $k\in\Z_+$, $\lb i_0,j_0\rb=\pi^k(\lb i_1,j_1\rb)$.  If $k>0$ then since $\pi(\lb i_1,j_1\rb)=0$ we get $\lb i_0,j_0\rb=\pi^{k-1}(0)$ which is impossible.   Thus $k=0$, whence $\lb i_0,j_0\rb=\lb i_1,j_1\rb$ and  $0$  
 has degree one.

Now consider $\aij$ for $i\neq j$ in $[K]$.  Define $c(i,j)\in [K]$ by 
$\tau_{i,c(i,j)}=\min\{\tau_{i,k}:\tau_{i,k}>\tau_{i,j}\}$ where we choose $c(i,j)$ minimal if there is more than one   $k$.
Note that the minimum is over  a 
non-empty set because it contains $i$ by the non-degeneracy of $\bs{\tau}$. 
 The definition of $\pi$ easily gives $\pi(\lb i,c(i,j)\rb)=\aij$. Interchanging $i$ and $j$ we also see that 
 $\pi(\lb j,c(j,i)\rb)=\aij$.  
 These two children of $\aij$, $\lb i,c(i,j)\rb$ and $\lb j,c(j,i)\rb$, are distinct since otherwise by \eqref{eqclass3} we have $\tau_{i,j}\ge \tau_{i,c(i,j)}=\tau_{j,c(j,i)}$, which contradicts $\tau_{i,c(i,j)}>\tau_{i,j}$ (from the definition of $c(i,j)$). 
  
Finally, assume that a parent has three distinct children $\lb i_r,j_r\rb$ (for $r=1,2,3$) in $T$.  Consider any pair of these siblings, say $\lb i_1,j_1\rb$ and $\lb i_2,j_2\rb$.  
Neither is a descendent of the other because they are distinct siblings, so by Lemma \ref{ancestry} (b) we must have $\tau_{i_1,i_2}<\min(\tau_{i_1,j_1},\tau_{i_2,j_2})$.  By  Lemma \ref{ancestry} (a) their most recent common ancestor is $\lb i_1,i_2\rb$.  Since the greatest common ancestor is  the common parent, we conclude that the parent is $\lb i_1,i_2\rb$, and so we also have $i_1\neq i_2$ by Lemma~\ref{parentwd}(b).  Applying this for each pair of siblings gives that the common parent is 
$\lb i_1,i_2\rb=\lb i_1,i_3\rb=\lb i_2,i_3\rb$, and $i_1,i_2$ and $i_3$ are distinct.  This shows that $\tau_{i_r,i_s}$ for  $r\ne s$ are all equal.  This violates the non-degeneracy condition \eqref{binaryc} and so the proof is complete.\qed

\medskip

\noindent{\bf Proof of Lemma~\ref{lem:samemet}.} If $v_1$ and $v_2$ are distinct vertices in the non-degenerate binary tree $T$, the path $[[v_1,v_2]]$ is the sequence of neighbouring edges going down from $v_1$ to $v_1\wedge v_2$, followed by the sequence of neighbouring edges going back up from $v_1\wedge v_2$ to $v_2$. Therefore (recall Definition~\ref{metricdef})
\begin{equation}\label{dtauform}
d_{\ell}(\aij,\lb k,\ell\rb)=\sum_{e_v\in[[\aij\wedge \lb k,\ell\rb,\aij]]} \ell(e_v )+\sum_{e_v\in[[\aij\wedge \lb k,\ell\rb,\lb k,\ell\rb]]} \ell(e_v).
\end{equation}
Here $[[v,v]]=\emptyset$ and so either of the above sums may be empty.\\
{\bf Case 1.} $\tau_{\lb i,k\rb}<\min(\tau_{\aij},\tau_{\lb k,\ell\rb})$.\\
By Lemma~\ref{ancestry}(a) and $\ell(e_v)=\tau_v-\tau_{\pi(v)}$, the right-hand side of \eqref{dtauform} telescopes to
give
\[d_{\ell}(\aij,\lb k,\ell\rb)=\tau_{\aij}+\tau_{\lb k,\ell\rb}-2\tau_{\lb i,k\rb}.\]
{\bf Case 2.} $\tau_{\lb i,k\rb}\ge\min(\tau_{\aij},\tau_{\lb k,\ell\rb})=\tau_{\aij}$.\\
By Lemma~\ref{ancestry}(b)(i), the first summand in \eqref{dtauform} vanishes and the second telescopes to give
\[d_{\ell}(\aij,\lb k,\ell\rb)=\tau_{\lb k,\ell\rb}-\tau_{\aij}=|\tau_{\lb k,\ell\rb}-\tau_{\aij}|.\]
{\bf Case 3.} $\tau_{\lb i,k\rb}\ge\min(\tau_{\aij},\tau_{\lb k,\ell\rb})=\tau_{\lb k,\ell\rb}$.\\
As in Case 2 we get
\[d_{\ell}(\aij,\lb k,\ell\rb)=\tau_{\aij}-\tau_{\lb k,\ell\rb}=|\tau_{\lb k,\ell\rb}-\tau_{\aij}|.\]
To summarize the three cases, we have
\begin{equation}\label{dellform1}
d_{\ell}(\aij,\lb k,\ell\rb)=\begin{cases} \tau_{\aij}+\tau_{\lb k,\ell\rb}-2\tau_{\lb i,k\rb}, & \text{ if }\min(\tau_{\aij},\tau_{\lb k,\ell\rb})>\tau_{i,k}\\
|\tau_{\lb k,\ell\rb}-\tau_{\aij}|, &\text{ otherwise}.
\end{cases}
\end{equation}
On the other hand, we have that
\[d_{\bs{\tau}}(\aij,\lb k,\ell\rb)=d_{\bs{\tau}}([i,\tau_{\aij}],[k,\tau_{\lb k,\ell \rb}]),\]
and the last expression equals the right-hand side of \eqref{dellform1} by \eqref{treemet}.
\qed

\medskip
Recall our standing assumption that $w\in C_K$.
\begin{LEM}\label{dtauconv}\,
\begin{itemize}
\item[\emph{(a)}] For any $[i,u],[i_n,u_n]\in\overline T(\bs{\tau})$, 
$$d_{\bs{\tau}}([i_n,u_n],[i,u])\to 0\text{ iff }u_n\to u\text{ and for large enough }n, [i_n,u]=[i,u].$$
\item[\emph{(b)}] If $\phi_w:\overline T\to\R^d$ is given by $\phi_w([i,u])=w_i(u)$, then $\phi_w$ is continuous.
\end{itemize}
\end{LEM}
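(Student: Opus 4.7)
The plan is to leverage the two-branch formula \eqref{treemet} together with the fact that, for fixed $\bs{\tau}$, the quantity $\tau_{i_n,i}$ takes only finitely many values (at most $K$) as $n$ varies. This finiteness lets us rule out the ``first branch'' of \eqref{treemet} in any sequence whose distance tends to $0$, after which the result reduces to elementary manipulations.

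For the forward direction of (a), I would first show the first branch $\min(u_n,u)>\tau_{i_n,i}$ cannot hold for infinitely many $n$. Passing to a subsequence on which $i_n$ is constantly equal to some $j\in[K]$, this branch would give $u_n+u-2\tau_{j,i}\to 0$ with $u_n,u>\tau_{j,i}$ (the latter fixed), forcing $u_n\to 2\tau_{j,i}-u\le\tau_{j,i}$, which contradicts $u_n>\tau_{j,i}$. Hence for all large $n$ the second branch applies, so $d_{\bs{\tau}}([i_n,u_n],[i,u])=|u_n-u|$, whence $u_n\to u$. A similar subsequence/finiteness argument then shows $u\le\tau_{i_n,i}$ for all large $n$: if not, a subsequence has $u_n\le\tau_{i_n,i}<u$ with $u_n\to u$, forcing $\tau_{i_n,i}\to u$ and, by finiteness, $\tau_{i_n,i}=u$ eventually, contradicting $\tau_{i_n,i}<u$. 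By \eqref{eqrel} this gives $[i_n,u]=[i,u]$ for large $n$ (the case $u=0$ is trivial since then both classes equal the root). For the backward direction, the hypotheses $[i_n,u]=[i,u]$ (equivalently $u\le\tau_{i_n,i}$) and $u_n\to u$ yield $\min(u_n,u)\le u\le\tau_{i_n,i}$, placing us in the second branch of \eqref{treemet}; hence $d_{\bs{\tau}}([i_n,u_n],[i,u])=|u_n-u|\to 0$.

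For (b), I would apply (a) to reduce continuity to the case $u_n\to u$ with $w_{i_n}\vert_u=w_i\vert_u$ for all large $n$; in particular $w_{i_n}(u)=w_i(u)$. Decompose
\[
w_{i_n}(u_n)-w_i(u)=\bigl[w_{i_n}(u_n)-w_{i_n}(u)\bigr]+\bigl[w_{i_n}(u)-w_i(u)\bigr].
\]
The second bracket vanishes, while the first tends to $0$ by uniform continuity of the finite family $\{w_j:j\in[K]\}\subset\mc{C}$ on the compact interval $[0,1+\sup_n u_n]$. Hence $\phi_w([i_n,u_n])\to\phi_w([i,u])$.

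The main obstacle lies entirely in part (a): ruling out the ``triangle'' branch of \eqref{treemet} along any distance-vanishing sequence. This is where the finiteness of $[K]$ (forcing $\tau_{i_n,i}$ into a finite set) is essential, since a priori the edge lengths appearing in that branch need not be bounded away from the coordinates $u_n,u$. Once the correct branch of \eqref{treemet} is identified, all remaining steps in both (a) and (b) reduce to elementary bookkeeping and the continuity of each path $w_j$.
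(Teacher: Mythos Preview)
Your proof is correct and follows essentially the same approach as the paper's. The only organizational difference is that the paper first records the general inequality $|u-v|\le d_{\bs{\tau}}([i,u],[j,v])$ (valid in both branches of \eqref{treemet}) to obtain $u_n\to u$ before any case analysis, and then runs a single contradiction via a constant-$i_n$ subsequence to get $u\le\tau_{i,i'}$, whereas you split this into two separate contradiction steps; part (b) is handled identically in both.
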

 \begin{proof}  It is easy to check from the definition of $d_{\bs{\tau}}$ that 
\begin{equation}\label{dtaubnd}
\text{for any }[i,u],[j,v]\in \overline T,\ \ |u-v|\le d_{\bs{\tau}}([i,u],[j,v]).
\end{equation}
Let $[i_n,u_n]\to[i,u]$ in $\overline T$. By the above we have $u_n\to u$.  Let $i'$ be a limit point of $\{i_n\}$, so that we may choose a subsequence s.t. $i_{n_k}=i'$ for all $k$.  Therefore we have 
\begin{equation}\label{conv1}
[i',u_{n_k}]=[i_{n_k},u_{n_k}]\to [i,u].
\end{equation}
We claim that $u\le \tau_{i,i'}$. Suppose not, that is $u>\tau_{i,i'}$.  Then $u_{n_k}>\tau_{i,i'}$ for $k$ large since $u_n\to u$. Therefore we have from \eqref{conv1},
\begin{align*}
0=\lim_kd_{\overline T}([i',u_{n_k}],[i,u])=\lim_k u_{n_k}+u-2\tau_{i,i'}=2u-2\tau_{i,i'}.
\end{align*}
This implies $u=\tau_{i,i'}$, a contradiction.  We conclude that for any limit point $i'$ we have $u\le \tau_{i,i'}$, which implies by \eqref{eqrel} that $[i',u]=[i,u]$.  It follows that for $n$ large enough, $[i_n,u]=[i,u]$. 

Conversely assume $u_n\to u$ and for $n\ge N$, $[i_n,u]=[i,u]$. By \eqref{eqrel} we have for $n\ge N$, $u\le \tau_{i_n,i}$ which implies that $d_{\bs{\tau}}([i_n,u_n],[i,u])=|u_n-u|\to 0$.

(b) We have already noted in the Introduction that $\phi_w$ is well-defined.  If $[i_n,u_n]\to[i,u]$ in $\overline T$, part (a) implies for large enough $n$, 
 \begin{align*}|\phi_w([i_n,u_n])-\phi_w([i,u])|&=|\phi_w([i_n,u_n])-\phi_w([i_n,u])|\\
 &=|w_{i_n}(u_n)-w_{i_n}(u)|\le \max_{i}|w_{i}(u_n)-w_{i}(u)|\to 0\text{ as }n\to\infty.
 \end{align*} 
\end{proof}

\begin{LEM}\label{lem:comp}
Assume $\bs{\tau}$ and ${\bs{\tau}'}$ are non-degenerate in $R_K$ such that 
\begin{equation}\label{compcond}
\text{for any }i,j,k,\ell\in[K]\ \ \tau_{i,j}<\tau_{k,\ell}\Rightarrow \tau_{i,j}'<\tau_{k,\ell}'.
\end{equation}
Then $T(\bs{\tau})=T(\bs{\tau}')$ in the space $\Sigma_K$ of non-degenerate shapes.
\end{LEM}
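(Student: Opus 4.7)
The plan is to define a map $\varphi: T(\bs{\tau}) \to T(\bs{\tau}')$ on representatives by $\varphi(\lb i,j\rb) = \lb i,j\rb'$ and show it is a label-preserving graph isomorphism that fixes the root and the leaves $1,\dots,K$; by the identification convention in Definition~\ref{ndshape}, this will give $T(\bs{\tau}) = T(\bs{\tau}')$ in $\Sigma_K$.

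The engine of the argument will be a \emph{row-preservation} claim: for every fixed $i \in [K]$ and every $j,k \in \{0,1,\dots,K\}$, $\tau_{i,j} \le \tau_{i,k}$ iff $\tau'_{i,j} \le \tau'_{i,k}$, with equalities matching up as well. The strict part is immediate from \eqref{compcond} used with the pairs in both orders. For the equality case $\tau_{i,j} = \tau_{i,k}$ with $j \ne k$ in $[K]$, I would apply the ultrametric in \eqref{tauprops} to get $\tau_{j,k} \ge \tau_{i,j} \wedge \tau_{i,k} = \tau_{i,j}$; when $i,j,k$ are distinct, the non-degeneracy condition \eqref{binaryc} upgrades this to a strict inequality, with the degenerate index overlaps handled by \eqref{noanc}. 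Then \eqref{compcond} applied to both $\tau_{i,j} < \tau_{j,k}$ and $\tau_{i,k} < \tau_{j,k}$ gives $\tau'_{i,j},\tau'_{i,k} < \tau'_{j,k}$, and the primed ultrametric (used twice, once for each of $\tau'_{i,j}\wedge\tau'_{j,k}\le\tau'_{i,k}$ and its symmetric partner) forces $\tau'_{i,j} = \tau'_{i,k}$.

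Granting row-preservation, injectivity of $\varphi$ is the easy direction: the contrapositive of \eqref{compcond} converts $\tau'_{i,j} = \tau'_{k,\ell}$ into $\tau_{i,j} = \tau_{k,\ell}$ and likewise for the side condition $\tau_{i,k} \ge \tau_{i,j}$ in \eqref{eqclass3}. Well-definedness requires more care: starting from $\lb i,j\rb = \lb k,\ell\rb$ (i.e., $\tau_{i,j} = \tau_{k,\ell}$ and $\tau_{i,k} \ge \tau_{i,j}$) I would split on whether $\tau_{i,k}$ and $\tau_{i,\ell}$ are equal to or strictly exceed $\tau_{i,j}$. The ultrametric rules out $\tau_{i,k}>\tau_{i,j}$ and $\tau_{i,\ell}>\tau_{i,j}$ simultaneously, since this forces $\tau_{k,\ell}>\tau_{i,j}$. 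In the surviving configurations, the row-preservation claim applied on rows $i$, $k$, and $\ell$, together with the symmetry $\tau_{\cdot,\cdot}=\tau_{\cdot,\cdot}$ reversed, chains the cross-index equality $\tau_{i,j}=\tau_{k,\ell}$ to $\tau'_{i,j}=\tau'_{k,\ell}$ (for instance, if $\tau_{i,\ell}=\tau_{i,j}$ then row-preservation on $\ell$ gives $\tau'_{k,\ell}=\tau'_{i,\ell}=\tau'_{i,j}$). The remaining configuration is closed off by invoking the primed ultrametric together with the equalities already established in row $i$. Finally, the parent function in \eqref{mdef}--\eqref{pidefn} depends only on the ordering of the $i$-th row, so row-preservation gives $m(i,j)=m'(i,j)$ and hence $\pi'(\varphi(\lb i,j\rb))=\varphi(\pi(\lb i,j\rb))$; surjectivity of $\varphi$ is automatic because the indexing is the same on both sides.

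The main obstacle is the well-definedness step, specifically the transfer of cross-index equalities $\tau_{i,j}=\tau_{k,\ell}$ with all four indices possibly distinct: no single ultrametric triple relates these terms directly, so one must assemble the row-preservation claim, the primed ultrametric, and the non-degeneracy of $\bs{\tau}'$ across several subcases. This is where non-degeneracy of the \emph{primed} matrix plays an essentially asymmetric role in compensating for the one-sided nature of \eqref{compcond}.
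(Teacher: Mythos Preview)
Your approach is correct but takes a more laborious route than the paper's. The paper defines the map in the \emph{opposite} direction, $f:T(\bs{\tau}')\to T(\bs{\tau})$ with $f(\lb i,j\rb')=\lb i,j\rb$. Well-definedness of $f$ is then immediate: the contrapositive of \eqref{compcond} (applied via the characterization \eqref{ledef}) gives $\lb k,\ell\rb'\le'\lb i,j\rb'\Rightarrow\lb k,\ell\rb\le\lb i,j\rb$, and antisymmetry of $\le$ does the rest. Bijectivity then comes for free from the cardinality count $|T(\bs{\tau})|=|T(\bs{\tau}')|=2K$ (Proposition~\ref{prop:wshape}), which entirely bypasses the cross-index equality transfer you identify as the main obstacle. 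For the graph-isomorphism step, the paper shows $f$ preserves the ancestral order $<$: one direction is the contrapositive again, and for the converse it invokes Lemma~\ref{orderprops}(a) to rewrite $\lb k,\ell\rb$ as $\lb i,k\rb$ (same first index as $\lb i,j\rb$), reducing to a single-row comparison where \eqref{compcond} applies directly. Your row-preservation lemma is a correct and reusable device that achieves the same reduction, and your $m(i,j)=m'(i,j)$ argument is a clean way to conclude; but the paper's choice of direction for the map eliminates the subcase analysis for well-definedness, and its cardinality shortcut avoids having to establish both injectivity and surjectivity separately.
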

\begin{proof} Let $T=T(\bs{\tau})$ and $T'=T(\bs{\tau}')$. We write $\aij'$ for points in $T'$, and write $\le'$ and $<'$ for the ancestral relations in $T'$. By non-degeneracy we may extend \eqref{compcond} to allow $i,j,k,\ell\in[K]\cup\{0\}$.  Take the contrapositive in \eqref{compcond} to get
\begin{equation}\label{contra}
\text{for any }i,j,k,\ell\in[K]\cup\{0\}\ \ \tau_{k,\ell}'\le\tau'_{i,j}\Rightarrow \tau_{k,\ell}\le\tau_{i,j}.
\end{equation}
The characterization \eqref{ledef}  and the above show that for all $i,j,k,\ell\in[K]\cup\{0\}$, 
\begin{equation}\label{leimp}
\lb k,\ell\rb'\le'\aij'\Rightarrow \tau'_{k,\ell}\le \min(\tau'_{i,j},\tau'_{i,k})\Rightarrow \tau_{k,\ell}\le\min(\tau_{i,j},\tau_{i,k})\Rightarrow\lb k,\ell\rb\le\aij.
\end{equation}
By the antisymmetry of the partial order $\le$ on $T$ we conclude that $\lb k,\ell\rb'=\aij'$ implies $\lb k, \ell\rb=\aij$ and so may define $f:T'\to T$ by $f(\aij')=\aij$. Clearly $f$ is onto, and since $T$ and $T'$ both have $2K$ vertices, $f$ is a bijection, and so for all $i,j,k,\ell\in[K]\cup\{0\}$,
\begin{equation}\label{fbij}
\lb k,\ell\rb'=\aij'\text{ iff }\lb k,\ell\rb=\aij.
\end{equation}
This and \eqref{leimp} show that for all $ i,j,k,\ell\in[K]\cup\{0\}$,
\begin{equation}\label{oneway}
\lb k,\ell\rb'<'\aij'\Rightarrow \lb k,\ell\rb<\aij. 
\end{equation}
Conversely assume $\lb k,\ell\rb<\aij$. 
By Lemma~\ref{orderprops}(a) $\lb k,\ell\rb=\lb i,k\rb$ or $\lb i,\ell\rb$, so assume without loss of generality that
\begin{equation}\label{gettoi}
 \lb k,\ell\rb=\lb i,k\rb.
\end{equation}
Therefore $\tau_{i,k}=\tau_{k,\ell}<\tau_{i,j}$, the latter by our assumption. This implies  $\tau'_{i,k}<\tau'_{i,j}$ by \eqref{compcond}.  An application of \eqref{strictle} with $i'=i$ and $j'=k$ gives $\lb i,k\rb' <'\aij'$.  	By \eqref{fbij} and \eqref{gettoi} we have $\lb k,\ell\rb'=\lb i,k\rb'$ and so from the above, $\lb k,\ell\rb'<'\aij'$.  So we have proved the converse implication to \eqref{oneway} and hence $f$ preserves ancestral ordering, that is,
\[f(\lb k,\ell\rb)<f(\aij)\text{ iff }\lb k,\ell\rb<\aij.\]
From this we readily see that $f$ maps the parent of $\aij'\in T'\setminus\{0\}$ to the parent of $\aij\in T\setminus\{0\}$, and so is a (root preserving) graph isomorphism which clearly preserves the labelling.
\end{proof}

Let $\Vert\cdot\Vert$ denote the Euclidean norm on the space $\R^{K\times K}$ of $K\times K$ real matrices.
\begin{PRP}\label{prop:taupert}
Let $\bs{\tau}\in R_K$ be non-degenerate. There is a $\delta>0$ so that if $\bs{\tau}'\in R_K$ satisfies $\Vert \bs{\tau}'-\bs{\tau}\Vert<\delta$, then
\begin{itemize}
\item[\emph{(i)}] $\bs{\tau}'$ is non-degenerate, \text{ and }
\item[\emph{(ii)}] $T(\bs{\tau})=T(\bs{\tau}')$ in $\Sigma_K$.
\end{itemize}
\end{PRP}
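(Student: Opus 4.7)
The strategy is to reduce everything to Lemma~\ref{lem:comp}: by choosing $\delta$ small enough that all strict inequalities among the entries of $\bs{\tau}$ survive the perturbation, I will simultaneously obtain the non-degeneracy of $\bs{\tau}'$ (part (i)) and the hypothesis \eqref{compcond} needed to invoke Lemma~\ref{lem:comp} (part (ii)).

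First I would set
\[
\eta = \min\Bigl(\{|\tau_{i,j}-\tau_{k,\ell}| : i,j,k,\ell\in [K],\ \tau_{i,j}\neq \tau_{k,\ell}\}\cup \{\tau_{i,j} : i\neq j\in [K]\}\Bigr).
\]
This is the minimum over a finite collection of strictly positive numbers: the first set is non-empty since non-degeneracy gives $\tau_{i,j}<\tau_{i,i}$ for $i\neq j$, so that at least two entries of $\bs{\tau}$ differ, and all of its elements are positive by construction; the second set is positive by \eqref{noanc}. Hence $\eta>0$. Set $\delta=\eta/3$, and let $\bs{\tau}'\in R_K$ satisfy $\|\bs{\tau}'-\bs{\tau}\|<\delta$. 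Since the Euclidean norm dominates each coordinate, $|\tau'_{i,j}-\tau_{i,j}|<\delta$ for every $i,j\in[K]$.

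For (i), given distinct $i,j\in[K]$, positivity gives $\tau'_{i,j}\geq \tau_{i,j}-\delta\geq \eta-\eta/3>0$, and since $\tau_{i,i}-\tau_{i,j}\geq \eta$ by non-degeneracy, $\tau'_{i,i}-\tau'_{i,j}\geq \eta-2\delta>0$; together these give \eqref{noanc} for $\bs{\tau}'$. For \eqref{binaryc}, take distinct $i,j,k\in[K]$; non-degeneracy of $\bs{\tau}$ forces at least two of $\tau_{i,j},\tau_{i,k},\tau_{j,k}$ to differ, say $\tau_{i,j}\neq \tau_{i,k}$, so $|\tau_{i,j}-\tau_{i,k}|\geq \eta$ and hence $|\tau'_{i,j}-\tau'_{i,k}|\geq \eta-2\delta>0$, precluding the triple equality. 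For (ii), the same bookkeeping verifies \eqref{compcond}: if $\tau_{i,j}<\tau_{k,\ell}$ for some $i,j,k,\ell\in[K]$, then $\tau_{k,\ell}-\tau_{i,j}\geq \eta$, whence $\tau'_{k,\ell}-\tau'_{i,j}\geq \eta-2\delta>0$. Lemma~\ref{lem:comp} then yields $T(\bs{\tau})=T(\bs{\tau}')$ in $\Sigma_K$.

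There is no real obstacle here; the only point worth noticing is that non-degeneracy encodes a finite list of strict inequalities whose gaps are automatically bounded away from zero, so a single quantitative choice of $\delta$ handles every case at once, and Lemma~\ref{lem:comp} then packages the combinatorial comparison into the desired conclusion.
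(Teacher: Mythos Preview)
Your proof is correct and follows essentially the same approach as the paper: both choose $\delta$ small enough to preserve all strict inequalities among the entries of $\bs{\tau}$, then invoke Lemma~\ref{lem:comp}. The only cosmetic difference is that the paper handles part (i) by noting that the set of degenerate matrices is a finite union of closed sets (so its complement is open), whereas you verify \eqref{noanc} and \eqref{binaryc} for $\bs{\tau}'$ directly; your single quantity $\eta$ effectively merges the paper's separate $\delta_1$ and $\delta_2$.
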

\begin{proof} The space of degenerate branching matrices, $\bs{\tau}$, is 
\[\cup_{i\neq j}\{\bs{\tau}:\tau_{i,j}=0\text{ or }\tau_{i,j}=\tau_{i,i}\}\bigcup\cup_{i,j,k\text{ distinct}}\{\bs{\tau}:\tau_{i,j}=\tau_{i,k}=\tau_{j,k}\}.\]
This is a finite union of closed sets, so there is a $\delta_1>0$ so that $\Vert\bs{\tau}'-\bs{\tau}\Vert<\delta_1$ implies $\bs{\tau}'$ is non-degenerate.
Let 
\[\delta_2=\frac{1}{3}\min\{\tau_{i,j}-\tau_{k,\ell}:\tau_{i,j}>\tau_{k,\ell}, i,j,k,\ell\in[K]\},\] 
and take $\delta=\min(\delta_1,\delta_2)$ which is strictly positive.  Assume $\bs{\tau}'\in R_K$ satisfies $\Vert\bs{\tau}'-\bs{\tau}\Vert<\delta$. Then $\bs{\tau}'$ is non-degenerate by the choice of $\delta_1$.  Assume $\tau_{i,j}>\tau_{k,\ell}$.  Then
\begin{align*}\tau_{i,j}'-\tau_{k,\ell}'\ge \tau_{i,j}-\tau_{k,l}-2\delta\ge 3\delta_2-2\delta>0.
\end{align*}
Lemma~\ref{lem:comp} implies $T(\bs{\tau}')=T(\bs{\tau})$ and the proof is complete.
\end{proof} 

$C_K$ is given the subspace topology it inherits from $\mc{C}^K$, so $w^{\sn}\to w$ iff $w^{\sn}_i\to w_i$ uniformly on compacts for each $i\le K$.

\begin{PRP}\label{shapecont} Assume $w\in C_K$ is non-degenerate, $w^{\sn}\to w$ in $C_K$, and $\bs{\tau}(w^{\sn})\to \bs{\tau}(w)$ in $\R^{K\times K}$.  Then for $n$ large $\bs{\tau}(w^{\sn})$ is non-degenerate, and $\mc{B}_K(w^{\sn})\to\mc{B}_K(w)$ in $\TT_{gst}$ as $n\to\infty$.
\end{PRP}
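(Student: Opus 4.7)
The plan is to derive the result directly from Proposition~\ref{prop:taupert} (for non-degeneracy and equality of the labelled shape) together with routine continuity estimates for the edge lengths and for the spatial map $\phi$.

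Since $\bs{\tau}(w^{\sn})\to\bs{\tau}(w)$ and $\bs{\tau}(w)$ is non-degenerate, Proposition~\ref{prop:taupert} applied with $\bs{\tau}'=\bs{\tau}(w^{\sn})$ yields an index $n_0$ such that for all $n\ge n_0$, $\bs{\tau}(w^{\sn})$ is non-degenerate and the labelled shape $T^{\sn}:=T(\bs{\tau}(w^{\sn}))$ equals $T:=T(\bs{\tau}(w))$ in $\Sigma_K$. Fix $n\ge n_0$. In particular the vertex set $\{\aij\}$, parent map $\pi$, and edge set $E(T)$ are shared by the two trees, so writing $\ell^{\sn}$ and $\ell$ for the edge-length functions of $\bs{\tau}(w^{\sn})$ and $\bs{\tau}(w)$ (recall \eqref{elldef}),
\[d_1\big((T,d_{\bs{\tau}(w^{\sn})}),(T,d_{\bs{\tau}(w)})\big)=\max_{e\in E(T)}|\ell^{\sn}(e)-\ell(e)|\longrightarrow 0,\]
because $\ell^{\sn}(e_{\aij})=\tau^{\sn}_{i,j}-\tau^{\sn}_{\pi(\aij)}$ and $\ell(e_{\aij})=\tau_{i,j}-\tau_{\pi(\aij)}$ depend continuously on $\bs{\tau}$.

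The main work is to bound $d_2:=\sup_{x\in\overline T}\Vert\phi_{w^{\sn}}(x)-\phi_{w}(\Upsilon_{d^{\sn},d}(x))\Vert$, and the key step is to choose the right parametrization of each edge. Using \eqref{iuchar}, a point $x\in\overline T$ lying at $d_{\bs{\tau}(w^{\sn})}$-distance $\beta\ell^{\sn}(e_{\aij})$ along an edge $e_{\aij}$ from $\pi(\aij)$, with $\beta\in[0,1]$, satisfies $\phi_{w^{\sn}}(x)=w^{\sn}_i(u^{\sn}_\beta)$, where $u^{\sn}_\beta:=(1-\beta)\tau^{\sn}_{\pi(\aij)}+\beta\tau^{\sn}_{\aij}$, while the definition of $\Upsilon_{d^{\sn},d}$ yields $\phi_w(\Upsilon_{d^{\sn},d}(x))=w_i(u_\beta)$ with $u_\beta:=(1-\beta)\tau_{\pi(\aij)}+\beta\tau_{\aij}$. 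The triangle inequality gives
\[\big\Vert\phi_{w^{\sn}}(x)-\phi_w(\Upsilon_{d^{\sn},d}(x))\big\Vert\le \Vert w^{\sn}_i(u^{\sn}_\beta)-w_i(u^{\sn}_\beta)\Vert+\Vert w_i(u^{\sn}_\beta)-w_i(u_\beta)\Vert.\]
Uniform convergence $w^{\sn}_i\to w_i$ on compacts controls the first term, uniformly in $\beta$, $\aij$ and $i\le K$. For the second, $\sup_{\beta,\aij}|u^{\sn}_\beta-u_\beta|\to 0$ follows from $\bs{\tau}(w^{\sn})\to\bs{\tau}(w)$ and the finiteness of $E(T)$, and each continuous path $w_i$ is uniformly continuous on compact intervals. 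Taking the supremum over $x\in\overline T$ and maximum over $i\le K$ yields $d_2\to 0$, and combining with the edge-length estimate gives $D(\mc{B}_K(w^{\sn}),\mc{B}_K(w))\to 0$, which is the conclusion. The only real obstacle is the spatial-map step, and it is routine once the common parameter $\beta$ is used to reconcile the two different edge-length parametrizations with the time parametrization of the paths.
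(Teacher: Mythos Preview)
Your proof is correct and follows essentially the same route as the paper's: invoke Proposition~\ref{prop:taupert} to fix the shape, bound $d_1$ via continuity of edge lengths in $\bs{\tau}$, and bound $d_2$ by comparing the time parameters on corresponding edges under $\Upsilon$. The only cosmetic differences are that the paper applies $\Upsilon_{d,d^n}$ (limit to approximant) rather than your $\Upsilon_{d^{\sn},d}$, and it bounds $\Vert w_i(u)-w_i^{\sn}(u_n)\Vert$ directly instead of inserting the triangle-inequality split through $w_i(u^{\sn}_\beta)$; neither change is substantive.
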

\begin{proof} Let $\bs{\tau}^{n}=\bs{\tau}(w^{\sn})$, $\bs{\tau}=\bs{\tau}(w)$, and $T^n=T(\bs{\tau}^{n})$.  
Proposition~\ref{prop:taupert} implies that for $n\ge N$, $\bs{\tau}^n$ is non-degenerate and $T^n=T(\bs{\tau})$.  Recall the distances $d_1$ and $d_2$ from Section~\ref{sec:GST}. Henceforth assume $n\ge N$. We have 
\begin{align}\label{d1bnd}
\nonumber d_1\big((T^n,d_{\bs{\tau}^n}),(T(\bs{\tau}),d_{\bs{\tau}})\big)&=\sup_{i,j\in[K]}|(\tau^n_{i,j}-\tau^n_{i,m(i,j)})-(\tau_{i,j}-\tau_{i,m(i,j)})|\\
&\le 2\sup_{i,j}|\tau^n_{i,j}-\tau_{i,j}|\to 0.
\end{align}
Let $d^n=d_{\bs{\tau}^{n}}$, $d=d_{\bs{\tau}}$, let $\ell^n$ and $\ell$ denote their respective edge length functions,  and let $[i,u]_n$ and $[i,u]$ denote points in $\overline{T}_n$ and $\overline{T}(\bs{\tau})$, respectively. Recall the map $\Upsilon_{d,d^n}$ arising in the definition of $d_2$ in Section~\ref{sec:GST}. If $[i,u]\in\overline T(w)\setminus\{0\}$, we may choose $j\in[K]$ so that $u\in(\tau_{\pi(\aij)},\tau_{\aij}]$. Then, recalling \eqref{iuchar} for both $d$ and $d^n$, we see that 
$\Upsilon_{d,d^n}([i,u])=[i,u_n(i,u)]_n$, where
\begin{align*}u_n(i,u)&=\tau^n_{\pi(\aij)}+(u-\tau_{\pi(\aij)})\frac{\ell^n(e_{\aij})}{\ell(e_{\aij})}\\
&=\tau^n_{\pi(\aij)}+(u-\tau_{\pi(\aij)})\frac{\tau^n_{\aij}-\tau^n_{\pi(\aij)}}{\tau_{\aij}-\tau_{\pi(\aij)}}.
\end{align*}
Write $u=\tau_{\pi(\aij)}+(u-\tau_{\pi(\aij)})$ and do a bit of arithmetic to see that 
\begin{equation}\label{undiff}
|u_n-u|\le 3 \sup_{i,j}|\tau^n_{i,j}-\tau_{i,j}|\le 3\Vert\bs{\tau}^n-\bs{\tau}\Vert.
\end{equation}
Therefore by the definition of $d_2$, for $n\ge N$ (we clearly may restrict $x\neq 0$ in the definition),
\begin{align*}
d_2(\mc{B}_K(w),\mc{B}_K(w^{\sn}))&=\sup_{[i,u]\in \overline{T}(\bs{\tau})\setminus\{0\}}\Vert w([i,u])-w^{\sn}(\Upsilon_{d,d^n}([i,u])\Vert\\
&=\sup_{[i,u]\in \overline{T}(\bs{\tau})\setminus\{0\}}\Vert w_i(u)-w_i^{\sn}(u_n(i,u))\Vert\\
&\le \sup_{i\in[K]}\ \ \sup_{|u-v|\le 3\Vert \bs{\tau}^n-\bs{\tau}\Vert, u\le {\bs{\tau}_{i,i}}}\Vert w_i(u)-w_i^{\sn}(v)\Vert.
\end{align*}
The last expression approaches $0$ because $w^{\sn}_i\to w_i$ and $\Vert \bs{\tau^n}-\bs{\tau}\Vert\to0$.  This and \eqref{d1bnd} imply the required convergence result.
\end{proof}

Here are some semicontinuity properties of $\mc{L}$ and  $\tau$.  
\begin{LEM}
\label{lem:uppersemi} Assume that for $i=1,2$, $(w^{\sn}_i)_{n \in \N}$ are sequences in $\mc{D}$ such that $w^{\sn}_i\rightarrow w_i$ in $\D$.
\begin{itemize}
\item[\emph{(a)}] $\mc{L}$ is lower semicontinuous on $\D$, that is, $\liminf_{n\to\infty}\mc{L}(w_i^{\sn})\ge \mc{L}(w)$ ($i=1,2$).
\item[\emph{(b)}] $\bar\tau$ is upper semicontinuous on $\D^2$, that is, $\limsup_{n\to\infty}\bar\tau(w_1^{\sn},w_2^{\sn})\le \bar\tau(w_1,w_2)$.
\item[\emph{(c)}] If $\FL(w^{\sn}_i)\to \FL(w_i)$ as $n\to\infty$ for $i=1,2$, 
then $\limsup_{n\to\infty}\tau(w_1^{\sn},w_2^{\sn})\le \tau(w_1,w_2)$. 
\end{itemize}
\end{LEM}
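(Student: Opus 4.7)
The plan is to derive all three statements by contradiction using the two standard consequences of Skorokhod convergence in $\D$: if $w^{\sn}\to w$ in $\D$, then $w^{\sn}_t\to w_t$ at every continuity point $t$ of $w$, and the set of continuity points of a cadlag path is cocountable and hence dense in $[0,\infty)$. Part (c) will be a formal consequence of (b) together with the hypothesis on lifetimes.

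For (a), I would assume toward contradiction that some subsequence satisfies $\FL(w^{\sn})\to L<\FL(w)$. By definition of $\FL$, the limit $w$ is not constant on $[L,\infty)$, so there exist $v_1<v_2$ in $[L,\infty)$ with $w(v_1)\neq w(v_2)$. Setting $\epsilon=|w(v_1)-w(v_2)|/3>0$ and using right-continuity of $w$ at $v_1$ and $v_2$ together with density of continuity points, I pick continuity points $u_1<u_2$ of $w$ slightly to the right of $v_1,v_2$ with $u_1>L$, $|w(u_i)-w(v_i)|<\epsilon$, and therefore $w(u_1)\neq w(u_2)$. Pointwise convergence at $u_1,u_2$ gives $w^{\sn}(u_1)\neq w^{\sn}(u_2)$ for large $n$, so $w^{\sn}$ is not constant on $[u_1,\infty)$; by the contrapositive of the definition this forces $\FL(w^{\sn})\ge u_1>L$, contradicting $\FL(w^{\sn})\to L$.

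Part (b) uses the same template with ``not constant'' replaced by ``differing.'' Suppose along a subsequence $\bar\tau(w_1^{\sn},w_2^{\sn})\to L>\bar\tau(w_1,w_2)=:T$. Then there is some $s\in[T,L)$ with $w_1(s)\neq w_2(s)$. Right-continuity of $w_1,w_2$ at $s$ combined with the density of \emph{joint} continuity points (the two exceptional sets are countable) lets me pick a common continuity point $t^*\in[s,L)$ with $w_1(t^*)\neq w_2(t^*)$. Pointwise convergence at $t^*$ gives $w_i^{\sn}(t^*)\to w_i(t^*)$ for $i=1,2$, so $w_1^{\sn}(t^*)\neq w_2^{\sn}(t^*)$ eventually; hence $\bar\tau(w_1^{\sn},w_2^{\sn})\le t^*<L$ for large $n$, contradicting the assumed limit.

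For (c), I would simply combine (b) with the hypothesis $\FL(w_i^{\sn})\to\FL(w_i)$ via the general inequality $\limsup_n(a_n\wedge b_n\wedge c_n)\le(\limsup_n a_n)\wedge(\limsup_n b_n)\wedge(\limsup_n c_n)$, applied with $a_n=\bar\tau(w_1^{\sn},w_2^{\sn})$ and $b_n,c_n=\FL(w_i^{\sn})$, to obtain
\[\limsup_n\tau(w_1^{\sn},w_2^{\sn})\le \bar\tau(w_1,w_2)\wedge\FL(w_1)\wedge\FL(w_2)=\tau(w_1,w_2).\]
The only mild subtlety is the selection of continuity points preserving strict separation of values in (a) and (b), but this is a routine consequence of right-continuity and cocountability of discontinuities, so I do not expect a serious obstacle.
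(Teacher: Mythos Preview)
Your proposal is correct. The paper itself leaves (a) and (b) as easy exercises and derives (c) immediately from (b), exactly as you do; your argument simply supplies the routine details (pointwise convergence at joint continuity points, density of continuity points, right-continuity to preserve strict inequalities) that the paper omits.
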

\begin{proof} (a),(b) We leave these as easy exercises.

\noindent(c) This is now immediate from (b).
\end{proof}
Let $D_K=\{w=(w_1,\dots,w_K)\in\mc{D}^K: w_i(0)=0\text{ and }\mc{L}(w_i)<\infty\text{ for all }i\in[K]\}$. 

\begin{LEM}\label{interpol} Assume $w\in C_K$ and $w^{\sn}\in D_K$ satisfy $w^{\sn}\to w$ and $\bs{\tau}(w^{\sn})\to\bs{\tau}(w)$ as $n\to\infty$,  Then as $n\to\infty$:\\
(a) $w_i^{\sn}\to w_i$ and $\kappa_n(w_i^{\sn})\to w_i$  in the supremum norm for each $i\in[K]$,\\
(b) $\bs{\tau}(\kappa_n(w^{\sn}))\to\bs{\tau}(w)$.
\end{LEM}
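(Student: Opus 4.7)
My strategy is to leverage the hypothesis $\bs{\tau}(w^{\sn})\to\bs{\tau}(w)$---which in particular gives $\FL(w^{\sn}_i)=\tau_{i,i}(w^{\sn})\to\tau_{i,i}(w)=\FL(w_i)$---to upgrade Skorokhod convergence to uniform convergence on all of $[0,\infty)$, and then to control the branch times of the piecewise-linear interpolations by matching them with the original paths at the grid points.

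For part (a), since $w_i$ is continuous, Skorokhod convergence $w^{\sn}_i\to w_i$ already yields uniform convergence on every compact interval. Both $w_i$ and (for $n$ large) $w^{\sn}_i$ are constant past any fixed $T>\FL(w_i)$, so uniform convergence on $[0,T]$ forces the terminal values to agree in the limit and hence promotes to uniform convergence on $[0,\infty)$. For the interpolants, I would use that on each $[k/n,(k+1)/n]$, $\kappa_n(w^{\sn}_i)(t)$ is a convex combination of $w^{\sn}_i(k/n)$ and $w^{\sn}_i((k+1)/n)$, giving
\[|\kappa_n(w^{\sn}_i)(t)-w_i(t)|\le\sup_{s\ge 0}|w^{\sn}_i(s)-w_i(s)|+\omega_{w_i}(1/n),\]
where $\omega_{w_i}$ denotes the modulus of continuity of $w_i$ on $[0,\infty)$ (finite because $w_i$ is continuous and eventually constant). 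Both terms on the right vanish as $n\to\infty$.

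For part (b), I first handle the lifetimes of the interpolants: $\kappa_n(w^{\sn}_i)$ is constant on $[\lceil n\FL(w^{\sn}_i)\rceil/n,\infty)$ because $w^{\sn}_i$ itself is constant on $[\FL(w^{\sn}_i),\infty)$ (so all grid points beyond $\FL(w^{\sn}_i)$ take the same terminal value), giving $\limsup\FL(\kappa_n(w^{\sn}_i))\le\FL(w_i)$; the matching liminf is the lower semicontinuity of $\FL$ (Lemma~\ref{lem:uppersemi}(a)) applied with the sup-norm convergence from (a). The resulting $\FL(\kappa_n(w^{\sn}_i))\to\FL(w_i)$ together with Lemma~\ref{lem:uppersemi}(c) yields $\limsup\tau_{i,j}(\kappa_n(w^{\sn}))\le\tau_{i,j}(w)$.

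The matching lower bound is the crux. Setting $t_n:=\bar\tau(w^{\sn}_i,w^{\sn}_j)$, the paths $w^{\sn}_i$ and $w^{\sn}_j$ coincide on $[0,t_n)$ and hence at every grid point $k/n<t_n$. Since $\kappa_n$ depends only on values at grid points, the interpolants must coincide on $[0,(\lceil nt_n\rceil-1)/n]$ by piecewise-linearity, whence $\bar\tau(\kappa_n(w^{\sn}_i),\kappa_n(w^{\sn}_j))\ge t_n-1/n$. Combined with $t_n\ge\tau_{i,j}(w^{\sn})\to\tau_{i,j}(w)$ and the $\FL$ convergences established above, this gives $\liminf\tau_{i,j}(\kappa_n(w^{\sn}))\ge\tau_{i,j}(w)\wedge\FL(w_i)\wedge\FL(w_j)=\tau_{i,j}(w)$. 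The main obstacle is precisely this lower bound: the upper semicontinuity of $\bar\tau$ (Lemma~\ref{lem:uppersemi}(b)) is automatic but insufficient, and it is only the grid-point-matching property of $\kappa_n$ that lets the hypothesis $\bs{\tau}(w^{\sn})\to\bs{\tau}(w)$ transfer to the interpolated paths.
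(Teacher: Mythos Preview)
Your proposal is correct and follows essentially the same route as the paper's proof: both upgrade Skorokhod convergence to sup-norm convergence via continuity of $w_i$ and convergence of lifetimes, bound $\FL(\kappa_n(w^{\sn}_i))\le\FL(w^{\sn}_i)+1/n$, use lower semicontinuity of $\FL$ for the matching liminf, derive $\bar\tau(\kappa_n(w^{\sn}_i),\kappa_n(w^{\sn}_j))\ge\bar\tau(w^{\sn}_i,w^{\sn}_j)-1/n$ from grid-point matching, and close with Lemma~\ref{lem:uppersemi}(c) for the limsup. Your write-up is slightly more explicit about the convex-combination bound for the interpolant and the grid-point counting, but the argument is the same.
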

\begin{proof} (a) The convergence of $w^{\sn}_i$ to $w_i$ implies uniform convergence on compacts since $w_i$ is continuous.  This and the convergence of $\mc{L}(w_i^{\sn})=\tau_{i,i}(w_i^{\sn})$ to $\mc{L}(w_i)=\tau_{i,i}(w_i)$ implies uniform convergence of $w^{\sn}_i$ to $w_i$.  The second convergence is now an elementary calculation.\\
(b) It follows easily from the definition of $\mc{L}$, that 
$\mc{L}(\kappa_n(w_i^{\sn}))\le \mc{L}(w_i^{\sn})+\frac{1}{n}$.
This and our hypothesis imply $\limsup_{n\to\infty}\mc{L}(\kappa_n(w_i^{\sn}))\le \mc{L}(w_i)$. By (a) and Lemma~\ref{lem:uppersemi}(a), \break
$\liminf_{n\to\infty}\mc{L}(\kappa_n(w_i^{\sn}))\ge \mc{L}(w_i)$, and we conclude that
\begin{equation}\label{Lconv}
|\mc{L}(\kappa_n(w^{\sn}_i))-\mc{L}(w_i)|\to 0\text{ as }n\to\infty.
\end{equation}

\eqref{Lconv}  gives the convergence in (b) for the diagonal entries of $\bs{\tau}$.  Now consider $i\neq j$ in $[K]$. A simple calculation using the definition of $\bar\tau(w'_i,w'_j)$ shows that 
\[\bar\tau(\kappa_n(w^{\sn}_i),\kappa_n(w^{\sn}_j))\ge \bar\tau(w^{\sn}_i,w^{\sn}_j)-\frac{1}{n}.\]
Use this with \eqref{Lconv}  
and our convergence hypothesis on $\tau_{i,j}(w^{\sn})$ to see that
\[\liminf_{n\to\infty}\tau(\kappa_n(w^{\sn}_i),\kappa_n(w^{\sn}_j))\ge\tau(w_i,w_j).\]
On the other hand, part (a) above, \eqref{Lconv}, and Lemma~\ref{lem:uppersemi}(c) imply
\[\limsup_{n\to\infty}\tau(\kappa_n(w^{\sn}_i),\kappa_n(w^{\sn}_j))\le\tau(w_i,w_j).\]
The last two inequalities complete the proof of (b).
\end{proof}

Recall the rescalings $\rho_n:\mc{D}^K\to\mc{D}^K$ from Section~\ref{sec:intro}. An elementary calculation shows that
\begin{equation}\label{resctau}
\text{for any }w\in D_K\quad \bs{\tau}(\rho_n(w))=\bs{\tau}(w)/n.
\end{equation}
It follows that 
\begin{equation}\label{resnd}
\text{for any $w\in C_K$\quad$w$ is non-degenerate iff $\rho_n(w)$ is non-degenerate.}
\end{equation}
If $w\in C_K$ and $n\in\N$, recall the definitions of $\mc{B}_K(w)$ and $\mc{B}_{n,K}(w)$ (in $\TT_{gst}$) from Definition~\ref{gstwdef} and \eqref{BnKdef}, respectively.  
\begin{LEM}\label{lem:rescBK}
If $w^{\sss (1)}\in C_K$, then for all $n\in\N$,
\begin{equation}\label{rescBK}
\mc{B}_K(\rho_n(w^{\sss (1)}))=\mc{B}_{n,K}(w^{\sss (1)}).
\end{equation} 
\end{LEM}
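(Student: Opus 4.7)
The plan is to handle the degenerate case by appeal to \eqref{resnd}, and in the non-degenerate case to exhibit an isometric, leaf-label-preserving isomorphism between the two gst's and invoke \eqref{gstidentn} from Remark~\ref{idgsts}.

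If $w^{\sss(1)}$ is degenerate then so is $\rho_n(w^{\sss(1)})$ by \eqref{resnd}, and both sides of \eqref{rescBK} equal $\degK$ by Definition~\ref{gstwdef} and \eqref{BnKdef}. So assume $w^{\sss(1)}$ is non-degenerate and set $\bs{\tau}=\bs{\tau}(w^{\sss(1)})$. By \eqref{resctau}, $\bs{\tau}(\rho_n(w^{\sss(1)}))=\bs{\tau}/n$. Multiplying all entries by $n>0$ preserves strict inequalities in both directions, so the hypothesis \eqref{compcond} of Lemma~\ref{lem:comp} holds between $\bs{\tau}/n$ and $\bs{\tau}$; the lemma then gives $T(\bs{\tau}/n)=T(\bs{\tau})$ in $\Sigma_K$ via the leaf-preserving graph isomorphism $\psi$ sending $\lb i,j\rb_{\bs{\tau}/n}\mapsto\lb i,j\rb_{\bs{\tau}}$.

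Next I would read off from \eqref{elldef} (or \eqref{treemet}) that $d_{\bs{\tau}/n}=d_{\bs{\tau}}/n$ under this correspondence, so that $\psi:(T(\bs{\tau}/n),d_{\bs{\tau}/n})\to(T(\bs{\tau}),d_{\bs{\tau}}/n)$ is an isometric isomorphism. Its isometric extension satisfies $\bar\psi([i,u]_{\bs{\tau}/n})=[i,nu]_{\bs{\tau}}$, as one sees directly from how \eqref{eqrel} transforms under the rescaling $u\mapsto u/n$ (or from the characterization of $[i,u]$ in \eqref{iuchar}). Using \eqref{phiwdef} and the definition of $\rho_n$, one then computes
\[\phi_{\rho_n(w^{\sss(1)})}([i,u]_{\bs{\tau}/n})=w^{\sss(1)}_i(nu)/\sqrt n=(\phi_{w^{\sss(1)}}/\sqrt n)(\bar\psi([i,u]_{\bs{\tau}/n})).\]
The criterion \eqref{gstidentn} in Remark~\ref{idgsts}, applied with $(T,d,\phi)=(T(\bs{\tau}/n),d_{\bs{\tau}/n},\phi_{\rho_n(w^{\sss(1)})})$ and $(T',d',\phi')=(T(\bs{\tau}),d_{\bs{\tau}}/n,\phi_{w^{\sss(1)}}/\sqrt n)$, then yields \eqref{rescBK}.

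The main (and really only) obstacle is the bookkeeping: the equivalence classes $[i,u]$ formally live in different quotient spaces for $\bs{\tau}$ and $\bs{\tau}/n$, so one has to track carefully how the equivalence relation \eqref{eqrel} transforms under the scaling $u\mapsto u/n$ before the factors of $1/n$ and $1/\sqrt n$ drop out by pure rescaling.
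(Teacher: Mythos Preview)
Your proof is correct and follows essentially the same route as the paper: handle the degenerate case via \eqref{resnd}, construct an isometric isomorphism between the two underlying metric trees, verify the embedding relation, and invoke Remark~\ref{idgsts}. The only cosmetic differences are that your $\psi$ goes in the opposite direction to the paper's $\tilde\psi_n$ (harmless by symmetry), and that you obtain the shape identification $T(\bs{\tau}/n)=T(\bs{\tau})$ by citing Lemma~\ref{lem:comp} rather than by directly verifying that $\langle i,j\rangle_{\bs{\tau}/n}\mapsto\langle i,j\rangle_{\bs{\tau}}$ is a well-defined, order-preserving bijection as the paper does---a small economy, not a different argument.
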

\begin{proof}  Let $w^{\sn}=\rho_n(w^{\sss(1)})$. If $w^{\sss(1)}$ is degenerate, so is $w^{\sn}$ by \eqref{resnd}, and the result is immediate by definition.  Assume $w^{\sss(1)}$, and hence $w^{\sn}$ for any $n$, is non-degenerate. Let $[i,u]_n$ ($i\in [K]\cup\{0\}, 0\le u\le \tau_{i,i}/n$) denote generic points in $\overline{T}^n:=\overline{T}(w^{\sn})$, and let $\lb i,j\rb_n=[i,\tau_{i,j}(w^{\sn})]_n$ ($i,j\in [K]\cup\{0\}$) denote generic points in $T^n:=T(w^{\sn})$.
Define $\tilde \psi_n:\overline{T}^1\to\overline{T}^n$ by
\begin{equation}\label{tpsidef}
\tilde\psi_n([i,u]_1)=[i,(u/n)]_n.
\end{equation}
It is easy to check $\tilde\psi_n$ is a well-defined injection, and it is surjective by \eqref{resctau}.  
It follows from the above definition and \eqref{resctau} that if $\psi_n$ is the restriction of $\tilde \psi_n$ to $T^1$, then
\begin{equation}\label{psidef}
\psi_n(\aij_1)=\aij_n.
\end{equation}
Therefore $\psi_n$ is an injective map from $T^1$ to $T^n$, and as these vertex sets have the same cardinality, $\psi_n:T^1\to T^n$ is a bijection.  Clearly it preserves the root and preserves the labelled leaves $1,\dots,K$. 
Let
$\le_n$ and $<_n$ denote the ancestral orderings in $\overline{T}^n$. Another application of \eqref{resctau} shows that 
\begin{equation}\label{orders}
[i,u]_1\le_1 [j,v]_1\text{ iff }\tilde \psi_n([i,u]_1)\le_n\tilde\psi_n([j,v]_1).
\end{equation}
In particular, $\psi_n$ is a graph isomorphism between $T^1$ and $T^n$. 
Turning next to the metrics, note that 
\begin{equation*}
d_{\bs{\tau}(w^{\sn})}(\tilde\psi_n([i,u]_1),\tilde\psi_n([j,v]_1))=d_{\bs{\tau}(w^{\sn})}([i,u/n]_n,[j,v/n]_n)=n^{-1}d_{\bs{\tau}(w^{\sss (1)})}([i,u]_1,[j,v]_1),
\end{equation*}
where in the last we have used \eqref{resctau} and the definition of $d_{\bs{\tau}}$ in \eqref{treemet}. Therefore 
\[\tilde\psi_n:(\overline{T}^1,d_{\bs{\tau}(w^{\sss (1)})}/n)\to (\overline{T}^n, d_{\bs{\tau}(w^{\sn})})\text{ is an isometry}.
\]
This shows $\psi_n:(T^1,d_{\bs{\tau}(w^{\sss (1)})}/n)\to (T^n,d_{\bs{\tau}(w^{\sn})})$ is an isometric isomorphism, and
$\tilde \psi_n$ is the unique isometric extension of $\psi_n$ to $(\overline{T}^1,d_{\bs{\tau}(w^{\sss (1)})}/n)$,  denoted by $\bar\psi_n$ in Remark~\ref{idgsts}.  We have 
\[\phi_{w^{\sn}}\circ\bar\psi_n([i,u]_1)=\phi_{w^{\sn}}([i,(u/n)]_n)=w^{\sn}_i(u/n)=w_i^{\sss (1)}(u)/\sqrt n=\phi_{w_1^{\sss (1)}}([i,u]_1)/\sqrt n.\]
By Remark~\ref{idgsts} the result follows.
\end{proof}

\section{General Convergence and  the Proofs of Theorems~\ref{thm:jointconvergence} and \ref{thm:rjointconvergence}}
\label{sec:G}

\subsection{Proof of Lemma~\ref{lem:hBM_shape0}}\label{ssec:HBMnondeg}
\begin{LEM}\label{lem:distlife}
For every $s>0$, 
$\N_{\sss H}^s[J(\{(u,w):\FL(w)\neq u\})]=0$.
\end{LEM}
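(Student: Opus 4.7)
The plan is to reduce the statement to two simpler facts about the random measures $H_u$ under the canonical measure $\N_{\sss H}$: (i) $H_u$ is supported on paths that are constant on $[u,\infty)$, and (ii) the mean measure $\N_{\sss H}[H_u(\cdot)]$ is the law of Brownian motion started at $0$ and stopped at $u$. The first is already recorded in the paper (just above \eqref{Jsupp}) and gives $\FL(w)\le u$ for $H_u$-a.e.\ $w$, $\N_{\sss H}$-a.e.; consequently
\[
I\bigl(\{(u,w):\FL(w)>u\}\bigr)=\int_0^\infty H_u\bigl(\{w:\FL(w)>u\}\bigr)\,du=0\qquad \N_{\sss H}\text{-a.s.}
\]
So the entire content is to rule out $\FL(w)<u$.

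First I would show that for each $u>0$, $\N_{\sss H}[H_u(\{w:\FL(w)<u\})]=0$. By the mean measure identity (from (II.8.6)(a) of \cite{Per02}, as cited in the paper),
\[
\N_{\sss H}[H_u(\{w:\FL(w)<u\})]=P_B\bigl(\FL(B_{\cdot\wedge u})<u\bigr),
\]
where $B$ is a Brownian motion starting at $0$. Since Brownian motion is nowhere constant (on any non-degenerate interval $[a,u]$ with $a<u$, $B$ is not constant a.s.), the event $\{\FL(B_{\cdot\wedge u})<u\}=\cup_{n}\{B\text{ constant on }[u-1/n,u]\}$ is a $P_B$-null set, so the displayed expectation vanishes.

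Next I integrate over $u$. Using Fubini,
\[
\N_{\sss H}\bigl[I\bigl(\{(u,w):\FL(w)<u\}\bigr)\bigr]=\int_0^\infty \N_{\sss H}\bigl[H_u(\{w:\FL(w)<u\})\bigr]\,du=0,
\]
so $I(\{(u,w):\FL(w)<u\})=0$ $\N_{\sss H}$-a.e. Combining with the upper bound $\FL(w)\le u$, we obtain $I(\{(u,w):\FL(w)\neq u\})=0$ $\N_{\sss H}$-a.e., and therefore also $J(\{(u,w):\FL(w)\neq u\})=0$ $\N_{\sss H}$-a.e.\ on the event $\{I\neq 0_M\}$ (on the complementary event $J=0_M$ by definition).

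Finally, since $\N_{\sss H}(S>s)=2/s\in(0,\infty)$ by \eqref{SBMsurv}, the probability $\N_{\sss H}^s$ is just $\N_{\sss H}$ restricted to $\{S>s\}$ and renormalised, so the $\N_{\sss H}$-null statement transfers to a $\N_{\sss H}^s$-null statement. As $J$ is a probability on $\oD$ and the relevant set has $J$-mass $0$ a.s., $\N_{\sss H}^s[J(\{(u,w):\FL(w)\neq u\})]=0$. The only potential obstacle is verifying the mean-measure formula in the non-standard setting of the canonical (sigma-finite) excursion measure $\N_{\sss H}$, but this is exactly what the cited reference (II.8.6)(a) of \cite{Per02} provides, so the argument is essentially bookkeeping around Fubini and the nowhere-constancy of Brownian paths.
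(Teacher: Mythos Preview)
Your proposal is correct and follows essentially the same route as the paper: both reduce the claim to the fact that the mean measure of $H_u$ under $\N_{\sss H}$ is the law of a Brownian motion stopped at time $u$, for which $\FL(B_{\cdot\wedge u})=u$ a.s. The only cosmetic difference is that you split off the inequality $\FL(w)\le u$ via the support property and then use the mean measure for $\FL(w)<u$, whereas the paper handles $\{\FL(w)\neq u\}$ in one stroke via the mean measure.
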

\begin{proof}  
It suffices to show for each $u>0$, $\N_{\sss H}^s\big[H_u(\{w:\FL(w)\neq u\})\big]=0$, and this 
follows from
\begin{equation}
\N_{\sss H}\big[H_u(\{w:\FL(w)\neq u\})\big]=0.\label{NH_Lu}
\end{equation}
The mean measure of $H_u$ under $\N_{\sss H}$ is the law of a Brownian path stopped at $u$ (this is an elementary consequence of (II.8.6)(a) of \cite{Per02})  
so \eqref{NH_Lu} reduces to showing that if $B$ is a Brownian motion starting at $0$ and $B^u(t)=B(t\wedge u)$, then $\FL(B^u)=u$ a.s., which is obvious.
\end{proof}

To prove Lemma~\ref{lem:hBM_shape0} we need a moment formula for $H$ which is implicit in \cite{CFHP20}.   
Using notation from \cite{CFHP20}, we let $r\in \N$, $\vec t=(t_1,\dots,t_r)\in(0,\infty)^r$, $\digamma\in\Sigma_{r}$ be a non-degenerate shape, and $\prec$ denote the ancestral relation on $\digamma$.  
Consider a metric, $d$, on $\digamma$ satisfying $d(i,0)=t_i$ for the leaves $i=1,\dots,r$, and let $\vec u=(\vec t,u_{r+1},\dots, u_{2r-1})$ where $u_i=d(i,0)$ for the interior vertices $i>r$ in $\digamma$ (recall Remark~\ref{rem:labels}(b)). Hence the set of possible values of $(u_{r+1},\dots, u_{2r-1})$ is
\begin{align*}\M(\vec t,\digamma)=\{(u_{r+1},\dots,u_{2r-1}):& \text{ if $u_i=t_i$ for $i\le r$, then for all $k,\ell\le 2r-1$,}\\
&\qquad k\prec \ell\text{ implies }u_k<u_\ell\} \text{ (if $r=1$, the set is empty}),
\end{align*}
 and $d=d(\vec{u})$ is uniquely determined by $\vec u$.  
Then $(W^{1,\vec u,\digamma},\dots,W^{r,\vec u,\digamma})$ denotes a tree-indexed Brownian motion with variance parameter $\sigma^2$ on $T(\digamma,\vec u):=(\digamma,d(\vec u))$ (see Definition~2.8 of \cite{CFHP20}). This means $\{W_t^{i,\vec u,\digamma}, t\in[0, t_i]:i\le r\}$ are correlated Brownian motions, starting at $o$, (variance parameter $\sigma^2$), where for $i\neq j$ $W^{i,\vec u,\digamma}=W^{j,\vec u,\digamma}$ on $[0,u_{i\wedge j}]$, and  $W^{i,\vec u,\digamma}$ and $W^{j,\vec u,\digamma}$ evolve  independently after time $u_{i\wedge j}$. Here, for $r>1$, $i\wedge j\in\{r+1,\dots,2r-1\}$ is the most recent common ancestor of leaves $i$ and $j$ with respect to $\prec$.  We write $d\vec{u}$ for integration with respect to $(u_{r+1},\dots,u_{2r-1})$.

\begin{LEM}\label{lem:hBMmoments} Fix $d\in \N$.  If $r\in\N$, $\vec t\in(0,\infty)^r$ and  $\phi:\mc{C}^r\to\R_+$ is Borel 
then
\begin{equation}\label{hbmmoments}\N_{\sss H}\Bigl[\int\dots\int\phi(w_1,\dots,w_r)dH_{t_1}(w_1)\dots H_{t_r}(dw_r)\Bigr]=\sum_{\digamma\in\Sigma_r}\int_{\M(\vec t,\digamma)}E\Big[\phi(W^{1,\vec u,\digamma}_{\cdot\wedge t_1},\dots, W^{r,\vec u,\digamma}_{\cdot\wedge t_r})\Big]d\vec{u}.
\end{equation}
\end{LEM}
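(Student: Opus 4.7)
The plan is to prove the formula by induction on $r$, exploiting the cluster structure of $H$ under the canonical measure: since $\N_{\sss H}$ describes a single Poissonian genealogical tree, its factorial moments decompose naturally over the possible shapes of the genealogical subtree spanned by $r$ selected particles. The base case $r=1$ is immediate: $\Sigma_1$ contains only the trivial shape joining root $0$ to leaf $1$, and $\M(\vec t,\digamma)$ has no integration variables, so the right-hand side reduces to $E[\phi(B_{\cdot\wedge t_1})]$ with $B$ a Brownian motion of covariance $\sigma^2 I$ from $o$; the left-hand side is $\N_{\sss H}[H_{t_1}(\phi)]$, which equals the same expectation by the mean-measure identity (II.8.6)(a) of \cite{Per02}.

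For the inductive step I would use the log-Laplace functional. For Borel $\psi_i \ge 0$ and $\lambda_i > 0$ small, set
$$v(\vec t;\vec\lambda) = \N_{\sss H}\Big[1 - \exp\Big(-\textstyle\sum_{i=1}^r \lambda_i H_{t_i}(\psi_i)\Big)\Big],$$
which satisfies the semilinear evolution equation for HBM (Section~II.7 of \cite{Per02}) whose non-linearity is the quadratic term encoding branching rate $1$. Differentiating $r$ times in the $\lambda_i$ at $\vec\lambda = 0$ yields the factorial moment on the left-hand side, and repeated application of the Leibniz rule to the quadratic non-linearity produces an expansion indexed by binary pairings of the $r$ derivatives, i.e.~by shapes $\digamma\in\Sigma_r$: each quadratic non-linearity contributes an internal branching vertex $v$ of $\digamma$ with its time coordinate $u_v$ integrated against Lebesgue measure on the allowed interval above its parent (collectively giving $\int_{\M(\vec t,\digamma)} d\vec u$), while the linear part of the evolution runs independent Brownian motions of covariance $\sigma^2 I$ along each edge; recombining reproduces exactly the tree-indexed Brownian motion $(W^{i,\vec u,\digamma})_{i\le r}$. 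An alternative is Le Gall's Brownian-snake representation of $\N_{\sss H}$ from \cite{LeGall}, under which the cluster is directly carried by a tree-indexed Brownian motion and the formula can be read off as its $r$-point correlation.

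The main obstacle will be the combinatorial bookkeeping: verifying that the sum over shapes in $\Sigma_r$ (taken modulo label-preserving isomorphism, per Remark~\ref{rem:labels}(b)) together with the integration over $\M(\vec t,\digamma)$ indexes each genealogy exactly once, with no stray symmetry factors. Non-degeneracy of $\vec u \in \M(\vec t,\digamma)$ enforces a strict ancestral ordering of branching times along every root-to-leaf path of $\digamma$; configurations violating this form a Lebesgue-null set and may be omitted without affecting the identity.
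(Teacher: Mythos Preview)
Your approach is correct in outline but takes a genuinely different route from the paper. The paper's proof is much shorter and essentially a citation: it reduces to the case where $\phi$ depends on the paths only through finitely many time coordinates, observes that both sides of \eqref{hbmmoments} are then integrals of $\phi$ against finite measures on a Euclidean space, and invokes Propositions~2.6 and 2.9 of \cite{CFHP20} for the equality of the characteristic functions of those measures. All the analytic work (and the combinatorics over $\Sigma_r$) is deferred to that reference.

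Your proposal instead derives the formula from first principles via the Dynkin-type moment recursion for superprocesses: differentiate the log-Laplace functional at zero and let the quadratic nonlinearity generate the binary tree structure. This is a legitimate and well-known route (see e.g.\ Dynkin's moment diagrams, or the recursive moment formulas in \cite{Per02}), and your identification of the Leibniz expansion with the sum over $\Sigma_r$ is the right combinatorial picture. The advantage of your approach is self-containment; the advantage of the paper's is brevity, since the characteristic-function identity it cites was already established in the companion paper. Note that what you call an ``inductive step'' is really a direct expansion for each fixed $r$; the recursion is hidden in the repeated Leibniz differentiation of the quadratic term, and making that precise (with the correct symmetry factors, as you flag) is exactly the content of the results the paper cites.
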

\begin{proof} It suffices to prove \eqref{hbmmoments} for $\phi\ge 0$ a Borel function of $(w^1|_{\vec s^{(1)}},\dots,w^r|_{\vec s^{(r)}})$ where each $\vec s^{(\ell)}=(s_0^{(\ell)},\dots,s_{m^{(\ell)}}^{(\ell)})$ is a finite dimensional set of times $0=s_0^{(\ell)}<\dots<s_{m^{(\ell)}}^{(\ell)}=t_\ell$.  (For this recall that $w_i(\cdot\wedge t_i)=w_i$ for $H_{t_i}$-a.a. $w_i$.) In this finite dimensional case both sides of \eqref{hbmmoments} are integrals of $\phi$ with respect to a finite dimensional finite measure and so equality of these measures will follow from equality of their characteristic functions. This is established in Propositions~2.6 and 2.9 of \cite{CFHP20}, and we are done.
\end{proof}
\begin{COR}\label{cor:taulaw} Let $r\in \N^{\ge 2}$, $\vec t\in(0,\infty)^r$ and $\psi:\R_+^{\{(i,j)\in [r]\times [r]:i<j\}}\to \R_+$ be Borel. Then
\begin{equation}\label{taumoments}
\N_{\sss H}\Big[\int\dots\int\psi((\tau(w_i,w_j))_{i,j\in[r],i<j})dH_{t_1}(w_1)\dots H_{t_r}(dw_r)\Bigr]=\sum_{\digamma\in\Sigma_r}\int_{\M(\vec t,\digamma)}\psi((u_{i\wedge j})_{i<j\le r})d\vec u.
\end{equation}
\end{COR}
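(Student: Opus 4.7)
The plan is to apply Lemma~\ref{lem:hBMmoments} to the non-negative Borel function
\[
\phi(w_1,\dots,w_r)=\psi\bigl((\tau(w_i,w_j))_{i<j\le r}\bigr).
\]
The left-hand side of \eqref{hbmmoments} then matches the left-hand side of the corollary, so the real work is to identify, for each $\digamma\in\Sigma_r$ and each $\vec u\in\M(\vec t,\digamma)$, the branch times $\tau(W^{i,\vec u,\digamma}_{\cdot\wedge t_i},W^{j,\vec u,\digamma}_{\cdot\wedge t_j})$ associated with the tree-indexed Brownian motion.

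Fix $i<j$ in $[r]$. By construction, $W^{i,\vec u,\digamma}$ and $W^{j,\vec u,\digamma}$ coincide on $[0,u_{i\wedge j}]$ and evolve as independent Brownian motions on $(u_{i\wedge j},\infty)$. Since two independent Brownian motions from a common starting point separate immediately (their difference is, up to a scalar, a non-trivial Brownian motion started at $0$, so is a.s.\ non-zero in every right neighbourhood of $u_{i\wedge j}$), we obtain $\bar\tau(W^{i,\vec u,\digamma},W^{j,\vec u,\digamma})=u_{i\wedge j}$ almost surely. The definition of $\M(\vec t,\digamma)$ forces $u_{i\wedge j}<u_i\wedge u_j=t_i\wedge t_j$, because $i\wedge j$ is a strict ancestor of both $i$ and $j$ in $\digamma$; this strict inequality guarantees that the separation occurs before either truncation time, and so $\bar\tau(W^{i,\vec u,\digamma}_{\cdot\wedge t_i},W^{j,\vec u,\digamma}_{\cdot\wedge t_j})=u_{i\wedge j}$ as well. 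Since Brownian paths are nowhere constant, $\FL(W^{i,\vec u,\digamma}_{\cdot\wedge t_i})=t_i$ and $\FL(W^{j,\vec u,\digamma}_{\cdot\wedge t_j})=t_j$ almost surely, and the definition \eqref{tau_def} of $\tau$ then yields $\tau(W^{i,\vec u,\digamma}_{\cdot\wedge t_i},W^{j,\vec u,\digamma}_{\cdot\wedge t_j})=u_{i\wedge j}$ almost surely.

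Substituting this into \eqref{hbmmoments} makes the integrand on the right-hand side a deterministic function of $\vec u$ equal to $\psi((u_{i\wedge j})_{i<j\le r})$, which is exactly the right-hand side of \eqref{taumoments}. The derivation is really just a combinatorial translation of the moment formula in Lemma~\ref{lem:hBMmoments}; the only non-trivial analytic ingredient is the classical fact that two independent Brownian motions from a common starting point separate immediately, and the only care needed is in verifying the strict inequality $u_{i\wedge j}<t_i\wedge t_j$, so that truncation at $t_i$ or $t_j$ does not interfere with identifying $\bar\tau$.
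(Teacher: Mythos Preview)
Your proof is correct and follows essentially the same approach as the paper: apply Lemma~\ref{lem:hBMmoments} with $\phi(w_1,\dots,w_r)=\psi((\tau(w_i,w_j))_{i<j})$ and then identify $\tau(W^{i,\vec u,\digamma}_{\cdot\wedge t_i},W^{j,\vec u,\digamma}_{\cdot\wedge t_j})=u_{i\wedge j}$ a.s.\ using the strict inequality $u_{i\wedge j}<t_i\wedge t_j$ and elementary properties of Brownian motion. In fact you spell out the Brownian-motion argument (immediate separation of independent copies, $\FL$ of the stopped path equals $t_i$) in more detail than the paper, which simply cites ``definition and elementary properties of Brownian motion'' for the identity \eqref{uistau}.
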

\begin{proof} Let $r$, $\vec{t}$ and $\digamma\in\Sigma_r$ be as above, and $\vec u\in \M(\vec t,\digamma)$. By definition this means that $u_{i\wedge j}<\min(t_i,t_j)$. By definition and elementary properties of Brownian motion,  
\begin{equation}\label{uistau}
u_{i\wedge j}=\tau(W^{i,\vec u,\digamma}_{\cdot\wedge t_i},W^{j,\vec u,\digamma}_{\cdot\wedge t_j}) \text{ for all }i,j\in [r],\ i<j \ a.s.
\end{equation}
Let $\psi$ be as above and apply Lemma~\ref{lem:hBMmoments} with $\phi(w_1,\dots,w_r)=\psi((\tau(w_i,w_j))_{i,j\in[r],i<j})$ and then use \eqref{uistau} to complete the proof. 
\end{proof}

\begin{proof}[Proof of Lemma~\ref{lem:hBM_shape0}]
By the definition of non-degeneracy and simple union bounds, it suffices to show the following:
\begin{itemize}
\item[(0)] for $K=1$, $\hat\N^s_{\sss H}\big(\tau(W_1,W_1)=0\big)=0$,
\item[(a)] for $K=2$, $\hat\N^s_{\sss H}\big(\tau(W_1,W_2)=0\big)=0$,
\item[(b)] for $K=2$, $\hat\N^s_{\sss H}\big(\tau(W_1,W_2)=\mc{L}(W_1)\big)=0$,
\item[(c)] for $K=3$, $\hat\N^s_{\sss H}\big(\tau(W_1,W_2)=\tau(W_1,W_3)=\tau(W_2,W_3)\big)=0$.
\end{itemize}
Item (0) is immediate from Lemma~\ref{lem:distlife}.  
In each of (a)-(c) we want to show a result of the form $\hat\N_{\sss H}^s\big((W_1,\dots, W_r)\in A\big)=0$, i.e.
\begin{align}
\N_{\sss H}^s\Bigg[\dfrac{\int_{(0,\infty)^r}H_{t_1}\times\cdots \times H_{t_r}(A)dt_1\dots dt_r}{\Big(\int_{(0,\infty)}H_t(1) dt\Big)^r}\Bigg]=0.
\end{align}
Elementary arguments show it suffices to  to prove that $\N_{\sss H}\big[H_{t_1}\times\cdots \times H_{t_r}(A)\big]=0$ for every $t_1,\dots, t_r>0$.

By \eqref{NH_Lu} we have $\N_{\sss H}$-a.e. that $\FL(w_i)=t_i$ for $H_{t_i}$-a.a. $w_i$.  Therefore to prove (a) and (b) it suffices to show that for fixed $t_1,t_2\in(0,\infty)$,
\begin{equation}\label{abeq}
\N_{\sss H}\big[H_{t_1}\times H_{t_2}(\tau(w_1,w_2)\notin(0,t_1\wedge t_2))\big]=0.
\end{equation}
To see this apply Corollary~\ref{cor:taulaw} with $r=2$.  Recall that $\Sigma_{[2]}$ consists of a single shape $\digamma$~corresponding to a single internal binary branch, and $u\in \M(\vec t,\digamma)$ iff $0<u<\min(t_1,t_2)$.  So this and Corollary~\ref{cor:taulaw} 
show that the left-hand side of \eqref{abeq} equals $\int_0^{\min(t_1,t_2)}\indic{u\notin(0,\min(t_1,t_2))}du=0$. 

Turning to (c), let $t_i\in(0,\infty)$ for $i=1,2,3$. As for \eqref{abeq} it suffices to prove
\begin{equation}\label{ceq}
\N_{\sss H}\big[H_{t_1}\times H_{t_2}\times H_{t_3}(\tau(w_1,w_2)=\tau(w_2,w_3)=\tau(w_1,w_3))\big]=0.
\end{equation}
In this case we take $r=3$ in Corollary~\ref{cor:taulaw} and recall that $\Sigma_3=\{\digamma_i:i=1,2,3\}$ has 3 shapes.  For each shape, the event in \eqref{ceq} reduces to equality of the two internal branch times $u_4, u_5$, that is, by Corollary~\ref{cor:taulaw} the left-hand side of \eqref{ceq} equals 
\[\sum_{i=1}^3\int_{\M(\vec t,\digamma_i)}\indic{u_{1\wedge 2}=u_{1\wedge 3}=u_{2\wedge 3}}du_4du_5=\sum_{i=1}^3\int_{\M(\vec t,\digamma_i)}\indic{u_4=u_5}du_4du_5=0.\]
\end{proof}
 Let $\tau_{i,j}(\digamma,\vec u)=u_{i\wedge j}$ if $i\neq j$ and $\tau_{i,i}(\digamma,\vec u)=t_i$, extend $W^{i,\vec u,\digamma}$ to be constant on $[t_i,\infty)$,  and then let $W^{\vec u,\digamma}=(W^{i,\vec u,\digamma})_{i\le r}\in C_r$. Then by \eqref{uistau}, $\bs{\tau}(\digamma,\vec u)=\bs{\tau}(W^{\vec u, \digamma})=:\bs{\tau}$.  The construction of $T(W^{\vec u,\digamma})$, and in particular the construction of $\pi$, in Section~\ref{sec:GST} easily gives $\digamma=T(W^{\vec u,\digamma})$. Remark~\ref{gcaij} and \eqref{dtauij0} show that $d_{\tau}(0,i\wedge j)=\tau_{i,j}=u_{i\wedge j}=d(\vec{u})(0,i\wedge j)$ (there is now no ambiguity in interpreting $i\wedge j$),  and therefore
 $d(\vec{u})=d_{\bs{\tau}}$. These equalities imply that $\overline\digamma=\overline T(W^{\vec u,\digamma})$ and so 
 $\phi_{W^{\vec u,\digamma}}([i,v])=W_i^{\vec u,\digamma}(v)$ is an embedding on $\overline\digamma$.  We conclude that $(\digamma,d(\vec u),\phi_{W^{\vec u,\digamma}})$ is a non-degenerate gst. As in Corollary~\ref{cor:taulaw} we can use the above identifications to see that if $Z_{s,r}=(s/2)\1(H_s(1)>0)\Bigl[\int_0^\infty H_t(1)\,dt\Bigr]^{-r}$, then (recall $\vec u=(\vec t, u_{r+1},\dots,u_{2r-1})$)
 \begin{align}\label{Wabscont}
\nonumber& \hat\N^s_{\sss H}((W,\bs{\tau}(W),\mc{B}_r(W))\in\cdot)\\
\nonumber&=\N_{\sss H}\Bigl(Z_{s,r}\int_{\R_+^r}\int_{\mc{C}^r}\1((W,\bs{\tau}(W),\mc{B}_r(W))\in\cdot)\,dH_{t_1}(W_1)\,\dots\,dH_{t_r}(W_r)d\vec{t}\Bigr)\\
\nonumber&\ll\N_{\sss H}\Bigl(\int_{\R_+^r}\int_{\mc{C}^r}\1((W,\bs{\tau}(W),\mc{B}_r(W))\in\cdot)\,dH_{t_1}(W_1)\,\dots\,dH_{t_r}(W_r)d\vec{t}\Bigr)\\
&=\int_{\R_+^r}\sum_{\digamma\in\Sigma_r}\Bigl[\int_{\M(\vec t,\digamma)}P\Bigl((W^{\vec u,\digamma},\bs{\tau}(\digamma,\vec u),(\digamma,d(\vec u),\phi_{W^{\vec u,\digamma}}))\in\cdot\Bigr)d\vec u\Bigr]d\vec t.
\end{align}

The above absolute continuity result allows one to deduce qualitative properties of the left-hand side as was done in the proof of Lemma~\ref{lem:hBM_shape0}.
\subsection{A  Convergence Theorem For General Historical Processes}\label{ssec:genthm}
We start with a simple abstract result on weak convergence of a sequence of samples selected according to a sequence of weakly convergent random measures.  For a Polish space $E_2$, let $F_K:\mc{M}_F(E_2)\to \mc{M}_F((E_2)^K)$ denote the normalised $K$-fold product measure, i.e. for $\mu \in \mc{M}_F(E_2)$,
\begin{equation}
\label{FKdef}
F_K(\mu)=\begin{cases}
0_M, & \text{ if }\mu(1)=0\\
\dfrac{\mu\times \cdots \times \mu}{\mu(1)^K}, & \text{ otherwise.}
\end{cases}
\end{equation}

\begin{LEM} \label{lem:absrandompts} Fix $K\in \N$ and let $E_1,E_2$ be Polish spaces.  Assume for each $n\in\N$, we have a random vector $(Z^{\sn},M^{\sn})\in E_1\times \mc{M}_F(E_2)$ with  $M^{\sn}(E_2)>0$ a.s., and conditional on $(Z^{\sn},M^{\sn})$, $\{V^{\sn}_i:i\le K\}$ is a collection of i.i.d.~random vectors with common law $M^{\sn}/M^{\sn}(E_2)$.  Assume also $(Z,M)\in E_1\times \mc{M}_F(E_2)$, $M(E_2)>0$ a.s., and conditional on $(Z,M)$, $\{V_i:i\le K\}$ are i.i.d. with law $M/M(E_2)$.
If $(Z^{\sn},M^{\sn})\cweak(Z,M)$ in $E_1\times\mc{M}_F(E_2)$, then $(Z^{\sn},M^{\sn},V^{\sn})\cweak(Z,M,V)$ on $E_1\times \mc{M}_F(E_2)\times (E_2)^K$.
\end{LEM}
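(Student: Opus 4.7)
The plan is to reduce the assertion to the extended Portmanteau theorem: if $X^{\sn} \cweak X$ on a Polish space and $g$ is bounded Borel with $\P(X \in \mathrm{Disc}(g)) = 0$, then $\E[g(X^{\sn})] \to \E[g(X)]$. For any bounded continuous test function $f : E_1 \times \MF(E_2) \times (E_2)^K \to \R$, I would define
\[
g(z, \mu) := \int f(z, \mu, v_1, \ldots, v_K) \, dF_K(\mu)(v_1, \ldots, v_K),
\]
extended by $g(z, \mu) := 0$ on $\{\mu : \mu(E_2) = 0\}$. The conditional i.i.d.~structure yields
\[
\E\big[f(Z^{\sn}, M^{\sn}, V^{\sn})\big] = \E\big[g(Z^{\sn}, M^{\sn})\big], \qquad \E\big[f(Z, M, V)\big] = \E\big[g(Z, M)\big],
\]
so it suffices to prove $\E[g(Z^{\sn}, M^{\sn})] \to \E[g(Z, M)]$.

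First I would verify that $\mu \mapsto F_K(\mu)$ is continuous on the open set $\mc{U} := \{\mu \in \MF(E_2) : \mu(E_2) > 0\}$. Weak convergence of finite measures gives convergence of total masses (test against the constant $1$), whence $\mu_n / \mu_n(E_2) \cweak \mu / \mu(E_2)$ as probabilities on $E_2$ whenever $\mu \in \mc{U}$; weak convergence is preserved under $K$-fold products, so $F_K(\mu_n) \cweak F_K(\mu)$ on $(E_2)^K$.

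Next I would establish joint continuity of $g$ on $E_1 \times \mc{U}$. For $(z_n, \mu_n) \to (z, \mu)$ with $\mu \in \mc{U}$, split
\[
g(z_n, \mu_n) - g(z, \mu) = \int \big[f(z_n, \mu_n, v) - f(z, \mu, v)\big] \, dF_K(\mu_n)(v) + \int f(z, \mu, v) \, d\big[F_K(\mu_n) - F_K(\mu)\big](v).
\]
The second term vanishes by the previous step and boundedness and continuity of $v \mapsto f(z, \mu, v)$. For the first term, tightness of the convergent sequence $\{F_K(\mu_n)\}$ of probability measures allows us to restrict integration to a compact $\mc{K} \subset (E_2)^K$ with arbitrarily small complementary mass uniformly in $n$; on the compact set $\big(\{(z_n, \mu_n)\}_n \cup \{(z, \mu)\}\big) \times \mc{K}$ the function $f$ is uniformly continuous, giving uniform convergence $f(z_n, \mu_n, v) \to f(z, \mu, v)$ on $\mc{K}$.

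Finally, since $M(E_2) > 0$ almost surely, $\mathrm{Disc}(g) \subset E_1 \times \{\mu : \mu(E_2) = 0\}$ is a null set under the law of $(Z, M)$. Applying the extended Portmanteau theorem to $(Z^{\sn}, M^{\sn}) \cweak (Z, M)$ and the bounded function $g$ gives $\E[g(Z^{\sn}, M^{\sn})] \to \E[g(Z, M)]$, completing the proof. The main technical point is the joint continuity of $g$ in both arguments, which is where the combination of tightness and uniform continuity of $f$ on compact sets is essential; everything else is a routine application of standard weak convergence machinery.
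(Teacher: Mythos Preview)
Your proof is correct and follows essentially the same strategy as the paper: push the test function through the conditional expectation to obtain a bounded function of $(Z^{\sn},M^{\sn})$ that is continuous on $\{\mu(E_2)>0\}$, then apply the continuous mapping/Portmanteau theorem. The only difference is that the paper works with \emph{product} test functions $\phi_1(z,\mu)\phi_2(v)$ (which determine weak convergence on the Polish product space), so the continuity of $\psi(z,\mu)=\phi_1(z,\mu)\int\phi_2\,dF_K(\mu)$ on $\{\mu(E_2)>0\}$ is immediate from continuity of $F_K$ there; you instead treat a general continuous $f$ and correspondingly need the tightness/uniform-continuity argument to handle the joint continuity of $g$. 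Both routes are valid; the paper's is a bit shorter, yours avoids invoking the sufficiency of product functions.
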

\begin{proof} 
Let $\phi_1:E_1\times\mc{M}_F(E_2)\to \R$ and $\phi_2:(E_2)^K\to\R$ be bounded and continuous functions.  
Define $\psi:E_1\times\mc{M}_F(E_2)\to\R$ by 
\[\psi(z,\nu)=\Big[\phi_1(z,\nu)\int\phi_2 \d F_K(\nu)\Big]\indic{\nu(1)>0}.\]
Then $\psi$ is a bounded function which is continuous at all points $(z,\nu)$ with $\nu(1)>0$.  Note that $M(E_2)>0$  a.s., and $M^{\sn}(E_2)>0$, a.s., and that
\[\E[\phi_2(V^{\sn})|M^{\sn},Z^{\sn}]=\int\phi_2\d F_K(M^{\sn}), \quad \text{ and }\E[\phi_2(V)|M,Z]=\int\phi_2\d F_K(M).\]
We may conclude from the assumed weak convergence that
\begin{align*} \lim_{n\to\infty}\E\big[\phi_1(Z^{\sn},M^{\sn})\phi_2(V^{\sn})\big]
&= \lim_{n\to\infty}\E\big[\psi(Z^{\sn},M^{\sn})\big]\\
&=\E[\psi(Z,M)]\\
&=\E[\phi_1(Z,M)\phi_2(V)].
\end{align*}
The result follows.
\end{proof}

In this subsection we work in an abstract setting that includes our rescaled historical processes for BRW and lattice trees.  
Assume that for each $n\in\N$,  $(H_t^{\sn}, t\ge 0)$ is a stochastic process with sample paths in 
$\mc{D}_{\infty}(\mc{M}_F(\mc{D}))$ on some probability space $(\Omega,\F,\P)$. We let $S^{\sn}=\inf\{t:H_t^{\sn}=0\}$ {\bf and assume throughout that}
\begin{equation}\label{Sncond} 
0<S^{\sn}<\infty, \text{ and }H^{\sn}_t=0\text{ for all }t\ge S^{\sn} \text{ and }n\in\N\  a.s. 
\end{equation}
This is clear for both BRW and lattice trees with $H_t^{\sn}$ defined as in Sections \ref{sec:BRW} and \ref{sec:LT},  respectively. The first part holds by \eqref{surv1} and because there is an individual of generation 0, and the second part is obvious. 
 Condition~\ref{cond:fdd}(i)  below will imply that $\P(S^{\sn}>s)>0$ for all $s>0$, which will allow us to introduce $\P_n^s=\P(\cdot|S^{\sn}>s)$. 
We may define $I^{\sn}$ and $J^{\sn}$ as before in this setting, that is,
$I^{\sn}$ is the finite (by \eqref{Sncond}) measure on $\overline{\mc{D}}$ defined by 
\[I^{\sn}(A\times B)=\int_0^\infty\int \1_A(u)\1_B(w)H_u^{\sn}(\d w)\d u,\]
and $J^{\sn}=F_1(I^{\sn})$. 
 For $t>0$, let $I^{\sn}_{t}(G)=I^{\sn}(G\cap([0,t]\times\D))$, and $J_t^{\sn}=F_1(I_t^{\sn})$. Note that $J^{\sn}$ and $J_t^{\sn}$ are probabilities for all $t>0,\, n\in\N$ a.s. because $0<I^{\sn}_t(\oD)\le I^{\sn}(\oD)<\infty$ for all $t>0$ a.s. by \eqref{Sncond}. For the limiting historical Brownian motion we also introduce $I_t(\cdot)=I(\cdot\cap([0,t]\times\D))\in \mc{M}_F(\mc{\overline{D}})$ a.s., and $J_t=F_1(I_t)$ which are probabilities on $\oD$ for all $t>0$  a.s.  
 We suppress dependence on the spatial variance $\sigma_0^2$ of the limiting HBM. Recall  the definition of the lifetime $\FL(w)$ of a path $w\in\oD$ from \eqref{Ldef}.
\begin{COND}
\label{cond:fdd}
The following hold for every $s>0$:
\begin{itemize}
\item[(i)]  $\exists c_0>0$ such that  $\lim\limits_{n \to \infty}\sup\limits_{t\ge s}|nt\P(S^{\sn}>t)- c_0|=0$, and $\P(S^{\sn}>s)>0\ \forall n\in\N$,
\item[(ii)] as $n \to \infty$,   \begin{equation}\label{fddcvgce}\P_n^s(H^{\sn}\in\cdot)\cfdd\N_{\sss H}^s(H\in\cdot),
\end{equation}  
\item[(iii)] for every $t>0$, \begin{equation}\label{mombound} \sup_{n \in \N}\sup_{u\le t}\E^s_n[H^{\sn}_u(1)^p]<\infty\ \ \forall p\in\N,
\end{equation}
\item[(iv)] for every $n\in \N$,  $\E_n^s[J^{\sn}(\{(u,w):\FL(w)\neq[u]_n\})]=0$.
\end{itemize}
\end{COND}

\begin{LEM}\label{i'} 
$\quad$
\begin{itemize}
\item[\emph{(a)}] Assume that  $S^{\sn}$ has the same law as $S^{\sss(1)}/n$ and that $\lim_{t\to\infty}t\P(S^{\sss(1)}>t)=c_0$ for some $c_0>0$.  Then Condition~\ref{cond:fdd}(i) holds.
\item[\emph{(b)}] If Condition~\ref{cond:fdd}(i) holds then for each $s>0$ there 
 an $n_0=n_0(s)\in\N$ so that 
$$\frac{s}{3(s\vee t)}\le \P^{s}_n(S^{\sn}>t)\le \frac{3s}{s\vee t},\ \ \text{for all }t>0, \ n\ge n_0.$$
\end{itemize}
\end{LEM}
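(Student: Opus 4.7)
For part (a), my plan is to use the distributional identity $S^{\sn}\stackrel{d}{=}S^{\sss(1)}/n$ to rewrite
\[ nt\,\P(S^{\sn}>t)=nt\,\P(S^{\sss(1)}>nt)=f(nt),\qquad\text{where }f(u):=u\,\P(S^{\sss(1)}>u). \]
Then $\sup_{t\ge s}|nt\P(S^{\sn}>t)-c_0|=\sup_{u\ge ns}|f(u)-c_0|$, which tends to $0$ as $n\to\infty$ since $ns\to\infty$ and $f(u)\to c_0$ by hypothesis. This immediately gives the uniform convergence requirement in Condition~\ref{cond:fdd}(i).

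For the positivity requirement $\P(S^{\sn}>s)>0$ for every $n\in\N$, I would argue as follows. Since $c_0>0$, the limit $f(u)\to c_0$ forces $\P(S^{\sss(1)}>u)>0$ for all $u$ sufficiently large. The survival function $u\mapsto\P(S^{\sss(1)}>u)$ is non-increasing, so positivity of the tail at some large $u_0$ propagates to all $u\le u_0$, hence to all $u>0$. Therefore $\P(S^{\sn}>s)=\P(S^{\sss(1)}>ns)>0$ for every $n\in\N$ and every $s>0$.

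For part (b), I would start with the elementary identity, valid because $\{S^{\sn}>t\}\cap\{S^{\sn}>s\}=\{S^{\sn}>s\vee t\}$,
\[ \P^s_n(S^{\sn}>t)=\frac{\P(S^{\sn}>s\vee t)}{\P(S^{\sn}>s)}. \]
Applying Condition~\ref{cond:fdd}(i) with $\vep=c_0/2$, choose $n_0=n_0(s)$ so that for every $n\ge n_0$ and every $u\ge s$,
\[ \tfrac{1}{2}c_0\le nu\,\P(S^{\sn}>u)\le \tfrac{3}{2}c_0. \]
In the case $t\ge s$ we have $s\vee t=t$, and dividing the bound at $u=t$ by the bound at $u=s$ (in the appropriate directions) gives
\[ \frac{s}{3t}=\frac{c_0/2}{3c_0/2}\cdot\frac{s}{t}\le \P^s_n(S^{\sn}>t)\le \frac{3c_0/2}{c_0/2}\cdot\frac{s}{t}=\frac{3s}{t}, \]
which matches the claimed bounds since $s\vee t=t$. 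In the case $t<s$ we have $\{S^{\sn}>s\}\subseteq\{S^{\sn}>t\}$, so $\P^s_n(S^{\sn}>t)=1$, and the target bounds reduce to $\tfrac{1}{3}\le 1\le 3$, which hold trivially.

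The proof is essentially a bookkeeping exercise and I do not foresee any serious obstacle. The only subtlety is the positivity claim in (a), which does not follow directly from the asymptotic hypothesis at infinity and requires the monotonicity argument indicated above; everything else is a direct translation between $nt\P(S^{\sn}>t)$ and its limit $c_0$.
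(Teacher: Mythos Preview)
Your proposal is correct and follows essentially the same approach as the paper: the same substitution $u=nt$ in (a), the same unbounded-support/monotonicity observation for positivity, and in (b) the same choice of $n_0$ from the $c_0/2$ window followed by dividing the resulting two-sided bounds at $u=t$ and $u=s$, with the $t<s$ case handled trivially.
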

\begin{proof} 
(a) Let $s>0$. Then $\sup_{t\ge s}|nt\P(S^{\sn}>t)-c_0|=\sup_{t\ge s}|nt\P(S^{\sss(1)}>nt)-c_0|\rightarrow0$ as $n\to\infty$. Also the given convergence implies $S^{\sss(1)}$ has unbounded support and so the same is true for $S^{\sss(1)}/n$.

\noindent(b) By Condition~\ref{cond:fdd}(i) there is an $n_0$ so that for $n\ge n_0$, $\sup_{t\ge s}|nt\P(S^{\sn}>t)-c_0|\le c_0/2$. Therefore for $n\ge n_0$ and $t\ge s$ we have 
\[\P_n^s(S^{\sn}>t)=\frac{\P(S^{\sn}>t)}{\P(S^{\sn}>s)}\in\Bigl[\frac{s}{3t},\frac{3s}{t}\Bigr]=\Bigl[\frac{s}{3(s\vee t)},\frac{3s}{s\vee t}\Bigr].\]
For $t<s$ the probability in the Lemma is one and the bound is trivial.\end{proof}

Condition~\ref{cond:fdd} holds both  for lattice trees with $d>8$ and $L$ large, and for BRW.  In either case, (i) 
follows from \eqref{surv1} and Lemma~\ref{i'}(a). 
  Part (iv) holds (again in either case) by \eqref{LTndist}, 
and the  validity of (ii) was discussed at the end of Section~\ref{sec:scalelim}. For lattice trees, (iii) is well-known (see e.g.~\cite[Lemma 2.13]{CFHP20}),  
and for BRW, (iii) is readily proved using a predictable square function inequality of Burkholder as in Exercise II.4.1 of \cite{Per02} (recall \eqref{Ymom}).

\begin{LEM} \label{lem:IJStconv} 
Suppose that Condition \ref{cond:fdd} holds.  Then for any $t,s>0$, under $\P^s_n$ we have 
\[\P^s_n\big((I^{\sn}_t,J^{\sn}_t,S^{\sn})\in\cdot\big)\cweak \N_{\sss H}^s\big((I_t,J_t,S)\in\cdot\big)\ \ \text{in }\mc{M}_F(\oD)^2\times \R_+\text{ as }n\to\infty.\]
\end{LEM}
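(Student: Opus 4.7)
The plan is to reduce the joint weak convergence to $(a)$ $I^{\sn}_t\cweak I_t$ in $\mc{M}_F(\oD)$ and $(b)$ $S^{\sn}\cweak S$ in $\R_+$, then combine them to handle $J^{\sn}_t$ by continuous mapping. Fix $s,t>0$. Part $(b)$ is immediate from Condition~\ref{cond:fdd}(i): for $t'\ge s$,
\[\P^s_n(S^{\sn}>t')=\P(S^{\sn}>t')/\P(S^{\sn}>s)\longrightarrow (c_0/t')/(c_0/s)=s/t'=\N_{\sss H}^s(S>t'),\]
with both probabilities equal to $1$ for $t'<s$. Since the normalising map $F_1$ is continuous at measures with positive total mass and $I_t(\oD)=\int_0^t H_u(1)\,du>0$ $\N_{\sss H}^s$-a.s.\ (because $H_u(1)>0$ for $u\in(0,s)$), the claim for $J^{\sn}_t=F_1(I^{\sn}_t)$ will follow from joint convergence $(I^{\sn}_t,S^{\sn})\cweak(I_t,S)$ via the continuous mapping theorem applied to $(I,S)\mapsto(I,F_1(I),S)$.

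For the heart of the proof, $I^{\sn}_t\cweak I_t$ in $\mc{M}_F(\oD)$, I would use the method of moments on tensor-product test functions. For non-negative $f\in C_b([0,t])$ and $g\in C_b(\mc{D})$, Fubini gives
\[\E^s_n\bigl[I^{\sn}_t(f\otimes g)^p\bigr]=\int_{[0,t]^p}\prod_{j=1}^p f(u_j)\,\E^s_n\!\biggl[\prod_{j=1}^p H^{\sn}_{u_j}(g)\biggr]d\vec u.\]
Condition~\ref{cond:fdd}(ii) yields pointwise convergence of the inner expectation to $\N_{\sss H}^s\bigl[\prod_j H_{u_j}(g)\bigr]$ for each fixed $\vec u$, while Condition~\ref{cond:fdd}(iii) applied at an exponent slightly above $p$, together with H\"older, supplies the uniform integrability of the products; dominated convergence on $[0,t]^p$ then yields $\E^s_n[I^{\sn}_t(f\otimes g)^p]\to\N_{\sss H}^s[I_t(f\otimes g)^p]$. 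Since $I_t(f\otimes g)\le \sup|f|\sup|g|\int_0^t H_u(1)\,du$ has $p$th moments growing only factorially in $p$ under $\N_{\sss H}^s$, the limit law is moment-determined, giving $I^{\sn}_t(f\otimes g)\cweak I_t(f\otimes g)$. A Cram\'er--Wold argument promotes this to joint convergence for any finite tuple of tensor test functions, a separating class in $C_b(\oD)$; together with tightness of $\{I^{\sn}_t\}_n$ in $\mc{M}_F(\oD)$ (via the Kallenberg Laplace-functional criterion, using the uniform first-moment bound from (iii) and tightness of each $H^{\sn}_u$ from (ii)) this yields $I^{\sn}_t\cweak I_t$.

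To make the convergence joint with $S^{\sn}$, for each large $T$ and fine mesh $\delta>0$ I would approximate $S^{\sn}\wedge T$ by $S^{\sn}_{\delta,T}:=\max\{u_i\le T:H^{\sn}_{u_i}(1)>0\}$ (with $0$ when the set is empty) and approximate $I^{\sn}_t(f\otimes g)$ by the Riemann sum $A^{\sn}_\delta:=\delta\sum_i f(u_i)H^{\sn}_{u_i}(g)$. The pair $(A^{\sn}_\delta,S^{\sn}_{\delta,T})$ is then a function of finitely many $H^{\sn}_{u_i}$, so Condition~\ref{cond:fdd}(ii) delivers its joint weak convergence to the HBM analogue. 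The atomlessness of the law of $S$ under $\N_{\sss H}^s$ (its tail $t'\mapsto s/(t'\vee s)$ being continuous) removes the discontinuity of $\mu\mapsto\1_{\mu(1)>0}$ at $\mu=0$ in the limit, and the uniform tail bound $\P^s_n(S^{\sn}>T)\le 3s/T$ from Lemma~\ref{i'}(b) permits sending $T\to\infty$ uniformly in $n$. The main obstacle is controlling the $\delta\to 0$ Riemann-sum error $|I^{\sn}_t(f\otimes g)-A^{\sn}_\delta|$ uniformly in $n$: the cadlag oscillations of $u\mapsto H^{\sn}_u(g)$, invisible at the f.d.d.\ level, must be absorbed into $L^1$-estimates obtained by integrating in $u$ first and then invoking the moment bound in Condition~\ref{cond:fdd}(iii).
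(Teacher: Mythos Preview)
Your argument for $I^{\sn}_t\cweak I_t$ via the method of moments is essentially the paper's: convergence of $p$-th moments of $\int_0^t H^{\sn}_u(\phi(u,\cdot))\,du$ by Fubini plus dominated convergence (using Condition~\ref{cond:fdd}(ii) for the pointwise limit and (iii) for the integrable majorant and uniform integrability), then tightness of $\{I^{\sn}_t\}$ via tightness of the mean measures, then moment-determinacy from the exponential moment of $I_t(1)$. The paper's tightness step is slightly more concrete than yours: it shows $\E^s_n[I^{\sn}_t(\cdot)]\to\N^s_{\sss H}[I_t(\cdot)]$ in $\M_F(\D)$ and reads off compact containment by Markov's inequality, rather than invoking a Laplace-functional criterion. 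Your handling of $J^{\sn}_t$ by continuous mapping at nonzero measures is exactly the paper's.

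The real divergence is in coupling $S^{\sn}$ to $(I^{\sn}_t,J^{\sn}_t)$. The paper does \emph{not} approximate $S^{\sn}$ by a functional of finitely many $H^{\sn}_{u_i}$. Instead it exploits that the $I^{\sn}_t$-convergence was established for \emph{every} conditioning level $s>0$: writing
\[
\E^s_n\big[\psi(I^{\sn}_t,J^{\sn}_t)\indic{S^{\sn}>t'}\big]=\E^{s\vee t'}_n\big[\psi(I^{\sn}_t,J^{\sn}_t)\big]\cdot\frac{\P(S^{\sn}>s\vee t')}{\P(S^{\sn}>s)}
\]
reduces the joint statement to the already-proved marginal convergence under $\P^{s\vee t'}_n$ together with Condition~\ref{cond:fdd}(i). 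This change-of-conditioning trick is one line and sidesteps all approximation issues.

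Your route, by contrast, has two soft spots you should be aware of. First, the claim that ``atomlessness of the law of $S$ removes the discontinuity of $\mu\mapsto\1_{\mu(1)>0}$ at $\mu=0$'' is not right as stated: under $\N^s_{\sss H}$ the event $\{H_{u_i}=0\}=\{S\le u_i\}$ has \emph{positive} probability for $u_i>s$, so the continuous mapping theorem does not apply directly to $S^{\sn}_{\delta,T}$. What actually works is to approximate $\1_{h(1)>0}$ by $\1_{h(1)>\eta}$ and use that $H_{u_i}(1)$ is atomless on $(0,\infty)$; or, more simply, to observe that $\1_{H^{\sn}_{u_i}(1)>0}=\1_{S^{\sn}>u_i}$ and fall back on Condition~\ref{cond:fdd}(i)---which is precisely the paper's trick in disguise. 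Second, controlling the Riemann-sum error ``uniformly in $n$'' is the wrong formulation; what you need is $\lim_{\delta\to 0}\limsup_{n}\E^s_n[(I^{\sn}_t(f\otimes g)-A^{\sn}_\delta)^2]=0$, which follows by showing the second moment converges (via fdd convergence and (iii)) to $\N^s_{\sss H}[(I_t(f\otimes g)-A_\delta)^2]$ and then letting $\delta\to 0$ using the a.s.\ continuity of $u\mapsto H_u$. An $L^1$ bound that ``integrates in $u$ first'' does not obviously yield the required uniformity, since $u\mapsto H^{\sn}_u$ has no smallness of increments on scale $\delta$ for fixed $n$. So your plan is salvageable, but the paper's conditioning device is both shorter and avoids these subtleties.
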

\begin{proof} Let $s,t>0$.  The weak convergence of $I^{\sn}_t$ to $I_t$ is proved as in Lemma~2.2 of \cite{HP19} using the method of moments and Condition \ref{cond:fdd}.    
Although only minor changes are needed in that argument, we sketch the proof below.   
Let $p\in\N$, $u_1,\dots,u_p\in[0,t]$,  and $\phi\in C_b(\oD)$. Then \eqref{fddcvgce}, \eqref{mombound} and a uniform integrability argument give
\begin{equation} 
\lim_{n\to\infty}\E^s_n\Bigl[\prod_{i=1}^pH^{\sn}_{u_i}(\phi(u_i, \cdot))\Bigr]=\N_{\sss H}^s\Bigl[\prod_{i=1}^pH_{u_i}(\phi(u_i,\cdot))\Bigr].\label{moments_conv}
\end{equation}
A simple Fubini argument together with \eqref{mombound}, \eqref{moments_conv} and Dominated Convergence then implies
\begin{equation}\label{pmomc}
\lim_{n\to\infty}\E^s_n\Bigl[\Bigl(\int_0^tH^{\sn}_u(\phi(u,\cdot))\,du\Bigr)^p\Bigr]=\N_{\sss H}^s\Bigl[\Bigl(\int_0^tH_u(\phi(u,\cdot))\,du\Bigr)^p\Bigr].
\end{equation}

Taking $p=1$ and $\phi(u,w)=\psi(w)$ gives 
\[\E^s_n\Bigl[\int_0^t H^{\sn}_u(\cdot)\,du\Bigr]\rightarrow\N_{\sss H}^s\Bigl[\int_0^t H_u(\cdot)\,du\Bigr]\text{ in }\M_F(\D).\]
This implies tightness of $\{\E^s_n[I^{\sn}_t(\cdot)]:n\in\N\}$ on $\D$, and so if $\vep>0$ we may find a compact set $K\subset\D$ so that 
\[\sup_n\E^s_n\Bigl[\int_0^tH^{\sn}_u(K^c)\,du\Bigr]<\vep^2.\]
By Markov's inequality we obtain 
\begin{equation}
\sup_n\P_n^s\Bigl(I^{\sn}_t\big(([0,t] \times K)^c\big)>\vep\Bigr)\le \sup_n\E_n^s\Bigl[\int_0^t H^{\sn}_u(K^c)\,du\Bigr]/\vep<\vep.\label{comp_cont_Int}
\end{equation}
A simpler argument gives tightness of the total masses $\int_0^tH^{\sn}_u(1)du=I_t^{\sn}(1)$.  By a well-known characterization of tightness for random measures (e.g., take constant processes in Theorem~II.4.1 of \cite{Per02}), the compact containment  in \eqref{comp_cont_Int} and tightness of the total mass 
now implies tightness of the laws $\{\P^s_n(I^{\sn}_t\in\cdot):n \in \N\}$ on $\M_F(\oD)$. 

Let $\tilde I_t$ denote a subsequential limit point of these laws (under a probability $\P$). It follows from \eqref{mombound} and a uniformly integrability argument that for $\phi\in C_b(\oD)$
\[\lim_{n\to\infty}\E^s_n\big[I^{\sn}_t(\phi)^p\big]=\E\big[\tilde I_t(\phi)^p\big],\]
and so by \eqref{pmomc} we have
\begin{equation}\label{momid}\E[\tilde I_t(\phi)^p]=\N^s_{\sss H}[I_t(\phi)^p]\ \ \forall p\in\N.
\end{equation}
Use the fact that $\N_{\sss H}^s[e^{\theta I_t(1)}]<\infty$ for small $\theta>0$ (e.g. see the proof of Lemma~2.2 in \cite{HP19}) to conclude that $\P(\tilde I_t(\phi)\in\cdot)=\N_{\sss H}^s(I_t(\phi)\in\cdot)$ for every $\phi\in C_b(\oD)$, and in particular $\E[e^{-\tilde I_t(\phi)}]=\N_{\sss H}^s[e^{-I_t(\phi)}]$ for all such $\phi$.
It follows (e.g. see Lemma~II.5.9 of \cite{Per02}) that  
$\P(\tilde I_t\in\cdot)=\N^s_{\sss H}(I_t\in\cdot)$  and hence we get $\P^s_n(I^{\sn}_t\in\cdot)\cweak \N^s_{\sss H}(I_t\in\cdot)$ on $\M_F(\oD)$. 

Note that 
$F_1:\mc{M}_F(\oD)\to \mc{M}_F(\oD)$ is continuous except at $0_M$ which has zero probability with respect to $\N^s_{\sss H}(I_t\in\cdot)$, and so by the continuous mapping theorem we conclude
\begin{equation}\label{IJcvgce}
\P_n^s\big((I^{\sn}_t,J_t^{\sn})\in\cdot\big)\cweak \N_{\sss H}^s\big((I_t,J_t)\in\cdot\big) \text{ for all }s>0.
\end{equation}

It remains to accommodate $S^{\sn}$ but this is implicitly contained in the above since $s>0$ is arbitrary, as we now show.
Let $\psi:\M_F(\oD)^2\to \R$ be bounded and continuous and $t>0$. Then
\begin{align} 
\nonumber\E_n^s\big[\psi(I^{\sn}_t,J^{\sn}_t)\indic{S^{\sn}>t}\big]&=\dfrac{\E\big[\psi(I^{\sn}_t,J^{\sn}_t)\indic{S^{\sn}>s\vee t}\big]}{\P(S^{\sn}>s)}\\
\nonumber&=\E_n^{s\vee t}[\psi(I^{\sn}_t,J^{\sn}_t)]\cdot \frac{\P(S^{\sn}>s\vee t)}{\P(S^{\sn}>s)}\\
\label{weaklimit}&\rightarrow \N^{s\vee t}_{\sss H}[\psi(I_t,J_t)]\cdot \frac{s}{s\vee t}\ \ \text{ as }n\to\infty,
\end{align}
where we have used \eqref{IJcvgce} and the survival asymptotics Condition \ref{cond:fdd}(i).
Now argue as above and use the survival probability \eqref{SBMsurv} 
to see that the right side of \eqref{weaklimit} is 
$\N^s_{\sss H}[\psi(I_t,J_t)\indic{S>t}]$.
The result now follows easily.
\end{proof}
\begin{COR}\label{cor:IJScvgce}
Suppose that Condition \ref{cond:fdd} holds.   
Then for any $s>0$,
\[\P_n^s\big((I^{\sn},J^{\sn},S^{\sn})\in\cdot\big)\cweak \N_{\sss H}^s\big((I,J,S)\in\cdot\big)\ \ \text{on }\M_F(\oD)^2\times\R_+\ \ \text{ as }n\to\infty.\]
\end{COR}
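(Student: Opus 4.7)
The plan is to extend Lemma \ref{lem:IJStconv} from the truncated measures $(I^{\sn}_t,J^{\sn}_t)$ to the full ones $(I^{\sn},J^{\sn})$ by exploiting the tail bounds on $S^{\sn}$ supplied by Condition \ref{cond:fdd}(i).

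First I would observe that by \eqref{Sncond}, $H^{\sn}_u = 0_M$ for all $u > S^{\sn}$, so on the event $\{S^{\sn} \le t\}$ we have $I^{\sn} = I^{\sn}_t$ and consequently $J^{\sn} = F_1(I^{\sn}) = F_1(I^{\sn}_t) = J^{\sn}_t$.  The analogous identity holds for the limiting process: under $\N_{\sss H}^s$, $I = I_t$ and $J = J_t$ on $\{S \le t\}$, since historical Brownian motion is absorbed at $0$ upon hitting $0$ (as noted just above \eqref{SBMsurv}).

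Next, Lemma \ref{i'}(b) yields $\P^s_n(S^{\sn} > t) \le 3s/(s \vee t)$ for all $n \ge n_0(s)$, and \eqref{SBMsurv} together with conditioning on $\{S>s\}$ yields $\N_{\sss H}^s(S > t) = s/(s \vee t)$ (trivially $1$ for $t \le s$). Hence for any bounded continuous $\Phi\colon \M_F(\oD)^2 \times \R_+ \to \R$, the coincidences noted above produce
\begin{align*}
\big|\E^s_n[\Phi(I^{\sn}, J^{\sn}, S^{\sn})] - \E^s_n[\Phi(I^{\sn}_t, J^{\sn}_t, S^{\sn})]\big| &\le 2\|\Phi\|_\infty \P^s_n(S^{\sn}>t), \\
\big|\N_{\sss H}^s[\Phi(I, J, S)] - \N_{\sss H}^s[\Phi(I_t, J_t, S)]\big| &\le 2\|\Phi\|_\infty \N_{\sss H}^s(S > t),
\end{align*}
and both right-hand sides vanish as $t \to \infty$, uniformly in $n \ge n_0(s)$.

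Finally, for each fixed $t>0$, Lemma \ref{lem:IJStconv} gives $\E^s_n[\Phi(I^{\sn}_t,J^{\sn}_t,S^{\sn})] \to \N_{\sss H}^s[\Phi(I_t, J_t, S)]$ as $n \to \infty$. Combining this fixed-$t$ convergence with the two uniform approximation bounds produces
$$\limsup_{n\to\infty} \big|\E^s_n[\Phi(I^{\sn},J^{\sn},S^{\sn})] - \N_{\sss H}^s[\Phi(I,J,S)]\big| \le \frac{8s\|\Phi\|_\infty}{s \vee t},$$
and letting $t \to \infty$ yields the stated weak convergence on $\M_F(\oD)^2 \times \R_+$. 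There is no genuine obstacle here; the essential ingredient is the uniform tail control of $S^{\sn}$ already furnished by Condition \ref{cond:fdd}(i) via Lemma \ref{i'}(b), and the corollary then follows from the finite-time lemma by a standard truncation argument.
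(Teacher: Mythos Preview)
Your proof is correct and follows essentially the same approach as the paper's own argument: both observe that $I^{\sn}=I^{\sn}_t$ and $J^{\sn}=J^{\sn}_t$ on $\{S^{\sn}\le t\}$ (and likewise for the limit), use Lemma~\ref{i'}(b) and \eqref{SBMsurv} to control the tail probabilities uniformly in $n$, and then invoke Lemma~\ref{lem:IJStconv} for each fixed $t$ before letting $t\to\infty$.
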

\begin{proof} Let $\psi:\M_F(\oD)^2\times\R_+\to\R$ be bounded and continuous. 
Now observe that on $\{S^{\sn}\le t\}$, we have $I^{\sn}=I^{\sn}_t$ and $J^{\sn}=J^{\sn}_t$.  So by 
Lemma~\ref{i'} we have for $n\ge n_0$,
\begin{align*}\Big|\E_n^s&\big[\psi(I^{\sn},J^{\sn},S^{\sn})-\psi(I_t^{\sn},J_t^{\sn},S^{\sn})\big]\Big|\\
&\le \Big|\E_n^s\big[\psi(I^{\sn}_t,J^{\sn}_t,S^{\sn})\indic{S^{\sn}>t}\big]\Big|+\Big|\E_n^s\big[\psi(I^{\sn},J^{\sn},S^{\sn})\indic{S^{\sn}>t}\big]\Big|\\
&\le 2\Vert\psi\Vert_\infty\frac{3s}{s\vee t}.
\end{align*}
Similarly,  since for all $s,t>0,$ 
$\N_{\sss H}^s(S>t)=\frac{s}{s\vee t}$ (by \eqref{SBMsurv}),  
we have
\[\Big|\N_{\sss H}^s\big[\psi(I_t,J_t,S)-\psi(I,J,S)\big]\Big|\le 2\Vert\psi\Vert_\infty\frac{s}{s\vee t}.\]
Choose $t$ large enough and apply Lemma~\ref{lem:IJStconv} to finish the proof.
\end{proof}

Fix $K\in\N$.  We use the same notation as in Section~\ref{sec:enlargement}, so that under $\hat\P^s_n$, given $(I^{\sn},J^{\sn},S^{\sn})$, 
$(\mathfrak{T}^{\sn}_i,W^{\sn}_i)_{i\le K}$ are i.i.d.~random vectors distributed according to the (given) probability $J^{\sn}$.  We set $\overline{W}^{\sn}=(\mathfrak{T}^{\sn}_i,W^{\sn}_i)_{i\le K}\in\oD^K$. The above prescribes the law of $(I^{\sn},J^{\sn},S^{\sn},\overline{W}^{\sn})$ on $\M_F(\oD)^2\times\R_+\times\oD^K$ under $\hat\P^s_n$.  
Similarly the joint law of $(I,J,S,\overline W)$ on $\M_F(\oD)^2\times\R_+\times\oD^K$ under $\hat\N^s_{\sss H}$ is such  that, given $(I,J,S)$, $\overline W=(\mathfrak{T}_i,W_i)_{i\le K}$ are i.i.d.~with law $J$. In fact, under $\hat\N^s_{\sss H}$,  $(I,J,S,\overline W)$ is supported on $\mc{M}_F(\R_+\times \mc{C})^2\times\R_+\times (\R_+\times \mc{C})^K$, where $\mc{C}\subset \mc{D}$ is the space of continuous functions from $\R_+$ to $\R^d$ (see Section~\ref{sec:scalelim}).

\begin{REM}
Note that $S^{\sn}=\inf\{t:I^{\sn}([t,\infty)\times\D)=0\}$ and $S=\inf\{t:I([t,\infty)\times\D)=0\}$, and so in
the above conditioning we are really only conditioning on $I^{\sn}$ and $I$, respectively.\Enddef
\end{REM}
\begin{LEM}\label{quenchlimit} 
Suppose that Condition \ref{cond:fdd} holds.  Then
\begin{equation*}
\hat\P^s_n\big((I^{\sn},J^{\sn},S^{\sn},\overline W^{\sn})\in\cdot\big)\cweak \hat\N^s_{\sss H}\big((I,J,S,\overline{W})\in\cdot\big) \quad \text{ in }\mc{M}_F(\oD)^2\times\R_+\times\oD^K \quad \text{as } n\to\infty.
\end{equation*}
\end{LEM}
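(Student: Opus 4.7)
My plan is to derive Lemma~\ref{quenchlimit} essentially mechanically from two results already in hand: Corollary~\ref{cor:IJScvgce}, which gives weak convergence of the ``data'' triple $(I^{\sn},J^{\sn},S^{\sn})$, and Lemma~\ref{lem:absrandompts}, which lets us attach conditionally i.i.d.\ samples to a weakly convergent measure-valued sequence. The only thing to be checked is that the hypotheses of Lemma~\ref{lem:absrandompts} are in force in our setting, after which continuous mapping finishes the job.

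Concretely, I would apply Lemma~\ref{lem:absrandompts} with $E_1=\R_+$, $E_2=\oD$, $Z^{\sn}=S^{\sn}$, $M^{\sn}=I^{\sn}$, and $V^{\sn}_i=\overline W^{\sn}_i$, and analogously for the limit. The normalisation requirement $M^{\sn}(\oD)>0$ $\hat\P^s_n$-a.s.\ holds because on $\{S^{\sn}>s\}$ we have $I^{\sn}(\oD)=\int_0^{S^{\sn}}H^{\sn}_u(1)\,du>0$; the analogous statement $I(\oD)>0$ holds $\hat\N^s_{\sss H}$-a.s.\ since under $\N^s_{\sss H}$ the process $H$ survives past $s>0$. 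By construction, given $(Z^{\sn},M^{\sn})=(S^{\sn},I^{\sn})$, the $\overline W^{\sn}_i$ are i.i.d.\ with law $I^{\sn}/I^{\sn}(\oD)=J^{\sn}$, as required, and analogously for the limit. The marginal convergence $(S^{\sn},I^{\sn})\cweak(S,I)$ follows by projecting Corollary~\ref{cor:IJScvgce}. Lemma~\ref{lem:absrandompts} then produces
\[
\hat\P^s_n\big((S^{\sn},I^{\sn},\overline W^{\sn})\in\cdot\big)\cweak \hat\N^s_{\sss H}\big((S,I,\overline W)\in\cdot\big)\quad\text{in }\R_+\times\mc{M}_F(\oD)\times\oD^K.
\]

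To finish, note that $F_1:\mc{M}_F(\oD)\to\mc{M}_F(\oD)$ is continuous at every $\mu$ with $\mu(\oD)>0$ and that the limit measure $I$ satisfies $I(\oD)>0$ $\hat\N^s_{\sss H}$-a.s.\ as above. Hence the continuous mapping theorem, applied to the map $(S,I,\overline W)\mapsto(I,F_1(I),S,\overline W)$, yields the stated convergence of $(I^{\sn},J^{\sn},S^{\sn},\overline W^{\sn})$ on $\mc{M}_F(\oD)^2\times\R_+\times\oD^K$. I do not expect any serious obstacle; the one point to remain watchful of is that $J^{\sn}$ and $I^{\sn}$ are not independent coordinates (the former is a function of the latter), which is precisely why one first applies Lemma~\ref{lem:absrandompts} without $J^{\sn}$ and then reinserts it via continuous mapping, rather than trying to put both into $E_1$ from the outset.
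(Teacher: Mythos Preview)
Your proof is correct and follows essentially the same approach as the paper: apply Lemma~\ref{lem:absrandompts} with $M^{\sn}=I^{\sn}$, using Corollary~\ref{cor:IJScvgce} to supply the required weak convergence. The only difference is that the paper takes $Z^{\sn}=(J^{\sn},S^{\sn})$ and so obtains the full quadruple in one shot, whereas you take $Z^{\sn}=S^{\sn}$ and then reinsert $J^{\sn}=F_1(I^{\sn})$ by continuous mapping; your closing caution that one should avoid putting $J^{\sn}$ into $E_1$ is therefore unwarranted---Lemma~\ref{lem:absrandompts} places no independence constraint between $Z^{\sn}$ and $M^{\sn}$, and the paper's choice is slightly tidier for exactly this reason.
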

\begin{proof} We apply Lemma~\ref{lem:absrandompts} with $Z^{\sn}=(J^{\sn},S^{\sn})$, $Z=(J,S)$, $M=I$, $M^{\sn}=I^{\sn}$, and $V^{\sn}=\overline W^{\sn}$.  Corollary~\ref{cor:IJScvgce} provides the weak convergence required to use Lemma~\ref{lem:absrandompts}, and the a.s. strict positivity of $I^{\sn}(\overline{\D})$ and $I(\overline{\D})$ was noted in Section~\ref{sec:enlargement}
 when defining $J^{\sn}$ and $J$.
\end{proof}

In view of this and the fact that the weak convergence is for random vectors in a Polish space we may use Skorokhod's representation and work on a new probability space $(\Omega,\F,\P^s)$ where we have 
\begin{equation}\label{asconv} (I^{\sn},J^{\sn},S^{\sn},\overline {W}^{\sn})\rightarrow (I,J,S,\overline W), \  \quad \P^s \text{-a.s.}
\end{equation}
Recall the definition of the branch time $\tau(w,w')$ of  $w,w'\in\D$  from \eqref{tau_def}  and let $K\in\N$.  
Condition \ref{cond:fdd}(iv) and  Lemma~\ref{lem:distlife} imply  that 
\begin{equation}\label{UL}
\forall i\le K, \quad [\mathfrak{T}_i^{\sn}]_n=\FL(W_i^{\sn})<\infty\ \  \forall n,\quad \text{ and } \  \mathfrak{T}_i=\FL(W_i)<\infty,\ \quad  \P^s \text{-a.s.},
\end{equation}
and therefore
\begin{equation}\label{taubnd}
\tau(W^{\sn}_i,W^{\sn}_j)\le \mathfrak{T}^{\sn}_i\wedge \mathfrak{T}^{\sn}_j<\infty\ \ \text{and} \ \ \tau(W_i,W_j)\le \mathfrak{T}_i\wedge \mathfrak{T}_j<\infty,\ \ \forall i, j\le K, n\in\N\ \ \text{a.s.}
\end{equation}
It follows from \eqref{asconv} and \eqref{UL} that 
\begin{equation}\label{asconv2}
\big(I^{\sn},J^{\sn},S^{\sn},(\FL(W^{\sn}_i),W^{\sn}_i)_{i\le K}\big)\rightarrow\big(I,J,S,(\FL(W_i),W_i)_{i\le K}\big)\ \ \text{a.s. in }\M_F(\oD)^2\times\R_+\times\oD^K.
\end{equation}
Our goal is to add the branch times, $\tau(W^{\sn}_i,W^{\sn}_j)$, to the above convergence.

\begin{LEM}\label{Inlt}
Suppose that Condition \ref{cond:fdd} holds.  Then for any $s,\eta>0$ there is a $\what>0$ such that $\P^s_n(I^{\sn}(1)\le \what)<\eta$ for all $n\in \N$.
\end{LEM}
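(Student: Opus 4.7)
The plan is to derive the uniform-in-$n$ tightness of $I^{\sn}(1)$ away from $0$ by combining the weak convergence of Corollary~\ref{cor:IJScvgce} with the strict positivity of the limiting total mass $I(1)$ under $\N^s_{\sss H}$. The total-mass functional $\mu\mapsto\mu(\oD)$ is continuous on $\M_F(\oD)$, so Corollary~\ref{cor:IJScvgce} and the continuous mapping theorem give
\[\P^s_n(I^{\sn}(1)\in\cdot)\cweak \N^s_{\sss H}(I(1)\in\cdot)\quad\text{on }\R_+.\]

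Next I would verify that the limit has no atom at zero. Under $\N^s_{\sss H}$ we have $S>s>0$ a.s., and $H_\cdot$ is $\N_{\sss H}$-a.e.\ continuous as an $\M_F(\mc{C})$-valued process (Section~\ref{sec:scalelim}). Consequently $u\mapsto H_u(1)$ is continuous, nonnegative, and strictly positive throughout $(0,S)$, so $I(1)\ge\int_0^s H_u(1)\,du>0$ a.s. Writing $F(x)=\N^s_{\sss H}(I(1)\le x)$, this means $F$ is a right-continuous distribution function with $F(0+)=0$.

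Given $\eta>0$, choose a continuity point $\what>0$ of $F$ with $F(\what)<\eta/2$; such a $\what$ exists since $F(0+)=0$ and $F$ has only countably many discontinuities. Weak convergence at the continuity point $\what$ yields
\[\lim_{n\to\infty}\P^s_n(I^{\sn}(1)\le\what)=F(\what)<\eta/2,\]
so there exists $N_0$ with $\P^s_n(I^{\sn}(1)\le\what)<\eta$ for all $n\ge N_0$. The remaining finitely many indices $n<N_0$ are handled by noting that \eqref{Sncond} together with the conditioning gives $S^{\sn}>s$ $\P^s_n$-a.s., and the cadlag property of $H^{\sn}$ then gives $I^{\sn}(1)>0$ $\P^s_n$-a.s. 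Thus for each such $n$ we may pick $\what_n>0$ with $\P^s_n(I^{\sn}(1)\le\what_n)<\eta$, and replacing $\what$ by $\min(\what,\what_1,\dots,\what_{N_0-1})$ yields a single constant that works uniformly in $n$. No individual step poses a serious obstacle: the heart of the argument is simply that the weak convergence already in hand, together with the elementary positivity of $I(1)$ under the canonical measure conditioned on survival, forces the laws of $I^{\sn}(1)$ to stay away from $0$ uniformly in $n$.
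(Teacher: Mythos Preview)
Your proof is correct and follows essentially the same approach as the paper: use Corollary~\ref{cor:IJScvgce} to get weak convergence of $I^{\sn}(1)$ to $I(1)$, use $\N^s_{\sss H}(I(1)>0)=1$ to find a suitable $\what$ for all large $n$, and handle the remaining finitely many $n$ via $I^{\sn}(1)>0$ $\P^s_n$-a.s. The paper's version is more terse (it sidesteps the continuity-point issue by choosing $\what$ with $\N^s_{\sss H}(I(1)\le 2\what)<\eta/2$ and invoking Portmanteau for closed sets), but the substance is identical.
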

\begin{proof} Given $s,\eta>0$ we may choose $\what>0$ so that $\N_{\sss H}^s(I(1)\le 2\what)<\eta/2$.  The result is now immediate from Corollary~\ref{cor:IJScvgce} (for $n$ large enough) and from $I^{\sn}(1)>0$ $\P^s_n$-a.s.~
 (for remaining $n$ by readjusting $\what$).\end{proof} 

To continue with the proof of Theorem \ref{thm:jointconvergence},  we need the following additional condition.   Part (i) states that for large $n$ it is unlikely that the branch time between two randomly chosen paths is close to one of the lifetimes.   Part (ii) shows it is asymptotically unlikely that two such randomly chosen paths separate early enough but remain close for a while after branching. Part (iii) will ensure that $\kappa_n(W^{\sn})\in C_K$ $\hat\P^s_n$-a.s. (recall $\kappa_n$ from the end of Section~\ref{sec:GST}).
\begin{COND}
\label{cond:tau_stuff}
For every $s,t_0>0$,  
\begin{itemize}
\item[(i)]    
\begin{align}
\lim_{\delta\downarrow0}\limsup_{n\to\infty}\E_n^s\Bigg[\int_0^{t_0}\int_0^{t_0}\int\int \indic{\tau(w_1,w_2)> (u_1\wedge u_2)-\delta}H^{\sn}_{u_1}(\d w_1)H^{\sn}_{u_2}(\d w_2)du_1\,du_2\Bigg]=0,\label{term_b}
\end{align}
\item[(ii)] for every $ \delta>0$,
\begin{align}
\lim_{\vep\downarrow0}\limsup_{n\to\infty}\E_n^s\Bigg[\int_0^{t_0}&\int_0^{t_0}\int\int \indic{|w_{1,\tau(w_1,w_2)+\delta}-w_{2,\tau(w_1,w_2)+\delta}|<\vep}\nn\\
&\times\indic{\tau(w_1,w_2)\le (u_1\wedge u_2)-\delta}
H^{\sn}_{u_1}(\d w_1)H^{\sn}_{u_2}(\d w_2)du_1\,du_2\Bigg]=0.\label{term_a}
\end{align}
\item[(iii)] For all $n$, $\E^s_n[I^{\sn}(\{(u,w):\,w_0\neq o\})]=0$.
\end{itemize}
\end{COND}

\medskip

\begin{THM} \label{thm:Gconvergence}  Assume \eqref{Sncond}
 and Conditions \ref{cond:fdd} and \ref{cond:tau_stuff}.   
Then 
the conclusions, \eqref{rnondeglargen} and \eqref{wconva}, of Theorem~\ref{thm:jointconvergence} hold.
\end{THM}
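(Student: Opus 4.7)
The plan is to bootstrap from the weak convergence of $(I^{\sn},J^{\sn},S^{\sn},\overline W^{\sn})$ established in Lemma~\ref{quenchlimit} to the fuller joint convergence claimed in~\eqref{wconva}. By Skorokhod's representation one passes to a coupling on which $(I^{\sn},J^{\sn},S^{\sn},\overline W^{\sn})\to(I,J,S,\overline W)$ a.s.; combining \eqref{UL} with this gives $\FL(W^{\sn}_i)=[\mathfrak T^{\sn}_i]_n\to\mathfrak T_i=\FL(W_i)$ a.s.\ for each $i\in[K]$, and Lemma~\ref{lem:uppersemi}(c) then yields the upper semicontinuity
\[
\limsup_{n\to\infty}\tau(W^{\sn}_i,W^{\sn}_j)\le \tau(W_i,W_j)\quad\text{a.s., for all }i,j\in[K].
\]

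The heart of the proof is the matching lower bound in probability. Fix $i\ne j\in[K]$ and $\eta,\epsilon>0$. By Condition~\ref{cond:tau_stuff}(i), after rewriting $\hat\P^s_n$ via the representation
\[
\hat\E^s_n[f(W^{\sn}_1,W^{\sn}_2,\mathfrak T^{\sn}_1,\mathfrak T^{\sn}_2)]=\E^s_n\!\left[\frac{1}{I^{\sn}(\oD)^2}\!\int\!\!\int\!\!\int\!\!\int\! f(w_1,w_2,u_1,u_2)\,H^{\sn}_{u_1}(dw_1)H^{\sn}_{u_2}(dw_2)\,du_1du_2\right]
\]
and removing the normalization $I^{\sn}(\oD)^{-2}$ uniformly in $n$ via Lemma~\ref{Inlt}, I can pick $\delta\in(0,\eta/2)$ with $\hat\P^s_n(\tau(W^{\sn}_i,W^{\sn}_j)>\mathfrak T^{\sn}_i\wedge\mathfrak T^{\sn}_j-\delta)<\epsilon/2$ for all large $n$. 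On the complementary part of the event $\{\tau(W^{\sn}_i,W^{\sn}_j)<\tau(W_i,W_j)-\eta\}$, setting $t_n:=\tau(W^{\sn}_i,W^{\sn}_j)+\delta\le \tau(W_i,W_j)-\eta/2$, I use that $W_i=W_j$ on $[0,\tau(W_i,W_j)]$ and that $W^{\sn}_\ell\to W_\ell$ uniformly on compacts to conclude
\[
|W^{\sn}_i(t_n)-W^{\sn}_j(t_n)|\to 0\quad\text{a.s.\ on this event}.
\]
For any fixed $\vep>0$ the probability that $|W^{\sn}_i(t_n)-W^{\sn}_j(t_n)|<\vep$ together with $\tau(W^{\sn}_i,W^{\sn}_j)\le\mathfrak T^{\sn}_i\wedge\mathfrak T^{\sn}_j-\delta$ is exactly the integral bounded by Condition~\ref{cond:tau_stuff}(ii) (again using Lemma~\ref{Inlt}); choosing $\vep$ small then $n$ large makes this $<\epsilon/4$, while the residual event $\{|W^{\sn}_i(t_n)-W^{\sn}_j(t_n)|\ge\vep\}$ intersected with the coupling event has probability $<\epsilon/4$ by the a.s.\ convergence. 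Summing gives $\hat\P^s_n(\tau(W^{\sn}_i,W^{\sn}_j)<\tau(W_i,W_j)-\eta)<\epsilon$ for $n$ large, so $\bs\tau(W^{\sn})\to\bs\tau(W)$ in probability.

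Condition~\ref{cond:tau_stuff}(iii) supplies $W^{\sn}_i(0)=o$ a.s., so $W^{\sn}\in D_K$, and Lemma~\ref{interpol} promotes $\kappa_n(W^{\sn})\to W$ in $C_K$ and $\bs\tau(\kappa_n(W^{\sn}))\to\bs\tau(W)$ in probability. Lemma~\ref{lem:hBM_shape0} makes $\bs\tau(W)$ non-degenerate a.s., so Proposition~\ref{prop:taupert} (applied pathwise) yields \eqref{rnondeglargen} together with $T(\bs\tau(\kappa_n(W^{\sn})))=T(\bs\tau(W))$ on an event of probability tending to one. Proposition~\ref{shapecont} then upgrades this to $\mc B_K(\kappa_n(W^{\sn}))\to\mc B_K(W)$ in probability in $\TT_{gst}$. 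Pairing the a.s.\ convergence of the base vector with the in-probability convergence of the appended components produces the joint weak convergence~\eqref{wconva}.

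The main obstacle is the second paragraph: the purely geometric observation $|W^{\sn}_i(t_n)-W^{\sn}_j(t_n)|\to 0$ on the coupling space must be converted into a quantitative $\hat\P^s_n$-bound, which requires the measure-theoretic rewriting of $\hat\P^s_n$ in terms of products of the historical measures $H^{\sn}_u$, the uniform-in-$n$ control of the random normalization $I^{\sn}(\oD)^{2}$ afforded by Lemma~\ref{Inlt}, and both halves of Condition~\ref{cond:tau_stuff}, with (i) used to pick an admissible $\delta$ and (ii) to push $\vep\downarrow 0$. The remaining steps are essentially continuous-mapping bookkeeping built on the results of Section~\ref{ssec:gsts}.
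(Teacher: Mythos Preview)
Your proposal is correct and follows essentially the same approach as the paper: Skorokhod coupling for $(I^{\sn},J^{\sn},S^{\sn},\overline W^{\sn})$, upper semicontinuity of $\tau$ via Lemma~\ref{lem:uppersemi}(c), the lower bound via the inclusion that on $\{\tau(W^{\sn}_i,W^{\sn}_j)+\delta<\tau(W_i,W_j)\}$ the two approximating paths are $\vep$-close at time $\tau(W^{\sn}_i,W^{\sn}_j)+\delta$, followed by Condition~\ref{cond:tau_stuff}(i),(ii) with Lemma~\ref{Inlt} to kill this, and finally Lemma~\ref{interpol}, Lemma~\ref{lem:hBM_shape0} and Proposition~\ref{shapecont} for the gst component. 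The one step you elide that the paper makes explicit is the truncation to $\{S^{\sn}<t_0,\ S<t_0\}$ via Lemma~\ref{i'}(b) (see \eqref{tech1}--\eqref{tech2}), which is needed because Condition~\ref{cond:tau_stuff}(i),(ii) only bound integrals over $[0,t_0]^2$ and because uniform convergence of $W^{\sn}_\ell$ to $W_\ell$ is only available on compacts; this is easily supplied.
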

\begin{proof} We work on the probability space on which \eqref{asconv2} holds. We first fix $i,j$ and show
\begin{equation}\label{tauconvp}\tau(W^{\sn}_i,W^{\sn}_j)\cprobs\tau(W_i,W_j)\ \text{ as }\ n\to\infty.
\end{equation}
If $i=j$ this amounts to $\FL(W^{\sn}_i)\cprobs \FL(W_i)$ which we know by \eqref{asconv2}, so assume $i\neq j$.

Let $\delta>0$.  It suffices to show that
\begin{align}\label{lsctau1}
&\lim_{n\to\infty}\P^{s,K}\big(\tau(W^{\sn}_i,W^{\sn}_j)>\tau(W_i,W_j)+\delta\big)=0, \quad \text{ and }\\
&\label{lsctau} 
 \lim_{n\to\infty}\P^{s,K}\big(\tau(W^{\sn}_i,W^{\sn}_j)+\delta<\tau(W_i,W_j)\big)=0.
\end{align}
Lemma~\ref{lem:uppersemi}(c) and  \eqref{asconv2} 
imply that 
\begin{equation}\label{lsc1}\limsup_{n\to\infty} \tau(W^{\sn}_i,W^{\sn}_j)\le \tau(W_i,W_j),\ \ \P^{s,K}- \text{a.s.}
\end{equation}
Thus,
\begin{align}
\nonumber\limsup_{n\to\infty}\, & \P^{s,K}\big(\tau(W^{\sn}_i,W^{\sn}_j)>\tau(W_i,W_j)+\delta\big)\\
\nonumber&\le\limsup_{n\to\infty} \P^{s,K}\Bigl(\cup_{k=n}^\infty\{\tau(W^{\sss(k)}_i,W^{\sss(k)}_j)>\tau(W_i,W_j)+\delta\}\Bigr)\\
\label{lscbnd}&\le\P^{s,K}\Bigl(\limsup_{n\to\infty} \tau(W^{\sn}_i,W^{\sn}_j)\ge \tau(W_i,W_j)+\delta\Bigr)=0.
\end{align} 
This proves~\eqref{lsctau1}, so it remains to show \eqref{lsctau}.

To prove \eqref{lsctau} it suffices to fix $\eta>0$ and show that for sufficiently small $\delta>0$  and all $n$ sufficiently large, depending on $\delta$,  
\begin{align}
\label{actual} 
\P^{s,K}\big(\tau(W^{\sn}_i,W^{\sn}_j)+\delta<\tau(W_i,W_j)\big)<\eta.
\end{align}
(Here we can restrict to small $\delta$ because the left-hand side is decreasing in $\delta$.)  
By Lemma~\ref{i'}(b)  
and Lemma \ref{Inlt} we can choose $t_0=t_0(\eta)$ large and $\zeta=\zeta(\eta)>0$ small so that 
\begin{equation}\label{tech1}\sup_{n\ge n_0}\P^{s,K}(S^{\sn}\ge t_0/2)+\sup_n\P^s(I^{\sn}(1)\le 2\zeta)<\eta/10.
\end{equation}
Fix such a $t_0$ and $\zeta$.    Then \eqref{asconv2} implies
\begin{equation}\label{tech2}
\P^{s,K}(S\ge t_0)+\P^{s,K}(I(1)\le \zeta)\le\eta/10.
\end{equation}

Recall that $i$ and $j$ are fixed. Let $W_{i,t}\in \R^d$ denote the location of the path $W_i$ at time $t$.  We claim next that for any $n\in \N, \delta, \vep>0$, up to sets of $\P^s$-probability 0, 
\begin{align}
E:=\{&S^{\sn}<t_0,S<t_0\}\cap\Big\{\sup_{t\le t_0}|W^{\sn}_{i,t}-W_{i,t}|<\frac{\vep}{4},\sup_{t\le t_0}|W_{j,t}^{\sn}-W_{j,t}|<\frac{\vep}{4}\Big\}\nonumber\\
\nonumber&\phantom{S^{\sn}<t_0,S<t_0\}}\cap\{\tau(W^{\sn}_i,W^{\sn}_j)+\delta<\tau(W_i,W_j)\}\\
\label{track}&\subset\Big\{\Bigl|W^{\sn}_{i,\tau(W^{\sn}_i,W^{\sn}_j)+\delta}-W^{\sn}_{j,\tau(W^{\sn}_i,W^{\sn}_j)+\delta}\Bigr|<\vep\Big\}\cap\{S^{\sn}<t_0\}.
\end{align}
Assume that the event $E$ on the left-hand side of the inclusion \eqref{track} holds. 
By Lemma~\ref{lem:distlife} 
 and the fact that $J$ is supported on $[0,S]\times\D$ we see that 
\[\tau(W_i^{\sn},W_j^{\sn})+\delta<\tau(W_i,W_j)\le \FL(W_i)\le S<t_0.\]
Therefore $E$ implies that 
\begin{equation}\label{Wnnear}
|W^{\sn}_{k,\tau(W^{\sn}_i,W^{\sn}_j)+\delta}-W_{k,\tau(W^{\sn}_i,W^{\sn}_j)+\delta}|<\vep/4\quad \text{ for } k=i,j.
\end{equation}
In addition, $\tau(W^{\sn}_i,W^{\sn}_j)+\delta<\tau(W_i,W_j)$ implies that 
\[W_{i,\tau(W^{\sn}_i,W^{\sn}_j)+\delta}=W_{j,\tau(W^{\sn}_i,W^{\sn}_j)+\delta}.\]
This and \eqref{Wnnear} allow us to use the triangle inequality to conclude that 
\[\Big|W^{\sn}_{i,\tau(W^{\sn}_i,W^{\sn}_j)+\delta}-W^{\sn}_{j,\tau(W^{\sn}_i,W^{\sn}_j)+\delta}\Big|\le \frac{\vep}{4}+\frac{\vep}{4}<\vep.\]
This proves the claim \eqref{track}.  This latter inclusion implies that for any $\vep,\delta>0$ and $n\in\N$, 
\begin{align}
\nonumber\P^s&\big(\tau(W^{\sn}_i,W^{\sn}_j)+\delta<\tau(W_i,W_j)\big)\\
\nonumber&\le \P^s(S\ge t_0)+\P^s(S^{\sn}\ge t_0)+\P^s(I^{\sn}(1)\le\zeta)+\sum_{k=i,j}\P^s\Big(\sup_{t\le t_0}|W^{\sn}_{k,t}-W_{k,t}|\ge\vep/4\Big)\\
\nonumber&\quad+\P^s\Big(\big|W^{\sn}_{i,\tau(W^{\sn}_i,W^{\sn}_j)+\delta}-W^{\sn}_{j,\tau(W^{\sn}_i,W^{\sn}_j)+\delta}\big|<\vep,S^{\sn}< t_0, I^{\sn}(1)>\zeta\Big)\\
\label{keybound1}&\le\frac{\eta}{2}+\P^s\Big(\big|W^{\sn}_{i,\tau(W^{\sn}_i,W^{\sn}_j)+\delta}-W^{\sn}_{j,\tau(W^{\sn}_i,W^{\sn}_j)+\delta}\big|<\vep,S^{\sn}< t_0, I^{\sn}(1)>\zeta\Big)
\end{align}
where the last line holds for 
$n\ge n_0\vee n_1(\vep)$ by \eqref{asconv2} (recall $W_k$ is continuous a.s.), \eqref{tech1} and \eqref{tech2}.  Let $p_n=p_n(\delta,\vep)$  
denote the probability arising on the right-hand side of \eqref{keybound1}. 

By definition of $W^{\sn}_k$, we have that
\begin{align}
\nonumber p_n&\le \E_n^s\Bigg[\dfrac{\int_0^{t_0}\int_0^{t_0}\int\int \indic{|w_{1,\tau(w_1,w_2)+\delta}-w_{2,\tau(w_1,w_2)+\delta}|<\vep}H^{\sn}_{u_1}(\d w_1)H^{\sn}_{u_2}(\d w_2)\d u_1\d u_2}{I^{\sn}(1)^2}\indic{I^{\sn}(1)>\zeta}\Bigg]\\ 
&\le \E_n^s\Bigg[\int_0^{t_0}\int_0^{t_0}\zeta^{-2}\int\int \indic{|w_{1,\tau(w_1,w_2)+\delta}-w_{2,\tau(w_1,w_2)+\delta}|<\vep}H^{\sn}_{u_1}(\d w_1)H^{\sn}_{u_2}(\d w_2)\d u_1 \d u_2\Bigg].\label{whoops1}
\end{align}
The expectation above is at most 
\begin{align}
&\zeta^{-2}\E_n^s\Bigg[\int_0^{t_0}\int_0^{t_0}\int\int \indic{|w_{1,\tau(w_1,w_2)+\delta}-w_{2,\tau(w_1,w_2)+\delta}|<\vep}\indic{\tau(w_1,w_2)\le (u_1\wedge u_2)-\delta}
H^{\sn}_{u_1}(\d w_1)H^{\sn}_{u_2}(\d w_2)\,du_1du_2\Bigg]\label{term1}\\
&+\zeta^{-2}\E_n^s\Bigg[\int_0^{t_0}\int_0^{t_0}\int\int \indic{\tau(w_1,w_2)> (u_1\wedge u_2)-\delta}H^{\sn}_{u_1}(\d w_1)H^{\sn}_{u_2}(\d w_2)\,du_1du_2\Bigg].\label{term2}
\end{align}
By Condition \ref{cond:tau_stuff}(i) there is a $\delta_0>0$ so that for each $0<\delta\le \delta_0$ there is an $n_2=n_2(\delta)\in\N$ for which \eqref{term2} is at most $\eta/4$ for $n\ge n_2$.  By Condition \ref{cond:tau_stuff}(ii) for each $\delta$ as above there is an $\vep_1=\vep_1(\delta)$ so that for $0<\vep\le \vep_1$ there is an $n_3=n_3(\delta,\vep)$ for which \eqref{term1} is at most $\eta/4$ for $n\ge n_3$. So for $0<\delta\le \delta_0$ we may set $\vep=\vep_1(\delta)$ and then conclude that for $n\ge n_4(\delta):=n_0\vee n_1(\vep_1(\delta))\vee n_2(\delta)\vee n_3(\delta,\vep_1(\delta))$, we have \eqref{keybound1}  with $p_n<\eta/2$.   This proves \eqref{actual} and the proof of \eqref{tauconvp} is complete.
  
 In view of \eqref{asconv2} and \eqref{tauconvp}, to complete the proof of \eqref{wconva} it suffices to show that 
\begin{equation}\label{shapecp}
\mc{B}_K(\kappa_n(W^{\sn}))\cprobs\mc{B}_K(W).
\end{equation}
To prove this, first apply Lemma~\ref{interpol} (use the convergences in \eqref{asconv2} and \eqref{tauconvp}) to see that
\begin{equation}\label{kapconv}
(\kappa_n(W^{\sn}),\bs{\tau}(\kappa_n(W^{\sn})))\cprobs (W,\bs{\tau}(W)).
\end{equation}
Now use Lemma~\ref{lem:hBM_shape0} and the above to apply Proposition~\ref{shapecont} with $w^{\sn}=\kappa_n(W^{\sn})$ and $w=W$ to establish \eqref{shapecp} and also \eqref{rnondeglargen}, thus completing the proof.
\end{proof} 

\begin{THM}\label{thm:Grjointconvergence} 
Assume \eqref{Sncond}, Conditions~\ref{cond:fdd} and \ref{cond:tau_stuff}, and the scaling relation \eqref{rescaleHLT}.
Then the conclusions, \eqref{nondeglargen} and \eqref{wconvb}, of Theorem~\ref{thm:rjointconvergence} hold.
\end{THM}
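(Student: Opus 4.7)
The plan is to show that, under the scaling relation \eqref{rescaleHLT}, the quantities appearing on the left-hand side of the rescaled statement \eqref{wconvb}, computed under $\hat\P_1^{sn}$, are pathwise equal (up to a natural coupling of the sampled points) to the quantities appearing on the left-hand side of the unscaled statement \eqref{wconva}, computed under $\hat\P_n^s$. Once that identification is made, Theorem~\ref{thm:Gconvergence} (which is applicable since Conditions~\ref{cond:fdd} and \ref{cond:tau_stuff} are assumed) gives the rescaled conclusions directly.

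The first key step is to translate \eqref{rescaleHLT} into identities for $I^{\sn}$, $J^{\sn}$ and $S^{\sn}$. Using $H^{\sn}_u = n^{-1} H^{\sss(1)}_{\lfloor nu\rfloor}\circ \rho_n^{-1}$ and changing the time variable $v=nu$ in the definition of $I^{\sn}$ yields $I^{\sn} = n^{-2}\, I^{\sss(1)}\circ \bar\rho_n^{-1}$, and hence $J^{\sn} = J^{\sss(1)}\circ \bar\rho_n^{-1}$ (normalization commutes with pushforward). The same relation gives $S^{\sn}=S^{\sss(1)}/n$, so $\{S^{\sn}>s\}=\{S^{\sss(1)}>sn\}$ and therefore the conditional laws satisfy $\P_n^s=\P_1^{sn}$. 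This allows us to couple the samplings $(\mathfrak{T}_i^{\sn},W_i^{\sn})_{i\le K}$ and $(\mathfrak{T}_i^{\sss(1)},W_i^{\sss(1)})_{i\le K}$ on a common probability space so that
\[(\mathfrak{T}_i^{\sn},W_i^{\sn})=\bar\rho_n(\mathfrak{T}_i^{\sss(1)},W_i^{\sss(1)})=(\mathfrak{T}_i^{\sss(1)}/n,\ \rho_n(W_i^{\sss(1)})),\quad i\le K,\]
and under this coupling $\hat\P_n^s=\hat\P_1^{sn}$ as laws on the enlarged space.

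Next I would verify the commutation $\kappa_n\circ\rho_n=\rho_n\circ\kappa_1$, a short direct computation at the lattice points $k/n$ combined with linearity of the interpolations. Applied componentwise this gives $\kappa_n(W^{\sn})=\rho_n(\kappa_1(W^{\sss(1)}))$. Combining this with \eqref{resctau} yields $\bs{\tau}(\kappa_n(W^{\sn}))=\bs{\tau}(\kappa_1(W^{\sss(1)}))/n$, and combining it with \eqref{resnd} shows that $\kappa_n(W^{\sn})$ is non-degenerate iff $\kappa_1(W^{\sss(1)})$ is non-degenerate, which immediately translates \eqref{rnondeglargen} into \eqref{nondeglargen}. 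Finally, Lemma~\ref{lem:rescBK} gives
\[\mc{B}_K(\kappa_n(W^{\sn}))=\mc{B}_K(\rho_n(\kappa_1(W^{\sss(1)})))=\mc{B}_{n,K}(\kappa_1(W^{\sss(1)})).\]

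Putting these identifications together, the random vector appearing on the left-hand side of \eqref{wconva} under $\hat\P_n^s$ is equal, under the coupling above, to the random vector appearing on the left-hand side of \eqref{wconvb} under $\hat\P_1^{sn}$. Consequently the weak convergence statement \eqref{wconva} supplied by Theorem~\ref{thm:Gconvergence} is literally the same statement as \eqref{wconvb}, and \eqref{rnondeglargen} is literally \eqref{nondeglargen}; the theorem follows. I do not expect a serious obstacle here: the only step that requires a little care is checking the pushforward formula for $I^{\sn}$ (a one-line change of variables) and the commutation $\kappa_n\circ\rho_n=\rho_n\circ\kappa_1$, both of which are routine.
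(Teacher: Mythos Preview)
Your proposal follows essentially the same route as the paper's proof: derive the scaling identities $I^{\sn}=n^{-2}I^{\sss(1)}\circ\bar\rho_n^{-1}$, $J^{\sn}=J^{\sss(1)}\circ\bar\rho_n^{-1}$, $S^{\sn}=S^{\sss(1)}/n$ from \eqref{rescaleHLT}; identify $\hat\P_n^s$ with $\hat\P_1^{sn}$ and couple the sampled paths via $W^{\sn}=\rho_n(W^{\sss(1)})$; use $\kappa_n\circ\rho_n=\rho_n\circ\kappa_1$ together with \eqref{resctau}, \eqref{resnd} and Lemma~\ref{lem:rescBK}; then invoke Theorem~\ref{thm:Gconvergence}.

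There is one small slip. You assert that the vector in \eqref{wconva} is \emph{equal} under the coupling to the vector in \eqref{wconvb}, but the fifth component of \eqref{wconva} is $\bs{\tau}(W^{\sn})$, whereas your scaling identity gives $\bs{\tau}(\kappa_1(W^{\sss(1)}))/n=\bs{\tau}(\kappa_n(W^{\sn}))$. These are not the same: for the cadlag path the branch time is detected at the jump, while for the linear interpolation it is detected one grid step earlier, so they can differ by $1/n$ (cf.\ the inequality in the proof of Lemma~\ref{interpol}(b)). The paper handles this by observing that, in the course of proving Theorem~\ref{thm:Gconvergence}, both $\bs{\tau}(W^{\sn})$ and $\bs{\tau}(\kappa_n(W^{\sn}))$ are shown to converge in probability to $\bs{\tau}(W)$ (see \eqref{tauconvp} and \eqref{kapconv}), so one may replace $\bs{\tau}(W^{\sn})$ by $\bs{\tau}(\kappa_n(W^{\sn}))$ in \eqref{wconva} without affecting the weak limit. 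Once you insert this one-line justification, your argument is complete and matches the paper's.
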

\begin{proof} By \eqref{rescaleHLT} and a short calculation we have for measurable $\phi:\oD\to [0,\infty)$,
\begin{equation}\label{Iints}
\int \phi(u,w)I^{\sn}(du,dw)=n^{-2}\int \phi\circ\bar\rho_n(u,w)I^{\sss(1)}(du,dw),
\end{equation}
\begin{equation}\label{Jints}
\int \phi(u,w)J^{\sn}(du,dw)=\int \phi\circ\bar\rho_n(u,w)J^{\sss(1)}(du,dw),
\end{equation}
and
\begin{equation}\label{Sresc}
S^{\sn}=S^{\sss(1)}/n,\text{ and so }\P_n^s=\P_1^{ns}.
\end{equation}
Let $\psi_1,\psi_2$ be non-negative measurable functions on $\mc{M}_F(\mc{\oD})^2\times\R_+$ and $\D^K$, respectively. By the definition of $\hat\E^s_n=\hat\E^s_n(J^{\sn})$ and \eqref{Iints}--\eqref{Sresc} we have
\begin{align*}
\hat\E&^s_n\big[\psi_1(I^{\sn},J^{\sn},S^{\sn})\psi_2(W^{\sn}_1,\dots,W_K^{\sn})\big]\\
&=\E^s_n\Bigl[\psi_1(I^{\sn},J^{\sn},S^{\sn})\int\dots\int\psi_2(w)J^{\sn}(du_1,dw_i)\dots J^{\sn}(du_K,dw_K)\Bigr]\\
&=\E^{ns}_1\Bigl[\psi_1(n^{-2}I^{\sss(1)}\circ\bar\rho_n^{\ -1},J^{\sss(1)}\circ\bar\rho_n^{\ -1},S^{\sss(1)}/n)\int\dots\int \psi_2(\rho_n(w))\,J^{\sss(1)}(du_1,dw_1)\dots J^{\sss(1)}(du_K,dw_K)\Bigr]\\
&=\hat\E_1^{ns}\big[\psi_1(n^{-2}I^{\sss(1)}\circ\bar\rho_n^{\ -1},J^{\sss(1)}\circ\bar\rho_n^{\ -1},S^{\sss(1)}/n)\psi_2(\rho_n(W_1^{\sss(1)}),\dots,\rho_n(W_K^{\sss(1)}))\big].
\end{align*}
Therefore (recall $W^{\sn}=(W^{\sn}_1,\dots,W^{\sn}_K)$)
\begin{align}\label{scaleprob}
\hat\P^s_n\big((I^{\sn},J^{\sn},S^{\sn},W^{\sn})\in\cdot\big)=\hat \P^{sn}_1\big((n^{-2}I^{\sss(1)}\circ\bar\rho_n^{\ -1},J^{\sss(1)}\circ\bar\rho_n^{\ -1},S^{\sss(1)}/n,\rho_n(W^{\sss(1)}))\in\cdot\big)
\end{align}
It is easy to check
\begin{equation}\label{kapparho}
\kappa_n\circ\rho_n(w)=\rho_n\circ\kappa_1(w)\quad\forall w\in\D^K.
\end{equation}
By the above and Lemma~\ref{lem:rescBK}, if $\mc{B}_{n,K}$ is as in \eqref{BnKdef}, then for all $w^{\sss(1)}\in D_K$, 
\begin{align}\label{BKrescal}
\mc{B}_K(\kappa_n\circ\rho_n(w^{\sss(1)}))=\mc{B}_K(\rho_n\circ\kappa_1(w^{\sss(1)}))
=\mc{B}_{n,K}(\kappa_1(w^{\sss(1)})).
\end{align} 
We also have from \eqref{kapparho} and \eqref{resctau} that for all $w^{\sss(1)}\in D_K$,
\begin{equation}\label{taukapterm}
\bs{\tau}(\kappa_n\circ\rho_n(w^{\sss(1)}))=\bs{\tau}(\rho_n(\kappa_1(w^{\sss(1)})))=\frac{\bs{\tau}(\kappa_1(w^{\sss(1)}))}{n}.
\end{equation}
Now use  \eqref{scaleprob}, \eqref{taukapterm}, and \eqref{BKrescal} to conclude that
\begin{align}\label{scaequiv}
\hat\P&^s_n\Bigl(\big(I^{\sn},J^{\sn},S^{\sn}, W^{\sn},\bs{\tau}(\kappa_n(W^{\sn})),\mc{B}_K(\kappa_n(W^{\sn}))\big)\in\cdot\Bigr)\\
\nonumber&=\hat\P^{ns}_1\Bigl(\big(n^{-2}I^{\sss(1)}\circ\bar\rho_n^{\ -1},J^{\sss(1)}\circ\bar\rho_n^{\ -1},S^{\sss(1)}/n,\rho_n(W^{\sss(1)}),\bs{\tau}(\kappa_1(W^{\sss(1)}))/n,\mc{B}_{n,K}(\kappa_1(W^{\sss(1)}))\big)\in\cdot\Bigr),
\end{align}
as laws on $\MF(\oD)^2\times\R_+\times\D^K\times\R_+^{K^2}\times\TT_{gst}$.  
By \eqref{kapconv} and \eqref{tauconvp} we may replace $\bs{\tau}(W^{\sn})$ with $\bs{\tau}(\kappa_n(W^{\sn}))$ in the fifth component of the weakly convergent vector in Theorem~\ref{thm:Gconvergence} (that is, in \eqref{wconva}) and still get the weak convergence in Theorem~\ref{thm:Gconvergence}. That is, the left-hand side of \eqref{scaequiv} converges weakly to the historical Brownian limit law in Theorem~\ref{thm:Gconvergence}, and hence so does the right-hand side of \eqref{scaequiv}. This proves \eqref{wconvb}. The equivalence in law of the $\bs{\tau}$ components in \eqref{scaequiv} implies,
\begin{align*}\hat\P^{ns}_1(\kappa_1(W^{\sss(1)})\text{ is non-degenerate})&=\hat\P^{ns}_1\Bigl( \frac{\bs{\tau}(\kappa_1(W^{\sss(1)}))}{n}\text{ is non-degenerate}\Bigr)\\
&=\hat\P_n^s(\bs{\tau}(\kappa_n(W^{\sn}))\text{ is non-degenerate})\to1\text{ as }n\to\infty,
\end{align*}
where \eqref{rnondeglargen} from Theorem~\ref{thm:Gconvergence} is used in the last convergence. This gives \eqref{nondeglargen} and the proof is complete.
\end{proof}
Theorems~\ref{thm:jointconvergence} and \ref{thm:rjointconvergence} will now follow from Theorems~\ref{thm:Gconvergence} and \ref{thm:Grjointconvergence}, respectively, once we establish \eqref{rescaleHLT}, \eqref{Sncond}, and Conditions~\ref{cond:fdd} and \ref{cond:tau_stuff} for lattice trees with $d>8$ and $L$ sufficiently large, and for BRW.  We have already 
seen that \eqref{rescaleHLT}, \eqref{Sncond} and Condition~\ref{cond:fdd} hold 
 in both cases, Condition~\ref{cond:tau_stuff}(iii) is obvious, and so it remains to establish Condition~\ref{cond:tau_stuff}(i),(ii) in both cases.

\begin{REM}\label{scalecondition} Under \eqref{rescaleHLT} the following slightly simpler conditions  imply Condition~\ref{cond:fdd}:
\begin{itemize}
\item[(i)'] $\lim_{t\to\infty}t\P(S^{\sss(1)}>t)=c_0\in(0,\infty)$,
\item[(ii)'] For each $s>0$, $\P_1^{sn}(H^{\sn}\in\cdot)\cfdd\N_{\sss H}^s(H\in\cdot)$ as $n\to\infty$,
\item[(iii)'] For each  $p\in\N$, $\sup_{k\in\Z_+}\E[(H^{\sss(1)}_k(1))^p]/(k^{p-1}\vee 1)<\infty$.
\item[(iv)'] For each $k\in\Z_+$, $H_k^{\sss(1)}(\{w:\FL(w)\neq k\})=0\ \  \P-a.s.$
\end{itemize}
Condition~\ref{cond:fdd}(i) follows easily from (i)' and Lemma~\ref{i'}(a). The other derivations are easy.
\Enddef
\end{REM}

\subsection{Verification of Condition \ref{cond:tau_stuff}(i),(ii) for lattice trees}\label{sec:cond18lt}

\blank{
\todo[inline]{Old stuff:  By Lemma \ref{lem:error} we have that \eqref{term2} is at most 
\[\zeta^{-2}\frac{Cn}{n^2}\Big[ \floor{n(u_1\wedge u_2)}-\floor{n((u_1\wedge u_2)-\delta)}\Big]\le \frac{C'(\zeta)}{n}(n\delta+1)= C'(\delta+\frac{1}{n}),\]
which is less than $\eta/4$ if $\delta<\eta/(10C')$ and $n\ge n_2(\eta)$.

 From \eqref{histprod} and Lemma~\ref{lem:LTbranch}, we have for any $\delta'=\delta'(s,\zeta,t_0,\eta)=\delta'(\eta)$ there is an $\vep=\vep(\delta')>0$ and $n_3=n_3(\delta,\delta',\vep)\in\N$ so that for $n\ge n_3$, \eqref{term1}  above is bounded by 
$Cs\zeta^{-2}n^{-1}\sum_{m=0}^{n(u_1\wedge u_2)}\delta'$,
whence \eqref{whoops1} is at most
\begin{align*}
& Cs\zeta^{-2}\int_0^{t_0}\int_0^{t_0}n^{-1}\sum_{m=0}^{n(u_1\wedge u_2)}\delta'\d u_1\,\d u_2\le Cs\zeta^{-2}t_0^2(t_0+1)\delta'<\eta/4,
\end{align*}
where in the last line we have chosen $\delta'=\delta'(s,\zeta,t_0,\eta)>0$ small enough.  Use the above in \eqref{keybound1} to derive \eqref{lsctau} and so complete the proof.}
}
 
We work in the setting of lattice trees described in Section~\ref{sec:LT} with $d>8$ and $L$ sufficiently large.  In particular Condition~\ref{cond:fdd} holds.
For any Borel $A\subset [0,\infty)$ and $u_1,u_2>0$ we have 
\begin{align}
&\big(H_{u_1}^{\sss(n)}\times H_{u_2}^{\sss(n)}\big)\big(\tau(w_1,w_2)\in A\big)=\frac{1}{(C_0n)^2}\sum_{y_1\in \mc{T}_{nu_1}}\sum_{y_2\in \mc{T}_{nu_2}}\indic{\tau(w^{\sn}(u_1,y_1),w^{\sn}(u_2,y_2))\in A}. \label{Hprod1}
\end{align}
For $y_1,y_2\in T$,   
we use the notation $\tau_{y_1,y_2}(T)$  
to represent the branch time for the paths in $T$ 
from the origin to $y_1$ and $y_2$ respectively.
Taking the expectation of \eqref{Hprod1} 
 we obtain
\begin{align}
\label{histprod}&\E^s_n\Big[\big(H_{u_1}^{\sss(n)}\times H_{u_2}^{\sss(n)}\big)\big(\tau(w_1,w_2)\in A\big)\Big]\\
\nonumber&=(n\P(S^{\sn}>s))^{-1}\frac{C_0^{-2}}{n} 
\sum_{x_1,x_2\in \Z^d}\sum_{T\ni o}W(T)\indic{T_{ns}\ne \varnothing}\indic{x_1\in T_{nu_1},x_2\in T_{nu_2}}\indic{\tau_{x_1,x_2}(T)\in nA}
\\
&
\le \frac{C(s\vee 1)}{n}\sum_{x_1,x_2\in\Z^d}\sum_{T\ni o}W(T)\indic{x_1\in T_{nu_1},x_2\in T_{nu_2}}\indic{\tau_{x_1,x_2}(T)\in nA},\label{yep1}
\end{align}
where $nA=\{na:a\in A\}$, and we have  used $\P(S^{\sn}>s)\ge c(s\vee 1)^{-1}$ for all $n$ and some $c>0$ (by equations (1.22) and (1.27) in \cite{HP19}) in the last line.

We will use the following Lemma to verify Condition~\ref{cond:tau_stuff}(i) for lattice trees.

\begin{LEM}
\label{lem:error}
There exists a constant $C(d,L)$ such that for all  $0\le m_1\le m_2\in \Z_+$, and every $u_1,u_2\ge 0$, $n \in \N$, 
\[\sum_{x_1,x_2\in\Z^d}\sum_{T\ni o}W(T)\indic{x_1\in T_{nu_1},x_2\in T_{nu_2}}\indic{\tau_{x_1,x_2}(T)\in [m_1+1,m_2]}\le C(m_2-m_1).\]
\end{LEM}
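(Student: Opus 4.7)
The plan is to prove the lemma by establishing the stronger per-$k$ uniform bound
\[ S_k(N_1, N_2) := \sum_{x_1, x_2 \in \Z^d} \sum_{T \ni o} W(T) \indic{x_1 \in T_{N_1}} \indic{x_2 \in T_{N_2}} \indic{\tau_{x_1, x_2}(T) = k} \le C(d, L) \]
uniformly in $k, N_1, N_2 \in \Z_+$ (with $N_i = \lfloor n u_i \rfloor$). Summing over $k \in [m_1+1, m_2]$ then yields the claim.

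The first step is to decompose at the branch point. For $k < \min(N_1, N_2)$, if $\tau_{x_1, x_2}(T) = k$, then the tree paths $w(N_i, x_i; T)$ share a common initial segment of length $k$ ending at a unique vertex $y \in T_k$ and then leave $y$ through distinct children $y_1, y_2 \in T_{k+1}$ of $y$ in $T$. This rewrites $S_k$ as a sum over tuples $(y, y_1, y_2, x_1, x_2, T)$ with the ancestry constraints that $y \in T_k$, $y_1 \ne y_2$ are children of $y$ in $T$, and $x_i \in T_{N_i}$ is a descendant of $y_i$. For $k \ge \min(N_1, N_2)$ one has $S_k(N_1, N_2) = 0$, so that range is trivial.

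The resulting expression is a form of the fully spatially summed three-point function of critical lattice trees: three subtrees meet at $y$---the backbone from $o$ to $y$ and the two forward subtrees containing $x_1$ and $x_2$---and they are mutually disjoint by the no-cycle property of $T$. For critical BRW the analogue of $S_k$ equals $\rho \gamma \, \E[|\mc{T}_k|]$, which is uniformly bounded by \eqref{pop_size_converge}. In dimensions $d > 8$ with $L$ sufficiently large, the lace-expansion analysis of lattice trees (see \cite{H08,HH13}) shows that the additional no-cycle constraint is a small perturbation of the independent-subtree structure of BRW, yielding the uniform bound $S_k(N_1, N_2) \le C(d,L)$. Alternatively one may give a direct tree-bond severing at $y$, combined with the total-mass bound $\sum_x \sum_{T\ni o} W(T)\indic{x \in T_m} = \rho \E[|\mc{T}_m|] \le C\rho$ from \eqref{pop_size_converge} and a lace-expansion control on the branching factor at $y$.

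The main obstacle is extracting the three-point function estimate in the uniform form stated above. The lace-expansion literature typically formulates such bounds in Fourier space and at a specific generational scale, so some care is needed to obtain a spatial $\ell^1$-bound that is uniform in all three generational parameters $k$, $N_1 - k$ and $N_2 - k$ simultaneously (and in particular independent of $n$). Once this uniform bound is secured, summing over $k \in [m_1+1, m_2]$ is immediate and gives the claimed $C(m_2-m_1)$ estimate.
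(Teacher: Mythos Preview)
Your overall strategy---reduce to a uniform per-$k$ bound via a branch-point decomposition---matches the paper. But you are making the core step much harder than it is: no lace-expansion bound on a three-point function is needed, and there is no ``Fourier versus spatial'' obstacle to overcome.

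The paper's argument is elementary. Write the per-$m$ sum as a sum over the branch vertex $x_0$ and over triples of lattice trees $(T^{[0]},T^{[1]},T^{[2]})$ all containing $x_0$, with $o\in T^{[0]}$, $x_0\in T^{[0]}_m$, and $x_i\in T^{[i]}_{\lfloor nu_i\rfloor-m}$ (viewing $T^{[i]}$ as rooted at $x_0$), together with an indicator $\indic{I'_{0,1,2}}$ that the three trees share only the vertex $x_0$. Since $W$ is multiplicative over edge-disjoint pieces, $W(T)=W(T^{[0]})W(T^{[1]})W(T^{[2]})$. Now simply \emph{drop} $\indic{I'_{0,1,2}}$: all summands are nonnegative, so this only increases the sum, and what remains factorizes. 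By translation invariance each of the three resulting factors is of the form $\sum_{x}\sum_{T\ni o}W(T)\indic{x\in T_j}=\rho\,\E[|\mc{T}_j|]$, which by \eqref{pop_size_converge} is bounded by a constant $K$ uniformly in $j$. Hence $S_k\le K^3$ for every $k$, and summing over $k\in[m_1+1,m_2]$ gives $K^3(m_2-m_1)$. Your insistence on tracking the two distinct children $y_1\ne y_2$ is precisely what leads you towards a ``branching factor at $y$'' and hence towards heavier machinery; the coarser split at the branch vertex alone avoids this entirely. The only lace-expansion input needed is the one-point bound \eqref{pop_size_converge}, which is already given.
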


\begin{proof}
Let $m_2^*=\min\{m_2,\floor{nu_1},\floor{nu_2}\}$, and write the left hand side of the claimed bound as 
\begin{align}
\sum_{x_0 \in \Z^d}\sum_{m=m_1+1}^{m_2^*}\sum_{x_1,x_2}\sum_{T\ni o}W(T)\indic{x_1\in T_{nu_1},x_2\in T_{nu_2}}\indic{\tau_{x_1,x_2}(T)=m}\indic{w_m(nu_1, x_1)=x_0}.
\end{align} 
We can write $T$ as a union of $T^{[0]}$ containing $o$ and $x_0$, $T^{[1]}$ containing $x_0$ and $x_1$, $T^{[2]}$ containing $x_0$ and $x_2$, that are all pairwise disjoint except for the common point $x_0$ that is in all of them.
 Replace the sum over $T$ with a sum over these various trees and include the indicator $\indic{I'_{0,1,2}}$ that they are ``almost'' disjoint as above.  Then this becomes
\begin{align}\label{Wtriple}
&\sum_{m=m_1+1}^{m_2^*}\sum_{x_0}\sum_{\stackrel{T^{[0]}\ni o:}{x_0 \in T^{[0]}_m}}W(T^{[0]})
\sum_{x_1}\sum_{\stackrel{T^{[1]}\ni x_0:}{x_1 \in T^{[1]}_{nu_1-m}}}W(T^{[1]})
\sum_{x_2}\sum_{\stackrel{T^{[2]}\ni x_0:}{x_2 \in T^{[2]}_{nu_2-m}}}W(T^{[2]})
\indic{I'_{0,1,2}}.
\end{align}
Since all summands are non-negative, we may drop $\indic{I'_{0,1,2}}$ to get an upper bound.  Once this indicator is removed we may perform the sum over $x_1$ and $x_2$, and then $x_0$ to get three terms (up to translation) of the form $\sum_x \sum_{\stackrel{T \ni o:}{x\in T_{j}}}W(T)=\rho\E[|\mc{T}_{j}|]$, which is bounded by $K$, uniformly in $j\ge 0$ (recall \eqref{pop_size_converge}).
  Thus \eqref{Wtriple} is at most $\sum_{m=m_1+1}^{m_2}K^3\le C(m_2-m_1)$.
\end{proof}
Let us now verify Condition~\ref{cond:tau_stuff}(i) for lattice trees (for $d>8$ and $L$ sufficiently large).  Fix $s,t_0>0$.   Condition \ref{cond:fdd}(iv) gives an upper limit on $\tau$, and so \eqref{yep1} shows  
the expression, $\E_n^s\Big[\int\int \indic{\tau(w_1,w_2)> (u_1\wedge u_2)-\delta}H^{\sn}_{u_1}(\d w_1)H^{\sn}_{u_2}(\d w_2)\Big]$, appearing in \eqref{term_b}, is at most
\begin{align}
\frac{C(s\vee 1)}{n}\sum_{x_1,x_2}\sum_{T\ni o}W(T)\indic{x_1\in T_{nu_1},x_2\in T_{nu_2}}\indic{\tau_{x_1,x_2}(T)\in (n(u_1\wedge u_2)-n\delta,n(u_1\wedge u_2)]}.
\end{align}
By Lemma \ref{lem:error} this is at most
\[\frac{C(s\vee1)}{n}\Big[n(u_1\wedge u_2)-n((u_1\wedge u_2)-\delta)+1\Big]\le C_s(\delta+\frac{1}{n}).\] 
The constant is independent of $u_1$ and $u_2$, and so we may integrate the above bound over $u_i\le t_0$ and let $n\to\infty$ and then $\delta\downarrow0$ to derive \eqref{term_b}, as required.

We now turn our attention to Condition \ref{cond:tau_stuff}(ii).  Note that for any $u_1,u_2\ge 0$, $\vep,\delta>0$, $n \in \N$ 
\begin{align*}
&\big(H_{u_1}^{\sss(n)}\times H_{u_2}^{\sss(n)}\big)\big(\{(w,w'):|w_{\tau(w,w')+\delta}-w'_{\tau(w,w')+\delta}|<\vep,\quad \tau(w,w')\le (u_1\wedge u_2)-\delta\}\big)\\
&=\frac{1}{(C_0n)^2}\sum_{x_1\in \mc{T}_{nu_1}}\sum_{x_2\in \mc{T}_{nu_2}}\indic{|w^{\sn}_{(\tau_{x_1,x_2}(\mc{T})/n)+\delta}(u_1,x_1)-w^{\sn}_{(\tau_{x_1,x_2}(\mc{T})/n)+\delta}(u_2,x_2)|<\vep,\ \  \tau_{x_1,x_2}(\mc{T})/n\le (u_1\wedge u_2)-\delta}.
\end{align*}
Taking expectation with respect to $\E^s_n$ we obtain
\begin{align}
\label{histprod*}&\E^s_n\Big[\big(H_{u_1}^{\sss(n)}\times H_{u_2}^{\sss(n)}\big)\big(\{(w,w'):|w_{\tau(w,w')+\delta}-w'_{\tau(w,w')+\delta}|<\vep,\ \  \tau(w,w')\le (u_1\wedge u_2)-\delta\}\big)\Big]\\
\nonumber&=(n\P(S^{\sn}>s))^{-1}\frac{C_0^{-2}}{n}\sum_{m=0}^{\lfloor n(u_1\wedge u_2)-n\delta\rfloor}\sum_{x_1,x_2}\sum_{T\ni o}W(T)\indic{T_{ns}\ne \varnothing}\indic{x_1\in T_{nu_1},x_2\in T_{nu_2}}\indic{\tau_{x_1,x_2}(T)=m}\\
\nonumber&\phantom{(n\P(S^{\sn}>s))^{-1}\frac{c}{n}\sum_{m=0}^{n(t_1\wedge t_2)}\sum_{x_1,x_2}\sum_{T\ni o}W(T)\indic{T_{ns}\ne \varnothing}}\times\indic{|w_{m+n\delta}(nu_1,x_1)-w_{m+n\delta}(nu_2,x_2)|<\sqrt{n}\vep}\\
\nonumber&
\le \frac{C(s\vee 1)}{n}\sum_{m=0}^{\lfloor n(u_1\wedge u_2)-n\delta\rfloor}\sum_{x_1,x_2}\sum_{T\ni o}W(T)\indic{x_1\in T_{nu_1},x_2\in T_{nu_2}}\indic{\tau_{x_1,x_2}(T)=m}\indic{|w_{m+n\delta}(nu_1,x_1)-w_{m+n\delta}(nu_2,x_2)|<\sqrt{n}\vep},
\end{align}
where we have again used equations (1.22) and (1.27) in \cite{HP19} in the last line.  

\begin{LEM}
\label{lem:LTbranch}
For any 
$\delta,\delta'>0$, there is an $\vep=\vep(\delta')>0$ and an $n_0=n_0(\delta,\delta')$ such that for every $u_1,u_2\ge 0$: \hspace{.5cm} for every $m\in \Z_+$ such that $m\le n((u_1\wedge u_2) - \delta)$, 
\[\sup_{n\ge n_0}\sum_{x_1,x_2}\sum_{T\ni o}W(T)\indic{x_1\in T_{nu_1},x_2\in T_{nu_2}}\indic{\tau_{x_1,x_2}(T)=m}\indic{|w_{m+n\delta}(nu_1,x_1)-w_{m+n\delta}(nu_2,x_2)|<\sqrt{n}\vep}<\delta'.\]
\end{LEM}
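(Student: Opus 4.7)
The plan is to adapt the tree decomposition from the proof of Lemma~\ref{lem:error}, refining it to extract an extra smallness factor from the spatial constraint $|z_1-z_2|<\sqrt{n}\vep$, where $z_i:=w_{m+\lfloor n\delta\rfloor}(nu_i,x_i)$. Given $T\ni o$ with $\tau_{x_1,x_2}(T)=m$, I would partition $T$ into five almost-disjoint subtrees meeting at the origin, the branch vertex $x_0\in T_m$, and the two vertices $z_1,z_2\in T_{m+\lfloor n\delta\rfloor}$: one of length $m$ from $o$ to $x_0$; two of length $\lfloor n\delta\rfloor$ from $x_0$ to $z_i$; and two rooted at $z_i$ and containing $x_i$. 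As in Lemma~\ref{lem:error}, $W(T)$ factors over edges, and dropping the almost-disjointness indicator yields an upper bound. Summing over the three ``endpoint-free'' pieces and the positions $x_0,x_1,x_2$ gives three factors of the form $\rho\E[|\mc{T}_k|]$, each bounded uniformly by a constant $C$ via \eqref{pop_size_converge}. By translation invariance, the claim reduces to showing that for each fixed $\delta>0$,
\[\lim_{\vep\downarrow 0}\sup_{n\ge n_0(\delta)}\sum_{z_1,z_2\in\Z^d}t_{\lfloor n\delta\rfloor}(z_1)\,t_{\lfloor n\delta\rfloor}(z_2)\,\indic{|z_1-z_2|<\sqrt{n}\vep}=0,\]
where $t_k(y):=\sum_{T\ni o}W(T)\indic{y\in T_k}$ is the two-point function.

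To establish this reduced bound, I would invoke the pointwise Gaussian upper bound $\sup_{y\in\Z^d}t_k(y)\le Ck^{-d/2}$ for $k$ large, which is a standard local-CLT-type consequence of the lace expansion for sufficiently spread-out lattice trees in $d>8$ (cf.~\cite{H08,HH13}). Combining this bound on one factor with the total-mass bound $\sum_{y}t_k(y)=\rho\E[|\mc{T}_k|]\le C$ and the trivial estimate $|\{z\in\Z^d:|z|<\sqrt{n}\vep\}|\le C(\sqrt{n}\vep)^d$ (valid for $n$ large) gives
\[\sum_{z_1,z_2}t_{\lfloor n\delta\rfloor}(z_1)\,t_{\lfloor n\delta\rfloor}(z_2)\,\indic{|z_1-z_2|<\sqrt{n}\vep}\le\Big(\sum_{z_1}t_{\lfloor n\delta\rfloor}(z_1)\Big)\cdot\sup_{z_2}t_{\lfloor n\delta\rfloor}(z_2)\cdot C(\sqrt n\vep)^d\le\frac{C'\vep^d}{\delta^{d/2}}.\]
Given $\delta'>0$, I would then choose $\vep$ small enough (depending on $\delta,\delta'$) that $C'\vep^d/\delta^{d/2}$ times the bounded constants from the first paragraph is less than $\delta'$, and $n_0=n_0(\delta,\delta')$ large enough to validate the pointwise bound.

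The main technical obstacle is verifying the pointwise Gaussian bound $\sup_z t_k(z)\le Ck^{-d/2}$ in the precise spread-out setting used here; this is a well-known but non-trivial consequence of the lace expansion. An alternative route that avoids this direct input is to reinterpret the double sum probabilistically as $(\rho C_0 n)^2\,\E[X^{\sn}_\delta\otimes\tilde X^{\sn}_\delta(\{|y_1-y_2|<\vep\})]$ for rescaled measures of two independent lattice trees, absorb the factor $n^2$ via the survival asymptotics \eqref{surv1} for two independent trees, and then invoke \eqref{weak_historical} together with the moment bound Condition~\ref{cond:fdd}(iii) to pass to the (conditioned) SBM limit; the mean measure of conditioned SBM at time $\delta$ is absolutely continuous (Gaussian, by (II.8.6)(a) of \cite{Per02}), so by Fubini and independence the limiting product measure carries no mass on the diagonal $\{y_1=y_2\}$ and vanishes as $\vep\downarrow 0$ by dominated convergence.
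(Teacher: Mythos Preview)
Your five-subtree decomposition and reduction to the constrained double two-point-function sum
\[
\sum_{z_1,z_2}t_{\lfloor n\delta\rfloor}(z_1)\,t_{\lfloor n\delta\rfloor}(z_2)\,\indic{|z_1-z_2|<\sqrt n\,\vep}
\]
is exactly what the paper does. Where you diverge is in bounding this last expression. Your primary route uses the pointwise local-CLT estimate $\sup_z t_k(z)\le Ck^{-d/2}$ and a ball-volume count to obtain the explicit bound $C'\vep^d/\delta^{d/2}$; this is clean and quantitative, but it imports an input that is not established (or cited) in the paper. The paper instead normalizes $t_{\lfloor n\delta\rfloor}$ to a probability $\nu_{n,1}$ on $\R^d$, uses only the characteristic-function convergence $\int e^{ik\cdot z}\,d\nu_{n,1}(z)\to e^{-|k|^2\delta\sigma_0^2/2}$ from \cite[Theorem~1.12]{H08}, and concludes by weak convergence that $P(|Z_{n,1}-Z_{n,2}|<\vep)\to P(|Z_1-Z_2|<\vep)$ for independent Gaussians $Z_1,Z_2$; the right-hand side is then made small by choosing $\vep$ small. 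So the paper trades your pointwise bound for a softer CLT-type input that is already in its references, at the cost of losing the explicit rate.

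Your alternative route via SBM is garbled as written (there is no factor of $n^2$ to absorb: after the tree decomposition the expression is already $O(1)$, since $\sum_z t_k(z)$ is bounded), but once corrected it amounts to the paper's weak-convergence argument. Finally, note that in both your bound $C'\vep^d/\delta^{d/2}$ and the paper's Gaussian $P(|Z_1-Z_2|<\vep)$ (where $Z_1-Z_2$ has variance $2\delta\sigma_0^2$), the choice of $\vep$ in fact depends on $\delta$ as well as $\delta'$, despite the notation $\vep(\delta')$ in the statement; this is harmless in the application to Condition~\ref{cond:tau_stuff}(ii), where $\delta$ is fixed first.
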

\begin{proof}
If $\delta>u_1\wedge u_2$ then no claim is being made. 

Otherwise, the argument of the supremum can be written as  
\begin{align}\nn
&\sum_{x_0}\sum_{x_1,x_2}\sum_{y_1,y_2}\sum_{T\ni o}\indic{x_1\in T_{nu_1},x_2\in T_{nu_2}}W(T)\indic{\tau_{x_1,x_2}(T)=m}\indic{w_m(nu_1, x_1)=x_0}\\
\label{supargu}&\phantom{\sum_{x_0}\sum_{x_1,x_2}\sum_{y_1,y_2}\sum_{T\ni o}\indic{x_1,x_2\in T_{m+n\delta}}}
\indic{w_{m+n\delta}(nu_1, x_1)=y_1}\indic{w_{m+n\delta}(nu_2, x_2)=y_2}\indic{|y_1-y_2|<\sqrt{n}\vep}.
\end{align}
We can write $T$ as a union of $T^{[0]}$ containing $o$ and $x_0$, $T^{[1,1]}$ containing $x_0$ and $y_1$, $T^{[1,2]}$ containing $y_1$ and $x_1$, $T^{[2,1]}$ containing $x_0$ and $y_2$, $T^{[2,2]}$ containing $y_2$ and $x_2$ that are all pairwise disjoint except for their common start/end points (e.g.~$T^{[1,2]}\cap T^{[1,1]}=\{y_1\}$).  Replace the sum over $T$ with a sum over these various trees and include the indicator $\indic{I_{0,1,2}}$ that they are ``almost'' disjoint as above.  Then this becomes
\begin{align}\nn
&\sum_{x_0}\sum_{\stackrel{T^{[0]}\ni o:}{x_0 \in T^{[0]}_m}}W(T^{[0]})
\sum_{y_1}\sum_{\stackrel{T^{[1,1]}\ni x_0:}{y_1 \in T^{[1,1]}_{n\delta}}}W(T^{[1,1]})
\sum_{y_2}\sum_{\stackrel{T^{[2,1]}\ni x_0:}{y_2 \in T^{[2,1]}_{n\delta}}}W(T^{[2,1]})\indic{|y_1-y_2|<\sqrt{n}\vep}\\
&\phantom{=\sum_{x_0,x_1,x_2,y_1,y_2}}\sum_{x_1}
\sum_{\stackrel{T^{[1,2]}\ni y_1:}{x_1 \in T^{[1,2]}_{n(u_1-\delta)-m}}}W(T^{[1,2]})
\sum_{x_2}\sum_{\stackrel{T^{[2,2]}\ni y_2:}{x_2 \in T^{[2,2]}_{n(u_2-\delta)-m}}}W(T^{[2,2]})
\indic{I_{0,1,2}}.\label{abodis}
\end{align}
Since all summands are non-negative, we may drop $\indic{I_{0,1,2}}$ to get an upper bound.  Once this indicator is removed we may perform the sum over $(x_1,T^{[1,2]})$ and $(x_2,T^{[2,2]})$ to get two terms (up to translation) of the form $\sum_x \sum_{\stackrel{T \ni o:}{x\in T_{t}}}W(T)=\rho\E[|\mc{T}_{t}|]$, which is bounded by a universal constant $K$ (recall \eqref{pop_size_converge}), uniformly in $t\ge 0$.  Thus \eqref{abodis} is at most $K^2$ times 
\begin{align*}
&\sum_{x_0,y_1,y_2}\sum_{\stackrel{T^{[0]}\ni o:}{x_0 \in T^{[0]}_m}}W(T^{[0]})
\sum_{\stackrel{T^{[1,1]}\ni x_0:}{y_1 \in T^{[1,1]}_{n\delta}}}W(T^{[1,1]})
\sum_{\stackrel{T^{[2,1]}\ni x_0:}{y_2 \in T^{[2,1]}_{n\delta}}}W(T^{[2,1]})\indic{|y_1-y_2|<\sqrt{n}\vep}.
\end{align*}
This is equal to 
\begin{align}
\nonumber& \rho^3 \sum_{x_0}\P(x_0 \in \mc{T}_{m})
 \sum_{y_1,y_2}\indic{|y_1-y_2|<\sqrt{n}\vep}\P(y_1-x_0 \in \mc{T}_{n\delta})\P(y_2-x_0 \in \mc{T}_{n\delta})\\
\nonumber&=\rho^3 \sum_{x_0}\P(x_0 \in \mc{T}_{m})
 \sum_{z_1,z_2}\indic{|z_1-z_2|<\sqrt{n}\vep}\P(z_1 \in \mc{T}_{n\delta})\P(z_2 \in \mc{T}_{n\delta})\\
\nonumber&\le C\sum_{z_1,z_2}\indic{|z_1-z_2|<\sqrt{n}\vep}\P(z_1 \in \mc{T}_{n\delta})\P(z_2 \in \mc{T}_{n\delta})\\
&\le C'\E[|\mc{T}_{n\delta}|]^{-2}\sum_{z_1,z_2}\indic{|z_1-z_2|<\sqrt{n}\vep}\P(z_1 \in \mc{T}_{n\delta})\P(z_2 \in \mc{T}_{n\delta}),\label{muahahaha}
\end{align}
where $C'$ is a universal constant (i.e.~it doesn't depend on $\delta, \delta', t_1,t_2, m,n$) and we have again used the fact that $\sup_{m\ge 0}\E[|\mc{T}_m|]<\infty$.  Define $\nu_{n,1}(A)=\E[|\mc{T}_{\delta n}|]^{-1}\sum_{z\in \Z^d} \indic{n^{-1/2}z\in A}\P(z \in \mc{T}_{\delta n})$ for Borel $A\subset \R^d$, and for Borel $R\subset\R^d\times\R^d$ define 
\[\nu_{n}(R)=\nu_{n,1}\times\nu_{n,1}(R)=\E[|\mc{T}_{\delta n}|]^{-2}\sum_{z_1,z_2\in \Z^d} \indic{n^{-1/2}(z_1,z_2)\in R}\P(z_1\in \mc{T}_{\delta n})\P(z_2\in \mc{T}_{\delta n}).\]
Then $\nu_n$ (dependence on $\delta$ is suppressed) is the law of a pair of iid random vectors $(Z_{n,1},Z_{n,2})\in \R^d \times \R^d$, each with law $\nu_{n,1}$. 

We know from \cite[Theorem 1.12]{H08} that $\int e^{i k\cdot z}\d\nu_{n,1}(z)=\E[|\mc{T}_{\delta n}|]^{-1}\sum_{z\in \Z^d}e^{\frac{ik \cdot z}{\sqrt{n}}}\P(z\in \mc{T}_{\delta n}) \to e^{-k^2\delta\frac{\sigma_0^2}{2}}$ 
(in the notation of \cite{H08} we have $\sigma_0^2:=c_0/d$).   
Therefore $(Z_{n,1},Z_{n,2})$ converges weakly to a Gaussian vector $(Z_1,Z_2)$ with $Z_1,Z_2$ independent, each with characteristic function $e^{- k^2\delta \sigma_0^2/2}$.  Thus the bound in \eqref{muahahaha} equals
\begin{align*}
&C'\E[|\mc{T}_{n\delta}|]^{-2}\sum_{z_1,z_2}\indic{|z_1-z_2|<\sqrt{n}\vep}\P(z_1 \in \mc{T}_{n\delta})\P(z_2 \in \mc{T}_{n\delta})\\
&=C'P(|Z_{n,1}-Z_{n,2}|<\vep)\to C'P(|Z_1-Z_2|<\vep).
\end{align*}
Choose $\vep>0$ sufficiently small (depending only on $\delta'$) so that $P(|Z_1-Z_2|<\vep)<\delta'/(2C')$.  Next, choose $n_0(\delta,\delta',\vep(\delta'))=n_0(\delta,\delta')$ sufficiently large so that 
\[\text{$P(|Z_{n,1}-Z_{n,2}|<\vep)< P(|Z_1-Z_2|<\vep)+\delta'/(2C')$ for all $n\ge n_0$.}\]  
Then for all $n\ge n_0$, \eqref{muahahaha} is at most
\[C' \Bigg[P(|Z_1-Z_2|<\vep)+\frac{\delta'}{2C'}\Bigg]<\delta'.\] 
Our earlier bounds, now show that for all $n\ge n_0$, \eqref{supargu} is also bounded by $\delta'$, 
as required.
\end{proof}

We can now verify Condition \ref{cond:tau_stuff}(ii) for lattice trees (with $d>8$ and $L$ sufficiently large).  Fix $s,t_0,\delta>0$.  For any $\vep>0$, $u_1,u_2\in [0,t_0]$ we have from \eqref{histprod*} that the 
expression, $\E_n^s\Big[\big(H_{u_1}^{\sss(n)}\times H_{u_2}^{\sss(n)}\big)\big(\{(w,w'):|w_{\tau(w,w')+\delta}-w'_{\tau(w,w')+\delta}|<\vep,\  \tau(w,w')\le (u_1\wedge u_2)-\delta\}\big)\Big]$, in \eqref{term_a}  is at most
\begin{align}\label{exprint}
\frac{C(s\vee 1)}{n}\sum_{m=0}^{n(u_1\wedge u_2)-n\delta} \sum_{x_1,x_2}\sum_{T\ni o}W(T)&\indic{x_1\in T_{nu_1},x_2\in T_{nu_2}}\indic{\tau_{x_1,x_2}(T)=m}
\indic{|w_{m+n\delta}(nu_1,x_1)-w_{m+n\delta}(nu_2,x_2)|<\sqrt{n}\vep}.
\end{align}
Let $\delta'>0$ and use Lemma \ref{lem:LTbranch}  to find $\vep(\delta')$ and $n_0(\delta')$ so that for $\vep\le \vep(\delta')$ and $n\ge n_0(\delta')$, \eqref{exprint} is at most $C(s\vee1) (u_1\wedge u_2)\delta'$ uniformly in $u_i\le t_0$.  Now integrate out $u_i\in[0,t_0]$ ($i=1,2$) in this bound and let $n\to\infty$, $\vep\downarrow 0$ and $\delta'\downarrow 0$ (in that order)  to verify Condition~\ref{cond:tau_stuff}(ii).

\subsection{Verification of Condition \ref{cond:tau_stuff} for branching random walk}\label{sec:cond18brw}

We consider the branching random walk setting from Section~\ref{sec:BRW}. If $\alpha,\beta$ are labels in $I$ let $\alpha\wedge \beta$ denote their greatest common ancestor in the tree order on $I$. For $t\ge 0$ let $I_t=I_{\lfloor t\rfloor}$. Recall that $s>0$ is fixed throughout.  We first bound the mean of the integral inside the time integrals on  the left hand side of \eqref{term_b}, where we may assume $0\le u_1\le u_2$, and $n\in\N$, $\delta>0$ are fixed for now. We have from Condition~\ref{cond:fdd}(i), for a universal constant $c$,
\begin{align}
\nn\E^s_n&\Bigl[\int\int \1_{\{\tau(w_1,w_2)>u_1-\delta\}}H^{\sn}_{u_1}(dw_1)H^{\sn}_{u_2}(dw_2)\Bigr]\\
\nn&\le \frac{c}{n}\E\Bigl[\sum_{\alpha_1\in I_{nu_1}}\1_{\{\alpha_1\in \mc{T}_{nu_1}\}}\sum_{\alpha_2\in I_{nu_2}}\1_{\{\alpha_2\in \mc{T}_{nu_2}\}}\1_{\{\tau(w(nu_1,\alpha_1),w(nu_2,\alpha_2))>n(u_1-\delta)\}}\Bigr]\\
\nn&=\frac{c}{n}\Bigg(\E\Big[\sum_{m=\lfloor (nu_1-2n\delta)^+\rfloor}^{\lfloor nu_1\rfloor}\sum_{\alpha_1\in I_{nu_1}}\sum_{\alpha_2\in I_{nu_2}}\1_{\{\alpha_i\in \mc{T}_{nu_i},\,i=1,2, \,|\alpha_1\wedge \alpha_2|=m\}}\1_{\tau(w(nu_1,\alpha_1),w(nu_2,\alpha_2))>n(u_1-\delta)\}}\Big]\\
\nn&\quad+\E\Big[\sum_{\alpha_1\in I_{nu_1}}\sum_{\alpha_2\in I_{nu_2}}\1_{\{\alpha_i\in \mc{T}_{nu_i},\,i=1,2, |\alpha_1\wedge\alpha_2|<\lfloor (nu_1-2n\delta)^+\rfloor\}}\1_{\{\tau(w(nu_1,\alpha_1),w(nu_2,\alpha_2))-|\alpha_1\wedge\alpha_2|> \lceil n\delta\rceil\}}\Big]\Bigg)\\
&:=\frac{c}{n}[E_1+E_2].\label{Edecomp}
\end{align}
If $\beta_0\in I_j$ and $\beta_1\in\N^k$ let $\beta_0\vee\beta_1\in I_{j+k}$ be the concatenation of $\beta_0$ and $\beta_1$.  Consider $\alpha_1,\alpha_2$ and let $m=|\alpha_1\wedge \alpha_2|$, as in the sum contributing to $E_1$. 
We can then set $\beta_0=\alpha_1\wedge \alpha_2\in I_m$ and find unique $\beta_j\in \N^{\lfloor nu_j\rfloor-m}$ (with differing first coordinates) so that $\alpha_j=\beta_0\vee\beta_j$ for $j=1,2$. Note that 
$$\P(\alpha_j\in \mc{T}_{nu_j}\text{ for }j=1,2)=\P(\beta_0\in \mc{T}_m)\P(0\vee\beta_1\in\mc{T}_{nu_1-m})\P(0\vee\beta_2\in\mc{T}_{nu_2-m}).$$
Drop the final indicator in the sum defining $E_1$ to see that 
\begin{align*}
\frac{c}{n}E_1&\le \frac{c}{n}\sum_{m=\lfloor (nu_1-2n\delta)^+\rfloor}^{\lfloor nu_1\rfloor}\sum_{\beta_0\in I_m}\sum_{\beta_1\in\N^{\lfloor nu_1\rfloor-m}}\sum_{\beta_2\in\N^{\lfloor nu_2\rfloor-m}}\P(\beta_0\in \mc{T}_m)\P(0\vee\beta_1\in\mc{T}_{nu_1-m})\\
&\phantom{\le \frac{c}{n}\sum_{m=\lfloor (nu_1-2n\delta)^+\rfloor}^{\lfloor nu_1\rfloor}\sum_{\beta_0\in I_m}\sum_{\beta_1\in\N^{\lfloor nu_1\rfloor-m}}\sum_{\beta_2\in\N^{\lfloor nu_2\rfloor-m}}}\times\P(0\vee\beta_1\in\mc{T}_{nu_1-m})\\
&=\frac{c}{n}\sum_{m=\lfloor (nu_1-2n\delta)^+\rfloor}^{\lfloor nu_1\rfloor} \E[|\mc{T}_m|]\E[|\mc{T}_{\lfloor nu_1\rfloor-m}|]\E[|\mc{T}_{\lfloor nu_2\rfloor-m}|].
\end{align*}
As the critical GW process has constant mean one this shows that 
\begin{equation}\label{E1bound}\frac{c}{n}E_1\le \frac{c}{n}(2\delta n+1)=c(2\delta+\frac{1}{n}).
\end{equation}

Let $\alpha_i\in I_{nu_i}$ for $i=1,2$ as in the sum defining $E_2$ and let $m=|\alpha_1\wedge \alpha_2|<\lfloor (nu_1-2n\delta)^+\rfloor\}$.  Define $S_k=w_{m+k}(nu_1,\alpha_1)-w_{m+k}(nu_2,\alpha_2), k=0,\dots,\lfloor nu_1\rfloor-m$.  Then conditionally on $\mc{T}$ $S_k$ is a simple $d$-dimensional random walk with step distribution that of $X=X_1-X_2$, where $X_i$ are iid with law $D$.  Therefore 
\begin{align*}\P&(\tau(w(nu_1,\alpha_1),w(nu_2,\alpha_2))-|\alpha_1\wedge\alpha_2|> \lceil n\delta\rceil\}|\mc{\T})\\
&=\P(S_k=0\text{ for }k=1,\dots,\lceil n\delta\rceil)\le e^{-c'n\delta},
\end{align*}
for some $c'>0$. Use this to see that 
\begin{equation}\label{E2bound}
\frac{c}{n}E_2\le \frac{c}{n}\E[|\mc{T}_{nu_1}||\mc{T}_{nu_2}|]e^{-c'n\delta}\le c\Bigl[\frac{1}{n}+\gamma u_2\Bigr]e^{-c'n\delta}\le c(1+\gamma (u_1\vee u_2))e^{-c'n\delta}.
\end{equation}
In the above we use the classical result that $\E[|\mc{T}_m|^2]=1+\gamma m$ (see e.g. Section 2 of \cite{Ha48}).
Use the bounds on $E_1$ and $E_2$ in \eqref{E1bound} and \eqref{E2bound}  in \eqref{Edecomp}, integrate out $u_1,u_2\in[0,t_0]$ on the left-hand side of \eqref{Edecomp}, and in the resulting inequality  
let $n\to\infty$, and then $\delta\to 0$ to prove Condition~\ref{cond:tau_stuff}(i). 

The proof of Condition \ref{cond:tau_stuff}(ii) proceeds by a similar argument.  Now $\delta>0$ is fixed and we will let $\vep>0$ and $n$ vary.  Let $\hat{S}_k=w_{\tau+k}(nu_1,\alpha_1)-w_{\tau+k}(nu_2,\alpha_2), k=0,\dots,\lfloor nu_1\rfloor-\tau$, where $\tau=\tau(w(nu_1,\alpha_1),w(nu_2,\alpha_2))$.  Since the $\Delta_\alpha$ variables in \eqref{Deltasum} are independent of each other, and $\tau$ is a stopping time, $(\hat{S}_k)$ is a simple random walk with identical step distribution to $(S_k)$.

Bounding the expectation on the left-hand side of \eqref{term_a} and using very similar reasoning as in the proof of (i), one can reduce \eqref{term_a} to showing 
\[\lim_{\vep\to 0}\limsup_{n\to\infty}P(|\hat{S}_{\lfloor n\delta\rfloor}|\le \vep\sqrt n)=0.\]
For each $\delta$ this holds by the central limit theorem.
 \qed

\medskip
\begin{EXA}[Branching random walk]\label{BRWrem}  Recall the branching random walk notation from the above, Section~\ref{sec:BRW} and 
 Remark~\ref{rem:alphahat}. 
To make the latter a bit more precise we define a random probability $\hat{J}^{\sn}$ on $I\times\oD$ (which projects down to $J^{\sn}$ by \eqref{Jnunif}) by
\[\hat{J}^{\sn}(V\times A\times B)=\frac{1}{|\T|}\sum_{t\in\Z_+/n}\ \sum_{\alpha\in\T_{nt}}\1_V(\alpha)\1_A(t)\1_B(w^{\sn}(t,\alpha)).\]  
Then under $\hat{\P}^s_n=\hat{\P}^s_n(\hat{J}^{\sn})$, given $\T$, $(\hat\alpha_i,\mathfrak{T}^{\sn}_i, W^{\sn}_i)_{i\in\N}$ are iid with law $\hat{J}^{\sn}$. Therefore given $\T$, $\hat\alpha_i$ is uniformly chosen from $\T$, $\mathfrak{T}^{\sn}_i=|\hat\alpha_i|/n$, and $W^{\sn}_i$ is given by \eqref{WBRW}.
 Let 
$\hat\tau^{\sn}_{i,j}=|\hat{\alpha}_i\wedge \hat{\alpha}_j|/n$, which is the rescaled generation of the most recent common ancestor of the randomly selected multi-indices $\hat{\alpha}_i$ and $\hat{\alpha}_j$. (One can easily check that $\hat{\bs{\tau}}\in R_K$ by constructing an appropriate $w\in C_K$.)   
If $\tau^{\sn}_{i,j}=\tau(\kappa_n(W_i^{\sn}),\kappa_n(W_j^{\sn}))$, then the fact that common ancestral lineages imply common spatial trajectories (recall \eqref{WBRW}) implies 
\begin{equation}\label{tauineqs}\hat\tau^{\sn}_{i,j}\le\tau^{\sn}_{i,j}\text{ for all }i,j,n\in\N,
\end{equation} 
but strict inequality is possible if the spatial trajectories coincide for a few steps after $\hat{\bs{\tau}}^{\sn}_{i,j}$.  Asymptotically they are the same:
\begin{equation}\label{sametaus}\text{for every $i,j \in \N$, and $s,\vep>0$,  $\hat{\P}^s_n(|\tau^{\sn}_{i,j}-\hat\tau^{\sn}_{i,j}| >\vep)\to 0$ as $n \to \infty$.}
\end{equation}
Here one uses the bound on $E_2$ in the final step of the previous proof, which gives this with $\tau(W^{\sn}_i,W^{\sn}_j)$ in place of $\tau^{\sn}_{i,j}$, and then applies \eqref{kapconv} and \eqref{tauconvp}.
To study the true ancestral tree, one may prefer to work with $(T(\hat{\bs{\tau}}^{\sn}),d_{\hat{\bs{\tau}}^{\sn}})$ in
place of $(T(\bs{\tau}^{\sn}),d_{\bs{\tau}^{\sn}})$. The inequality \eqref{tauineqs} shows that $\phi_{\kappa_n(W^{\sn})}([i,u])=\kappa_n(W^{\sn}_i)(u)$ is well-defined on $\overline{T}(\hat{\bs{\tau}}^{\sn})$ and it is again easy to verify continuity of $\phi_{\kappa_n(W^{\sn})}$. So we may define $\tilde{\mc{B}}_{n,K}=(T(\hat{\bs{\tau}}^{\sn}),d_{\hat{\bs{\tau}}^{\sn}},\phi_{\kappa_n(W^{\sn})})$, if it is non-degenerate, and $\degK$  otherwise. One can then use \eqref{sametaus} and an easy extension of Proposition~\ref{shapecont} to show that the weak convergence in  Theorem~\ref{thm:Gconvergence} holds with the last two components of the vector in \eqref{wconva} replaced by $(\hat{\bs{\tau}}^{\sn},\tilde{\mc{B}}_{n,K})$.  Now apply a simple scaling argument as in Theorem~\ref{thm:Grjointconvergence} to see that for any $s>0$ $K\in\N$, as $n\to\infty$,  $\hat{\P}^{sn}_1(\hat{\bs{\tau}}^{\sss(1)}\text{ non-degenerate})\to1$ and if $\hat{\mc{B}}_{n,K}=(T(\bs{\tau}^{\sss(1)}),d_{\hat{\bs{\tau}}^{\sss(1)}}/n,\phi_{\kappa_1(W^{\sss(1)})}/\sqrt n)$ then 
\begin{align}\label{bbrhatwconv}&\hat\P^{sn}_1\Big(\Big(\frac{I^{\sss(1)}\circ\bar\rho_n^{\ -1}}{n^2},J^{\sss(1)}\circ\bar\rho_n^{\ -1},\frac{S^{\sss(1)}}{n},\rho_n(W^{\sss(1)}),\frac{\hat{\bs{\tau}}^{\sss(1)}}{n},\hat{\mc{B}}_{n,K}\Bigr)\in\cdot\Bigr)\\
\nonumber&\cweak \hat\N^{s,\sigma_0^2}_{\sss H} \Big(\big(I,J,S,W,\bs{\tau}(W),\mc{B}_K(W)\big)\in\cdot\Big)\text{ in } \M_F(\D)^2\times\R_+\times\D^K\times\R_+^{(K^2)}\times \TT_{gst}. \Enddef
\end{align}

\end{EXA}

\section{Proof of Theorem \ref{thm:main2}(b),(c)}
\label{sec:S}
Define a random measure, $I^{\sn}_X$, and a random probability, $J^{\sn}_X$, on $\R_+\times\R^d$ by
\begin{align*}
I_X^{\sn}(A \times B)&=\int^\infty_0\int \1_A(u)\1_B(x)X_u^{\sn}(\d x)\,du=\int\1_{A\times B}(u,w_u)I^{\sn}(\d u,\d w),\\
J_X^{\sn}&=
I^{\sn}_X/I^{\sn}_X(\R_+\times\R^d), 
\end{align*}
the last equality since $I_X^{\sn}(\R_+\times\R^d)>0$ as $o\in\mc{T}$.  Recall that $(\Omega, \mc{F})$ is the underlying space on which our random tree is defined.  For $\omega\in\Omega$, let $P_\omega$ be the law on $(\R_+\times\R^d)^\N$ under which the coordinate variables $\big((\mathfrak{T}^{\sss(1)}_j,V^{\sss(1)}_j)\big)_{j\in\N}$ are i.i.d.~with law $J_X^{\sss(1)}(\omega)$, and let $\hat{\P}^{ns}_1=\hat{\P}^{ns}_1(J_X^{\sss(1)})$ be the probability on $(\hat\Omega,\hat\F)=(\Omega\times(\R_+\times\R^d)^\N,\F\times\text{Borel sets})$ given by 
\[\hat\P^{ns}_1(A\times B)=\int\1_A(\omega)P_\omega(B)\d\P^{ns}_1.\]
For $\hat\omega\in\hat\Omega$ we write $\hat\omega=(\omega,\omega')$.  We set $(\mf{T}^{\sn}_j,V^{\sn}_j)=(\frac{\mf{T}^{\sss(1)}_j}{n},\frac{V^{\sss(1)}_j}{\sqrt n})$ for $j,n\in\N$, so that by \eqref{Jints}, conditional on $\omega$, $\big((\mf{T}^{\sn}_j,V^{\sn}_j)\big)_{j\in\N}$ are i.i.d.~with common law $J_X^{\sn}(\omega)$ under $\hat{\P}_1^{ns}$. Recalling that $\mc{T}_t=\mc{T}_{\lfloor t\rfloor}$, one easily sees that each $V^{\sss(1)}_i$ is chosen uniformly from the vertices of $\mc{T}$, 
 and that 
\begin{equation}\label{UYrelship}
V^{\sss(1)}_j\in \mc{T}_{\lfloor \mf{T}^{\sss(1)}_j\rfloor}, \text{ so that }\lfloor \mf{T}^{\sss(1)}_j\rfloor=d_{\mc{T}}(o,V^{\sss(1)}_j).
\end{equation}

Let $\mc{T}^{\sn}=\mc{T}/\sqrt n$ and $\mc{T}^{\sn}_t=\mc{T}_{nt}/\sqrt n$ for $n\in\N$ and $t\ge 0$.  As in \cite{HP19} for $s_i\ge 0$, $x_i\in\Z^d$, $i=1,2$, write
\[(s_1,x_1)\ara(s_2,x_2)\text{ iff }x_i\in\mc{T}_{s_i},\ i=1,2,\text{ and $x_2$ is a descendant of $x_1$ in } \mc{T},\]
and
\[(s_1,x_1)\ara s_2\text{ iff }(s_1,x_1)\ara(s_2,x_2)\text{ for some }x_2\in\Z^d.\]
We also write $(s_1,x_1)\aran(s_2,x_2)$ and $(s_1,x_1)\aran s_2$ for $s_i\ge 0$ and $x_i\in(\Z/\sqrt n)^d$ when the above hold with $\mc{T}^{\sn}$ in place of $\mc{T}$.
For $i\in\Z_+$ and $m,n\in\N$ as in \cite[prior to Lemma 4.6]{HP19} we introduce (recall that $C_0$ is as in \eqref{def_x})
\begin{align*}\Omega^n_{i,m}=\Bigl\{\omega\in \Omega:&\exists y\in\mc{T}^{\sn}_{i 2^{-m}}\text{ s.t. }(i2^{-m},y)\aran(i+1)2^{-m}\\
&\text { and }\int_{(i+1)2^{-m}}^{(i+2)2^{-m}}|\{x:(i2^{-m},y)\aran(s,x)\}|\d s/(C_0n)\le 2^{-10m}\Bigr\}.
\end{align*}
We also slightly modify the notation in \cite{HP19} and set
\[\Omega_m^n=\cup_{i=0}^{2^{2m}-1}\Omega^n_{i,m}.\]
As $s>0$ is fixed throughout this section, we omit dependencies on $s$ in our constants below.  Define $\mu_n=C_0n\P$, as in \cite{HP19}.  A key ingredient of our proof is the following restatement of \cite[Lemma 4.6]{HP19}.
\begin{LEM}\label{lem:4.6} There is a $c_{\ref{lem:4.6}}$ and for any $m\in\N$ there is an $n_{\ref{lem:4.6}}\in\N$ such that 
\[\sup_{n\ge n_{\ref{lem:4.6}}}\mu_n(\Omega_m^n)\le c_{\ref{lem:4.6}}2^{-m}.\]
\end{LEM}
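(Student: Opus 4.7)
The plan is to invoke the cited result directly. Since Lemma \ref{lem:4.6} is explicitly advertised as a restatement of \cite[Lemma 4.6]{HP19}, the task reduces to verifying that our $\Omega^n_{i,m}$ coincides with the event of the same name in \cite{HP19} and that the outer union $\Omega_m^n = \bigcup_{i=0}^{2^{2m}-1} \Omega^n_{i,m}$ matches the relevant range (up to harmless relabeling). The normalisation $\mu_n = C_0 n \P$ is identical on both sides, as are the survival asymptotic \eqref{surv1} and the spread-out lattice-tree setting with $d>8$ and $L$ large, so no adaptation of constants is required.

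For completeness, the underlying strategy of \cite[Lemma 4.6]{HP19} is as follows. First reduce to a per-$i$ bound: one shows that, uniformly in $i$ and in $n\ge n_{\ref{lem:4.6}}$, $\mu_n(\Omega_{i,m}^n)\le c\cdot 2^{-3m}$, which when summed over the $2^{2m}$ indices $i=0,\dots,2^{2m}-1$ delivers the asserted bound $c\cdot 2^{-m}$. Next, for a fixed $i$ one sums over potential ancestors $y\in \mc{T}^{\sn}_{i2^{-m}}$ and, using translation invariance of $W(\cdot)$ and the branching/Markov structure of the weighted tree, factors the expectation into: (a) a contribution from the ``spine'' up to generation $i2^{-m}$ (of bounded expectation by \eqref{pop_size_converge}); (b) a survival factor for the subtree rooted at $y$ reaching generation $2^{-m}$ (of order $2^m/n$ by \eqref{survasym}); and (c) the conditional probability that the integrated rescaled descendant volume over the next window $[(i+1)2^{-m},(i+2)2^{-m}]$ is at most $2^{-10m}$.

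The key small-probability input is that the last factor is small. Given survival, the expected rescaled descendant volume integrated over a window of length $2^{-m}$ is typically of order $2^{-2m}$, which is much larger than the threshold $2^{-10m}$. A Paley--Zygmund / Chebyshev argument, using the lace-expansion moment bounds on $r$-point functions in $d>8$ (as in \cite{H08,HH13}), then yields that the conditional probability of atypically small volume is at most $c\cdot 2^{-6m}$. Multiplying the three contributions and cancelling the $n^{-1}$ against $\mu_n=C_0 n\P$ gives the per-$i$ estimate $c\cdot 2^{-3m}$.

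The main potential obstacle, were the argument to be carried out from scratch, lies in step (c): obtaining the second-moment estimate on the volume uniformly in $n$ and $m$ requires the full strength of the lace-expansion machinery from \cite{H08,HH13}. However, since this is precisely what \cite[Lemma 4.6]{HP19} already establishes in exactly the same framework, the proof here consists entirely of quoting that result after checking the notational correspondence above.
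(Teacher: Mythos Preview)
Your proposal is correct and matches the paper's approach exactly: the paper provides no proof of this lemma, simply introducing it as ``the following restatement of \cite[Lemma 4.6]{HP19}.'' Your additional sketch of the strategy behind the cited result goes beyond what the paper offers but is consistent with the argument in \cite{HP19}.
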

We note that (1.22) of \cite{HP19} with $m(t)=C_0(t \vee 1)$ implies that for some $0<\underline c\le \overline C<\infty$,
\begin{equation}\label{Sbounds} \frac{\underline c}{1\vee t}\le \P(S^{\sss (1)}>t)\le \P(S^{ \sss (1)}\ge t)\le \frac{\overline C}{1\vee t}\quad\forall t>0.
\end{equation}
\begin{COR}\label{cor:4.6} For some $c_{\ref{cor:4.6}}$ and all $m\in\N$, 
\[\sup_{n\ge n_{\ref{lem:4.6}}}\P^{ns}_1(\Omega_m^n)\le c_{\ref{cor:4.6}}2^{-m}.\]
\end{COR}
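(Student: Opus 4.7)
\medskip
\noindent\textbf{Proof proposal for Corollary~\ref{cor:4.6}.}

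The plan is to derive the corollary directly from Lemma~\ref{lem:4.6} by absorbing the conditioning via the survival lower bound in \eqref{Sbounds}. First, I would observe the trivial inclusion bound
\[
\P^{ns}_1(\Omega_m^n)
= \frac{\P(\Omega_m^n\cap\{S^{\sss (1)}>ns\})}{\P(S^{\sss (1)}>ns)}
\le \frac{\P(\Omega_m^n)}{\P(S^{\sss (1)}>ns)}.
\]

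Next, I would rewrite the numerator in terms of $\mu_n=C_0n\P$ and invoke Lemma~\ref{lem:4.6} to obtain, for $n\ge n_{\ref{lem:4.6}}$,
\[
\P(\Omega_m^n)=\frac{\mu_n(\Omega_m^n)}{C_0 n}\le \frac{c_{\ref{lem:4.6}}\,2^{-m}}{C_0 n}.
\]
For the denominator I would apply the lower bound in \eqref{Sbounds}, namely $\P(S^{\sss (1)}>ns)\ge \underline c/(1\vee ns)\ge \underline c/(1+ns)$, which is valid for every $n\in\N$.

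Combining these two estimates gives
\[
\P^{ns}_1(\Omega_m^n)\le \frac{c_{\ref{lem:4.6}}\,2^{-m}}{C_0 n}\cdot\frac{1+ns}{\underline c}
=\frac{c_{\ref{lem:4.6}}}{C_0\,\underline c}\Bigl(\frac{1}{n}+s\Bigr)2^{-m}
\le \frac{c_{\ref{lem:4.6}}}{C_0\,\underline c}\Bigl(\frac{1}{n_{\ref{lem:4.6}}}+s\Bigr)2^{-m}
\]
for all $n\ge n_{\ref{lem:4.6}}$, so the conclusion holds with
\[
c_{\ref{cor:4.6}}:=\frac{c_{\ref{lem:4.6}}}{C_0\,\underline c}\Bigl(\frac{1}{n_{\ref{lem:4.6}}}+s\Bigr).
\]

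There is essentially no obstacle here; the only minor care needed is that the lower bound in \eqref{Sbounds} involves $1\vee ns$ rather than $ns$, so one cannot simply write $\underline c/(ns)$ when $ns<1$. Using $1+ns$ in place of $1\vee ns$ handles all $n$ uniformly and gives a constant depending only on $s$ (as is permitted, since $s$ is fixed throughout this section).
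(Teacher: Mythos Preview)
Your approach is exactly that of the paper: drop the survival event, rewrite the unconditional probability via $\mu_n=C_0n\P$, apply Lemma~\ref{lem:4.6}, and divide by the survival lower bound from \eqref{Sbounds}.

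There is one small slip in your final constant. Lemma~\ref{lem:4.6} reads ``for any $m\in\N$ there is an $n_{\ref{lem:4.6}}\in\N$ such that \ldots'', so $n_{\ref{lem:4.6}}=n_{\ref{lem:4.6}}(m)$ depends on $m$. Consequently your
\[
c_{\ref{cor:4.6}}=\frac{c_{\ref{lem:4.6}}}{C_0\,\underline c}\Bigl(\frac{1}{n_{\ref{lem:4.6}}}+s\Bigr)
\]
depends on $m$, whereas the corollary asks for a single constant valid for all $m$. The fix is trivial: instead of bounding $1/n\le 1/n_{\ref{lem:4.6}}$, simply use $1/n\le 1$ (since $n\in\N$) to obtain the $m$-free constant $c_{\ref{cor:4.6}}=c_{\ref{lem:4.6}}(1+s)/(C_0\,\underline c)$. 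The paper achieves the same effect by taking $\P(S^{\sss(1)}>ns)\ge \underline c/(ns)$ directly (implicitly treating $ns\ge 1$, which your $1+ns$ handles more carefully), arriving at $c_{\ref{cor:4.6}}=c_{\ref{lem:4.6}}s/(C_0\,\underline c)$.
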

\begin{proof}
By Lemma~\ref{lem:4.6} and \eqref{Sbounds} for $n\ge n_{\ref{lem:4.6}}(m)$,
\begin{align*}
\P^{ns}_1(\Omega_m^n)\le \frac{\mu_n(\Omega_m^n)}{C_0n\P(S^{ \sss (1)}>ns)}\le \frac{c_{\ref{lem:4.6}}2^{-m}}{C_0\underline c/s}:=c_{\ref{cor:4.6}}2^{-m}.\end{align*}
\end{proof}
We next recall Theorem 1' from \cite{HP19} in the context of lattice trees (see Theorem 6 of \cite{HP19}) where $\mu_n$ is as above and we may set $\alpha=1/4$ in the above reference. 

\begin{THM}\label{thm:modulus} There is a $c_{\ref{thm:modulus}}$, and for all $n\in\N$, a random variable $\delta_n\in(0,1]$ such that 
\begin{equation}\label{deltabound}   \P(\delta_n\le \varrho)\le\frac{c_{\ref{thm:modulus}}}{n}\varrho\quad\forall\varrho\in[0,1),\end{equation}
and 
$s_1,s_2\in\Z_+/n,\ |s_1-s_2|\le \delta_n,\text{ and }(s_1,y_1)\aran(s_2,y_2)\text{ imply }
|y_1-y_2|\le c_{\ref{thm:modulus}}|s_1-s_2|^{1/4}$.
\end{THM}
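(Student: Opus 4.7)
The plan is to establish \eqref{deltabound} by a Kolmogorov--Chentsov style modulus-of-continuity argument on ancestral spatial displacements in the rescaled tree, then to define $\delta_n$ so that the deterministic implication is automatic.  Concretely I would set
$$\delta_n=\inf\bigl\{|s_1-s_2|:(s_1,y_1)\aran(s_2,y_2),\ |y_1-y_2|>c_{\ref{thm:modulus}}|s_1-s_2|^{1/4}\bigr\}\wedge 1,$$
which lies in $[1/n,1]$ almost surely because $\mc{T}$ is a.s.\ finite, so the set on the right is finite and the infimum is attained.  The conclusion of the theorem then holds by construction and the entire content reduces to the tail estimate \eqref{deltabound}.

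The key analytic input is a $2p$-th moment bound on ancestral separations, obtained by the very same tree-surgery scheme that powers Lemma~\ref{lem:error}.  Decomposing $T=T^{[0]}\cup T^{[1]}$ at the common ancestor $y_1$ factors the weighted sum into two independent critical-tree partition functions, and combining with infrared end-to-end-displacement moment bounds for critical spread-out lattice trees in $d>8$ (see \cite{H08,HH13}) yields, for any integer $p\ge 2$ for which the needed lace-expansion moment estimates are available in the spread-out regime,
$$\sum_{y_1,y_2\in\Z^d}\sum_{T\ni o}W(T)\,\1\{(m_1,y_1)\ara(m_2,y_2)\}\,|y_2-y_1|^{2p}\le C_p(m_2-m_1)^p\quad(0\le m_1\le m_2),$$
with $C_p=C_p(d,L)$ independent of $n$.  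This is the analogue, at the level of $2p$-th moments of spatial increments, of the first-moment bound in Lemma~\ref{lem:error}.

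With this in hand I would run a dyadic chaining.  For each $k\ge 0$, let $\mc{E}_k^n$ be the event that some ancestrally related pair at rescaled time separation $|s_1-s_2|\in(2^{-k-1},2^{-k}]$ violates the modulus $|y_1-y_2|\le c_{\ref{thm:modulus}}|s_1-s_2|^{1/4}$.  Applying Chebyshev's inequality at the $2p$-th moment, undoing the spatial rescaling by $\sqrt n$, and truncating the ancestor time at a survival-controlled horizon $t_0$ using $n\P(S^{\sss(1)}>t_0)\le \overline{C}/t_0$ from \eqref{Sbounds}, one arrives (exactly as in the proof of Theorem~1$'$ of \cite{HP19}) at
$$\mu_n(\mc{E}_k^n)\le C 2^{-k},$$
uniformly in $n\in\N$ and $k\ge 0$, with $p$ chosen large enough that the $2p$-th moment Chebyshev combined with the $\alpha=1/4$ exponent makes the implicit geometric series over dyadic scales converge.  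A geometric sum over $k$ with $2^{-k}\le\varrho$ gives $\mu_n(\bigcup_k\mc{E}_k^n)\le C\varrho$, and dividing by $C_0 n$ to convert $\mu_n$ into $\P$ delivers \eqref{deltabound}.

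The main obstacle is obtaining \emph{uniform} control across all ancestrally related pairs in a tree with infinite expected total population at criticality, where a naive union bound over vertices is hopeless.  The tree-surgery identity resolves this by turning an otherwise intractable sum over ancestral $(y_1,y_2)$ into a product of two independent critical-tree partition functions of uniformly bounded mean (by \eqref{pop_size_converge}), and the coupling with a survival-time truncation keeps the sum over $s_1$ finite with quantitative control.  A subsidiary calibration point is that the Hölder exponent $\alpha=1/4$ in the theorem is precisely the one for which the chaining closes with a linear-in-$\varrho$ tail; stronger exponents would need higher-order moment bounds, and weaker ones would give a worse prefactor.
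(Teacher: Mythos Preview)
The paper does not prove this theorem: it is quoted directly from \cite{HP19} (Theorem~1$'$ there, specialized to lattice trees via Theorem~6 with $\alpha=1/4$). So there is no argument in the present paper to compare against; your proposal is an attempt to reconstruct the proof from the cited reference.

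The strategy you describe---a $2p$-th moment bound on ancestral displacements via tree surgery, followed by dyadic chaining and Chebyshev---is the right kind of argument, but the bookkeeping as sketched does not close. After Chebyshev, each dyadic block of rescaled length $2^{-k}$ contributes a violation probability at most $C\,c_{\ref{thm:modulus}}^{-2p}\,2^{-kp/2}$, independently of $n$; the difficulty is the union over the starting times of these blocks. Even restricting to dyadic starting times there are of order $t_0 2^{k}$ blocks up to rescaled horizon $t_0$, so $\mu_n(\mc{E}_k^n\cap\{S^{\sn}\le t_0\})\le C\,n\,t_0\,2^{-k(p/2-1)}$, and no choice of $t_0$ balances this against the survival tail $\mu_n(S^{\sn}>t_0)\le C/t_0$ uniformly in $n$ and $k$ (optimising gives a bound of order $\sqrt{n}\,2^{-k(p-2)/4}$, which blows up in $n$ for any fixed $k$). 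Equivalently: the expected number of ancestor--descendant pairs at any fixed temporal separation is infinite at criticality (since $\E[|\mc{T}|]=\infty$), so a first-moment union bound over all such pairs cannot succeed. Your final paragraph correctly names this as ``the main obstacle'', but the remedy you propose---surgery to factor the pair sum into two bounded-mean partition functions, plus survival truncation at a fixed horizon---still leaves an uncontrolled factor of $n$. The argument in \cite{HP19} requires an additional mechanism to control, at each dyadic scale, the number of ancestral lineages that must actually be tracked (compare the role of the quantities bounded in \eqref{meanbnd} here).

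One small slip: with $\delta_n$ defined as the infimum of violating separations, the implication in the theorem can fail at $|s_1-s_2|=\delta_n$ when that infimum is attained; you should take $\delta_n$ strictly below the minimal violating separation (for instance, subtract $1/n$ before capping at $1$).
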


Note that \eqref{Sbounds} and \eqref{deltabound} imply that for all  $\varrho\in[0,1)$ and $n\in\N$, 
\begin{equation}\label{deltabound2}\P_1^{ns}(\delta_n\le \varrho)\le \frac{c_{\ref{thm:modulus}}}{n\P(S^{\sss (1)}>ns)}\varrho\le c'_{\ref{thm:modulus}}\varrho.
\end{equation}

\begin{LEM}\label{ancnumbers} There is a $c_{\ref{ancnumbers}}$ so that if
\begin{equation}\label{mncond}n\ge 2^m\ge s^{-1},\ m,n\in\N,
\end{equation}
then 
\[\P^{ns}_1\Bigl(\max_{0\le i\le 2^{2m}}\sum_{y\in\mc{T}_{ni2^{-m}}}\indic{(ni2^{-m},y)\ara n(i+1)2^{-m}}>2^{4m}\Bigr)\le c_{\ref{ancnumbers}}2^{-m}.\]
\end{LEM}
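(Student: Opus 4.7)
The plan is to reduce the proof to a single first-moment estimate on $N_i:=\sum_{y\in\mc{T}_{\lfloor ni/2^m\rfloor}}\indic{(\lfloor ni/2^m\rfloor,y)\ara \lfloor n(i+1)/2^m\rfloor}$. Write $k_1=k_1(i)=\lfloor ni/2^m\rfloor$ and $k_2=k_2(i)=\lfloor n(i+1)/2^m\rfloor$. By a union bound and Markov,
\[\P^{ns}_1\Bigl(\max_{0\le i\le 2^{2m}}N_i>2^{4m}\Bigr)\le \sum_{i=0}^{2^{2m}}\P^{ns}_1(N_i>2^{4m})\le 2^{-4m}(2^{2m}+1)\max_{i}\E^{ns}_1[N_i].\]
So it suffices to prove $\E^{ns}_1[N_i]\le C_s\cdot 2^m$ uniformly in $i$ in the allowed range; the right side is then at most $C_s\cdot 2^{-m}$, as required.

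To bound $\E[N_i]$ unconditionally, I would use the standard ``cut at $y$'' decomposition of lattice trees (the same device used in the proof of Lemma~\ref{lem:error}): any $T\ni o$ with $y\in T_{k_1}$ and $(k_1,y)\ara k_2$ decomposes as $T=T^{[0]}\cup T^{[1]}$ where $T^{[1]}$ is the subtree of $T$ rooted at $y$ and $T^{[0]}$ is the complement together with $y$, with $T^{[0]}\cap T^{[1]}=\{y\}$ (disjoint edges). The weight $W$ factors as $W(T)=cW(T^{[0]})W(T^{[1]})$. Dropping the ``almost disjoint'' indicator and using translation invariance $T^{[1]}-y$,
\[\E[N_i]=\sum_y\P\bigl(y\in\mc{T}_{k_1},(k_1,y)\ara k_2\bigr)\le c\,\E[|\mc{T}_{k_1}|]\,\P\bigl(S^{\sss(1)}>k_2-k_1\bigr).\]
By \eqref{pop_size_converge} the first factor is uniformly bounded, and since $n\ge 2^m$ we have $k_2-k_1\ge n/2^{m+1}\ge 1/2$, so $k_2-k_1\ge 1$; then \eqref{Sbounds} gives $\P(S^{\sss(1)}>k_2-k_1)\le \overline{C}\cdot 2^{m+1}/n$. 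Altogether
\[\E[N_i]\le C\cdot 2^m/n.\]

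To pass to $\E^{ns}_1[N_i]$, use $\indic{S^{\sss(1)}>ns}\le 1$ in the numerator and \eqref{Sbounds} in the denominator; the hypotheses $n\ge 2^m\ge s^{-1}$ give $ns\ge 1$ so that $\P(S^{\sss(1)}>ns)\ge \underline{c}/(ns)$. Therefore
\[\E^{ns}_1[N_i]=\frac{\E[N_i\indic{S^{\sss(1)}>ns}]}{\P(S^{\sss(1)}>ns)}\le \frac{C\cdot 2^m/n}{\underline c/(ns)}=C\underline c^{-1}\,s\cdot 2^m.\]
Substituting into the union/Markov bound above yields the claim with $c_{\ref{ancnumbers}}=c_{\ref{ancnumbers}}(s)$.

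The only step requiring care is the ``cut at $y$'' factorization; this is essentially a single-vertex version of the three-piece decomposition already carried out in the proof of Lemma~\ref{lem:error}, and I expect no genuinely new obstacle beyond tracking the multiplicative constant arising from whether $|T|$ counts edges or vertices in the definition of $W$.
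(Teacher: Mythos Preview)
Your proof is correct and follows essentially the same strategy as the paper: a union bound over $i$, Markov's inequality, and the first-moment estimate $\E^{ns}_1[N_i]\le C_s 2^m$. The only difference is in how that first moment is obtained: the paper invokes the conditional survival bound (9.11) of \cite{HP19}, namely $\P\big((k_1,y)\ra k_2\mid\mc{T}_{\le k_1}\big)\le c(k_2-k_1)^{-1}$, and then sums over $y\in\mc{T}_{k_1}$, whereas you get the same unconditional bound $\E[N_i]\le c\,\E[|\mc{T}_{k_1}|]\,\P(S^{\sss(1)}>k_2-k_1)$ directly via the single-cut tree decomposition already used in the proof of Lemma~\ref{lem:error}. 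Your route is slightly more self-contained; the paper's is marginally shorter by quoting the external bound. (Minor point: your intermediate inequality $k_2-k_1\ge n/2^{m+1}$ should be justified via $k_2-k_1\ge\lfloor n/2^m\rfloor\ge n/2^{m+1}$ for $n\ge 2^m$, but the conclusion $k_2-k_1\ge 1$ and the resulting bound are correct.)
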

\begin{proof} We let $\mc{T}_{\le t}$ be the subtree of $\mc{T}$ consisting of vertices in $\cup_{i\le t}\mc{T}_i$ and their connecting edges to the root. Then (9.11) of \cite{HP19} implies that for $m,n$ as in \eqref{mncond}, and on $\{y\in\mc{T}\}$, for all $i\in\Z_+$,
\begin{align*}
\P\big((ni2^{-m},y)\ara n(i+1)2^{-m}\big |\mc{T}_{\le ni2^{-m}}\big)&\le c (\lfloor n(i+1)2^{-m}\rfloor-\lfloor ni2^{-m}\rfloor)^{-1}\\
&\le c_12^mn^{-1},
\end{align*}
where the last inequality is by $n\ge 2^m$ and elementary arithmetic.  Therefore under \eqref{mncond}, for $i\in\Z_+$,
\begin{align}
\nonumber\E_{1}^{ns}&\Bigg[\sum_{y\in\mc{T}_{ni2^{-m}}}\1_{\{(ni2^{-m},y)\ara n(i+1)2^{-m}\}}\Bigg]\\
\nonumber&\le\E\Bigg[\sum_{y\in\mc{T}_{ni2^{-m}}}\P\big((ni2^{-m},y)\ara n(i+1)2^{-m}\big|\mc{T}_{ni2^{-m}}\big)\Bigg]\,/\,\P(S^{\sss(1)}>sn)\\
\nonumber&\le c_1 2^mn^{-1}\E\big[| \mc{T}_{ni2^{-m}}|\big]sn\quad\text{(by \eqref{Sbounds})}\\
\label{meanbnd}&\le C2^m,
\end{align}
where in the last line we used the uniform boundedness of $\E[|\mc{T}_i|]$ from \eqref{pop_size_converge}.  
By Markov's inequality we conclude that for $m,n$ as above,
\begin{align}
\nonumber\P_1^{ns}\Bigg(&\max_{0\le i\le 2^{2m}}\Bigl[\sum_{y\in \mc{T}_{ni2^{-m}}}\1_{\{(ni2^{-m},y)\ara n(i+1)2^{-m}\}}\Bigr]>2^{4m}\Bigg)\label{probbnd}\le C(2^{2m}+1)2^{m-4m}\le c_{\ref{ancnumbers}}2^{-m}.
\end{align}
\end{proof}
\begin{LEM} \label{lem:SBMcalc}$\N_{\sss H}\Bigl[X_1(1)\exp\Bigl(-\lambda\int_0^1 X_u(1)\,\d u\Big)\Bigr]=\frac{4e^{\sqrt{2\lambda}}}{(1+e^{\sqrt{2\lambda}})^2}\le 4e^{-\sqrt{2\lambda}}\quad\forall \lambda>0$.
\end{LEM}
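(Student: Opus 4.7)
The plan is to reduce the identity to a classical log-Laplace computation for Feller's branching diffusion. Since the integrand depends on $X$ only through the total-mass process $Z_t := X_t(1)$, and under $\N_{\sss H}$ the process $Z$ is autonomous (it is the canonical measure of the Feller diffusion with branching rate $1$, regardless of $\sigma_0^2$), I would first rewrite the target as a derivative:
\[
\N_{\sss H}\Bigl[Z_1\,e^{-\lambda\int_0^1 Z_u\,du}\Bigr]=\frac{\partial}{\partial\mu}\bigg|_{\mu=0^+}\N_{\sss H}\Bigl[1-\exp\Bigl(-\mu Z_1-\lambda\int_0^1 Z_u\,du\Bigr)\Bigr].
\]
The interchange of $\partial_\mu$ and $\N_{\sss H}$ is justified by dominated convergence, using $\N_{\sss H}[Z_1]=1$ (itself obtained by differentiating the well-known $\N_{\sss H}[1-e^{-\mu Z_t}]=2\mu/(2+\mu t)$ at $\mu=0$).

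Next, I would invoke the standard log-Laplace characterization of $\N_{\sss H}$ (see e.g.~Sections II.5 and II.7 of \cite{Per02}) to identify
\[
\N_{\sss H}\Bigl[1-\exp\Bigl(-\mu Z_1-\lambda\int_0^1 Z_u\,du\Bigr)\Bigr]=V_\mu(0),
\]
where $V_\mu\colon[0,1]\to\R_+$ solves the backward Riccati ODE
\[
V_\mu'(s)=\tfrac{1}{2}V_\mu(s)^2-\lambda,\qquad V_\mu(1)=\mu,
\]
(the spatial Laplacian term drops out because the test functions are spatially constant).

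I would then solve this explicitly. Setting $a=\sqrt{2\lambda}$, the ODE factors as $V'=(V-a)(V+a)/2$, and partial-fraction integration yields
\[
\frac{V_\mu(s)-a}{V_\mu(s)+a}=e^{-a(1-s)}\,\frac{\mu-a}{\mu+a},\qquad s\in[0,1].
\]
At $s=0$ and $\mu=0$ this gives $V_0(0)=a\tanh(a/2)$. Implicit differentiation in $\mu$ of the relation $g(V_\mu(0))=e^{-a}g(\mu)$ with $g(v)=(v-a)/(v+a)$, using $g'(v)=2a/(v+a)^2$ and $V_0(0)+a=2a/(1+e^{-a})$, yields
\[
\partial_\mu V_\mu(0)\big|_{\mu=0}=e^{-a}\cdot\frac{(V_0(0)+a)^2}{a^2}=\frac{4e^{-a}}{(1+e^{-a})^2}=\frac{4e^{\sqrt{2\lambda}}}{(1+e^{\sqrt{2\lambda}})^2},
\]
which is the desired equality. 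The stated upper bound follows at once from $(1+e^a)^2\ge e^{2a}$.

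The main obstacle is not the Riccati calculation itself (which is routine) but the clean reduction to it: one must be careful in citing the log-Laplace formula for $\N_{\sss H}$ with both a terminal and a running cost, and in justifying that the spatial structure of SBM plays no role because the weights $\mu$ and $\lambda$ are constants in $x$. Once this is in place the rest is a one-dimensional ODE and an elementary estimate.
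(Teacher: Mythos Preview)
Your proof is correct and follows essentially the same approach as the paper: both reduce to the log-Laplace/exponential duality formula $\N_{\sss H}[1-\exp(-\mu X_1(1)-\lambda\int_0^1 X_u(1)\,du)]=v$, solve the resulting Riccati ODE explicitly, and differentiate in $\mu$ at $\mu=0$. The only cosmetic difference is that the paper writes the forward ODE $dv/dt=-v^2/2+\lambda$ with $v_0=\mu$ and evaluates $v_1$, whereas you use the time-reversed backward form $V_\mu'(s)=V_\mu(s)^2/2-\lambda$ with $V_\mu(1)=\mu$ and evaluate $V_\mu(0)$; these are the same object under $t\leftrightarrow 1-s$.
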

\begin{proof} This is a standard duality exercise in superprocesses. 
 If $g(\lambda,\mu)=(\sqrt{2\lambda}+\mu)/ (\sqrt{2\lambda}-\mu)$, for $0\le\mu<\sqrt{2\lambda}$, then 
 \[v_t=v_t^{\lambda,\mu}=\sqrt{2\lambda}\cdot \frac{(g(\lambda,\mu)e^{\sqrt{2\lambda} t}-1)}{(g(\lambda,\mu)e^{\sqrt{2\lambda} t}+1)}\]
is the unique non-negative solution of 
\[\frac{dv}{dt}=\frac{-v^2}{2}+\lambda,\ \ t\ge 0,\quad v_0=\mu.\]
Exponential duality (see Theorem II.5.11(c) and Theorem II.7.3 (c) of \cite{Per02}) 
gives
\[\N_{\sss H}\Bigl[1-\exp\Bigl(-\mu X_1(1)-\lambda\int_0^1 X_u(1)\d u\Bigr)\Bigr]=v_1^{\lambda,\mu}.\]
Now take the right derivative of the above with respect to $\mu$ at $\mu=0$ to derive the required equality.
\end{proof}

\begin{COR} \label{smallintbnd}For all $m\in\N$ satisfying $2^m\ge 2/s$, 
\begin{equation}\label{limsupint} \limsup_{n\to\infty}\P_1^{ns}\Bigl(\int_0^{2^{-m}}X_u^{\sn}(1)\,\d u\le 2^{-10m}\Bigr)\le 8e \cdot e^{-2^{4m}\sqrt 2}.
\end{equation}
In particular for $m$ as above there is an $n_{\ref{smallintbnd}}(m)\in\N$ so that 
\begin{equation}\label{smallintbndeq}
\sup_{n\ge n_{\ref{smallintbnd}}(m)}\P_1^{ns}\Bigl(\int_0^{2^{-m}}X_u^{\sn}(1)\,\d u\le 2^{-10m}\Bigr)\le 24 \cdot 2^{-4m}.
\end{equation}
\end{COR}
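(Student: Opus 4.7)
The plan is to pass to the limiting super-Brownian motion via weak convergence of the integrated total mass, and then estimate the limit using the Markov property of the canonical measure $\N_{\sss H}$, a Brownian scaling argument, and Lemma~\ref{lem:SBMcalc}. Set $A^{\sn}=\int_0^{2^{-m}}X^{\sn}_u(1)\,du = I^{\sn}_{2^{-m}}(\oD)$ and $A=\int_0^{2^{-m}}X_u(1)\,du=I_{2^{-m}}(\oD)$. Since Condition~\ref{cond:fdd} has been verified for lattice trees with $d>8$ and $L$ large, Lemma~\ref{lem:IJStconv} (applied at $t=2^{-m}$) gives $\P^{ns}_1(I^{\sn}_{2^{-m}}\in\cdot)\cweak \N^s_{\sss H}(I_{2^{-m}}\in\cdot)$ on $\M_F(\oD)$. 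Total mass $\mu\mapsto\mu(\oD)$ is continuous in the weak topology, so $A^{\sn}\cweak A$, and since $\{\cdot\le 2^{-10m}\}$ is closed in $\R$, the Portmanteau theorem yields
\[\limsup_{n\to\infty}\P^{ns}_1(A^{\sn}\le 2^{-10m})\le \N^s_{\sss H}(A\le 2^{-10m}).\]

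Next I bound the right-hand side. By \eqref{SBMsurv}, $\N_{\sss H}(S>s)=2/s$, and it suffices to bound $\N_{\sss H}(\{A\le 2^{-10m}\}\cap\{S>s\})$. Since $A$ is measurable with respect to $(X_u)_{u\le 2^{-m}}$, the Markov property of $\N_{\sss H}$ at time $2^{-m}$ (see Sections~II.7--8 of \cite{Per02}) together with the standard extinction formula $P_\mu(S>t)=1-\exp(-2\mu(1)/t)$ for branching-rate-one super-Brownian motion and the inequality $1-e^{-x}\le x$ yield
\[\N_{\sss H}(\{A\le 2^{-10m}\}\cap\{S>s\})\le \frac{2}{s-2^{-m}}\,\N_{\sss H}\bigl[X_{2^{-m}}(1)\,\indic{A\le 2^{-10m}}\bigr].\]
An exponential Chebyshev inequality $\indic{A\le 2^{-10m}}\le e^{\lambda 2^{-10m}}e^{-\lambda A}$ (for any $\lambda>0$) then reduces matters to bounding $\N_{\sss H}[X_{2^{-m}}(1)e^{-\lambda A}]$.

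The final computation uses Brownian scaling: $Y_v(1):=2^m X_{2^{-m}v}(1)$ still satisfies $dZ_v=\sqrt{Z_v}\,d\tilde W_v$ for a new standard Brownian motion $\tilde W$, while comparison of $\N_{\sss H}(Y_1(1)>0)=2\cdot 2^m$ with the standard canonical measure $\N^Y$ of a branching-rate-one Feller diffusion (for which $\N^Y(Y_1(1)>0)=2$) shows that the law of $(Y_v(1))$ under $\N_{\sss H}$ equals $2^m\N^Y$. Substituting $X_{2^{-m}}(1)=2^{-m}Y_1(1)$ and $A=2^{-2m}\int_0^1 Y_v(1)\,dv$ gives
\[\N_{\sss H}[X_{2^{-m}}(1)e^{-\lambda A}]=\N^Y\Bigl[Y_1(1)\exp\Bigl(-\lambda 2^{-2m}\int_0^1 Y_v(1)\,dv\Bigr)\Bigr]\le 4e^{-2^{-m}\sqrt{2\lambda}}\]
by Lemma~\ref{lem:SBMcalc}. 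Choosing $\lambda=2^{10m}$ yields $\lambda 2^{-10m}=1$ and $2^{-m}\sqrt{2\lambda}=2^{4m}\sqrt 2$, so the inner quantity is at most $4e\cdot e^{-2^{4m}\sqrt 2}$. The hypothesis $2^m\ge 2/s$ gives $s-2^{-m}\ge s/2$ and hence $(s/2)\cdot (2/(s-2^{-m}))\le 2$; chaining the inequalities produces $\N^s_{\sss H}(A\le 2^{-10m})\le 8e\cdot e^{-2^{4m}\sqrt 2}$, establishing \eqref{limsupint}. The bound \eqref{smallintbndeq} is then immediate, since $8e\cdot e^{-2^{4m}\sqrt 2}\ll 24\cdot 2^{-4m}$ for every $m\ge 1$, so taking $n_{\ref{smallintbnd}}(m)$ large enough converts the $\limsup$ in \eqref{limsupint} into a uniform bound. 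The main technical point is giving a clean account of the Markov property and the scaling identification for the $\sigma$-finite canonical measure $\N_{\sss H}$.
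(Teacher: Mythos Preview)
Your proof is correct and follows essentially the same route as the paper's: pass to the super-Brownian limit via Lemma~\ref{lem:IJStconv} and Portmanteau, then combine the Markov property under $\N_{\sss H}$, the survival formula $1-e^{-2\mu(1)/t}\le 2\mu(1)/t$, an exponential Chebyshev bound on the indicator, Brownian scaling, and Lemma~\ref{lem:SBMcalc}. The only cosmetic difference is the order of operations: the paper first applies the scaling $u\mapsto 2^{-m}u$ (reducing to $\int_0^1 X_u(1)\,du\le 2^{-8m}$ and survival to time $2^m s$) and then Markov at time~$1$, whereas you apply Markov at time $2^{-m}$ and then scale; the constants assemble identically either way.
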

\begin{proof} The last assertion is a trivial consequence of the first.  

The weak convergence of
$\P_n^{s}\Bigl(\int_0^{2^{-m}}X_u^{\sn}(1)\,\d u\in\cdot\Bigr)$ to $\N_{\sss H}^s\Bigl(\int_0^{2^{-m}}X_u(1)\,\d u\in\cdot\Bigr)$ (by Lemma~\ref{lem:IJStconv} or by Condition~6 in \cite{HP19} for the more general setting considered there) and $\P^{sn}_1=\P^s_n$ imply the limsup in \eqref{limsupint} is at most
\begin{align}
\N_{\sss H}^s&\Bigl(\int_0^{2^{-m}}X_u(1)\,\d u\le 2^{-10m}\Bigr)\nonumber\\
&=\N_{\sss H}\Bigl(\int_0^{2^{-m}}X_u(1)\,\d u\le 2^{-10m}, X_s(1)>0\Bigr)/\N_{\sss H}(S>s)\nonumber\\
&=2^m\cdot \N_{\sss H}\Bigl(\int_0^{1}X_u(1)\,\d u\le 2^{-8m}, X_{2^ms}(1)>0\Bigr)\cdot\frac{s}{2},\label{dogeatdog}
\end{align}
where we have used \eqref{SBMsurv} and scaling (use  \cite[Exercise II.5.5]{Per02} and the fact that \break $\N_{\sss H}(X(1)\in\cdot)=\lim_{n \to \infty}n\P_{\delta_0/n}(X(1)\in\cdot)$ therein).  
Applying the Markov property and \cite[(II.5.12)]{Per02} we see that \eqref{dogeatdog} is equal to
\begin{align*}
&\N_{\sss H}\Bigg[\indic{\int_0^{1}X_u(1)\,\d u\le 2^{-8m}}\Bigl[1-\exp\Bigl(\frac{-2X_1(1)}{2^ms-1}\Bigr)\Bigr]\Bigg]\cdot \frac{2^m s}{2}\\
&\le e\cdot \N_{\sss H}\Bigg[\exp\Bigl(-2^{8m}\int_0^1X_u(1)\,\d u\Bigr)\,X_1(1)\Bigg]\cdot \Bigl(\frac{2^ms}{2^ms-1}\Bigr)\\
&\le 8e \cdot e^{-\sqrt{2}\,2^{4m}},
\end{align*}
the last by Lemma~\ref{lem:SBMcalc} and $2^ms/(2^{m}s-1)\le 2$. The result follows.
\end{proof}

\noindent{\it Proof of Theorem~\ref{thm:main2}(b),(c).} Fix $\delta\in(0,1)$.  By \eqref{Sbounds} for $M>s$, 
\[\P_1^{ns}(S^{\sn}>M)=\frac{\P(S^{\sss(1)}>Mn)}{\P(S^{\sss(1)}>sn)}\le c\frac{s}{M}\quad\text{for all }n\in\N.\]
Therefore we may choose $M_1=M_1(\delta,s)\in\N$ so that 
\begin{equation}\label{S1bnd}\sup_{n\in\N}\P_1^{ns}(S^{\sn}>M_1)<\frac{\delta}{100}.
\end{equation}
By Corollary~\ref{cor:IJScvgce} (for the more general setting of \cite{HP19} one could apply Lemma 4.5 of that reference),
\[\P_1^{ns}\Bigl(\int_0^\infty X^{\sn}_u(1)\,\d u\in\cdot\Bigr)\cweak \N_{\sss H}^s\Bigl(\int_0^\infty X_u(1)\,\d u\in\cdot\Bigr)\quad\text{ as }n\to\infty.\]
Therefore by tightness there is an $M_2(\delta,s)\in\N$ so that
\begin{equation}\label{totmassbnd}\sup_{n\in\N}\P_1^{ns}\Bigl(\int_0^\infty X_u^{\sn}(1)\,\d u>M_2\Bigr)<\delta/100.
\end{equation}
Next choose $m=m(\delta,s)\in\N$ large enough so that $2^{1-m}<\delta/5$ and 
\begin{equation}\label{mcond} 2^m-1>M_1\vee M_2\vee \frac{2}{s}, \text{ and }c_{\ref{cor:4.6}}2^{-m}+c'_{\ref{thm:modulus}}2^{4-m}+c_{\ref{ancnumbers}}2^{-m}+24 \cdot 2^{-4m}<\frac{\delta}{50}. 
\end{equation} 
Introduce a measurable set $\tilde \Omega^{\sn}\subset\Omega$, such that on $\tilde \Omega^{\sn}$ we have
\begin{align}\label{omegaconds}
&(a) \ \forall i\le 2^{2m}-1\quad\forall y\in\mc{T}^{\sn}_{i2^{-m}}\quad (i2^{-m},y)\aran (i+1)2^{-m}\quad\text{implies}\\
\nonumber &\qquad\frac{1}{C_0 n}\int_{(i+1)2^{-m}}^{(i+2)2^{-m}}\ |\{x:(i2^{-m},y)\aran (u,x)\}|\,\d u 
>2^{-10m},\\
\nonumber&(b) \ \text{If $\delta_n$ is as in Theorem~\ref{thm:modulus}, then }\delta_n>2^{4-m},\\
\nonumber&(c)\ \max_{0\le i\le 2^{2m}}\sum_{y\in\mc{T}_{ni2^{-m}}}\indic{(ni2^{-m},y)\ara n(i+1)2^{-m}}\le 2^{4m},\\
\nonumber&(d)\ \int_0^{2^{-m}}X^{\sn}_u(1)\,\d u>2^{-10m},\\
\nonumber&(e)\ S^{\sn}\le M_1,\ \ \int_0^\infty X_u^{\sn}(1)\,\d u\le M_2.
\end{align}
If we combine Corollary~\ref{cor:4.6}, 
\eqref{deltabound2}, Lemma \ref{ancnumbers} and Corollary~\ref{smallintbnd} with \eqref{S1bnd} and \eqref{totmassbnd}, we see that we may choose a natural number $n_1(\delta)> 2^m$ (recall $m$ depends on $\delta$) so that by the choice of $m$,
\begin{align}
\inf_{n\ge n_1(\delta)}\P_1^{ns}(\tilde\Omega^{\sn})&\ge 1-c_{\ref{cor:4.6}}2^{-m}-c'_{\ref{thm:modulus}}2^{4-m}-c_{\ref{ancnumbers}}2^{-m}-24 \cdot 2^{-4m}-\frac{\delta}{50}
\label{tomegalb}&\ge 1-\frac{\delta}{25}.
\end{align}
 
For $i\in\Z_+$, $n\in\N$ and $y\in(\Z/\sqrt n)^d$, introduce
\begin{align*}A_{i,y}^n&=\{(u,x)\in[(i+1)2^{-m},(i+2)2^{-m}]\times\R^d: (i2^{-m},y)\aran (u,x)\},\\
A_0^n&=\{(u,x)\in[0,2^{-m}]\times\R^d:x\in\mc{T}^{\sn}_u\}.
\end{align*}
By \eqref{omegaconds}(a),(e), and the definition of $X^{\sn}$ in \eqref{def_x}, $\omega\in\tilde\Omega^{\sn}$ implies
\begin{equation}\label{muchmass}
J_X^{\sn}(A^n_{i,y})\ge\frac{2^{-10m}}{M_2}\quad\forall 0\le i\le 2^{2m}-1\ \forall y\in\mc{T}^{\sn}_{i 2^{-m}}\ \text{ such that }\ (y,i 2^{-m})\aran (i+1)2^{-m},
\end{equation}
and by \eqref{omegaconds}(d),(e), $\omega\in\tilde\Omega^{\sn}$ implies
\begin{equation}\label{lefttailmass} J_X^{\sn}(A_0^n)=\frac{\int_0^{2^{-m}}X^{\sn}_u(1)\,\d u}{\int_0^\infty X_u^{\sn}(1)\,\d u}\ge\frac{2^{-10 m}}{M_2}.
\end{equation}
It follows from \eqref{muchmass} and \eqref{omegaconds}(c) that for $\omega\in\tilde\Omega^{\sn}$ and $K\in\N$, 
\begin{align*}P_\omega&\Big(\exists i\le 2^{2m}-1 \text{ and }y\in\mc{T}^{\sn}_{i2^{-m}}\text{ satisfying }(i2^{-m},y)\aran(i+1)2^{-m}\\
&\qquad\text{ so that } (\mathfrak{T}^{\sn}_j,V^{\sn}_j)\notin A^n_{i,y}\text{ for all }j\le K\Big)\\
&\le 2^{2m}\times 2^{4m}\times\Bigl(1-\frac{2^{-10m}}{M_2}\Bigr)^K\\ 
&\le\exp\Bigl(\frac{-K}{M_2}2^{-10m}+6m\log 2\Bigr).
\end{align*}
Therefore we may choose $K\ge K_1(\delta)$ (recall $m=m(\delta)$ is fixed) large enough so that the above probability is at most $\delta/10$.  A similar (but simpler) calculation using \eqref{lefttailmass} shows that for $\omega\in\tilde\Omega^{\sn}$ and $K\ge K_2(\delta)$,
\[P_\omega\big((\mathfrak{T}^{\sn}_j,V^{\sn}_j)\notin A_0^n\text{ for all }j\le K\big)\le \delta/10.\]
We have shown that if $\omega\in\tilde\Omega^{\sn}$ and $K\ge K(\delta)$, then
\begin{align}
\nonumber P_\omega\Big(&\forall0\le i\le 2^{2m}-1\ \forall y\in\mc{T}^{\sn}_{i2^{-m}}\text{ such that }(i2^{-m},y)\aran(i+1)2^{-m}\\
\nonumber&\exists j\le K\text{ such that }(\mathfrak{T}^{\sn}_j,V^{\sn}_j)\in A^n_{i,y}\text{ and }\exists j\le K\text{ such that }(\mathfrak{T}^{\sn}_j,V^{\sn}_j)\in A_0^n\Big)\\
\nonumber&\equiv{P}_\omega\big(\Lambda^{n,K}(\omega)\big)\\
\label{pomegalb}&\ge 1-\frac{\delta}{5}.
\end{align}
If $\hat\Omega^{n,K}=\{(\omega,\omega')\in\hat\Omega:\omega\in\tilde\Omega^{\sn}\text{ and }\omega'\in\Lambda^{n,K}(\omega)\}$, then by \eqref{tomegalb} and \eqref{pomegalb} for $K\ge K(\delta)$, 
\begin{equation}\label{hatlb1}\inf_{n\ge n_1(\delta)}\hat\P^{ns}_1\big(\hat\Omega^{n,K}\big)\ge\Bigl(1-\frac{\delta}{5}\Bigr)\Bigl(1-\frac{\delta}{25}\Bigr)\ge 1-\frac{\delta}{4}.
\end{equation}
We are now ready to prove (b) of the Theorem.  Let $n\ge n_1(\delta)$. We now show that on $\hat\Omega^{n,K}$, $\max_{x\in\mc{T}}d_{\mc{T}}(x,\pi_K(x))/n$ is small, where $\pi_K(x)$ is the closest point in $\mc{T}^{\vec V}$ to $x$ and $\vec V=(V^{\sss(1)}_1,\dots, V^{\sss(1)}_K)$.   
Fix $\hat\omega=(\omega,\omega')\in\hat\Omega^{n,K}$, and let $x\in\mc{T}_\ell$ for some $\ell\in\Z_+$.

\noindent {\bf Case 1.} $\ell\ge n2^{-m}$.\\
We may then choose $i\in\Z_+$ so that 
\[\frac{i+1}{2^m}\le\frac{\ell}{n}<\frac{i+2}{2^m}.\]
Note that \eqref{omegaconds}(e) and \eqref{mcond} together with the fact that $\mc{T}_\ell\ni x$ imply
\[\frac{i+1}{2^m}\le \frac{\ell}{n}\le S^{\sn}\le M_1\le 2^m-1,\]
so that 
\begin{equation}\label{ibound}i\le 2^{2m}-2^m-1<2^{2m}-1.
\end{equation}
Our choice of $i$ ensures that $ni2^{-m}<\ell$, and so we may let $y$ denote the ancestor of $x$ in $\mc{T}_{ni2^{-m}}$. The facts that $x\in\mc{T}_\ell$ and $\ell\ge(i+1)n2^{-m}$ show that 
\[(ni2^{-m},y)\ara(i+1)n2^{-m}.\]
It follows from this, \eqref{ibound} and $\omega'\in\Lambda^{n,K}(\omega)$ that
there is a $j\le K$ such that $\Bigl(\frac{\mathfrak{T}^{\sss(1)}_j}{n},\frac{V^{\sss(1)}_j}{\sqrt n}\Bigr)\in A^n_{i,y/\sqrt n}$, which in turn implies
(recall \eqref{UYrelship})
\begin{equation}\label{UYprops} V_j^{\sss(1)}\in\mc{T}_{\lfloor \mathfrak{T}_j^{\sss(1)}\rfloor}\text{ where }\mathfrak{T}_j^{\sss(1)}\in[(i+1)2^{-m}n,(i+2)2^{-m}n]\text{ and }(i2^{-m}n,y)\ara(\lfloor \mathfrak{T}_j^{\sss(1)}\rfloor,V_j^{\sss(1)}).
\end{equation}
Our choice of $y$ ensures that $(ni2^{-m},y)\ara(\ell,x)$, so that 
\begin{align}
\nonumber d_{\mc{T}}(x,V^{\sss(1)}_j)/n&\le [d_{\mc{T}}(x,y)+d_{\mc{T}}(y,V^{\sss(1)}_j)]/n\\
\nonumber&\le [(\ell-ni 2^{-m}+1)+(n(i+2)2^{-m}-ni2^{-m}+1)]/n\\
\nonumber&\le [2(n(i+2)2^{-m}-ni2^{-m})+2]/n\\
\nonumber&=4\cdot2^{-m}+2n^{-1}\\
\label{Yclose1}&< 6\cdot2^{-m}<\delta,
\end{align}
where we used $n\ge n_1( \delta)> 2^m$ and $10\cdot 2^{-m}<\delta$ (recall \eqref{mcond} and the line preceding it) in the last line.  As $V^{\sss(1)}_j$ is in $\mc{T}^{\vec V}$ (recall $j\le K$), this shows that 
\begin{equation}\label{acase1}
d_{\mc{T}}(x,\pi_K(x))/n<6\cdot2^{-m}<\delta.
\end{equation}

\noindent {\bf Case 2.} $0\le \ell<n2^{-m}$.\\
In this case $\omega'\in \Lambda^{n,K}(\omega)$ implies there is a $j\le K$ such that $\Bigl(\frac{\mathfrak{T}^{\sss(1)}_j}{n},\frac{V^{\sss(1)}_j}{\sqrt n}\Bigr)\in A^n_0$,  that is, $V^{\sss(1)}_j\in\mc{T}_{\lfloor \mathfrak{T}_j^{\sss(1)}\rfloor}$, where $\lfloor \mathfrak{T}_j^{\sss(1)}\rfloor\le n2^{-m}$.  Therefore
\begin{align}
\nonumber d_{\mc{T}}(x,\pi_K(x))/n\le d_{\mc{T}}(x,V^{\sss(1)}_j)/n&\le (d_{\mc{T}}(x,o)+d_{\mc{T}}(o,V^{\sss(1)}_j))/n\\
\label{acase2}&\le (\ell+n 2^{-m})/n<2^{1-m}<\delta.
\end{align}

Therefore we have shown by \eqref{acase1} and \eqref{acase2}, that
\begin{equation} \label{partabound} n\ge n_1(\delta),\ K\ge K(\delta)\text{ and }\hat\omega\in\hat\Omega^{n,K}\text{ imply }d_{\mc{T}}(x,\pi_K(x))/n< 6\cdot2^{-m}<\delta.
\end{equation}
Thus from \eqref{hatlb1}, for every $\delta,s>0$ there exist $n_1(\delta,s), K(\delta,s)\in \N$ such that for $K\ge K(\delta,s)$,
\begin{equation}
\inf_{n \ge n_1(\delta)}\hat \P_1^{ns}\big(d_{\mc{T}}(x,\pi_K(x))/n<\delta \big)\ge 
\inf_{n\ge n_1(\delta)}\hat\P^{ns}_1\big(\hat\Omega^{n,K}\big)\ge 1-\frac{\delta}{4}.
\end{equation}
This completes the proof of (b).

Turn next to the Euclidean distance, and work with $n,m,K,\delta$, $x\in\mc{T}_\ell$, and $\hat\omega$ as in \eqref{partabound}.   Let $\pi_K(x)\in \mc{T}_{j'}$ and recall from \eqref{piKanc} that $\pi_K(x)$ is an ancestor of $x$ in $\mc{T}$. Therefore $n^{-1}|\ell-j'|=d_{\mc{T}}(x,\pi_K(x))/n<6 \cdot 2^{-m}<\delta_n$ (the latter by \eqref{omegaconds}(b)), and  
we may use Theorem~\ref{thm:modulus} and \eqref{partabound} to conclude that
\begin{align}
\label{partbbound}\frac{|\pi_K(x)-x|}{\sqrt n}&\le  c_{\ref{thm:modulus}}\Bigl|\frac{\ell-j'}{ n}\Bigr|^{1/4}
\le c_{\ref{thm:modulus}}(6\cdot 2^{-m})^{1/4}<c_{\ref{thm:modulus}}\delta^{1/4}.
\end{align}

In view of \eqref{partabound}, \eqref{partbbound}, and \eqref{hatlb1}, we conclude that for $K\ge K(\delta,s)$, 
\begin{align}\label{largen}
&\inf_{n\ge n_1(\delta)}\hat \P^{ns}_1\Bigl(\sup_{x\in\mc{T}}d_{\mc{T}}(x,\pi_K(x))/n<\delta\text{ and }\sup_{x\in\mc{T}}|\pi_K(x)-x|/\sqrt n<c_{\ref{thm:modulus}}\delta^{1/4}\Bigr)\\
\nonumber&\ge \inf_{n\ge n_1(\delta)}\hat\P^{ns}_1\big(\hat\Omega^{n,K}\big)\ge 1-\frac{\delta}{4}.
\end{align}

Now consider $n<n_1(\delta)$.  Since $|\mc{T}|<\infty$ $\P$-a.s., we may choose a natural number $M_3=M_3(\delta,s)$ large enough so that for all $n\le n_1(\delta)$, 
\begin{equation}\label{sizebnd}\P_1^{ns}\big(|\mc{T}|>M_3\big)\le \P\big(|\mc{T}|>M_3\big)/\P(S^{\sss(1)}>n_1 s)<\delta/4.
\end{equation}
If $|\mc{T}(\omega)|\le M_3$, then for any $x\in\mc{T}$ and $j\in\N$,
\[P_{\omega}\big(V^{\sss(1)}_j=x\big)=\frac{1}{|\mc{T}(\omega)|}\ge \frac{1}{M_3},\]
and so for $K\ge K'(\delta,s)$, and $\omega$ as above,
\[P_\omega\big(\forall x\in\mc{T}\ \exists j\le K\text{ so that }V^{\sss(1)}_j=x\big)\ge 1-M_3\Bigl[1-\frac{1}{M_3}\Bigr]^K\ge 1-\frac{\delta}{4},\]
the last by the choice of $K'(\delta,s)$.  It follows from \eqref{sizebnd} and the above that for $K\ge K'(\delta,s)$,
\begin{equation}\label{smalln}
\sup_{n< n_1(\delta)}\hat\P_1^{ns}\Bigl(\sup_{x\in\mc{T}}|x-\pi_K(x)|>0\Bigr)\le 
\frac{\delta}{2}.
\end{equation}
Part (c) of Theorem~\ref{thm:main2} now is immediate from \eqref{largen} and \eqref{smalln}.\qed

\paragraph{Acknowledgments.}
The work of MH was supported in part by Future Fellowship FT160100166 from the Australian Research Council. 
The work of EP was supported by a Discovery Grant from the Natural Science and Engineering Research Council of Canada.  MH thanks the Pacific Institute for the Mathematical Sciences at UBC for hosting him during part of this research.  

\bibliographystyle{plain}
\def\cprime{$'$}

\section*{Appendix. Proof of Theorem \ref{thm:main2}(a)}\label{sec:oldS}

Here we make use of the weak convergence of the range obtained in \cite{HP19} to verify Theorem \ref{thm:main2}(a) holds for lattice trees in dimensions $d>8$ for all $L\ge L_0(d)$.  In this section we do not  use the historical processes.     
In fact we show that Theorem \ref{thm:main2}(a) holds for the general lattice models considered in \cite{HP19} (see Theorem~\ref{thm:1(a)}), and so, in particular, also holds for critical sufficiently spread-out oriented percolation in dimensions $d>4$.

It is trivial to extend the construction of $(X^{\sn},(V^{\sn}_i)_{i\in\N})$ under $\hat\P^s_n$ from Section~\ref{sec:enlargement} to the general lattice models in \cite{HP19} in the discrete time setting{(i.e.~for the index set $I=\Z_+$ therein) and the same formulae apply.    
Let $R^{\sss(1)}=\cup_{m\in \Z_+}\mc{T}_m$ denote the range (set of vertices) of the random sets $\mc{T}_m,m\in\Z_+$, and $R^{\sn}=n^{-1/2}R^{\sss(1)}$ denote the rescaled range.  For $\nu=(\nu_t)_{t\ge 0}\in \mc{D}(\mc{M}_F(\R^d))$, define the integrated measure $\bar{\nu}_\infty(\cdot)=\indic{S(\nu)<\infty}\int_0^\infty \nu_t(\cdot)\d t$, where $S(\nu)=\inf\{t>0:\nu_t(1)=0\}$.  
Recall that $X^{\sn}_t\in \mc{M}_F(\R^d)$.  Let $\K$ denote the space of compact subsets of  $\R^d$ equipped with the Hausdorff metric $d_0$ (see, e.g., (1.2) of \cite{HP19}). Theorem~2 of \cite{HP19} gives conditions on random subsets $\mc{T}_m,\ m\in\Z_+$, of $\Z^d$ (called Conditions~1--7 there) which include our rescaled lattice trees (sufficiently large range, $d>8$) as well as rescaled critical oriented percolation for sufficiently large range and $d>4$, under which one has weak convergence of the rescaled ranges $R^{\sn}$ to the range $R$ of super-Brownian motion on $\K$ (under the canonical measure, conditioned to survive to time $1$).   
The scaling in \cite{HP19} implies that if $r_n(x)=x/\sqrt n$ for $x\in\R^d$, then
\begin{equation}\label{genscale}
\bar{X}^{\sn}_\infty=c_n\bar{X}^{\sss(1)}_\infty\circ r_n^{-1}\text{ for some }c_n>0,\ S^{\sn}=S^{\sss(1)}/n, \text{ and therefore }\supp(\bar {X}_\infty)=R^{\sn}.
\end{equation}
 The reader interested only in lattice trees, should work in this setting only (where $c_n=n^{-2}$).  Let  $R=\textup{supp}(\bar{X}_\infty)$ be the range of the super-Brownian motion $X$. Note that the closed support map, $\textup{supp}:\mc{M}_F(\R^d)\to \K$, is not a continuous function (with the topology of weak convergence and the Hausdorff metric respectively).  
The following apparent extension of Theorem~2 of \cite{HP19} is in fact implicit in its proof. As usual $s>0$ is fixed. The extension (and its proof) holds equally well in the continuous time setting (when $I=[0,\infty)$ in the notation of \cite{HP19}).

\begin{PRP}\label{prop:jtconv}
Assume Conditions 1-7 of \cite{HP19} in the discrete time setting there, that is when $I=\Z_+$ in the notation of \cite{HP19}. Then as $n\to\infty$,
\[\P^s_n\big((R^{\sn},\bar{X}^{\sn}_\infty)\in\cdot\big)\cweak \N^s_{\sss H}\big((R,\bar{X}_\infty)\in\cdot\big)\text{ on }\K\times \mc{M}_F(\R^d).\]
\end{PRP}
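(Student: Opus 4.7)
The plan is to leverage the structure of the proof of Theorem~2 of \cite{HP19} rather than to try to promote the range convergence $R^{\sn}\cweak R$ (which is already given there) by a continuous mapping argument, since $\supp:\mc{M}_F(\R^d)\to\K$ is not continuous. In broad strokes we establish joint tightness, pass to a subsequential limit $(R^*,\bar X^*)$, and then show that any such limit must satisfy $R^*=\supp(\bar X^*)$ almost surely; since the marginals coincide with $R$ and $\bar X_\infty$ under $\N^s_{\sss H}$, and since $R=\supp(\bar X_\infty)$ $\N^s_{\sss H}$-a.s., this uniquely identifies the joint law and completes the proof.

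First I would establish tightness of $\{\P^s_n((R^{\sn},\bar X^{\sn}_\infty)\in\cdot):n\in\N\}$ on $\K\times \mc{M}_F(\R^d)$. Tightness of the $R^{\sn}$-marginal is exactly Theorem~2 of \cite{HP19}. Tightness of the $\bar X^{\sn}_\infty$-marginal follows by exactly the argument used to prove Lemma~\ref{lem:IJStconv}: a finite first-moment computation together with Condition~\ref{cond:fdd}(iii) yields tightness of the mean measures on $\R^d$, Markov's inequality gives compact containment for $\bar X^{\sn}_\infty$, and a simple bound on the total mass $\bar X^{\sn}_\infty(1)=\int_0^\infty X^{\sn}_u(1)\,du$ (via \eqref{genscale} and the tail bound \eqref{Sbounds}) yields tightness in $\mc{M}_F(\R^d)$.

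Next I would pass to a subsequence along which $\P^s_n((R^{\sn},\bar X^{\sn}_\infty)\in\cdot)$ converges weakly to some law of $(R^*,\bar X^*)$, and invoke Skorokhod's representation to realize the convergence almost surely on a common probability space $(\tilde\Omega,\tilde{\mc F},\tilde \P)$. The identification of the joint law reduces to showing
\begin{equation}\label{suppident}
R^*=\supp(\bar X^*)\ \ \tilde\P\text{-a.s.}
\end{equation}
The inclusion $\supp(\bar X^*)\subset R^*$ is the easy direction: if $x\in\supp(\bar X^*)$ and $\epsilon>0$, Portmanteau applied to the open ball $B(x,\epsilon)$ gives $\bar X^{\sn}_\infty(B(x,\epsilon))>0$ for all large $n$, whence $R^{\sn}\cap B(x,\epsilon)\neq\emptyset$ (since $R^{\sn}=\supp(\bar X^{\sn}_\infty)$ by \eqref{genscale}), and Hausdorff convergence of $R^{\sn}$ to $R^*$ then places $x$ in $R^*$.

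The main obstacle is the reverse inclusion $R^*\subset\supp(\bar X^*)$. A single vertex of $\mc{T}$ only contributes mass of order $1/(C_0n)$ to $\bar X^{\sn}_\infty$, so one cannot simply trace back a convergent sequence $x_n\in R^{\sn}\to x\in R^*$ and hope for a pointwise mass bound. The correct input here is the uniform persistence-of-mass estimate that underlies the proof of Theorem~2 of \cite{HP19}, the same kind of estimate packaged in Lemma~\ref{lem:4.6} and used to build the events $\Omega^n_{i,m}$ in Section~\ref{sec:S}: with high probability uniformly in $n$, every point $y\in\mc{T}^{\sn}_{i2^{-m}}$ that survives until time $(i+1)2^{-m}$ generates at least $2^{-10m}$ rescaled mass in the subsequent dyadic interval. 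Dyadically approximating an arbitrary $x\in R^*$ by points that survive one more dyadic step, and combining this with the Hausdorff convergence of the ranges, one deduces that every ball $B(x,\epsilon)$ receives $\tilde\P$-a.s.\ positive limiting mass under $\bar X^*$. This yields \eqref{suppident}, and the marginal identifications complete the argument. In the lattice-tree case $d>8$ for large $L$ (and likewise for sufficiently spread-out oriented percolation with $d>4$), all inputs are available from \cite{HP19}, so no new estimates are needed.
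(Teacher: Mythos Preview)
Your approach is correct in outline, but it is considerably more elaborate than what the paper does, and it partially reinvents work already contained in \cite{HP19}. The paper's proof is essentially two lines: Lemma~4.5 of \cite{HP19} gives $\bar X^{\sn}_\infty\cweak \bar X_\infty$; one then applies Skorokhod to this \emph{single} sequence to get $\bar X^{\sn}_\infty\to\bar X_\infty$ a.s.\ on some space, and observes that the proof of Theorem~2 in \cite{HP19} is carried out precisely on such a space and shows there that $R^{\sn}\to R$ \emph{in probability} (not merely in law). Joint convergence in probability of $(R^{\sn},\bar X^{\sn}_\infty)$ is then automatic, and the result follows. In other words, the crucial point you identify---that the mass-persistence estimates from \cite{HP19} pin down the range from the integrated measure---is exactly what \cite{HP19} already uses to prove $R^{\sn}\to R$ on the Skorokhod space; the paper simply cites that intermediate conclusion rather than re-deriving it.

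By contrast, you establish joint tightness separately, pass to a subsequential limit $(R^*,\bar X^*)$, apply Skorokhod to the pair, and then prove $R^*=\supp(\bar X^*)$ from scratch via the same mass-persistence input. This is a valid identification argument (once $R^*=\supp(\bar X^*)$, the pair is a measurable function of $\bar X^*$ alone, and the $\bar X^*$-marginal is already known to be $\bar X_\infty$), but it duplicates the core of \cite{HP19}'s Theorem~2 proof. One small caution: in your reverse inclusion $R^*\subset\supp(\bar X^*)$ you should be explicit that the lower bound $\bar X^{\sn}_\infty(B(x,\epsilon/2))\ge 2^{-10m}$ for large $n$ is turned into a lower bound on $\bar X^*$ via Portmanteau applied to the \emph{closed} ball $\overline{B(x,\epsilon/2)}$ (using $\limsup_n\bar X^{\sn}_\infty(F)\le \bar X^*(F)$ for closed $F$ in contrapositive form), not via open balls.
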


\begin{proof} Lemma~4.5 of \cite{HP19} 
states that $\bar{X}^{\sn}_\infty\to \bar{X}_\infty$ weakly, where it is always understood that the law for the latter is the canonical measure conditioned on survival until time $s$.  As in the proof of Theorem~2 in \cite{HP19}, we may use Skorokhod's representation to assume that $\bar{X}^{\sn}_\infty\to \bar{X}_\infty$ a.s., (see \cite[(4.17)]{HP19}).  In the same proof it is shown that on this probability space $R^{\sn}\to R$ in probability as $n\to\infty$ (in the proof of \cite[Theorem 2]{HP19} see (4.23) and the prior display, as well as the final line of the proof).   It follows that $(R^{\sn},\bar{X}^{\sn}_\infty)\to (R,\bar{X}_\infty)$ in probability on $\K\times \mc{M}_F(\R^d)$, and the result follows.
\end{proof}

Under $\hat{\P}^s_n$, given $\bar{X}^{\sn}_\infty$ the random variables $(V^{\sn}_i)_{i\in \N}$ are i.i.d.~with law $\bar{X}^{\sn}_\infty/\bar{X}^{\sn}_\infty(1)$ (equivalently, they are i.i.d.~uniform on $\mc{T}^{\sn}$).  Given  $\bar{X}_\infty$, let $(V_i)_{i \in \N}$ be  i.i.d.~with law $\bar{X}_\infty/\bar{X}_\infty(1)$ under $\hat{\N}_{\sss H}^s$. 
For fixed $K\in \N$, let $V^{\sn}=(V^{\sn}_1,\dots, V^{\sn}_K)\in(\R^d)^K$ and  $V=(V_1,\dots,V_K)\in (\R^d)^K$.

\begin{LEM} \label{lem:wcrandompts} Fix $K\in \N$ and $s>0$.  Under the hypotheses of Proposition~\ref{prop:jtconv}, 
\[\hat\P_n^s\big((R^{\sn},\bar{X}^{\sn}_\infty,V^{\sn})\in\cdot\big)\cweak\hat\N^s_{\sss H}\big((R,\bar{X}_\infty,V)\in\cdot\big) \text{ on }\K\times\mc{M}_F(\R^d)\times (\R^d)^K\text{ as }n\to\infty.\]
\end{LEM}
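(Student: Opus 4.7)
The plan is to recognize this as a direct application of the abstract sampling result, Lemma~\ref{lem:absrandompts}, combined with the joint convergence of range and integrated measure in Proposition~\ref{prop:jtconv}. In the notation of Lemma~\ref{lem:absrandompts}, I would take the Polish spaces $E_1=\mc{K}$ and $E_2=\R^d$, set $Z^{\sss(n)}=R^{\sss(n)}$, $M^{\sss(n)}=\bar X^{\sss(n)}_\infty$, $Z=R$, and $M=\bar X_\infty$. Given the pair $(Z^{\sss(n)},M^{\sss(n)})$, the i.i.d.~samples $(V^{\sss(n)}_i)_{i\le K}$ have the prescribed law $M^{\sss(n)}/M^{\sss(n)}(\R^d)$ under $\hat\P^s_n$, and analogously for $(V_i)_{i\le K}$ under $\hat\N^s_{\sss H}$, so the hypotheses on the conditional laws are built into the construction.

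Next I would verify the two hypotheses of Lemma~\ref{lem:absrandompts}: positivity of the sampling measures and weak convergence of $(Z^{\sss(n)},M^{\sss(n)})$. Positivity of $M^{\sss(n)}(\R^d)=\bar X^{\sss(n)}_\infty(\R^d)$ under $\P^s_n$ follows because $o\in \mc{T}$, so $|\mc{T}|\ge 1$ and hence $\bar X^{\sss(n)}_\infty(\R^d)>0$ a.s. Positivity of $M(\R^d)=\bar X_\infty(\R^d)$ under $\N_{\sss H}^s$ is immediate from the conditioning $\{S>s\}$ with $s>0$, together with the fact that $H_t(1)$, and hence $X_t(1)$, is strictly positive on $\{t<S\}$.

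For the weak convergence, I would invoke Proposition~\ref{prop:jtconv}, which gives precisely
\[
\P^s_n\big((R^{\sss(n)},\bar X^{\sss(n)}_\infty)\in\cdot\big)\cweak \N^s_{\sss H}\big((R,\bar X_\infty)\in\cdot\big)\qquad\text{on }\mc{K}\times\mc{M}_F(\R^d).
\]
At this point Lemma~\ref{lem:absrandompts} applies verbatim and yields the desired joint convergence of $(R^{\sss(n)},\bar X^{\sss(n)}_\infty,V^{\sss(n)})$ to $(R,\bar X_\infty,V)$ on $\mc{K}\times\mc{M}_F(\R^d)\times(\R^d)^K$. There is no real obstacle here; the only thing to double check is that the enlarged spaces are set up consistently so that $\hat\P^s_n$ and $\hat\N^s_{\sss H}$ indeed have the regular conditional distribution structure required by Lemma~\ref{lem:absrandompts}, but this was already arranged in Section~\ref{sec:enlargement} (via \eqref{nuhat}) and at the start of the present section.
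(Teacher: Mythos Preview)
Your proposal is correct and follows exactly the paper's approach: apply Lemma~\ref{lem:absrandompts} with $Z^{\sss(n)}=R^{\sss(n)}$, $M^{\sss(n)}=\bar X^{\sss(n)}_\infty$, $Z=R$, $M=\bar X_\infty$, and use Proposition~\ref{prop:jtconv} for the required weak convergence. The paper's proof is a two-line sketch of precisely this; your version simply spells out the positivity and conditional-law verifications in more detail.
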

\begin{proof} We apply Lemma~\ref{lem:absrandompts} with $Z^{\sn}=R^{\sn}$, $M^{\sn}=\bar{X}^{\sn}_{\infty}$, $Z=R$, and $M=\bar{X}_\infty$. The weak convergence hypothesis holds by Proposition~\ref{prop:jtconv}.  
The result follows. 
\end{proof}

We leave the proof of the following as a simple exercise for the reader. 
\begin{LEM}\label{lem:iiddense} Let $\mu$ be a non-zero, compactly supported measure on a metric space $M$ and let $(Z_i)_{i\in\N}$ be a sequence of i.i.d.~random vectors with law $\hat\mu=\mu/\mu(1)$ under a probability measure $\P^*$. Then for any $\vep>0$, $\lim_{K\to\infty} \P^*(\textup{supp}(\mu)\subset\cup_{i=1}^KB(Z_i,\vep))=1$.
\end{LEM}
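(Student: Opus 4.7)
The plan is to exploit compactness of $\textup{supp}(\mu)$ to reduce the statement to a finite union-of-Borel-Cantelli-type computation. Since $\textup{supp}(\mu)$ is compact, I can cover it by finitely many open balls $B(x_1,\vep/2),\dots,B(x_N,\vep/2)$ with centres $x_j\in\textup{supp}(\mu)$. Each such centre lies in the support, so by definition of the support we have $p_j:=\hat\mu(B(x_j,\vep/2))>0$ for every $j=1,\dots,N$.

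Next I would estimate the probability that some ball in this finite cover is missed by all $K$ samples. By independence,
\[\P^*\Bigl(\exists j\le N:\ \forall i\le K,\ Z_i\notin B(x_j,\vep/2)\Bigr)\le\sum_{j=1}^N(1-p_j)^K,\]
which tends to $0$ as $K\to\infty$ since each $p_j>0$ and $N$ is finite. So with probability tending to one, each ball $B(x_j,\vep/2)$ contains at least one of the samples $Z_1,\dots,Z_K$.

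Finally I would close the argument by a triangle-inequality step. On the good event above, take any $y\in\textup{supp}(\mu)$. By the covering property there is some $j$ with $d(y,x_j)<\vep/2$, and by the good event there is some $i\le K$ with $d(Z_i,x_j)<\vep/2$. Hence $d(y,Z_i)<\vep$, i.e.\ $y\in B(Z_i,\vep)$. This shows $\textup{supp}(\mu)\subset\cup_{i=1}^K B(Z_i,\vep)$ on an event of probability at least $1-\sum_j(1-p_j)^K\to 1$, giving the conclusion. There is no serious obstacle here; the only thing one has to notice is that a \emph{finite} cover suffices because of compactness of $\textup{supp}(\mu)$, which is precisely what makes the union bound harmless.
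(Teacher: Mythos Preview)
Your proof is correct. Both your argument and the paper's (hidden) proof use the same ingredients---compactness of $\textup{supp}(\mu)$, positivity of $\hat\mu$ on balls centred at support points, and independence---but they are organised differently. You apply compactness \emph{first} to extract a finite deterministic cover by $\vep/2$-balls, then use a union bound over the $N$ balls to get an explicit estimate $\sum_{j=1}^N(1-p_j)^K$; the paper instead takes a countable dense subset of the support, invokes the second Borel--Cantelli lemma to show that almost surely every such point lies in $B(Z_i,\vep/2)$ for infinitely many $i$, and only then uses compactness to pass from the resulting infinite cover $\cup_i B(Z_i,\vep)$ to a finite one with a random $K(\omega)$. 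Your route is more direct and yields a quantitative rate of convergence in $K$; the paper's route first establishes the stronger almost-sure statement that a random $K$ exists, from which the limit in probability follows.
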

\blank{
\begin{proof} We leave this as a simple exercise for the reader. [But here are the details just in case.]

Let $\{x_n\}\subset\text{supp}(\mu)$ be a countable dense subset of supp$(\mu)$.  The converse to Borel-Cantelli shows that $\hat\mu$-a.s. for every $n$, $x_n\in B(Z_i,\vep/2)$ for infinitely many $i$.  Fix such an $\omega$. Then for each $n$ we may choose $i_n(\omega)$ so that $x_n\in B(Z_{i_n},\vep/2)$, and so 
\[\text{supp}(\mu)\subset\cup_nB(x_n,\vep/2)\subset\cup_{i}B(Z_i,\vep).\]
By compactness of supp$(\mu)$ there is a $K(\omega)\in\N$ so that supp$(\mu)\subset\cup_{i=1}^KB(Z_i,\vep)$.  
We have shown $\P$-a.s. there is a $K\in\N$ so that supp$(\mu)\subset\cup_{i=1}^KB(Z_i,\vep)$. The result follows.
\end{proof}
}

We are now ready to prove Theorem \ref{thm:main2}(a), which we  restate under the more general conditions of \cite{HP19}. \
\begin{THM}\label{thm:1(a)}
  Assume Conditions 1-7 of \cite{HP19} in the discrete time setting there.  
Then for
any $\vep,\delta,s>0$ there is a $K=K(\vep,\delta,s)\in \N$ so that  \[\hat\P^{ns}_1\Big(R^{\sss(1)}\subset \cup_{i=1}^KB(V_i^{\sss(1)},\vep\sqrt{n})\Big)\ge 1-\delta \qquad \text{ for all }n\in \N.\]
\end{THM}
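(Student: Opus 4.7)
The plan is to split into two regimes in $n$: a ``large $n$'' regime handled by the joint weak convergence of Lemma \ref{lem:wcrandompts}, and a ``small $n$'' regime handled by the finiteness of $\mc{T}$ together with a direct coupon--collector style bound. First I would rescale: by \eqref{genscale} and $\hat\P^{ns}_1=\hat\P^s_n$, the event $\{R^{\sss(1)}\subset \cup_{i=1}^K B(V_i^{\sss(1)},\vep\sqrt n)\}$ equals $\{R^{\sn}\subset\cup_{i=1}^K B(V_i^{\sn},\vep)\}$, so it suffices to bound $\hat\P^s_n((R^{\sn},V^{\sn})\in G_\vep)$ where
\[
G_\vep:=\{(A,v_1,\dots,v_K)\in\K\times(\R^d)^K:A\subset\cup_{i=1}^K B(v_i,\vep)\}.
\]

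The key observation for the first regime is that $G_\vep$ is open in the Hausdorff--Euclidean product topology. Indeed, if $A\subset\cup_i B(v_i,\vep)$ then by compactness of $A$ there is $\vep'<\vep$ with $A\subset\cup_i B(v_i,\vep')$, and this open cover is preserved under small Hausdorff perturbations of $A$ and small Euclidean perturbations of the $v_i$. Since the super-Brownian range $R$ is almost surely compact and, conditional on $\bar X_\infty$, the $(V_i)_{i\in\N}$ are i.i.d.\ samples from $\bar X_\infty/\bar X_\infty(1)$, Lemma \ref{lem:iiddense} (applied conditionally and then integrated) gives $\lim_{K\to\infty}\hat\N^s_{\sss H}(R\subset\cup_{i=1}^K B(V_i,\vep))=1$. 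I would therefore fix $K_1=K_1(\vep,\delta,s)$ so that this probability exceeds $1-\delta/4$, and then invoke Lemma \ref{lem:wcrandompts} together with the Portmanteau theorem on the open set $G_\vep$ to obtain $n_1=n_1(\vep,\delta,s)$ such that $\hat\P^s_n((R^{\sn},V^{\sn})\in G_\vep)\ge 1-\delta/2$ for every $n\ge n_1$.

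For the finitely many $n<n_1$, I would use that $|\mc T|<\infty$ almost surely to choose $M=M(\delta,s,n_1)$ with $\P(|\mc T|>M)\le(\delta/4)\min_{1\le n<n_1}\P(S^{\sss(1)}>ns)$, so that $\P^{ns}_1(|\mc T|>M)\le\delta/4$ for every such $n$. Conditional on $\mc T$ with $|\mc T|\le M$, each $V_i^{\sss(1)}$ is uniform on $\mc T$, and a union bound over vertices gives $P_\omega(\vec V\not\supseteq\mc T)\le M(1-1/M)^K$, which is below $\delta/4$ once $K\ge K_2(\delta,n_1,s)$. On $\{\vec V\supseteq\mc T\}$ one has $R^{\sss(1)}=\mc T\subset\cup_i\{V_i^{\sss(1)}\}\subset\cup_i B(V_i^{\sss(1)},\vep\sqrt n)$ trivially. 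Setting $K:=\max(K_1,K_2)$ then gives the claim uniformly in $n$.

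The only mildly non-routine step will be checking openness of $G_\vep$ so that Portmanteau applies cleanly (with closed $\vep$-balls the analogous set would only be closed, and the weak-convergence inequality would go the wrong way; the statement uses open balls, which is what makes this direction work). Everything else is a direct combination of the weak convergence of ranges (Proposition \ref{prop:jtconv}), the conditional i.i.d.\ structure from Lemmas \ref{lem:absrandompts} and \ref{lem:wcrandompts}, the density lemma for i.i.d.\ samples from a compactly supported measure (Lemma \ref{lem:iiddense}), and the coupon-collector bound on finite trees.
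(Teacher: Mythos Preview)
Your proposal is correct and follows essentially the same two-regime strategy as the paper: for large $n$ use Lemma~\ref{lem:iiddense} on the limit together with the joint weak convergence of Lemma~\ref{lem:wcrandompts}, and for small $n$ use a coupon-collector argument on the finite tree. The only difference is cosmetic: where you invoke Portmanteau on the open set $G_\vep$, the paper instead passes to a Skorokhod space, works with $\vep/2$ in the limit, and uses the a.s.\ convergence of $(R^{\sn},V^{\sn})$ to absorb the slack into the full $\vep$ via a Fatou-type liminf inequality; both routes deliver the same conclusion.
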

\begin{proof} Fix $\vep,\delta,s>0$.  By Lemma~\ref{lem:iiddense}, applied conditionally on $\bar{X}_\infty$, and the fact that $R$ is a.s.~compact, we have
\[\lim_{K\to\infty}\hat\N_{\sss H}^s\Big(R\subset\cup_{i=1}^K B(V_i,\vep/2)\,\big |\,\bar{X}_\infty\Big)=1,\qquad   \hat\N^s_{\sss H}-\text{a.s.}\]
It follows by monotone convergence that there is a natural number $K_0$ such that 
\begin{equation}\label{lboundR}\hat\N_{\sss H}^s\Big(R\subset\cup_{i=1}^{K_0}B(V_i,\vep/2)\Big)\ge 1-\frac{\delta}{2}.
\end{equation}
Lemma~\ref{lem:wcrandompts} and Skorokhod's theorem allow us to work on a probability space $(\Omega',\mc{F}',\P')$ on which 
\begin{equation}\label{asconverge}\big(R^{\sn},(V_i^{\sn})_{i\le K_0}\big)\to \big(R,(V_i)_{i\le K_0}\big),\qquad \P'-\text{a.s.~in }\K\times(\R^d)^{K_0}.
\end{equation}
If $R\subset\cup_{i=1}^{K_0}B(V_i,\vep/2)$, then \eqref{asconverge} and the definition of the Hausdorff metric imply that for large enough $n$, $R^{\sn}\subset R^{\vep/4}\subset\cup_{i=1}^{K_0}B(V^{\sn}_i,\vep)$ (recall that $A^\vep$ is the set of points within distance $\vep$ of $A$). This shows that 
\begin{align*}\liminf_{n\to\infty}\hat\P^s_n\Big(R^{\sn}\subset\cup_{i=1}^{K_0}B(V^{\sn}_i,\vep)\Big)&=\liminf_{n\to\infty}\P'\Big(R^{\sn}\subset\cup_{i=1}^{K_0}B(V^{\sn}_i,\vep)\Big)\\
&\ge \P'\Big(\liminf_{n\to\infty}\{R^{\sn}\subset\cup_{i=1}^{K_0}B(V^{\sn}_i,\vep)\}\Big)\\
&\ge\P'\Big(R\subset\cup_{i=1}^{K_0}B(V_i,\vep/2)\Big)\\
&=\hat\N_{\sss H}^s\Big(R\subset\cup_{i=1}^{K_0}B(V_i,\vep/2)\Big)\\
&\ge 1-\frac{\delta}{2},
\end{align*}
the last by \eqref{lboundR}.  It follows by the scaling in \eqref{genscale} that for some $n_0=n_0(\vep,\delta)\in\N$,
\begin{equation}\label{coverall}\text{for all }n\ge n_0,\ \  \hat{\P}^{ns}_1\Bigl(R^{\sss(1)}\subset\cup_{i=1}^{K_0}B(V_i^{\sss(1)},\vep\sqrt n)\Bigr)=\hat\P^s_n\Big(R^{\sn}\subset\cup_{i=1}^{K_0}B(V^{\sn}_i,\vep)\Big)\ge 1-\delta.\end{equation}
For each $n<n_0$, $R^{\sss(1)}$ is $\P^{ns}_1$-a.s.~a finite subset of $\Z^d$ and so $\lim_{K\to\infty}\hat\P^{ns}_1(R^{\sss(1)}\subset\cup_{i=1}^K B(V^{\sss(1)}_i,\vep))=1$. Therefore by increasing $K_0$, if necessary, we can find a $K$ so that the conclusion of \eqref{coverall} holds for all $n\in\N$.
\end{proof}

\end{document}